\pgfplotsset{compat=1.14}
\newcolumntype{P}[1]{>{\raggedright\let\newline\\\arraybackslash\hspace{0pt}}p{#1}}
\numberwithin{equation}{section}
\def\paragraph{\@startsection{paragraph}{4}%
 \z@\z@{-\fontdimen2\font}%
 {\normalfont\bfseries}}
\newcommand{\proofstep}[1]{%
  \par% ensure starting on a new paragraph
  \addvspace{\medskipamount}% some vertical space
  \textit{#1\@addpunct{.}}\enspace\ignorespaces
  %%% or comment the above and use
  % \textbf{#1\@addpunct{.}}\enspace\ignorespaces
}
\DeclareMathOperator{\GL}{GL}
\DeclareMathOperator{\PGL}{PGL}
\DeclareMathOperator{\PU}{PU}
\DeclareMathOperator{\PSL}{PSL}
\DeclareMathOperator{\Bir}{Bir}
\DeclareMathOperator{\Aut}{Aut}
\DeclareMathOperator{\Aff}{Aff}
\DeclareMathOperator{\Id}{Id}
\DeclareMathOperator{\Iso}{Isom}
\DeclareMathOperator{\Jonq}{Jonq}
\DeclareMathOperator{\Cent}{Cent}
\begin{document}

%Theorems
\theoremstyle{plain}
\newtheorem{theorem}{Theorem}
\numberwithin{theorem}{subsection}
\newtheorem{exercise}{Exercise}
\newtheorem{corollary}[theorem]{Corollary}
\newtheorem{claim}[theorem]{Claim}
\newtheorem{lemma}[theorem]{Lemma}
\newtheorem{proposition}[theorem]{Proposition}
\newtheorem{conjecture}[theorem]{Conjecture}
\newtheorem{maintheorem}{Theorem}
\newtheorem{maincor}[maintheorem]{Corollary}
\newtheorem{mainproposition}[maintheorem]{Proposition}
\renewcommand{\themaintheorem}{\Alph{maintheorem}}

\theoremstyle{definition}
\newtheorem{fact}[theorem]{Fact}
\newtheorem{definition}[theorem]{Definition}
\newtheorem{remark}[theorem]{Remark}
\newtheorem{question}[theorem]{Question}
\newtheorem{example}[theorem]{Example}

%Alphabets
\def\bA{{\mathbb{A}}}
\def\bB{{\mathbb{B}}}
\def\bC{{\mathbb{C}}}
\def\bD{{\mathbb{D}}}
\def\bR{{\mathbb{R}}}
\def\bS{{\mathbb{S}}}
\def\bO{{\mathbb{O}}}
\def\bE{{\mathbb{E}}}
\def\bF{{\mathbb{F}}}
\def\bH{{\mathbb{H}}}
\def\bI{{\mathbb{I}}}
\def\bT{{\mathbb{T}}}
\def\bZ{{\mathbb{Z}}}
\def\bX{{\mathbb{X}}}
\def\bP{{\mathbb{P}}}
\def\bN{{\mathbb{N}}}
\def\bQ{{\mathbb{Q}}}
\def\bK{{\mathbb{K}}}
\def\bG{{\mathbb{G}}}

\def\nrj{{\mathcal{E}}}
\def\cA{{\mathscr{A}}}
\def\cB{{\mathscr{B}}}
\def\cC{{\mathscr{C}}}
\def\cD{{\mathscr{D}}}
\def\cE{{\mathscr{E}}}
\def\cF{{\mathscr{F}}}
\def\cB{{\mathscr{G}}}
\def\cH{{\mathscr{H}}}
\def\cI{{\mathscr{I}}}
\def\cJ{{\mathscr{J}}}
\def\cK{{\mathscr{K}}}
\def\Layer{{\rm Layer}}
\def\cM{{\mathscr{M}}}
\def\cN{{\mathscr{N}}}
\def\cO{{\mathscr{O}}}
\def\cP{{\mathscr{P}}}
\def\cQ{{\mathscr{Q}}}
\def\cR{{\mathscr{R}}}
\def\cS{{\mathscr{S}}}
\def\Up{{\rm Up}}
\def\cU{{\mathscr{U}}}
\def\cV{{\mathscr{V}}}
\def\cW{{\mathscr{W}}}
\def\cX{{\mathscr{X}}}
\def\cY{{\mathscr{Y}}}
\def\cZ{{\mathscr{Z}}}

\def\CC{{\mathbf{C}}}
\def\RR{{\mathbf{R}}}
\def\NN{{\mathbf{N}}}
\def\QQ{{\mathbf{Q}}}
\def\ZZ{{\mathbf{Z}}}
\def\fk{{\mathbf{k}}}

\newcommand{\Ort}[2]{\operatorname{O}_{#1}(#2)}

\newcommand{\OL}{\mathcal{O}_{\mathbf{L}}}
\newcommand{\PL}{\mathbf{P}_{\mathbf{L}}}
\newcommand{\PP}{\mathbb{P}}
\newcommand{\Fol}{\mathcal{F}}
\newcommand{\PPo}{\mathbb{P}^1}
\newcommand{\PPt}{\mathbb{P}^2}
\newcommand{\PPPP}{\mathbb{P}^1\times\mathbb{P}^1}
\newcommand{\SSS}{\mathbb{S}}
\newcommand{\HH}{\mathbb{H}}
\newcommand{\Hir}{\mathscr{H}}
\newcommand{\OO}{\mathcal{O}}
\newcommand{\EE}{\mathcal{E}}
\newcommand{\PicMt}{\mathscr{Z}}
\newcommand{\HHabc}{\mathbb{H}^{m} (m\in \mathbf{N}\cup \{\infty\})}
\newcommand{\KK}{\mathscr{K}}
\newcommand{\Fp}{\mathbf{F}_p}
\newcommand{\CRC}{\operatorname{Bir}(\mathbb{P}^2)}
\newcommand{\Kah}{K\"{a}hler}
\newcommand{\Mob}{M\"{o}bius}
\newcommand{\Jonqui}{Jonqui\`eres }
\newcommand{\pisigma}{\pi_1^{orb}(\Sigma)}
\newcommand{\boundh}[1]{\partial _h#1}
\newcommand{\Dev}{\operatorname{Dev}}
\newcommand{\Hol}{\operatorname{Hol}}
\newcommand{\cad}{c.-\`a-d.}

\newcommand{\llpar}{(\!(}
\newcommand{\rrpar}{)\!)}
\newcommand{\GLa}[2]{\operatorname{GL}_{#1}(\mathbf{#2})}
\newcommand{\PGLa}[2]{\operatorname{PGL}_{#1}(\mathbf{#2})}
\newcommand{\PGLthrK}{\operatorname{PGL}_{3}(\mathbf{K})}
\newcommand{\AUK}{\operatorname{Aut}(\mathbb{P}_{\mathbf{K}}^2)}
\newcommand{\CRK}{\operatorname{Cr}_{2}(\mathbf{K})}
\newcommand{\CRn}{\operatorname{Cr}_{n}(\mathbf{C})}
\newcommand{\charK}{\operatorname{char}(\mathbf{K})}
%replace

\newcommand{\JonqK}{\operatorname{Jonq}(\mathbf{K})}
\newcommand{\Jonqz}{\operatorname{Jonq}_0(\mathbf{K})}
\newcommand{\PGLtK}{\operatorname{PGL}_{2}(\mathbf{K})}
\newcommand{\Sym}{\mathscr{S}}
\newcommand{\Fpb}{\overline{\mathbf{F}_p}}
\newcommand{\dashmapsto}{\mapstochar\dashrightarrow}

\title{Birational Kleinian groups}
%%
%% Now edit the following to give your name and address:
%% 

\author[S. Zhao]{Shengyuan Zhao}
\address[S. Zhao]{Universit\'e Toulouse III Paul Sabatier\\
Institut de Math\'ematiques de Toulouse\\
118, route de Narbonne\\
F-31062 Toulouse Cedex 9, France}
\email{shengyuan.zhao@math.univ-toulouse.fr}

\begin{abstract}
In this paper we study birational Kleinian groups, i.e.\ groups of birational transformations of complex projective varieties acting in a free, properly discontinuous and cocompact way on an open subset of the variety with respect to the usual topology. We obtain a classification in dimension two.
\end{abstract}

\maketitle

\setcounter{tocdepth}{1}
\tableofcontents

\section{Introduction}

\subsection{Birational Kleinian groups}

\subsubsection{Definitions}\label{definitionssection}
According to Klein's 1872 Erlangen program a geometry is a space with a group of transformations. One of the implementations of this idea is Ehresmann's notion of geometric structures (\cite{Ehr36}) which models a space locally on a geometry in the sense of Klein. In his Erlangen program, Klein conceived a geometry modeled upon birational transformations:
%\blockquote[{ \cite{KleinE}{key}}]{In the case of rational transformations\cdots but these considerations have not yet been developed into a geometry of the plane \cdots}
\begin{displayquote}[{\cite{KleinG}, \cite{KleinE}{8.1}}]
Of such a geometry of rational transformations as must result on the basis of the transformations of the first kind, only a beginning has so far been made...
\end{displayquote}
In this paper we study the group of birational transformations of a variety from Klein and Ehresmann's point of view.

Let $Y$ be a smooth complex projective variety and $U\subset Y$ a connected open set in the usual topology of $Y$ considered as a smooth manifold. Let $\Gamma\subset \Bir (Y)$ be an \emph{infinite} group of birational transformations. We impose the following conditions on $\Gamma$:
\begin{enumerate}
	\item the points of indeterminacy of $\Gamma$ are disjoint from $U$ and $\Gamma$ preserves $U$, in other words each element of $\Gamma$ induces a biholomorphism of $U$;
	\item the action of $\Gamma$ on $U$ is free, properly discontinuous and cocompact.
\end{enumerate}
Therefore $U$ is identified with an infinite Galois covering of the quotient compact complex manifold $X=U/\Gamma$ and $\Gamma$ is identified with the group of deck transformations. We call the data $(Y,\Gamma,U,X)$ a \emph{birational kleinian group}. When the context is clear, we will just call $\Gamma$ a birational kleinian group. 

We say that two birational kleinian groups $(Y,\Gamma,U,X)$ and $(Y',\Gamma',U',X')$ are \emph{geometrically conjugate} if there exists a birational map $\varphi: Y\dashrightarrow Y'$ such that
\begin{enumerate}
	\item $\phi$ is regular on $U$ and sends $U$ biholomorphically to $U'$;
	\item $\Gamma'=\varphi \Gamma  \varphi^{-1}$.
\end{enumerate}
The corresponding quotient manifolds $X$ and $X'$ are isomorphic. If $(Y,\Gamma,U,X)$ is a birational kleinian group and if $\Gamma'$ is a finite index subgroup of $\Gamma$, then $(Y,\Gamma',U,X')$ is also a birational kleinian group where $X'$ is a finite unramified covering of $X$.

Two well-known families of examples are given by complex tori and locally symmetric varieties:
\begin{enumerate}
	\item $Y=\PP^n$, $U=\CC^n$, $\Gamma$ is a lattice in $\CC^n$ and $X$ is a complex torus.
	\item $U$ is a hermitian symmetric space of noncompact type, $Y$ is the associated hermitian symmetric space of compact type and $\Gamma$ is a cocompact lattice in the corresponding Lie group.
\end{enumerate}

The problem of classification of birational kleinian groups was posed by Shing-Tung Yau, Fedor Bogomolov and Serge Cantat. 

\subsubsection{Comparison with classical Kleinian groups}\label{classicalkleinian}
Nowadays a \emph{Kleinian group} means an arbitrary discrete subgroup of $\PGL(2,\CC)$. In this text we will call it \emph{classical Kleinian groups} to avoid confusion. If $\Gamma\subset\PGL(2,\CC)$ is a classical Kleinian group, then there is a unique maximal $\Gamma$-invariant open (not necessarily connected) subset $\Omega_{\Gamma}\subset\PP^1$ on which $\Gamma$ acts discontinuously; it is called the \emph{domain of discontinuity} of $\Gamma$. The complement $\PP^1\backslash \Omega_{\Gamma}$ is called the \emph{limit set} of $\Gamma$. 

If $\Gamma$ is a torsion free finitely generated classical Kleinian group then Ahlfors finiteness theorem asserts that $\Omega_{\Gamma}/\Gamma$ is a finite union of Riemann surfaces of finite type, i.e.\ smooth quasiprojective curves (see \cite{Ahl64}, \cite{Sul85}). If $\Gamma$ is a torsion free finitely generated classical Kleinian group such that $\Omega_{\Gamma}$ has an invariant connected component $\Omega_{\Gamma}^0$, then $\Omega_{\Gamma}^0/\Gamma$ is a connected smooth quasiprojective curve. Our definition of birational Kleinian group is a direct generalization of such classical Kleinian groups.

For more general spaces the notion of domain of discontinuity and that of limit set are not well defined (see \cite{Kulk78}). In dimension $\geq 2$ a group of automorphisms of a projective variety may act in a free, properly discontinuous and cocompact way on different invariant open subsets. For example let a Fuchsian surface group act diagonally on $\PP^1\times \PP^1$, then both $\bD\times \PP^1$ and $\PP^1\times \bD$ can serve as domain of discontinuity (see \cite{CNS}, \cite{KLP18} for more examples). This is why we define a birational Kleinian group as a quadruple $(Y,\Gamma,U,X)$: $Y,\Gamma,U$ determine $X$ but $U$ is not always uniquely determined by $\Gamma$.

\subsubsection{Complex projective Kleinian groups and Anosov subgroups}
If a birational Kleinian group $(Y,\Gamma,U,X)$ satisfies that $Y=\PP^n$ and $\Gamma\subset \PGL_{n+1}(\CC)$, then we call it a \emph{complex projective Kleinian group}. Complex projective Kleinian groups have been classified in dimension two by Kobayashi-Ochiai, Mok-Yeung, Klingler and Cano-Seade (see \cite{KobOch80}, \cite{MY93}, \cite{Kli98}, \cite{CanoSeade14}). If the quotient manifold is projective then they have been almost classified in any dimension $\geq 2$ (see \cite{JahRad15}). A reference on the subject is the book \cite{CNS} where the authors use the terminology ``complex Kleinian group''. 

Another class of birational Kleinian groups that has been studied is the so called \emph{Anosov subgroups}. Guichard-Wienhard \cite{GW12} and Kapovich-Leeb-Porti \cite{KLP18} constructed many birational Kleinian groups $(Y,\Gamma,U,X)$ such that $Y$ is a flag variety $G/P$, the quotient of a complex linear group $G$ by a parabolic subgroup $P$, and $\Gamma$ is an Anosov subgroup of $G$. Note that in many known examples of complex projective Kleinian groups (see \cite{CNS}) or Anosov subgroups (see \cite{DS20}), the quotient compact complex manifolds are not \Kah.

For complex projective Kleinian groups or Anosov subgroups, transformations in $\Gamma$ are in the connected component of identity of the automorphism group of the variety (which is a Lie group). Such an automorphism acts trivially on the cohomology of the variety. An automorphism acting non trivially on cohomology has more complicated dynamical properties (see \cite{Canicm}). In general a group of birational transformations acting on a variety does not necessarily give rise to a corresponding action on the cohomology of the variety. When $n\geq 2$ the group of birational transformations of $\PP^n$ is much larger than $\PGL_n(\CC)$ and has complicated topological structure (see \cite{BF13}). In particular one needs an infinite number of parameters to describe it. We emphasize also that the action of a group of birational transformations is not really a set-theoretic group action on the whole variety: a birational transformation could fail to be well defined at some points, or fail to be locally a diffeomorphism at some points where it is defined. These are some of the reasons why birational Kleinian groups seem to be much more complicated. Beyond complex projective Kleinian groups and Anosov subgroups, even if we restrict to automorphisms, there is essentially nothing known.

\subsubsection{Uniformization and geometric structures}
Every simply connected Riemann surface is biholomorphic to $\bP^1$, $\CC$ or the unit disk $\bD$ by Koebe-Poincar\'e's uniformization theorem which plays an omnipresent role in the study of classical Kleinian groups. Universal covers of projective varieties of dimension $\geq 2$ can be very complicated, not to mention non-projective manifolds. A reasonable way to study them is by adding additional hypotheses. The three elements of the uniformization are: the fundamental group, the universal cover and the action of the fundamental group on it. There are various ways to specify these data. %For example saying that a hyperbolic Riemann surface is covered by the unit disk with deck transformations contained in $\PSL_2(\RR)$ does not tell the arithmetic properties of the coefficients of the matrices. 
From this point of view, our paper is similar in theme to Claudon-H\"oring-Koll\'ar's work \cite{CHK13}. In \cite{CHK13} it is proved under the abundance conjecture that the only projective varieties with quasi-projective universal covers are bundles over abelian varieties or their finite quotients. Koll\'ar-Pardon investigated in \cite{KP12} the situation where we replace quasi-projective universal covers with semi-algebraic universal covers, and abelian varieties with quotients of noncompact hermitian symmetric spaces. A similar statement was proved with additional assumptions. Recall that abelian varieties and locally symmetric varieties are two basic examples of birational Kleinian groups. 

The results of \cite{CHK13} and \cite{KP12} suggest that requiring universal covers to be in some sense algebraic is very restrictive. Our study amounts to assume in the problem of uniformization an algebraic hypothesis on the action of deck transformations instead of on the universal cover itself. We should think of it as uniformization with geometric models. More precisely it is a uniformization with Ehresmann's geometric structures, i.e.\ the so called $(G,X)$-structures. A $(G,X)$-structure on a manifold is an atlas of charts with values in $X$ and locally constant changes of coordinates in $G$. Classically $G$ is a Lie group acting transitively on some model manifold $X$. Here we consider the more general notion of $(\Bir(Y),Y)$-structures. For example a $(\Bir(\bP^n),\bP^n)$-structure is an atlas of charts such that the changes of coordinates can be written as rational fractions. Quotient manifolds of birational Kleinian groups can be considered as the simplest $(\Bir(Y),Y)$-manifolds; they have injective developing maps. We refer to \cite{Dlo16} and \cite{Zhaobirinoue} for $(\Bir(Y),Y)$-structures. A similar notion called bi-algebraic structure has been introduced in \cite{KUY16}.

There are very few known examples of domains in algebraic varieties of dimension $>1$ that are infinite covers of projective varieties. We expect such domains to be holomorphically convex (see \cite{EKPR12}). Let $m\geq 2$. Wong \cite{Wong77} and Rosay \cite{Ros79} proved that if a bounded domain in $\CC^m$ covers a projective variety and has smooth boundary then it is biholomorphic to the unit ball. Frankel \cite{Frankel89} proved that the only convex bounded domains in $\CC^m$ that cover projective varieties are bounded symmetric domains. These results suggest that domains covering projective varieties are hard to describe if they are not bounded symmetric domains. The only other examples of domains that the author knows are universal covers of Kodaira fibrations and their variants. A Kodaira fibration is a projective surface with a submersive non-isotrivial fibration onto a hyperbolic Riemann surface which is a differentiable fiber bundle (see \cite{Kod67}). It follows from Bers' work on Teichm\"uller theory that the universal cover of a Kodaira fibration is biholomorphic to a bounded domain in $\CC^2$. We will see that there is no birational Kleinian group $(Y,\Gamma,U,X)$ such that $X$ is a Kodaira fibration. We also mention that Griffths proved in \cite{Gri71}, with an idea similar to that of Kodaira surfaces, that any point in a projective manifold has a Zariski neiborhood covered by a bounded domain.

\subsubsection{Non compact quotients}
In our definition of a birational kleinian group $(Y,\Gamma,U,X)$, we assumed that the quotient $X$ is compact. We are going to see in this paper that the cocompactness assumption makes birational kleinian groups quite rigid, at least in dimension two, in the sense that there are not many possible examples. If we impose that the action of $\Gamma$ on $U$ is properly discontinuous without being cocompact, then there are some examples with interesting dynamics related to Teichm\"uller theory, in dimension $2$ and also in higher dimension (see \cite{Min13}, \cite{MPT15}, \cite{HTZ18}, \cite{RR21}). However I do not know any interesting example for which $X$ is quasi-projective.

\subsection{Main results}\label{mainresultssection}
First we observe that birational Kleinian groups only exist on those varieties whose groups of birational transformations are large enough.
\begin{maintheorem}\label{nonnegthm}
There are no birational Kleinian groups acting on projective varieties of Kodaira dimension $\geq 0$.
\end{maintheorem}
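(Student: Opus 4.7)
The plan is to produce, from any non-trivial pluricanonical form on $Y$, a $\Gamma$-invariant semi-positive $(n,n)$-form $\Omega$ on $Y$ (with $n=\dim Y$) of finite total mass, and then derive a contradiction from the identity $\int_U\Omega=|\Gamma|\cdot\int_{\mathcal{F}}\Omega$ for a Borel fundamental domain $\mathcal{F}\subset U$ of the $\Gamma$-action, combined with $\int_U\Omega\leq\int_Y\Omega<\infty$ and $|\Gamma|=\infty$.

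First I would fix $m\geq 1$ with $V_m:=H^0(Y,K_Y^{\otimes m})\neq 0$, which exists because $\kappa(Y)\geq 0$. Pluricanonical forms extend over codimension-$2$ loci by Hartogs, so pullback by birational maps gives a linear representation $\rho_m\colon\Bir(Y)\to\GL(V_m)$. The functional $N(\omega):=\int_Y|\omega|^{2/m}$ is continuous, positively homogeneous of degree $2/m$, and strictly positive off zero on $V_m$. By the change-of-variables formula (applied on the open dense locus where a given birational $f$ is a biholomorphism, the exceptional locus having Lebesgue measure zero), $N$ is $\rho_m$-invariant. Hence $\rho_m(\Gamma)$ has compact closure in $\GL(V_m)$, and $V_m$ carries a $\rho_m(\Gamma)$-invariant Hermitian inner product.

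Next I would choose an orthonormal basis $(\omega_1,\dots,\omega_N)$ of $V_m$ for this inner product and define
\[
\Omega \;:=\; \Bigl(\sum_{i=1}^N \omega_i\otimes\overline{\omega_i}\Bigr)^{1/m},
\]
which in a local chart where $\omega_i=f_i\,(dz_1\wedge\cdots\wedge dz_n)^m$ reads $\bigl(\sum_i|f_i|^2\bigr)^{1/m}$ times the standard positive $(n,n)$-form. A short coordinate check shows that this local expression transforms correctly under holomorphic coordinate changes, so $\Omega$ is a well-defined continuous semi-positive $(n,n)$-form on $Y$, strictly positive outside the base locus $\mathrm{Bs}\,|K_Y^{\otimes m}|$ (a proper algebraic subvariety). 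Because any unitary change of basis preserves $\sum_i\omega_i\otimes\overline{\omega_i}$, the form $\Omega$ is $\Gamma$-invariant. A crude estimate $\sum_i|f_i|^2 \leq N\max_i|f_i|^2$ gives $\int_Y\Omega\leq N^{1/m}\sum_iN(\omega_i)<\infty$.

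The conclusion is then immediate. From $\gamma^*\Omega=\Omega$ and the disjoint decomposition $U=\bigsqcup_{\gamma\in\Gamma}\gamma\mathcal{F}$ one gets $\int_U\Omega=|\Gamma|\int_{\mathcal{F}}\Omega\leq\int_Y\Omega<\infty$; but $\int_{\mathcal{F}}\Omega>0$ because $\mathcal{F}$ is open of positive Lebesgue measure and $\Omega>0$ off a real-codimension-$\geq 2$ subvariety, forcing $|\Gamma|<\infty$, a contradiction. The hard part will be verifying the global well-definedness and continuity of $\Omega$ for $m\geq 2$: this reduces to the coordinate computation showing that $\bigl(\sum_i|f_i|^2\bigr)^{1/m}dz\wedge d\bar z$ is intrinsic, and to checking that unitary invariance of the Hermitian Bergman tensor $\sum_i\omega_i\otimes\overline{\omega_i}\in H^0(Y,K_Y^{\otimes m}\otimes\overline{K_Y^{\otimes m}})$ yields invariance of its formal $m$-th root. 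Notably, no classification of $Y$ or of $\Bir(Y)$ is required.
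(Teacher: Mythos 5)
Your proof is correct and follows essentially the same route as the paper: both extract from $\mathrm{H}^0(Y,mK_Y)$ a $\Gamma$-invariant singular volume form of finite total mass, positive off a proper algebraic subset, and then contradict the infinitude of $\Gamma$ by summing over disjoint translates inside $U$. The only (immaterial) differences are that the paper obtains the invariant form by Haar-averaging $\left(\alpha\wedge\overline{\alpha}\right)^{1/m}$ over the compact closure of the image of $\Bir(Y)\rightarrow \GL(\mathrm{H}^0(Y,mK_Y))$ rather than via your Bergman-type sum over an orthonormal basis for the invariant inner product, and it phrases the final count with a wandering open set $W$ (so that $\mu(U)\geq\sum_{\gamma}\mu(\gamma(W))=+\infty$) instead of a fundamental domain.
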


The core of our paper concerns only dimension two. We obtain an almost complete classification of birational Kleinian groups in dimension two. Our result is new even for groups of automorphisms.

\begin{maintheorem}\label{verylongthmintro}
Let $(Y,U,\Gamma,X)$ be a birational Kleinian group in dimension $2$. Suppose that if $\Gamma$ is virtually cyclic or if $X$ is a class VII surface then $\Gamma$ contains no loxodromic elements. Then up to geometric conjugation and up to taking a finite index subgroup, we are in one of the cases in the table in Section \ref{longlistsection}.
\end{maintheorem}

Here we state a simplified version of our classification in the case of \Kah{} quotients and simply connected domains. There are two hermitian symmetric spaces of noncompact type in dimension two: the Euclidean ball $\bB^2$ in $\bP^2$ and the bidisk $\bD\times\bD$ in $\bP^1\times\bP^1$. Roughly speaking our classification says that apart from these two cases, all other birational Kleinian groups having \Kah{} quotients come from one-dimensional classical Kleinian groups. 
\begin{maintheorem}\label{themainthm}
Let $(Y,U,\Gamma,X)$ be a birational Kleinian group in dimension two such that $U$ is simply connected and $X$ is \Kah{}. Then up to geometric conjugation and up to taking a finite index subgroup, we are in one of the following six situations:
\begin{enumerate}
 \item $Y=\bP^2$, $U=\bC^2$, $\Gamma$ is a group of translations and $X$ is a complex torus.
 \item $Y=\PP^1\times \PP^1$, $U$ is the bidisk $\bD\times\bD$ and $ \Gamma$ is a torsion free irreducible cocompact lattice in $\PSL(2,\RR)\times\PSL(2,\RR)\subset \Aut(\PP^1)\times\Aut(\PP^1)$.
 \item $Y=\PP^2$, $U$ is the Euclidean ball $\bB^2$, $ \Gamma$ is a torsion free cocompact lattice in $\PU(1,2)\subset \PGL(3,\CC)=\Aut(\PP^2)$.
  \item $\Gamma=\Gamma_1\times\Gamma_2$ is a product of two classical Kleinian groups (one possibly trivial), $U$ is a product $D_1\times D_2$ embedded in $Y=\PP^1\times\PP^1$, where $D_i\subset \PP^1$ is a simply connected invariant component of the domain of discontinuity of $\Gamma_i$.
	\item $Y=\PP^1\times\PP^1$, $U=D_1\times \CC$ where $D_1$ is a simply connected invariant component of the domain of discontinuity of a classical Kleinian group $\Gamma_1$. The projection of $\PP^1\times\PP^1$ onto the first factor is equivariant under a surjective homomorphism $\Gamma\rightarrow \Gamma_1$. The quotient surface $X$ is a principal elliptic bundle over $D_1/\Gamma_1$. Any element of $\Gamma$ has the form $(x,y)\dashmapsto (\gamma_1(x),y+R(x))$ where $\gamma_1\in\Gamma_1$ and $R\in \CC(x)$ has no poles in $D_1$. 	
	\item $Y=\PP^1\times\PP^1$, $U=D_1\times \PP^1$ where $D_1$ is a simply connected invariant component of the domain of discontinuity of a classical Kleinian group $\Gamma_1$. The projection of $\PP^1\times\PP^1$ onto the first factor is equivariant under an isomorphism $\Gamma\rightarrow \Gamma_1$. The quotient surface $X$ is a geometrically ruled surface over $D_1/\Gamma_1$. 
\end{enumerate}
\end{maintheorem}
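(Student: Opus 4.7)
My plan is to classify the quadruple $(Y,U,\Gamma,X)$ according to the Kodaira dimension of the quotient surface $X$. Since $\Bir(Y)$ is a birational invariant of $Y$, Theorem~\ref{nonnegthm} applied to $Y$ lets me assume after a birational modification that $Y$ is either $\PP^2$ or a geometrically ruled surface $\PP(E)\to C$. Because $U$ is simply connected, $\Gamma=\pi_1(X)$, so the Enriques--Kodaira classification applies to the compact Kähler surface $X$ and splits the proof into four cases.

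If $\mathrm{kod}(X)=-\infty$, then $X$ is ruled. The ruling of $X$ pulls back through the universal cover to a $\Gamma$-equivariant $\PP^1$-fibration of $U$; making this fibration algebraic on $Y$ and analyzing the induced action on the base curve yields the product case (4) and the nontrivial $\PP^1$-bundle case (6), depending on whether the fibration splits. If $\mathrm{kod}(X)=0$, the Kähler hypothesis leaves only tori, K3, Enriques and hyperelliptic surfaces; K3 and Enriques have finite $\pi_1$ and are excluded since $\Gamma$ is infinite, the hyperelliptic case reduces to a torus after a finite unramified cover, and in the torus case the absence of nontrivial birational actions compatible with $U=\CC^2$ forces $Y=\PP^2$ and $\Gamma$ a rank-$4$ lattice of translations (case (1)). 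If $\mathrm{kod}(X)=1$, the unique elliptic fibration $X\to C'$ pulls back to a $\Gamma$-invariant fibration on $Y$; invariance of the fibration forces $C'$ to uniformize as $D_1/\Gamma_1$ for a Fuchsian group $\Gamma_1$, and simple-connectedness of $U$ implies the lifted fibers are copies of $\CC$ rather than elliptic curves, which forces the Jonqui\`eres form $(x,y)\dashmapsto(\gamma_1(x),y+R(x))$ of case (5).

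The remaining case $\mathrm{kod}(X)=2$ is where I expect to land in (2) or (3), with $U$ biholomorphic to the bidisk or to the ball. The plan is to pull the Kähler--Einstein metric of $X$ back to a complete negatively curved Kähler metric on $U$, apply Mok-type rigidity to identify $U$ with a two-dimensional hermitian symmetric space of noncompact type, and then recover $Y$ as its compact dual via the Borel embedding; the representation of $\Gamma$ into $\Aut(U)$ will automatically be a cocompact lattice in $\PU(1,2)$ or $\PSL(2,\RR)\times\PSL(2,\RR)$.

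The principal obstacle is this general type case. The difficulty lies in passing from the abstract cocompact action of $\Gamma$ on a domain $U\subset Y$, where $U$ has no a priori convexity or boundedness and the transformations in $\Gamma$ are merely birational, to a concrete identification of $U$ with a bounded symmetric domain and of $\Gamma$ with a lattice in the corresponding Lie group. In particular one must rule out universal covers of Kodaira fibrations, which do embed as bounded pseudoconvex domains in $\CC^2$ but whose natural holomorphic automorphism groups cannot act birationally on any projective compactification; this is the dynamical obstruction alluded to after the statement of Theorem~\ref{themainthm} and will require a careful analysis of the indeterminacy loci of elements of $\Gamma$ accumulating on the boundary of $U$.
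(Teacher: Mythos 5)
There is a genuine gap, and it sits exactly where you place your ``principal obstacle'': the general type case is not proved, and the proposed route cannot be completed as stated. Pulling back the K\"ahler--Einstein metric of $X$ gives a metric on $U$ with negative Ricci curvature, not negative (let alone pinched) sectional curvature, and the rigidity theorems you invoke (Mok, Frankel, Wong--Rosay) all require hypotheses --- boundedness, convexity, or smoothness of $\partial U$ --- that are simply not available for an arbitrary invariant domain $U\subset Y$. You acknowledge that one must rule out universal covers of Kodaira fibrations and that this ``will require a careful analysis of the indeterminacy loci,'' but no such analysis is given; this is precisely the content that has to be supplied. The paper's actual mechanism is entirely different and does not touch differential geometry on $U$: it shows $\Gamma$ is an elementary subgroup of $\Bir(Y)$ by combining Manin's isometric action on $\HH_Y$ with the Delzant--Py factorization theorem for K\"ahler groups (Theorem \ref{kleinianareelementary}), then applies the strong Tits alternative. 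The ball quotient then arises only through the elliptic case $\Gamma\subset\Aut^0(Y')$, reducing to the already-known classification of complex projective Kleinian groups; the bidisk arises in the rational-fibration case, where Kodaira fibrations and exotic transversely hyperbolic foliations are killed by Deroin--Guillot's constraint $c_1^2=2c_2$ on foliated $(\PGL(2,\CC),\PP^1)$-structures, and the identification of $U$ with $\bD\times\bD$ inside $\PP^1\times\PP^1$ is obtained via Margulis superrigidity plus an explicit argument (Proposition \ref{bidiskrigidity}) that the conjugating map is algebraic.

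A second, structural gap runs through your cases $\mathrm{kod}(X)=-\infty,0,1$: at each step you assert that a fibration or structure on $X$ ``pulls back'' or ``is made algebraic'' on $Y$. The preimage in $U$ of a fiber of an elliptic or ruled fibration of $X$ is an analytic curve whose closure in $Y$ has no reason to be algebraic, and an abstract biholomorphism $U\cong\CC^2$ does not place $U$ as the standard affine chart of $\PP^2$ nor force $\Gamma$ to act by translations --- these are exactly the points the paper labors over (Lemma \ref{liftprop} uses the Hodge index theorem to produce an invariant pencil from disjoint compact curves in $U$; Proposition \ref{toriprop} and the centralizer Theorems \ref{bdcentralizer}--\ref{zhaocentralizer} pin down the algebraic normal forms). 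The paper's logic runs in the opposite direction: one first extracts a $\Gamma$-invariant pencil of rational curves on $Y$ from the group theory, and only then descends it to a regular foliation on $X$ (Proposition \ref{withoutsingularpoints}) to be compared with Brunella's list. Finally, a smaller but real issue: your opening reduction ``after a birational modification $Y$ is $\PP^2$ or geometrically ruled'' requires the modification to be a geometric conjugation, i.e.\ regular on $U$; this is not automatic and is the content of Lemma \ref{birconjtoaut}.
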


Classical Kleinian groups that appear in Theorem \ref{themainthm} have simply connected invariant components in their domains of discontinuity. Examples are fuchsian groups, quasi-fuchsian groups and their limits (\cite{Bers70}, \cite{Mas88}). Any such group is a limit of quasi-fuchsian groups (see \cite{BCM12}).

\subsection{Strategy and plan}

\subsubsection{Strategy}

Let $(Y,U,\Gamma,X)$ be a birational Kleinian group. For simplicity and to explain the strategy in the most important case, we assume that $Y$ is a rational surface, $X$ is \Kah{} and $U$ is simply connected. Thus $\Gamma$ can be identified with the fundamental group $\pi_1(X)$. We give here a sketch of the main steps of the proof of Theorem \ref{themainthm}, which is a mixture of three topics: dynamics of birational transformations, fundamental groups of compact \Kah{} manifolds (a.k.a \Kah{} groups) and holomorphic foliations. 

Manin \cite{Manin86} introduced a faithful action of the group of birational transformations $\Bir(Y)$ by isometries on an infinite dimensional hyperbolic space $\HH_Y$ (see \cite{Can11}). Thus we have an action of $\Gamma=\pi_1(X)$ on $\HH_Y$. Very strong restrictions on isometric actions of \Kah{} groups on finite dimensional hyperbolic spaces are found by Carleson-Toledo \cite{CT} and they are generalized by Delzant-Py \cite{DelPy} to infinite dimensional hyperbolic spaces; their works are based on the theory of harmonic mappings on \Kah{} manifolds. We combine Delzant-Py's results with the Kleinian property of $\Gamma$ to prove that the action of $\Gamma$ on $\HH_Y$ has fixed points in $\HH_Y$ or on the boundary of $\HH_Y$. Cantat proved in \cite{Can11} that the actions on $Y$ of finitely generated groups of birational transformations with fixed points in $\HH_Y\cup \partial \HH_Y$ have highly constrained geometric features. We deduce from Cantat's result that either $\Gamma$ is conjugate to a group of automorphisms, or to a group preserving a rational fibration. 

In the first case, it is possible to further reduce the situation to the case of complex projective Kleinian groups. We can thus apply the classification obtained by Kobayashi-Ochiai \cite{KobOch80}, Mok-Yeung \cite{MY93}, Klingler \cite{Kli98} and Cano-Seade \cite{CanoSeade14}. 

Consider now the case where $\Gamma$ preserves a rational fibration. We observe that the rational fibration induces a regular holomorphic foliation on $X$. Brunella did an almost complete classification of regular holomorphic foliations on compact complex surfaces in \cite{Bru97}. The foliations appearing in our study have additional structures: they support foliated $(\PGL(2,\CC),\CC)$-structures along the leaves. Recently in \cite{DG} Deroin-Guillot proved a property of foliated $(\PGL(2,\CC),\CC)$-structures. Combining Deroin-Guillot's theorem with Brunella's classification we have a detailed list of possible foliations appearing in our study. Then we examine whether and how each foliation is realized, by using some properties of birational transformations preservring rational fibrations proved in our previous paper \cite{Zhaocent}. This analysis will occupy half of the paper.
%When we combine the dynamics of birational transformations with the geometry of foliations, a technical result that we use often is the description of centralizers of birational transformations preserving a rational fibration obtained in a previous paper of the author \cite{Zhaocent}. 

\subsubsection{Plan} 
In Section \ref{nonnegkoddimchapter} we prove Theorem \ref{nonnegthm}. In the rest of the paper we only work in dimension two. In Section \ref{complexprojectivekleiniangroups} we present the known classification of complex projective Kleinian groups. Preliminaries on groups of birational transformations of projective surfaces are introduced in Section \ref{cremonagroups}. In Section \ref{delzantpychapter} we prove that birational Kleinian groups have fixed points on $\HH_Y\cup \partial \HH_Y$. Section \ref{foliationintro} is a glossary of holomorphic foliations on surfaces. In Sections \ref{rationalfibrationsectionone}, \ref{rationalfibrationsectiontwo} and \ref{rationalfibrationsectionthree} we study birational Kleinian groups preserving a rational fibration. 
%We consider in Section \ref{rationalfibrationsectionone} the case where each fiber of the the rational fibration is invariant, in Section \ref{rationalfibrationsectiontwo} the case where the base is an elliptic curve and in Section \ref{rationalfibrationsectionthree} the case where the base is rational. 
At last Section \ref{synthesis} consists of our classification of birational Kleinian groups in dimension two. Results from previous sections are assembled. It contains the long version of Theorem \ref{themainthm} and serves as an outline of the materials in previous sections.

\section{Varieties of non-negative Kodaira dimension}\label{nonnegkoddimchapter}

%\paragraph{Kodaira dimension.}Let $Y$ be a smooth projective variety of dimension $n$. The canonical bundle of $Y$ is the line bundle $K_Y$ of holomorphic $n$-forms, i.e.\ $K_Y=\bigwedge^n \Omega_X$ is the $n$-th exterior power of the holomorphic cotangent bundle. If for any $m\in\NN^*$ the line bundle $K_Y^m$ has no non-trivial sections, then the \emph{Kodaira dimension} of $Y$ is $\kappa(Y)=-\infty$. Otherwise for some $m$ the linear system $\vert K_Y^m\vert$ induces a rational map from $Y$ to a projective space of dimension $\mathrm{dim}\mathrm{H}^0(Y,K_Y^m)-1$; denote the dimension of the image by $D_m$. Then the \emph{Kodaira dimension} of $Y$ is $\kappa(Y)=\max_{m\in\NN^*}\{D_m\}$. 
\subsection{Invariant measure}
Let $Y$ be a smooth projec tive variety of dimension $n$.A measure which is absolutely continuous with respect to the Lebesgue measure associated with some Riemannian metric on $Y$ does not charge algebraic sets of codimension $\geq 1$. Therefore the push-forward of such a measure by a birational transformation of $Y$ is well-defined and it makes sense to talk about $\Gamma$-invariant measure. More precisely for $\nu$ such a measure, $\gamma\in\Bir(Y)$ a birational transformation and $E\subset Y$ a subset, $\gamma_*\nu(E)=\nu(\gamma^{-1}(E\backslash \mathrm{Ind}(\gamma^{-1})))$ where $\mathrm{Ind}$ means the set of indeterminacy points. The objective of this section is to prove:
\begin{lemma}\label{noinvariantmeasure}
Let $Y$ be a smooth projective variety of non-negative Kodaira dimension. Thee is a $\Bir(Y)$-invariant probability measure of support equal to $Y$ which is absolutely continuous with respect to the Lebesgue measure.
\end{lemma}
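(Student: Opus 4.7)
The plan is to manufacture the invariant measure from pluricanonical differentials. Since $\kappa(Y)\geq 0$, we may fix $m\geq 1$ with $V:=H^0(Y,K_Y^{\otimes m})\neq 0$; as the plurigenera are birational invariants of smooth projective varieties, pullback of pluricanonical sections defines a linear representation of $\Bir(Y)$ on $V$. The first observation is that for any $\omega\in V$, written locally as $\omega=f(z)(dz_1\wedge\cdots\wedge dz_n)^{\otimes m}$, the local recipe $|f(z)|^{2/m}\,dL$ assembles into a well-defined positive measure $|\omega|^{2/m}$ on $Y$: under a holomorphic change of coordinates $f$ transforms by $(\det D\phi)^m$, so $|f|^{2/m}$ picks up $|\det D\phi|^2$, which is exactly the real Jacobian of Lebesgue measure. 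A direct change-of-variables computation (the indeterminacy locus being Lebesgue-null) yields $\int_Y|\gamma^*\omega|^{2/m}=\int_Y|\omega|^{2/m}$ for every $\gamma\in\Bir(Y)$, so $\|\omega\|^2:=\int_Y|\omega|^{2/m}$ is a finite $\Bir(Y)$-invariant norm on $V$.

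Since $V$ is finite dimensional, the isometry group of $(V,\|\cdot\|)$ is a compact subgroup of $\GL(V)$, and by averaging an auxiliary Hermitian inner product over it I obtain a $\Bir(Y)$-invariant Hermitian inner product $h$ on $V$. I then fix an $h$-orthonormal basis $\omega_1,\ldots,\omega_k$ with local coefficients $f_i$, and set
$$\mu:=\Bigl(\sum_{i=1}^k|f_i(z)|^2\Bigr)^{1/m}\,dL(z).$$
The same transformation law as above shows that $\mu$ is a well-defined absolutely continuous global measure of finite total mass on the compact manifold $Y$, and its density vanishes exactly on the base locus of $|V|$, a proper Zariski-closed subset since $V\neq 0$, so $\mu$ has full support.

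For the invariance, suppose $\gamma\in\Bir(Y)$ acts on $V$ by the $h$-unitary matrix $A=(A_{ij})$ in the chosen basis. The identity $f_j(\gamma z)(\det D\gamma(z))^m=\sum_i A_{ji}f_i(z)$, combined with $A^*A=I$, gives $\sum_j|f_j(\gamma z)|^2=|\det D\gamma(z)|^{-2m}\sum_i|f_i(z)|^2$; raising to the $1/m$ power produces a factor $|\det D\gamma|^{-2}$ that exactly cancels the Jacobian $|\det D\gamma|^2$ coming from $\gamma^*dL$, so $\gamma^*\mu=\mu$. Normalization yields the desired $\Gamma$-invariant probability measure. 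The main point requiring care is the precise cancellation between the pluricanonical transformation rule and the Lebesgue Jacobian, which only works because of the specific exponent $2/m$; the averaging step turning norm-invariance into an invariant Hermitian structure is the other essential ingredient, as without it the sum-of-squares density would depend on the choice of basis in a way incompatible with the $\Bir(Y)$-action.
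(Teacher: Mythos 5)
Your proof is correct and follows essentially the same route as the paper: both hinge on the observation that the $L^{2/m}$-type functional $\omega\mapsto\int_Y(\omega\wedge\overline{\omega})^{1/m}$ on $\mathrm{H}^0(Y,mK_Y)$ is preserved by pullback, forcing the image of $\Bir(Y)$ in $\GL(\mathrm{H}^0(Y,mK_Y))$ into a compact group, after which one averages. The only (harmless) difference is the averaging device: the paper integrates the singular volume form $g^*\bigl((\alpha\wedge\overline{\alpha})^{1/m}\bigr)$ over the compact group against Haar measure, whereas you average a Hermitian inner product to make the representation unitary and then take the Bergman-type density $\bigl(\sum_i|f_i|^2\bigr)^{1/m}$ for an orthonormal basis -- both yield an invariant, fully supported, absolutely continuous finite measure.
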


We show first that Theorem \ref{nonnegthm} follows from Lemma \ref{noinvariantmeasure}.
\begin{proof}[Proof of Theorem \ref{nonnegthm}]
Let $(Y,\Gamma,U,X)$ be a birational Kleinian group such that $Y$ has non-negative Kodaira dimension. By discontinuity of the action there exists a non-empty open subset $W$ of $Y$ such that $\gamma(W)\cap W=\emptyset$ for all $\gamma\neq e$. Let $\mu$ be a $\Gamma$-invariant probability measure of total support which is absolutely continous with respect to the Lebesgue measure as in Lemma \ref{noinvariantmeasure}. Then $\mu(U)\geq \sum_{\gamma\in \Gamma}\mu(\gamma(W))=+\infty$, contradiction.
\end{proof}

Corollary \ref{nowandering} below is another consequence of Lemma \ref{noinvariantmeasure}. In its statement $f$-invariant means that $f$ and $f^{-1}$ are regular on $V$ and $f(V)=V$; recurrent means that for any open subset $W\subset V$, for almost all $x\in W$ (with respect to the Lebesgue measure associated with a Riemannian metric), there are infinitely many $n\in\NN$ such that $f^n(x)\in W$. An open subset is wandering if its iterates by $f$ are disjoint from each other. 
\begin{corollary}\label{nowandering}
Let $Y$ be a smooth projective variety of non-negative Kodaira dimension. Let $f\in\Bir(Y)$ be a birational transformation of infinite order. Then for any non-empty $f$-invariant Borel open subset $V$ of $Y$, the action of $f$ on $V$ is recurrent. In particular there is no wandering open subset.
\end{corollary}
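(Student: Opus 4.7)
The plan is to feed Lemma \ref{noinvariantmeasure} into the classical Poincar\'e recurrence theorem. First I would apply Lemma \ref{noinvariantmeasure} to the cyclic subgroup $\langle f\rangle \subset \Bir(Y)$; this produces an $f$-invariant Borel probability measure $\mu$ on $Y$, absolutely continuous with respect to Lebesgue measure, hence of full topological support. Because $\mu$ does not charge any proper algebraic subset, the indeterminacy loci of all iterates $f^n$ are $\mu$-null, so $(Y,f,\mu)$ is a genuine measure-preserving dynamical system after removing a set of measure zero.

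Next, for the recurrence statement itself, I would invoke Poincar\'e recurrence directly: for every $f$-invariant Borel set $V\subset Y$ and every measurable $A\subset V$ with $\mu(A)>0$, $\mu$-almost every point of $A$ returns to $A$ infinitely often under the iteration of $f$. Restricting $\mu$ to $V$ and (if $\mu(V)>0$) normalizing, one obtains an $f|_V$-invariant probability measure for which $f|_V$ is recurrent in the Poincar\'e sense; if $\mu(V)=0$ the claim is vacuous at the level of positive-measure subsets.

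For the last sentence, suppose for contradiction that some non-empty open $W\subset Y$ is wandering, i.e.\ $f^n(W)\cap W=\emptyset$ for every $n\neq 0$. Then the family $\{f^n(W)\}_{n\in\ZZ}$ is pairwise disjoint, since $f^n(W)\cap f^m(W)\neq\emptyset$ with $n\neq m$ would force $W\cap f^{m-n}(W)\neq\emptyset$. Since $f$ has infinite order, these iterates are all non-empty (as birational images of the non-empty open set $W$), while full support gives $\mu(W)>0$ and $f$-invariance gives $\mu(f^n(W))=\mu(W)$ for all $n$. Summing yields
\[
1 \;=\; \mu(Y) \;\geq\; \sum_{n\in\ZZ}\mu(f^n(W)) \;=\; +\infty,
\]
a contradiction.

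The only mildly delicate point—and what I would call the main ``obstacle''—is the usual book-keeping around indeterminacy: one must justify that $\mu(f^n(W))=\mu(W)$ and that disjointness really holds set-theoretically modulo $\mu$. Both facts are immediate from absolute continuity together with the fact that $\mathrm{Ind}(f^{\pm n})$ is a proper algebraic subset, so no separate argument is needed beyond citing Lemma \ref{noinvariantmeasure}.
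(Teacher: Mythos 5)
Your proposal is correct and follows essentially the same route as the paper: apply Lemma \ref{noinvariantmeasure} to the cyclic group generated by $f$ to obtain an absolutely continuous invariant probability measure of full support, discard the (null) union of indeterminacy loci of the iterates of $f$ to get a genuine measure-preserving system, and conclude by Poincar\'e recurrence, with the wandering-set statement following from the usual disjoint-union summation. The only difference is cosmetic: you spell out the wandering-set contradiction explicitly, whereas the paper leaves it as an immediate consequence of recurrence.
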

\begin{proof}
Let $V$ be a non-empty $f$-invariant Borel subset. Let $V'=V-\cup_n f^{-n}(Ind(f))$ be the complement of indeterminacy points of all iterates of $f$ in $V$. Let $\mu$ be a $\Gamma$-invariant probability measure of total support which is absolutely continous with respect to the Lebesgue measure. Then $V'$ is $f$-invariant and $\mu(V)=\mu(V')$. Poincaré recurrence theorem says that a measure preserving dynamical system on a space of finite measure is always recurrent.
\end{proof}

%Here $f$-invariant means that $f$ and $f^{-1}$ are regular on $V$ and $f(V)=V$; recurrent means that for any open subset $W\subset V$, for almost all $x\in W$ (with respect to the Lebesgue measure associated with a Riemannian metric), there are infinitely many $n\in\NN$ such that $f^n(x)\in W$. An open subset is wandering if its iterates by $f$ are disjoint from each other.

Let us explain the construction of the measure in Lemma \ref{noinvariantmeasure}. When $Y$ is an abelian variety or a Calabi-Yau variety, the canonical bundle is trivial and we can use the measure associated with the natural non-vanishing volume form defined up to a multiplicative scalar. In the general case we take a non-zero pluricanonical section $\alpha\in \mathrm{H}^0(Y,mK_Y)$ which in local coordinates can be written as $\alpha(z_1,\dots,z_n)(dz_1\wedge \cdots \wedge dz_n)^m$ where $n$ is the dimension of $Y$. Then the differential $2n$-form
\begin{equation}\label{eq:singularvolumeform}
i^{n^2}\big(\alpha\wedge \overline{\alpha}\big)^{\frac{1}{m}}=i^{n^2}\vert \alpha(z)\vert^{\frac{2}{m}}dz_1\wedge \cdots \wedge dz_n\wedge d\overline{z_1}\wedge \cdots \wedge d\overline{z_n}
\end{equation} 
is a singular volume form which is positive on the complement of the divisor $\{\alpha=0\}$. The finite measure defined by this singular volume form has full support and is absolutely continuous with respect to the Lebesgue measure associated with any Riemannian metric on $Y$. Hence the existence of a $\Bir(Y)$-invariant pluricanonical form implies Lemma \ref{noinvariantmeasure}. This is an immediate consequence of a theorem of Nakamura-Ueno (\cite{Ueno75} Chapter VI) saying that the linear representation of $\Bir(Y)$ on the space of pluricanonical forms has finite image. We give here an elementary argument without using the full strength of Nakamura-Ueno's theorem.

For a pluricanonical form $\alpha\in \mathrm{H}^0(Y,mK_Y)$ and a birational transformation $f\in\Bir(Y)$, we can pull back $\alpha$ by $f$ outside the indeterminacy locus of $f$ and then extend it in a unique way by Hartogs' Principle. In this way we obtain a linear representation 
\[
\rho:\Bir(Y)\rightarrow \GL\big(\mathrm{H}^0(Y,mK_Y)\big).
\] 
If we pull back the measure associated with a pluricanonical form, then we get the measure associated with the pulled-back pluricanonical form. Lemma \ref{noinvariantmeasure} is a consequence of the following statement:
\begin{lemma}
Let $Y$ be a smooth projective variety of non-negative Kodaira dimension. There is a non identically zero singular volume form $\Xi$ such that for any $\gamma\in\Bir(Y)$ we have $\gamma^*\Xi=\Xi$.
\end{lemma}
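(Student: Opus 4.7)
The plan is to exploit the $\Bir(Y)$-invariance of the total-mass functional on $V := \mathrm{H}^0(Y, mK_Y)$ to trap $\rho(\Gamma)$ inside a compact subgroup $K \subset \GL(V)$, and then to construct the invariant singular volume form via a Bergman-type formula built from a $K$-invariant Hermitian inner product on $V$.

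Concretely, here are the steps. First, since $Y$ has non-negative Kodaira dimension, choose $m \geq 1$ with $V \neq 0$ and consider the functional
\[
\Phi(\alpha) := \int_Y (\alpha\wedge\bar\alpha)^{1/m}, \qquad \alpha \in V.
\]
One checks directly that $\Phi$ is continuous, strictly positive on $V \setminus \{0\}$ (a nonzero holomorphic section vanishes only on a proper analytic subset), satisfies $\Phi(\lambda\alpha) = |\lambda|^{2/m}\Phi(\alpha)$, and is $\Bir(Y)$-invariant: the invariance follows from the usual change-of-variables formula applied to any $\gamma \in \Bir(Y)$, which is a biholomorphism outside a codimension-$\geq 1$ subset carrying Lebesgue measure zero. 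Hence $\Phi^{m/2}$ is a proper, continuous, positively homogeneous degree-one function on the finite-dimensional space $V$, equivalent to a norm; any $g \in \GL(V)$ preserving $\Phi$ preserves the compact absorbing set $\{\Phi \leq 1\}$ and is therefore bounded in operator norm, so the stabilizer $G_\Phi \subset \GL(V)$ of $\Phi$ is compact and $K := \overline{\rho(\Gamma)} \subset G_\Phi$ is a compact subgroup. Next, averaging any Hermitian inner product on $V$ over Haar measure on $K$, one obtains a $K$-invariant Hermitian form $h$ on $V$; fix an $h$-orthonormal basis $\alpha_1,\dots,\alpha_d$ of $V$ and set
\[
\Xi := \Bigl(\sum_{i=1}^{d}\alpha_i\wedge\bar\alpha_i\Bigr)^{1/m}.
\]
A check in local coordinates shows $\Xi$ is a globally defined positive $(n,n)$-form on $Y$, strictly positive on the complement of the base locus of $|mK_Y|$. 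Finally, for $\gamma \in \Gamma$, writing $\gamma^*\alpha_i = \sum_j U_{ij}\alpha_j$, the matrix $U = \rho(\gamma)$ is $h$-unitary because $\rho(\gamma) \in K$; expanding and using $U^*U = I$ yields $\sum_i\gamma^*\alpha_i\wedge\overline{\gamma^*\alpha_i} = \sum_i\alpha_i\wedge\bar\alpha_i$, so $\gamma^*\Xi = \Xi$.

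The main obstacle is the compactness step, namely showing that preservation of $\Phi$ forces $\rho(\Gamma)$ into a compact subgroup. The subtlety is that $\Phi$ itself is not a norm (for $m > 2$ the exponent $2/m < 1$ even prevents the triangle inequality), so one cannot simply invoke the standard fact that a subgroup of $\GL(V)$ preserving a norm is relatively compact. What rescues the argument is that $\Phi^{m/2}$ is continuous, strictly positive off $0$, and positively homogeneous of degree one on the finite-dimensional $V$, hence uniformly bounded above and below on the unit sphere of any genuine norm and therefore equivalent to a norm. A secondary remark: the form $\Xi$ produced above is not of the exact shape $(\beta\wedge\bar\beta)^{1/m}$ for a single pluricanonical section $\beta$ but rather a Bergman-type generalization; this is not an issue, since $\Xi$ remains a positive absolutely continuous finite measure on $Y$ with full topological support (the base locus of $|mK_Y|$ being a proper analytic subset), which is exactly what the proof of Theorem \ref{nonnegthm} requires.
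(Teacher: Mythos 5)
Your proof is correct, and its core is the same as the paper's: the $\Bir(Y)$-invariance of $\Phi(\alpha)=\int_Y(\alpha\wedge\bar\alpha)^{1/m}$, established by change of variables away from a measure-zero set, traps the image of the representation $\rho$ inside a compact subgroup of $\GL(\mathrm{H}^0(Y,mK_Y))$, after which one averages. Two points of divergence are worth recording. First, the averaging itself: the paper fixes a single nonzero section $\alpha$ and sets $\Xi=\int_G g^*\bigl((\alpha\wedge\bar\alpha)^{1/m}\bigr)d\nu(g)$ for the Haar measure $\nu$ of the compact group $G$, whereas you average a Hermitian inner product to get a $\rho(\Gamma)$-invariant form $h$ and then take the Bergman-type expression $\Xi=(\sum_i\alpha_i\wedge\bar\alpha_i)^{1/m}$ over an $h$-orthonormal basis; both yield a finite, absolutely continuous measure of full support (your $\Xi$ vanishes only on the base locus of $|mK_Y|$, the paper's only where the translates of $\{\alpha=0\}$ accumulate full $\nu$-mass, and either way the support is all of $Y$), so both suffice for Theorem~\ref{nonnegthm}. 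Second, you are more careful than the paper on the compactness step: the paper calls $\|\alpha\|^2=\Phi(\alpha)$ a ``norm,'' which it is not for $m>1$ (the homogeneity is of degree $2/m$ and the triangle inequality can fail); your observation that $\Phi^{m/2}$ is continuous, positive off $0$ and positively homogeneous of degree one on a finite-dimensional space, hence comparable to a genuine norm with compact stabilizer, is exactly the repair needed, and taking the ordinary (rather than Zariski) closure of $\rho(\Gamma)$ is also the cleaner way to land in a compact group. So the proposal is a valid proof, marginally more careful than the original at the one delicate point.
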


\begin{proof}
By the assumption on Kodaira dimension, there exists $m\in\NN^*$ such that $\mathrm{H}^0(Y,K_Y^m)$ is not trivial.
Consider the following function $\lVert\cdot \rVert$(similar to a norm) on the vector space $\mathrm{H}^0(Y,K_Y^m)$:
\[
\lVert\alpha\rVert^2=\int_Y i^{n^2}\big(\alpha\wedge \overline{\alpha}\big)^{\frac{1}{m}}.
\]
For $f\in\Bir(Y)$, a change of variables gives
\[
\lVert\alpha\rVert^2=\int_Y i^{n^2}\big(\alpha\wedge \overline{\alpha}\big)^{\frac{1}{m}}=\int_Y i^{n^2}\big(f^*\alpha\wedge \overline{f^*\alpha}\big)^{\frac{1}{m}}=\lVert f^*\alpha\rVert^2.
\]
This means that the image of the representation $\rho:\Bir(Y)\rightarrow \GL(\mathrm{H}^0(Y,K_Y^m))$ is contained in the compact group preserving the function $\lVert\cdot \rVert$. Denote by $G$ the Zariski closure of $\rho(\Bir(Y))$ in $\GL(\mathrm{H}^0(Y,K_Y^m))$. The group $G$ is compact and admits a left and right invariant Haar measure $\nu$. Let $\alpha$ be a non-zero element of $\mathrm{H}^0(Y,K_Y^m)$. Then
\[
\Xi=\int_G g^*\big(i^{n^2}(\alpha\wedge \overline{\alpha})^{\frac{1}{m}}\big)d\nu(g)
\]
is a desired singular volume form.
\end{proof}

\subsection{Genus one fibrations.}
If $Y$ is a surface with Kodaira dimension one then $Y$ has a genus one fibration $\pi:Y\rightarrow C$ induced by a pluricanonical linear system (see \cite{Iit70}). Every birational transformation of $Y$ preserves $\pi$. Note that the induced action of $\Bir(Y)$ on $C$ is finite, which is a particular case of Nakamura-Ueno Theorem (\cite{Ueno75} Chapter VI) that we used above. In this concrete case we present an elementary topological argument to illustrate Theorem \ref{nonnegthm}. We will also need this argument later while dealing with Halphen twists on rational surfaces.

\begin{lemma}\label{ellipticnondiscrete}
 Let $\Lambda$ be an infinite group of automorphisms of an elliptic curve $E$. Then the closure of the orbit of a point $x\in E$ is a finite union of real subtori.
\end{lemma}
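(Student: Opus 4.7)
The plan is to exploit the affine structure of $\Aut(E)$ and reduce the problem to a statement about closures of subgroups in a compact abelian Lie group.

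Write $E=\CC/\Lambda_0$. Every automorphism of $E$ as a complex variety has the form $z\mapsto \alpha z+\beta$ where $\alpha$ lies in the finite group $F\subset\CC^*$ of linear automorphisms of $E$ fixing the origin (so $F$ has order $2$, $4$ or $6$, depending on $j(E)$). Thus $\Aut(E)=E\rtimes F$, and the map $\Aut(E)\to F$ is a surjective homomorphism with kernel the translation subgroup $E$. First I would set $\Lambda'=\Lambda\cap E$, the subgroup of pure translations in $\Lambda$. Since $F$ is finite, $\Lambda'$ has finite index in $\Lambda$, and since $\Lambda$ is infinite, $\Lambda'$ is also infinite.

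Next I would analyze the orbit closure under $\Lambda'$. Since $\Lambda'$ is a subgroup of the compact abelian real Lie group $E$ (a two-dimensional real torus), its closure $\overline{\Lambda'}$ is a closed subgroup of $E$. By the structure theorem for closed subgroups of a torus, $\overline{\Lambda'}$ is a finite union of translates of its identity component $\overline{\Lambda'}^\circ$, and $\overline{\Lambda'}^\circ$ is a real subtorus of $E$ (of dimension $0$, $1$ or $2$; the infiniteness of $\Lambda'$ forces positive dimension, though this is not needed for the statement). Therefore the orbit closure
\[
\overline{\Lambda'\cdot x}=\overline{\Lambda'}+x
\]
is a finite disjoint union of translates of the real subtorus $\overline{\Lambda'}^\circ$, and in particular a finite union of real subtori.

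Finally I would promote this to $\Lambda$. Pick coset representatives $\lambda_1,\dots,\lambda_k$ so that $\Lambda=\bigsqcup_{i=1}^k \lambda_i\Lambda'$. Then
\[
\overline{\Lambda\cdot x}=\bigcup_{i=1}^k \lambda_i\!\cdot\!\overline{\Lambda'\cdot x},
\]
because each $\lambda_i$ is a homeomorphism of $E$. It remains to observe that an affine automorphism $\lambda_i(z)=\alpha_i z+\beta_i$ sends real subtori of $E$ to real subtori: multiplication by $\alpha_i\in F$ is a group automorphism of $E$ (it fixes $0$), so it sends closed subgroups to closed subgroups and connected ones to connected ones, and translation by $\beta_i$ then sends subtori to subtori. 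Combining, $\overline{\Lambda\cdot x}$ is a finite union of real subtori.

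The only delicate point is ensuring that the finite quotient $F=\Lambda/\Lambda'$ acts in a way compatible with the subtorus structure; this is handled by the observation that elements of $F$ fix $0$ and hence are group automorphisms of $E$. Everything else is standard compact-group structure theory combined with the basic description of $\Aut(E)$.
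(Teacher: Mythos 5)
Your proof is correct and follows essentially the same route as the paper's: pass to the finite-index translation subgroup $\Lambda'\subset E$, apply the structure theory of closed subgroups of a real torus to $\overline{\Lambda'}$, and conclude that the orbit closure is a finite union of translates of a subtorus. The only difference is that you spell out the final promotion from $\Lambda'$ to $\Lambda$ via coset representatives, which the paper leaves implicit.
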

\begin{proof}
The automorphism group of $E$ is the extension of a finite group by $E$. Some finite index subgroup $\Lambda'\subset \Lambda$ is thus a subgroup of the Lie group $E$. The closure $\overline{\Lambda'}$ in $E$ is a Lie group. It is thus a finite union of connected real subtori of positive dimension. For any $x\in E$ the closure of the orbit $\Lambda'\cdot x$ is also a union of real subtori of positive dimension. 
\end{proof}

\begin{lemma}\label{ellipticfibrationlem}
Let $\varphi:Y\rightarrow C$ be a genus one fibration on a surface $Y$. Let $U$ be an open set of $Y$ which is preserved by an infinite subgroup $\Gamma$ of $\Bir(Y)$. Suppose that $\Gamma$ preserves $\varphi$ and that the induced action on $C$ is finite. Then the action of $\Gamma$ on $U$ is not discontinuous. 
\end{lemma}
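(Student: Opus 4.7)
The plan is to reduce the problem to the dynamics on a single smooth fiber of $\varphi$ and invoke Lemma~\ref{ellipticnondiscrete}.

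First I would replace $\Gamma$ by a finite-index subgroup $\Gamma_0 \subset \Gamma$ that fixes every fiber of $\varphi$; this is possible because the induced $\Gamma$-action on $C$ is finite, and $\Gamma_0$ remains infinite. Only finitely many fibers of $\varphi$ are singular, so I may pick a smooth fiber $E_{t_0}$ which meets $U$ and a point $x \in E_{t_0} \cap U$. The fiber $E_{t_0}$ is a smooth projective curve of genus one, i.e.\ an elliptic curve, and every $\gamma \in \Gamma_0$ restricts to a birational and hence biregular self-map of $E_{t_0}$. Restriction therefore gives a homomorphism $\rho \colon \Gamma_0 \to \Aut(E_{t_0})$ into the compact Lie group $\Aut(E_{t_0})$.

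Next I would split the argument according to the kernel $K = \ker \rho$. If $K$ is infinite, every element of $K$ fixes $E_{t_0}$ pointwise, so in particular fixes $x$; the stabilizer of $x$ in $\Gamma_0$ is then infinite, which prevents the action on any neighborhood of $x$ in $U$ from being properly discontinuous. If $K$ is finite, then $\rho(\Gamma_0)$ is an infinite subgroup of the compact group $\Aut(E_{t_0})$, and Lemma~\ref{ellipticnondiscrete} describes the closure of $\rho(\Gamma_0) \cdot x$ in $E_{t_0}$ as a finite union of positive-dimensional real subtori. Because $K$ fixes $x$, this orbit coincides with $\Gamma_0 \cdot x$, which is contained in the open set $U \cap E_{t_0}$. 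The positive-dimensional subtorus through $x$ makes $x$ an accumulation point of its own orbit, producing a sequence of pairwise distinct elements $\gamma_n \in \Gamma_0$ with $\gamma_n(x) \to x$ in $Y$; since $U$ is open and $x \in U$, this sequence lies in $U$ for large $n$, and the action is not properly discontinuous.

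The delicate point is ensuring, in the second subcase, that the accumulation really takes place inside $U$ rather than merely on its topological boundary; localizing the limit at the interior point $x$ itself handles this cleanly. A small conceptual remark is also needed about what ``not discrete'' means here: in the sequel the lemma will be applied to contradict the Kleinian hypothesis (free and properly discontinuous action), and each of the two subcases above produces exactly such a failure.
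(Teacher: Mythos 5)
Your proof is correct and follows essentially the same route as the paper's: reduce to the action on a single smooth fiber meeting $U$ and invoke Lemma~\ref{ellipticnondiscrete}. The only difference is in handling the kernel of the restriction homomorphism --- the paper chooses the fiber generically (a countability argument produces a smooth fiber meeting $U$ on which a countable infinite subgroup restricts injectively), whereas you fix an arbitrary smooth fiber and dispose of an infinite kernel directly via the resulting infinite point stabilizer; both work.
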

\begin{proof}
By hypothesis, a finite index subgroup $\Gamma_0$ of $\Gamma$ preserves each fiber of $\varphi$. Since this group is infinite, it contains a countable infinite subgroup $\Gamma_1$. We claim that there is a fiber $E=\varphi^{-1}(c)$ of genus $1$ which intersects $U$ and on which $\Gamma_1$ induces an infinite group of automorphisms. Indeed, $U$ is open, so $\varphi(U)$ is uncountable, and if $\gamma\in \Gamma_1$ is not the identity there are at most countably many points $z\in C$ for which $\gamma_{\vert \varphi^{-1}(z)}$ is the identity. Then, the result follows from Lemma \ref{ellipticnondiscrete}, applied to $E$ and $\Gamma_{1\vert E}$.
\end{proof}
Note that Lemma \ref{ellipticfibrationlem} implies Theorem \ref{nonnegthm} for surfaces of Kodaira dimension one.

\section{Complex projective Kleinian groups}\label{complexprojectivekleiniangroups}
\subsection{Classification}
This section is a presentation of the known classification of complex projective Kleinian groups in dimension two. We first give the list, then describe its items.
\begin{theorem}[Kobayashi-Ochiai \cite{KobOch80}, Mok-Yeung \cite{MY93}, Klingler \cite{Kli98}, Cano-Seade \cite{CanoSeade14}]\label{classificationofcplxprojkleinian}
If $(\PP^2,U,\Gamma,X)$ is a birational Kleinian group such that $\Gamma\subset \PGL(3,\CC)$, then up to taking finite index subgroup and up to geometric conjugation inside $\PGL(3,\CC)$, we are in one of the following situations:
\begin{enumerate}
	\item $U=\CC\times \CC, \CC\times\CC^*$ or $\CC^*\times\CC^*$ and $X$ is a complex torus; here $\CC\times \CC, \CC\times\CC^*, \CC^*\times\CC^*$ are the standard Zariski open subsets of $\PP^2$ and $\Gamma$ is a lattice in $U$ viewed as a Lie group.
	\item $U$ is the Euclidean ball $\mathbb{B}^2$ embedded in the standard way in $\PP^2$, i.e.\ in an affine chart $\CC^2\subset \PP^2$, and $X$ is a ball quotient.
	\item $U=\CC^2\backslash\{0\}$ where $\CC^2$ is embedded in the standard way in $\PP^2$ as a Zariski open set and $X$ is a Hopf surface.
	\item $U$ is the standard Zariski open subset $\CC^2\subset \PP^2$ and $X$ is a primary Kodaira surface.
	\item $U=\HH\times \CC$ is embedded in the standard way in an affine chart $\CC^2$, $X$ is an Inoue surface.
	\item $U$ is biholomorphic to $\HH\times \CC^*$ and $X$ is an affine-elliptic bundle.
\end{enumerate}
\end{theorem}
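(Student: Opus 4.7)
The plan is to classify all discrete subgroups of $\PGL(3,\CC)$ acting freely, properly discontinuously, and cocompactly on some invariant open set $U\subset\PP^2$. The first move is to study the Zariski closure $G$ of $\Gamma$ in $\PGL(3,\CC)$: since $\Gamma$ is infinite, $G$ is an algebraic subgroup of positive dimension. I would case-split on the Levi decomposition $G^0 = L\ltimes R_u$ of the neutral component, according to whether the semisimple part $L$ is trivial or contains a positive-dimensional factor.

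In the reductive case, the classification of semisimple subgroups of $\PGL(3,\CC)$ up to conjugacy leaves very few possibilities: $\PSL(2,\CC)$ acting via the symmetric square representation and preserving a smooth conic, $\PSL(2,\CC)$ acting reducibly and preserving a flag, and $\PU(2,1)$ preserving a Hermitian form. In the $\PSL(2,\CC)$ subcases I would argue that a Zariski dense discrete $\Gamma$ acts on the invariant conic or line as a classical Kleinian group, and then examine the transverse dynamics in $\PP^2$ to rule out the existence of a two-dimensional invariant domain on which the action is cocompact. In the $\PU(2,1)$ subcase the only proper invariant domain is the Euclidean ball $\bB^2$, and one invokes the Mok--Yeung and Klingler rigidity/classification theorems for lattices to obtain case (2).

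When $L$ is trivial, $G^0$ is solvable and by the Lie--Kolchin theorem can be conjugated into upper triangular form; in particular $\Gamma$ preserves a flag, i.e.\ a line and a point in $\PP^2$. I would then stratify by the type of $G^0$. A torus $G^0$ yields case (1) after classifying lattices inside $\CC^2$, $\CC\times\CC^*$, and $(\CC^*)^2$. A unipotent $G^0$ splits further according to whether $\Gamma$ is virtually a lattice of translations (Kodaira case) or sits in a Heisenberg group acting by dilations with a fixed point removed (Hopf case); this gives cases (3)--(4) after identifying the quotient via Kodaira's classification. Finally, a genuinely solvable but non-nilpotent $G^0$ provides an element acting by a non-unitary character on $R_u$; I would show that this forces $\Gamma$ to preserve a half-plane $\HH$ inside the invariant line, and that the transverse structure is affine or multiplicative, producing the Inoue case (5) or the affine-elliptic bundle case (6) respectively.

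The principal obstacle lies in the solvable regime: one must enumerate the conjugacy classes of solvable algebraic subgroups of $\PGL(3,\CC)$ with sufficient precision to separate Hopf, primary Kodaira, Inoue, and affine-elliptic surfaces, and then identify the resulting quotient $X$ inside the Kodaira classification of non-\Kah{} surfaces. The fine biholomorphic identification --- for instance pinning down that a given quotient is an Inoue surface of a specific type rather than another class $\mathrm{VII}_0$ surface --- is the delicate computational input, and this is precisely where the arguments of Klingler and of Cano--Seade do the heavy lifting; I would treat this step by appealing to their results rather than reproving them.
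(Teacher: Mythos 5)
This theorem is not proved in the paper: it is quoted from Kobayashi--Ochiai, Mok--Yeung, Klingler and Cano--Seade, and the paper only records the strategy of those references, which is quite different from yours. There one observes that $X$ inherits a $(\PGL(3,\CC),\PP^2)$-structure, first determines via Kodaira's classification and characteristic-class obstructions which compact complex surfaces can carry such a structure, and then classifies all such structures on each candidate (the ball-quotient case being Mok--Yeung's rigidity theorem). The paper explicitly remarks that this route through the geometry of projective structures was chosen because it is easier than the direct dynamical classification of discrete subgroups of $\PGL(3,\CC)$ that you attempt. A direct approach is legitimate in principle, but as sketched it has two genuine gaps.

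First, the stratification by the Zariski closure $G$ mishandles the hardest case. $\PU(1,2)$ is a real form of $\PGL(3,\CC)$, not a complex algebraic subgroup, so it cannot appear as a Levi factor in your case division; by Borel density a cocompact lattice in $\PU(1,2)$ is Zariski dense in all of $\PGL(3,\CC)$. Your scheme therefore lumps the ball quotients together with arbitrary Zariski-dense discrete subgroups (e.g.\ would-be Anosov or exotic examples), and the assertion that ``the only proper invariant domain is the Euclidean ball'' presupposes that $\Gamma$ preserves a Hermitian form, which a general Zariski-dense discrete subgroup does not. Showing that a Zariski-dense $\Gamma$ admitting a cocompact domain must be (conjugate to) a lattice in $\PU(1,2)$ acting on $\bB^2$ is precisely the deep content of the theorem, and the proposal contains no argument for it.

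Second, the affine-elliptic bundles of case (6) are misplaced and would in fact be killed by your argument. The group there is generated by a lift $\rho(\pi_1(S))\subset\GL(2,\CC)$ of the holonomy of a $(\PGL(2,\CC),\PP^1)$-structure on a compact hyperbolic surface together with a scalar; for a quasi-Fuchsian holonomy this group is not virtually solvable and its Zariski closure contains $\SL(2,\CC)$ embedded reducibly (fixing the origin and the line at infinity). So case (6) lives in your ``reducible $\PSL(2,\CC)$'' stratum, which you propose to rule out entirely, while your solvable stratum --- where you claim case (6) arises --- cannot produce it. Any correct direct proof must allow a non-elementary classical Kleinian group on the invariant line with a $\CC^*$-bundle transverse structure, rather than excluding that configuration.
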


Remark that the cases of complex tori and ball quotients are the only cases where $X$ is \Kah{}. In higher dimension, if we want $X$ to be projective, then there is only one more possibility: $X$ can be the total space of a family of abelian varieties over a Shimura curve (Jahnke-Radloff \cite{JahRad15}).

The quotient surface is equipped with a $(\PGL(3,\CC),\PP^2)$-structure. The proof of Theorem \ref{classificationofcplxprojkleinian} has two main steps. First one finds the surfaces that admit $(\PGL(3,\CC),\PP^2)$-structures and the secondly one finds all $(\PGL(3,\CC),\PP^2)$-structures on those candidates. The first problem was studied by Gunning \cite{Gun78} and Kobayashi-Ochiai \cite{KobOch80}; it is completed by Klingler \cite{Kli98}. In \cite{Kli98}, the second problem is also investigated and is almost completed, except the case of ball quotients. The fact that the natural complex projective structure on a ball quotient is the only one is proved by Mok-Yeung in \cite{MY93}; later Klingler gave a different proof in \cite{Kli01}. The version of the classification that we state here can be found in the paper of Cano-Seade \cite{CanoSeade14} which contains more precise information, in particular when the group has torsion.

Though not every $(\PGL(3,\CC),\PP^2)$-structure arises from a complex projective Kleinian group, it was easier to study first the geometry of surfaces with complex projective structures than to look directly at the dynamics of subgroups of $\PGL(3,\CC)$. However this is not the strategy that we will use for general birational Kleinian groups: our method is based on a good understanding of dynamics of groups of birational transformations while general $(\Bir(Y),Y)$-structures seem to be more difficult to study.

\subsection{Examples}\label{examplesofprojstructures}
Here we describe all cases in Theorem \ref{classificationofcplxprojkleinian}. Some of them exist in any dimension.

\subsubsection{Complex tori}
A complex torus is a quotient of the commutative Lie group $\CC^2$ by a lattice, thus is naturally realized as the quotient of a complex affine Kleinian group. The affine varieties $\CC\times \CC, \CC\times\CC^*, \CC^*\times\CC^*$ are also infinite covers of complex tori and the corresponding groups of deck transformations are also birational.

\subsubsection{Ball quotients}
Let $\mathbb{B}^2$ be the unit ball $\{[z_0;\cdots;z_2]; \vert z_0\vert^2-\vert z_1 \vert^2-\vert z_2 \vert^2>0\}\subset \PP^2$. The group of biholomorphisms of $\mathbb{B}^2$ is $\PU(1,2)$. If $\Gamma$ is a torsion-free cocompact lattice of $\PU(1,2)$, then $\mathbb{B}^2/\Gamma$ is a projective manifold of general type equipped with a $(\PGL(3,\CC),\PP^2)$-structure.

\subsubsection{Hopf surfaces}
Let $\gamma\in\GL_2(\CC)$ be a contracting linear automorphism of $\CC^2$ such that $\gamma^n(x)\rightarrow 0$ for any $x\in\CC^2\backslash\{0\}$. Then the quotient of $\CC^2\backslash\{0\}$ by the action of $\gamma$ is a compact surface with a complex affine structure. Such surfaces are among the so called \emph{Hopf surfaces} and are not \Kah{}.

\subsubsection{Primary Kodaira surfaces}\label{primarykodaira}
Consider the subgroup $G$ of the group of affine transformations $\Aff(2,\CC)$ generated by the following four elements
\begin{align*}
z\mapsto z+\begin{pmatrix}0\\1\end{pmatrix},\quad & z\mapsto \begin{pmatrix}1&0\\c&1\end{pmatrix}z+\begin{pmatrix}1\\0\end{pmatrix}\\
z\mapsto z+\begin{pmatrix}0\\a\end{pmatrix},\quad & z\mapsto \begin{pmatrix}1&0\\d&1\end{pmatrix}z+\begin{pmatrix}b\\0\end{pmatrix}
\end{align*}
where $a,b\notin \RR$ and $c,d$ are not both equal to $0$.
The quotient $\CC^2/G$ is a primary Kodaira surface, i.e.\ a non \Kah{} principal bundle of elliptic curves over an elliptic curve; its Kodaira dimension is zero (see \cite{BHPV}). Note that the group $G$ is not unique up to conjugation for a fixed primary Kodaira surface, it has one parameter of freedom (see \cite{Kli98}).

\subsubsection{Inoue surfaces}\label{Inouesurfaces}
Inoue surfaces were discovered in \cite{Ino74} where they were constructed as quotients of complex affine Kleinian groups. There are three types of Inoue surfaces. Their universal covers are all $\HH\times \CC$ where $\HH$ is the Poincar\'e upper half plane. 

An \emph{Inoue surface of type $S^0$} is a quotient $\HH\times\CC/G$ where the group $G$ is a subgroup of $\Aff(2,\CC)$ generated by (we refer to \cite{Ino74} for the precise conditions that the parameters in the formulas have to satisfy)
\begin{align*}
g_0:&(x,y)\mapsto (\alpha x,\beta y)\\
g_i:&(x,y)\mapsto (x+a_i,y+b_i)\quad \text{for} \ i=1,2,3.
\end{align*}
An \emph{Inoue surface of type $S^+$} is a quotient $\HH\times\CC/G$ where $G$ is generated by \begin{align*}
g_0:(x,y)&\mapsto (\alpha x,y+t)\\
g_i:(x,y)&\mapsto (x+a_i,y+b_i x+c_i)\quad i=1,2\\
g_3:(x,y)&\mapsto (x,y+\frac{b_1a_2-b_2a_1}{n})
\end{align*}
 An Inoue surface of type $S^{-}$ has a double cover which is an Inoue surface of type $S^{+}$; in the above formulas it suffices to replace $g_0:(x,y)\mapsto (\alpha x,y+t)$ with $(x,y)\mapsto (\alpha x,-y)$.

\subsubsection{Affine-elliptic bundles}\label{affineellipticbundles}
This family of examples is constructed in \cite{Kli98}. Let $S=\HH/\pi_1(S)$ be a compact hyperbolic Riemann surface. Let $\bar{\rho}:\pi_1(S)\rightarrow \PGL(2,\CC)$ and $\phi:\HH\rightarrow \PP^1$ be the holonomy and developing map of some $(\PGL(2,\CC),\PP^1)$-structure on $S$. By \cite{Gun67}, $\bar{\rho}$ always lifts to a representation $\rho:\pi_1(S)\rightarrow \GL_2(\CC)$. Consider $\CC^2\backslash\{0\}$ as a $\CC^*$-bundle over $\PP^1$ and denote it by $W$. Consider the $\CC^*$-bundle $\phi^*W$ over $\HH$ obtained by pulling back $W$ via $\phi$. The natural complex affine structure on $W$ as an open set of $\CC^2$ induces a complex affine structure on $\phi^*W$. The group $\pi_1(S)$ acts on $\phi^*W$ via its natural action on $\HH$ and the representation $\rho$. This action preserves the affine structure on $\phi^*W$. Multiplication in the fibers also preserves this affine structure. Hence the quotient of $\phi^*W$ by the group generated by $\pi_1(S)$ and a multiplication in the fibers is a compact elliptic surface with a complex affine structure. We call them \emph{affine-elliptic bundles} as in \cite{Kli98}. If the $(\PGL(2,\CC),\PP^1)$-structure on $S$ is the quotient of the domain of discontinuity of a classical Kleinian group then this construction gives rise to a complex affine Kleinian group.

\section{Preliminaries on groups of birational transformations of surfaces}\label{cremonagroups}
This section is a glossary of notions and theorems on groups of birational transformations of surfaces. Most contents presented in this section can be found in the survey \cite{Can18}. We refer to \cite{Beauvillebook} and \cite{BHPV} for background on birational geometry of surfaces. 

\subsection{Two subgroups of the Cremona group}
The group $\Bir(\PP^2)$ is called the \emph{Cremona group}. In homogeneous coordinates a \emph{Cremona transformation} can be written as
\[
[x_0;x_1;x_2]\dashmapsto[P_0;P_1;P_2]
\]
where $P_1, P_2, P_3$ are homogeneous polynomials in $x_0,x_1,x_2$ and it has an inverse map of the same form. Here are two subgroups of $\Bir(\PP^2)$ that we will encounter several times in this text:

\subsubsection{The toric subgroup}
Fix a system of homogeneous coordinates on the projective plane $\PP^2$. The complement of three coordinates axes is isomorphic to $\CC^*\times\CC^*$. 
A matrix $\begin{pmatrix}	a&b\\c&d \end{pmatrix}\in \GL(2,\ZZ)$ acts by automorphism on $\CC^*\times\CC^*$ as follows: $(x,y)\mapsto (x^a y^b, x^c y^d)$. The group $\CC^*\times\CC^*$ acts on itself by multiplication. We obtain in this way an embedding of the semi-direct product $\CC^*\times\CC^*\rtimes \GL(2,\ZZ)$ into the Cremona group $\CRC$. We call its image \emph{the toric subgroup} of the Cremona group. The toric subgroup depends on a choice of homogeneous coordinates in $\PP^2$. Different choices of coordinates yield conjugate subgroups of the Cremona group and the conjugation is realized by an element of $\PGL(3,\CC)$. 
   
\subsubsection{The \Jonqui group.}

Fix an affine chart of $\PP^2$ with coordinates $(x,y)$.
\emph{The \Jonqui group} $\Jonq$ is the subgroup of the Cremona group of all transformations of the form
\[
(x,y)\mapsto \left ( \frac{ax+b}{cx+d},\frac{A(x)y+B(x)}{C(x)y+D(x)} \right ),\quad 
\begin{pmatrix}
	a&b\\c&d
\end{pmatrix}\in \PGL(2,\CC),\quad 
\begin{pmatrix}
	A&B\\C&D
\end{pmatrix}\in \operatorname{PGL}(2,\CC(x)).
\]
In other words, $\Jonq$ is the group of all birational transformations of $\PP^1\times\PP^1$ permuting the fibers of the projection onto the first factor; it is isomorphic to the semi-direct product $\PGL(2,\CC)\ltimes \operatorname{PGL}(2,\CC(x))$. The \Jonqui group is defined only if a system of coordinates is chosen; a different choice of the affine chart yields a conjugation by an element of $\PGL(3,\CC)$.

\subsection{Hyperbolic geometry and birational transformations}
Let $Y$ be a smooth complex projective surface. Manin \cite{Manin86} constructed an infinite dimensional hyperbolic space $\HH_Y$ and an injective homomorphism from $\Bir(Y)$ to the isometry group $\Iso(\HH_Y)$. For $f\in \Bir(Y)$ we denote by $f_*$ the corresponding element of $\Iso(\HH_Y)$. There is a dictionnary between the dynamics of $f_*$ on $\HH_Y$ and the dynamics of $f$ on $Y$. 

Let $f\in \Bir(Y)$. Let $\kappa\in \operatorname{H}^{1,1}(Y,\RR)$ be an ample class of self-intersection $1$. The \emph{degree} of $f$ with respect to $\kappa$, denoted by $\operatorname{deg}_{\kappa}(f)$, is the intersection number $f_*\kappa \cdot \kappa$. The \emph{translation length} of $f_*$ on $\HH_Y$ is $L(f_*)=\operatorname{inf}_{x\in\HH_Y}d(x,f_*(x))$. The sequence $\left(\operatorname{deg}_{\kappa}(f^n)\right )^{\frac{1}{n}}$ converges to a real number $\lambda(f)\geq 1$, called the dynamical degree of $f$ (see \cite{DF01}). Its logarithm $\log (\lambda(f))$ is the translation length $L(f_*)$ (see \cite{Can11}).

Elements of $\Bir(Y)$ are classified into four types (see \cite{DF01} and previous works \cite{Giz80}, \cite{Can01}, \cite{Can01b}):  
\begin{enumerate}
	\item The sequence $\operatorname{deg}_{\kappa}(f^n)$ is bounded, $f$ is birationally conjugate to an automorphism of a smooth projective surface birational to $Y$ and a positive iterate of $f$ lies in the connected component of identity of the automorphism group of that surface. The isometry $f_*$ is an elliptic isometry of $\HH_Y$. We call $f$ an \emph{elliptic} element.
	\item The sequence $\operatorname{deg}_{\kappa}(f^n)$ grows linearly, $f$ preserves a unique pencil of rational curves and $f$ is not conjugate to an automorphism of any projective surface. The isometry $f_*$ is a parabolic isometry of $\HH_Y$. We call $f$ a \emph{\Jonqui twist}.
	\item The sequence $\operatorname{deg}_{\kappa}(f^n)$ grows quadratically, $f$ is conjugate to an automorphism of a projective surface preserving a unique genus one fibration. The isometry $f_*$ is a parabolic isometry of $\HH_Y$. We call $f$ a \emph{Halphen twist}.
	\item The sequence $\operatorname{deg}_{\kappa}(f^n)$ grows exponentially, i.e.\ $\lambda(f)>1$. The isometry $f_*$ is a loxodromic isometry of $\HH_Y$. We call $f$ a \emph{loxodromic} element. 
\end{enumerate}

\subsection{Tits alternative}\label{sectiontits}
A group $G$ is said to satisfy \emph{the Tits alternative} if any subgroup of $G$ contains either a solvable subgroup of finite index, or a non-abelian free subgroup. Tits \cite{Tits72} proved that any linear algebraic group over a field of characteristic zero satisfies the Tits alternative. Lamy \cite{Lam01} proved that the group $\Aut(\CC^2)$ of polynomial automorphisms of the affine plane $\CC^2$ satisfies the Tits alternative. For $\Bir(\PP^2)$, the Tits alternative has been proved by Cantat \cite{Can11} for finitely generated subgroups and in general by Urech \cite{Ure21}. Furthermore for $\Bir(\PP^2)$ the Tits alternative has been improved to the following classification of infinite subgroups; one proves the Tits alternative by proving it for each case in the classification. The bulk of the classification is done by Cantat \cite{Can11} and it is finished by Urech \cite{Ure21}. Besides \cite{Can11} and \cite{Ure21}, the works of Weil \cite{Weil55}, Gizatullin \cite{Giz80}, Blanc-Cantat \cite{BC16} and D\'eserti \cite{Des15} are respectively the main input for case 1), 4), 5), 6) in the classification. 

\begin{theorem}[Strong Tits alternative for $\Bir(Y)$]\label{strongTitsalternative}
Let $Y$ be a smooth complex projective surface birational to $\bP^2$ and let $\Gamma$ be an infinite subgroup of $\Bir(Y)$. Then up to taking a finite index subgroup $\Gamma$ fits in one of the following mutually exclusive situations:
  \begin{enumerate}
  \item There is a projective surface $Y'$ and a birational map $\phi:Y\dashrightarrow Y'$ such that $\Gamma'=\phi\Gamma\phi^{-1}$ is contained in $\Aut^0(Y')$. 
  \item There is a projective surface $Y'$ and a birational map $\phi:Y\dashrightarrow Y'$ such that $\Gamma'=\phi\Gamma\phi^{-1}$ preserves a rational fibration on $Y'$. All elements of $\Gamma$ are elliptic but their degrees are not uniformly bounded and the group $\Gamma$ is not conjugate to a group of automorphisms of any projective surface.
	\item There is a projective surface $Y'$ and a birational map $\phi:Y\dashrightarrow Y'$ such that $\Gamma'=\phi\Gamma\phi^{-1}$ preserves a rational fibration on $Y'$. At least one element of $\Gamma$ is a \Jonqui twist.
  \item There is a projective surface $Y'$ and a birational map $\phi:Y\dashrightarrow Y'$ such that $\Gamma'=\phi\Gamma\phi^{-1}$ acts by automorphisms and preserves a genus one fibration. The group $\Gamma$ is virtually a free abelian group of rank $\leq 8$ formed by Halphen twists.
  \item $\Gamma$ is virtually a cyclic group generated by a loxodromic element. 
	\item $\Gamma$ is solvable but not virtually abelian. There is a birational map $\phi:Y\dashrightarrow \CC^*\times \CC^*$ such that $\Gamma'=\phi\Gamma\phi^{-1}$ is virtually contained in the toric subgroup $\Aut(\CC^*\times \CC^*)=(\CC^*\times\CC^*)\rtimes \GL_2(\ZZ)$. The loxodromic elements of $\Gamma$ form an infinite cyclic group. 
  \item $\Gamma$ contains a non-abelian free group all of whose non-trivial elements are loxodromic. 
	\item $\Gamma$ is a torsion group, is not finitely generated, and is not conjugate to a group of automorphisms of any projective surface.
 \end{enumerate}
\end{theorem}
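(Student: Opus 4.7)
The plan is to analyze the action of $\Gamma$ on Manin's infinite dimensional hyperbolic space $\HH_Y$, via the injective homomorphism $\Bir(Y)\hookrightarrow \Iso(\HH_Y)$, and exploit the dictionary between the isometry type of an element and the dynamical nature of the corresponding birational transformation. Any isometric action on a Gromov hyperbolic space falls into four mutually exclusive families: \emph{bounded} (a fixed point in $\HH_Y$), \emph{parabolic} (a unique fixed point on $\partial\HH_Y$), \emph{lineal} (a preserved geodesic together with loxodromic elements), and \emph{of general type} (two loxodromic isometries with disjoint fixed point sets on $\partial\HH_Y$). I would treat each family in turn and refine it into the eight advertised cases.

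\emph{Bounded case, giving (1), (2) or (8).} Every element of $\Gamma$ is then an elliptic birational transformation. If moreover the degrees of elements of $\Gamma$ are uniformly bounded with respect to a fixed polarization, a Weil-type regularization argument produces a smooth projective model $Y'$ and a birational map $\phi:Y\dashrightarrow Y'$ with $\phi\Gamma\phi^{-1}\subset \Aut(Y')$; the action of $\Aut(Y')$ on $\NS(Y')$ has finite image on a finite-index subgroup (Lieberman), so $\Gamma$ is virtually in $\Aut^0(Y')$, yielding case (1). If the degrees are unbounded, Gizatullin's theorem forces $\Gamma$ to preserve a pencil; Cantat's refinement shows this pencil is rational, since an elliptic group preserving a genus-one fibration has bounded degrees. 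This is case (2). The leftover possibility, a non-finitely-generated torsion group not conjugate to automorphisms, is case (8), which only emerges after the other elliptic scenarios have been eliminated.

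\emph{Parabolic case, giving (3) or (4).} Each parabolic element of $\Gamma$ is either a Jonqui\`eres or a Halphen twist. I would show that parabolic transformations of these two types cannot share a boundary fixed point (the invariant pencil is of different arithmetic genus, so the sequences of classes approaching the boundary point differ), hence all parabolic elements of $\Gamma$ are of one type. In the Jonqui\`eres case the shared invariant rational fibration is $\Gamma$-invariant, and one conjugates to land in the Jonqui\`eres group, giving case (3). In the Halphen case, the invariant genus-one fibration comes from a Halphen pencil on a rational elliptic surface $Y'$; after passing to the relatively minimal model $\Gamma$ becomes a group of automorphisms preserving the fibration, and the kernel of its action on the base embeds into the Mordell-Weil group of a rational elliptic surface, whose rank is at most $8$, yielding case (4).

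\emph{Loxodromic cases, giving (5), (6) or (7).} Fix a loxodromic element $f\in\Gamma$ with axis $A_f\subset \HH_Y$. If $\Gamma$ is of general type, the classical Klein ping-pong applied to two loxodromic elements with disjoint axes (after replacing them by high enough powers) produces a non-abelian free subgroup all of whose nontrivial elements are loxodromic, as in Cantat's paper: this is case (7). Otherwise $\Gamma$ is lineal: every loxodromic element of $\Gamma$ has the same two endpoints on $\partial\HH_Y$ as $f$. The index-$\leq 2$ subgroup of $\Gamma$ fixing those endpoints contains the loxodromic elements as a normal infinite cyclic subgroup; if the quotient is finite we get case (5). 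If instead the quotient is infinite, $\Gamma$ is solvable but not virtually abelian, and I would invoke the identification of Blanc-Cantat and D\'eserti to realize $\Gamma$, up to birational conjugacy, as a subgroup of the toric model $(\CC^*\times\CC^*)\rtimes \GL_2(\ZZ)$, giving case (6).

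The main obstacles are all in the ingredients rather than in the case analysis itself: the Weil-style regularization lemma that upgrades bounded degree to automorphism on a birational model; the fact that the Gizatullin pencil in the elliptic-unbounded case is rational rather than elliptic; the passage, in the Halphen case, from an abstract parabolic group to automorphisms of a relatively minimal rational elliptic surface with the sharp Mordell-Weil bound $8$; the identification of solvable lineal subgroups with the toric model; and above all the existence of genuinely exotic non-finitely-generated torsion groups in case (8), which is the delicate point where Cantat's original classification needed to be completed by Urech. Granting these pieces, the proof is a mostly formal assembly along the dichotomy above.
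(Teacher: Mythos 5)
The paper does not prove this theorem: it is quoted from the literature, with the case analysis attributed to Cantat \cite{Can11} and Urech \cite{Ure21} (plus Weil, Gizatullin, Blanc--Cantat and D\'eserti for individual cases), and the paper only ever uses it for finitely generated $\Gamma$. Your outline is essentially the strategy of those references --- act on $\HH_Y$, classify the action by its isometry type, and convert each type into a geometric statement about $\Gamma$ --- so there is nothing to compare it against inside the paper itself.

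One point of imprecision worth flagging: your opening trichotomy ``bounded / parabolic / lineal / general type'' does not quite match how you then use it. On a non-proper hyperbolic space such as $\HH_Y$, a subgroup all of whose elements are elliptic isometries need not have a bounded orbit, i.e.\ need not fix a point of $\HH_Y$; cases (2) and (8) live exactly in this gap between ``every element is elliptic'' and ``the action is bounded'', and closing that gap (showing such a group either has bounded degrees, or preserves a rational pencil, or is a non-finitely-generated torsion group) is the genuinely hard step, due to Urech rather than to Gizatullin as you suggest. Since you do split the elliptic case by boundedness of degrees and explicitly acknowledge Urech's completion at the end, this is a misattribution and a terminological slip rather than a logical gap, but as written the first sentence of your ``bounded case'' (that a bounded action is the same as all elements being elliptic) is false and should be reformulated. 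The remaining reductions --- incompatibility of Jonqui\`eres and Halphen fixed points on $\partial\HH_Y$, the Mordell--Weil bound $8$ in the Halphen case, discreteness of translation lengths in the lineal case, and ping-pong in the general-type case --- are the standard ingredients and are correctly placed.
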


A subgroup is called \emph{elementary} if it fixes a point in $\HH_Y\cup \partial \HH_Y$ or preserves a geodesic in $\HH_Y$; otherwise it is called \emph{non-elementary}. In Theorem \ref{strongTitsalternative} $\Gamma$ is non-elementary only in the seventh case.

In the first case of Theorem \ref{strongTitsalternative} $\Gamma$ is called an \emph{elliptic subgroup} of $\Bir(Y)$ because it fixes a point in $\HH_Y$; in the third or the fourth case, $\Gamma$ is called a \emph{parabolic subgroup} because it fixes a point in $\partial \HH_Y$.

\begin{remark}
It turns out that all birational Kleinian groups of interest appear either in Case 1) or Case 3) (see Theorem \ref{verylongthm}).
\end{remark}

\subsection{Centralizers of birational transformations preserving a rational fibration}
Here we collect a series of technical results that we will use in various concrete situations. They can all be found with proofs in \cite{Zhaocent}. Let $Y$ be a smooth rational surface and $r:Y\rightarrow \PP^1$ be a rational fibration. The subgroup of $\Bir(Y)$ that preserves the fibration $r$ will be identified with the \Jonqui group. For an element $f\in\Jonq$ (resp. a subgroup $\Gamma\subset \Jonq$) we denote by $f_B$ (resp. $\Gamma_B$) the element (resp. subgroup) of $\Aut(\PP^1)$ induced by the action on the base of the fibration. The centralizer in $\Bir(Y)$ of an element $f\in\Jonq$ is denoted by $\Cent(f)$. If $f$ is a \Jonqui twist then $\Cent(f)$ is necessarily a subgroup of $\Jonq$ and we denote by $\Cent_0(f)$ the normal subgroup of $\Cent(f)$ that preserves fiberwise the fibration $r$. If $f$ is a \Jonqui twist then we have an exact sequence 
\[
\{1\}\rightarrow \Cent_0(f)\rightarrow \Cent(f) \rightarrow \Cent_B(f)\rightarrow \{1\}.
\]

\begin{theorem}[Blanc-D\'eserti \cite{BD15}]\label{bdcentralizer}
Let $f\in\CRC$ be an elliptic element of infinite order. There exists an affine chart with affine coordinates $(x,y)$ on which $f$ acts by automorphism of the following form:
\begin{enumerate}
	\item $(x,y)\mapsto (\alpha x,\beta y)$ where $\alpha,\beta \in \CC^*$ are such that the kernel of the group homomorphism $\ZZ^2\rightarrow \CC^*,(i,j)\mapsto\alpha^i\beta^j$ is generated by $(k,0)$ for some $k\in\ZZ$;
	\item $(x,y)\mapsto (\alpha x,y+1)$ where $\alpha\in \CC^*$.
\end{enumerate}
The centralizer $\Cent(f)$ of $f$ in $\Bir(Y)$ is respectively described as follows:
\begin{enumerate}
	\item $\Cent(f)=\{(x,y)\dashmapsto(\eta(x),yR(x^k))\vert R\in \CC(x),\eta\in \PGL(2,\CC), \eta(\alpha x)=\alpha \eta(x)\}$.	If $\alpha$ is not a root of unity, i.e.\ if $k=0$, then $R$ must be constant.
	\item $\Cent(f)=\{(x,y)\dashmapsto(\eta(x),y+R(x))\vert \eta\in \PGL(2,\CC), \eta(\alpha x)=\alpha \eta(x), R\in \CC(x),\\ R(\alpha x)=R(x)\}$.	If $\alpha$ is not a root of unity, then $R$ is a constant and for some $\beta\in\CC^*$ we have $\eta(x)=\beta x$.
\end{enumerate}
\end{theorem}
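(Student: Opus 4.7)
The plan is to first put $f$ in one of the two stated normal forms and then to read off the centralizer by matching coefficients. First, since $f$ is elliptic of infinite order, the classification recalled in Theorem~\ref{strongTitsalternative} lets us conjugate $f$ birationally to an automorphism of a minimal rational surface, i.e.\ of $\PP^2$ or of a Hirzebruch surface $\mathbb{F}_n$. A Jordan decomposition inside the corresponding algebraic group, together with further birational conjugations when needed --- for instance, the non-diagonalizable element $(u,v)\dashmapsto(u+v,v)$ of $\PGL_3(\CC)$ becomes the translation $(x,y)\mapsto(x,y+1)$ after conjugating by $(u,v)\dashmapsto(u/v,v)$ and swapping coordinates --- yields either the diagonal form $(\alpha x,\beta y)$ or the form $(\alpha x,y+1)$ on some affine chart. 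The kernel-lattice normalization in case~(1) is then achieved by a toric conjugation $(x,y)\dashmapsto(x^p y^q,x^r y^s)$ with $\begin{pmatrix}p&q\\r&s\end{pmatrix}\in\GL_2(\ZZ)$, which sends the primitive generator of the rank-one kernel to a vector of the form $(k,0)$.

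Next, I would show that any $g\in\Cent(f)$ belongs to the Jonqui\`eres subgroup associated to $\pi:(x,y)\mapsto x$. In case~(1), the $f$-invariant rational fibrations are exactly the two coordinate projections, and the kernel hypothesis forbids the relations $\alpha\beta=1$ and $\alpha=\beta$, which are precisely the ones that would allow a commuting $g$ to swap the two projections. In case~(2), any $f$-invariant rational function must be periodic in $y$ (hence independent of $y$), so $\pi$ is the unique $f$-invariant rational fibration. Hence $g\in\Jonq$. Writing $g(x,y)=(\eta(x),(A(x)y+B(x))/(C(x)y+D(x)))$, the equation $gf=fg$ gives $\eta(\alpha x)=\alpha\eta(x)$ in the first coordinate. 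In the second coordinate, expanding and matching monomials in $y$, and using that $\beta$ has infinite order and is not a power of $\alpha$ (case~(1)), or that the $y$-translation is nontrivial (case~(2)), forces $C=0$, together with $B=0$ and $A/D\in\CC(x^k)$ in case~(1), and $A=D$ constant with $B(\alpha x)=B(x)$ in case~(2). This matches the stated form of the centralizer.

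The main obstacle is the first step. Reducing every infinite-order elliptic element to one of the two normal forms requires a careful analysis of the Jordan structure of elements of $\PGL_3(\CC)$ and of $\Aut(\mathbb{F}_n)$, including non-diagonalizable elements that become translations only after a genuinely birational (not linear) change of coordinates, as well as an \emph{a priori} argument that the minimal model carrying the automorphism can be chosen to be $\PP^2$ or a Hirzebruch surface. Once the normal form is in hand, the centralizer computation amounts to routine polynomial identities, made clean by the reduction to the Jonqui\`eres subgroup.
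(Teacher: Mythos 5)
The paper does not actually prove this statement: it is quoted verbatim from Blanc--D\'eserti \cite{BD15} (the surrounding results are likewise imported, with proofs referenced to \cite{Zhaocent}), so there is no in-paper argument to compare yours against and I am judging the proposal on its own terms. Your overall architecture --- normal form via conjugation to an automorphism of a minimal rational surface, then reduction of the centralizer into a Jonqui\`eres group, then coefficient matching --- is the standard and correct one, and the final matching of coefficients in $\PGL(2,\CC(x))$ is fine as sketched. The first step is, as you say yourself, the main content of \cite{BD15}; note that ``Jordan decomposition'' alone does not suffice even on $\PP^2$ (the full $3\times 3$ Jordan block gives $(x,y)\mapsto(x+y,y+1)$, which needs a polynomial, not linear, conjugation to reach $(x,y+1)$), and on $\mathbb{F}_n$ one must also normalize elements such as $(x,y)\mapsto(\alpha x, y+P(x))$. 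Since you flag this, I treat it as an acknowledged incompleteness rather than an error.

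The genuine gap is in your reduction to the Jonqui\`eres group: the claim that the coordinate projections are the only $f$-invariant rational fibrations is false in both cases, and it is exactly the crux of the centralizer computation. In case (1), for \emph{every} coprime pair $(i,j)$ the monomial fibration $(x,y)\dashmapsto x^iy^j$ is $f$-invariant, since $f$ sends the fiber over $c$ to the fiber over $\alpha^i\beta^j c$; so there are infinitely many invariant fibrations, and ruling out only the swap of the two projections (via $\alpha\beta\neq 1$, $\alpha\neq\beta$) does not prevent a centralizing $g$ from carrying $\pi:(x,y)\mapsto x$ to some other invariant fibration. In case (2) you conflate ``$f$-invariant fibration'' with ``fibration defined by an $f$-invariant rational function'': the second projection $(x,y)\mapsto y$ is an $f$-invariant fibration (with induced base map $y\mapsto y+1$) even though $y$ is not an invariant function, so your uniqueness claim fails outright. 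A correct route is to replace $f$ by the Zariski closure $G$ of $\langle f\rangle$ inside the automorphism group of the model (centralizers are closed, so $\Cent(f)=\Cent(G)$); under the stated kernel hypotheses $G$ contains a one-parameter subgroup ($(x,y)\mapsto(x,ty)$, resp.\ $(x,y)\mapsto(x,y+t)$) whose orbit closures are precisely the fibers of $\pi$, forcing every element of $\Cent(f)$ to permute those fibers and hence to lie in $\Jonq$; when $k=0$ one even gets $\Cent(f)=\Cent((\CC^*)^2)$ directly. Without some such argument, the passage from ``$g$ commutes with $f$'' to ``$g\in\Jonq$'' is unjustified, and everything downstream rests on it.
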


\begin{theorem}[\cite{Zhaocent}]\label{zhaocentralizer1}
Let $f\in\Jonq$ be a \Jonqui twist such that $f_B$ has infinite order. Then $\Cent_B(f)$ is isomorphic to the product of a finite cyclic group with $\ZZ$. The infinite cyclic subgroup generated by $f_B$ has finite index in $\Cent_B(f)$.
\end{theorem}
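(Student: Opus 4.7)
The plan is to analyze $\Cent_B(f)$ by first reducing $f_B$ to a normal form, then placing $\Cent_B(f)$ inside a one-dimensional complex Lie group, and finally using the Jonquières twist hypothesis to force a cyclic-times-finite structure. Since $f_B\in\PGL(2,\CC)$ has infinite order, after conjugating inside $\PGL(2,\CC)\subset\Jonq$ we may assume either $f_B(x)=\alpha x$ with $\alpha\in\CC^{*}$ not a root of unity (the hyperbolic case) or $f_B(x)=x+1$ (the parabolic case). A direct computation in $\PGL(2,\CC)$ shows that the centralizer of $f_B$ is respectively $\{x\mapsto \beta x\}\cong\CC^{*}$ or $\{x\mapsto x+c\}\cong\CC$, and $\Cent_B(f)$ sits as a subgroup of this one-dimensional complex Lie group.

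I would then exploit the semidirect product decomposition $\Jonq\cong \PGL(2,\CC)\ltimes\PGL(2,\CC(x))$. Writing $f=(f_B,\tilde A)$ and $g=(\eta,\tilde B)$ with $\tilde A,\tilde B\in\PGL(2,\CC(x))$, the commutation $fg=gf$ becomes the functional identity
\[
(\eta^{*}\tilde A)\cdot\tilde B=(f_B^{*}\tilde B)\cdot\tilde A
\]
in $\PGL(2,\CC(x))$, where $\eta^{*}$ and $f_B^{*}$ denote substitution in the variable $x$. The Jonquières twist hypothesis implies in particular that $\tilde A$ is not conjugate to a constant matrix in $\PGL(2,\CC(x))$ and hence has well-defined non-trivial invariants in $\CC(x)$ (degrees of the matrix entries, divisor of zeros and poles). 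Tracking how the substitution $\eta^{*}$ acts on these invariants compared to $f_B^{*}$ produces a relation of the form $\beta^{d}=\alpha^{k}$ (hyperbolic case) or $d\cdot c=k$ (parabolic case) with nonzero integers $d,k$ depending only on $\tilde A$, thereby locating $\Cent_B(f)$ inside a finitely generated, and in particular discrete, subgroup of the ambient Lie group.

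In the hyperbolic case, every discrete subgroup of $\CC^{*}$ is isomorphic to $\ZZ\times\mu_n$, and since $f_B$ is an infinite-order element it automatically generates a subgroup of finite index, giving the conclusion. In the parabolic case discreteness only yields a subgroup of $\CC$ of rank at most $2$, and the main obstacle I anticipate is to exclude the rank-$2$ possibility; this is where the Jonquières twist hypothesis (rather than a weaker ``infinite order'' assumption) is used essentially. One way to rule out rank $2$ is a Zariski-closure argument: a rank-$2$ lattice in $\CC$ is Zariski-dense in $\CC$, so the Zariski closure of $\Cent(f)$ inside the ind-group $\Bir(Y)$ would contain a positive-dimensional algebraic subgroup centralizing $f$, contradicting the fact (visible from Theorem \ref{strongTitsalternative}) that a Jonquières twist is not conjugate into any algebraic subgroup of $\Bir(Y)$.
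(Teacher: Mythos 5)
The paper itself does not prove Theorem \ref{zhaocentralizer1}: it is imported verbatim from \cite{Zhaocent}, so there is no internal proof to compare against. Judged on its own terms, your reduction is sound: $\Cent_B(f)$ does centralize $f_B$ in $\PGL(2,\CC)$, hence lies in $\{x\mapsto\beta x\}\cong\CC^*$ or $\{x\mapsto x+c\}\cong\CC$ after normalization, and the commutation identity $(\eta^*\tilde A)\tilde B=(f_B^*\tilde B)\tilde A$ is the right object to exploit. The gap is in the central step. The relation $\beta^d=\alpha^k$ is asserted, not derived, and the invariants you name do not always see $\beta$. Concretely, $f:(x,y)\dashmapsto(\alpha x,xy)$ is a \Jonqui{} twist (the degree of $f^n$ grows linearly), yet the divisor of its fiber component is $[0]-[\infty]$, supported exactly on the fixed points of $f_B$; since every $x\mapsto\beta x$ fixes $0$ and $\infty$, neither the zero/pole locus nor the entry degrees constrain $\beta$ at all. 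In this example the relation $\beta=\alpha^m$ comes from a secondary invariant (the exponent and leading coefficient at the invariant fibers, via $S(\alpha x)=\beta S(x)$), so the invariant to be tracked must be chosen with more care than the sketch indicates. Two smaller points: $k$ must depend on $g$, not only on $\tilde A$; and ``discreteness'' is a red herring, since for $|\alpha|=1$ the group $\{\beta:\beta^d\in\alpha^{\ZZ}\}$ is dense in the unit circle --- what saves you is only the abstract isomorphism $\{\beta:\beta^d\in\alpha^{\ZZ}\}\cong\mu_d\times\ZZ$, which holds regardless.

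The parabolic case is where the proposal would actually fail. The Zariski-closure argument does not go through: Zariski-density of $\Cent_B(f)$ in $\mathbb{G}_a$ does not yield a positive-dimensional algebraic subgroup of $\Bir(Y)$ centralizing $f$, because the lifts to $\Cent(f)$ of a rank-two group of translations may have unbounded degree and therefore do not sit in any single algebraic family of the ind-group $\Bir(Y)$; one cannot form ``the Zariski closure of $\Cent(f)$'' and expect an algebraic group surjecting onto $\mathbb{G}_a$. Fortunately no new idea is needed: the same finite-invariant mechanism as in the loxodromic case excludes rank two. The twist hypothesis says precisely that the relevant cocycle data of $\tilde A$ is not an $f_B$-coboundary, so it induces a non-zero, finitely supported invariant on the orbit space $\CC/\langle f_B\rangle$ (for instance, in the multiplicative normal form, the function assigning to each orbit the sum of the orders of $\tilde A$ along it). A non-zero finitely supported function on $\CC/\langle f_B\rangle$ cannot be invariant under an infinite group of translations, so the group of admissible $c$ contains $\ZZ$ with finite index and is therefore cyclic. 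I would recommend replacing the closure argument by this counting argument, and spelling out the degenerate case where the invariant concentrates on the fixed locus of $f_B$.
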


We will also use the above two theorems in the following form:
\begin{theorem}[\cite{Zhaocent}]\label{zhaocentralizer}
Let $\Gamma\subset \Jonq$. Suppose that $\Gamma$ is isomorphic to $\ZZ^n$ with $n\geq 2$ and that $\Gamma_B$ is infinite. Then there is a subgroup $\Gamma'$ of finite index of $\Gamma$ with generators $\gamma_1,\cdots,\gamma_n$ such that one of the following assertions holds up to conjugation in $\Jonq$:
\begin{enumerate}
	\item For any $i$, $\gamma_i$ has the form $(x,y)\mapsto (a_ix,b_iy)$ with $a_i,b_i\in\CC^*$.
	\item For any $i$, $\gamma_i$ has the form $(x,y)\mapsto (x+a_i,b_iy)$ with $a_i\in\CC$ and $b_i\in\CC^*$.
	\item For any $i$, $\gamma_i$ has the form $(x,y)\mapsto (a_ix,y+b_i)$ with $a_i\in \CC^*$ and $b_i\in\CC$.
	\item For any $i$, $\gamma_i$ has the form $(x,y)\mapsto (x+a_i,y+b_i)$ with $a_i,b_i\in\CC$.
	\item $\gamma_1$ is a \Jonqui twist of the form $(x,y)\dashmapsto(\eta(x),yR(x))$ where $\eta\in\PGL(2,\CC)$ and $R\in\CC(x)^*$. For any $i>1$, $\gamma_i$ has the form $(x,y)\mapsto (x,b_iy)$ with $b_i\in\CC^*$.
	\item $\gamma_1$ is a \Jonqui twist of the form $(x,y)\dashmapsto(\eta(x),y+R(x))$ where $\eta\in\PGL(2,\CC)$ and $R\in\CC(x)$. For any $i>1$, $\gamma_i$ has the form $(x,y)\mapsto (x,y+b_i)$ with $b_i\in\CC$.
\end{enumerate}
\end{theorem}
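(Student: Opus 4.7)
The plan is to analyze $\Gamma$ by splitting its action into the induced action on the base of the fibration and the fiberwise action, then producing normal forms using the abelian structure of $\Gamma$. Every element of $\Jonq$ is either elliptic or a \Jonqui twist, since $\Jonq$ preserves a rational fibration and therefore excludes loxodromic and Halphen elements. The image $\Gamma_B$ is a finitely generated infinite abelian subgroup of $\PGL(2,\CC)$; since commuting M\"obius transformations share fixed points on $\PP^1$, after conjugation by an element of $\PGL(2,\CC)\subset\Jonq$ and replacement of $\Gamma$ by a finite-index subgroup (to remove torsion in $\Gamma_B$), I may assume $\Gamma_B$ sits either in the diagonal torus $\{x\mapsto ax\}$ (the multiplicative base cases (1), (3), (5)) or in the translation subgroup $\{x\mapsto x+a\}$ (the additive base cases (2), (4), (6)).

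Suppose first that $\Gamma$ contains no \Jonqui twist, so every $\gamma_i$ is elliptic. Choose $\gamma_1\in\Gamma$ mapping to an infinite-order element of $\Gamma_B$ and apply Theorem \ref{bdcentralizer}: after a conjugation that can be arranged to preserve a rational fibration (since the Blanc--D\'eserti normal forms themselves preserve the $x$-fibration), $\gamma_1$ takes one of the forms $(x,y)\mapsto(\alpha x,\beta y)$ or $(x,y)\mapsto(\alpha x,y+1)$ in the multiplicative-base subcase, and analogous translation-base normal forms in the additive-base subcase. Since $\Gamma\subset\Cent(\gamma_1)$ and Theorem \ref{bdcentralizer} describes $\Cent(\gamma_1)$ explicitly, each $\gamma_i$ is forced into the shape appearing in case (1), (2), (3), or (4) of the theorem, according to the combination of base-action type and fiber-action type.

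Suppose now that $\gamma_1\in\Gamma$ is a \Jonqui twist. Theorem \ref{zhaocentralizer1} says $(\gamma_1)_B$ has infinite order and generates a finite-index subgroup of $\Cent_B(\gamma_1)$; since $\Gamma_B\subset\Cent_B(\gamma_1)$, after passing to a finite-index subgroup I may assume $\Gamma_B=\langle(\gamma_1)_B\rangle$, so each $\gamma_i$ with $i\geq 2$ fixes every fiber. Up to conjugation in $\Jonq$, $\gamma_1$ takes one of the canonical twist forms $(x,y)\dashmapsto(\eta(x),yR(x))$ (multiplicative fiber action, case (5)) or $(x,y)\dashmapsto(\eta(x),y+R(x))$ (additive fiber action, case (6)), with $R\in\CC(x)$ nonconstant. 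Writing each $\gamma_i$ as a general fiber-preserving element of $\PGL(2,\CC(x))$ and expanding the commutation $\gamma_1\gamma_i=\gamma_i\gamma_1$ yields functional equations in $\CC(x)$ whose solutions, using that $\eta$ has infinite order and $R$ is nonconstant, force $\gamma_i$ into the constant-coefficient shape $(x,y)\mapsto(x,b_iy)$ or $(x,y)\mapsto(x,y+b_i)$.

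The main obstacle is the last step of the \Jonqui twist case, namely extracting from the commutation relation that the fiberwise data of $\gamma_i$ is independent of $x$. A priori each $\gamma_i$ is a M\"obius transformation of the generic fiber with coefficients in $\CC(x)$; the commutation with $\gamma_1$ first forces $\gamma_i$ to preserve the multiplicative or additive dynamical structure of $\gamma_1$ on each fiber, and then the $\eta$-invariance of the remaining free coefficient function (an element of $\CC(x)$) forces it to be constant because $\eta$ has infinite order. This rigidity uses both that $\eta$ is of infinite order on the base and that $R$ is genuinely nonconstant, i.e., that $\gamma_1$ is a true \Jonqui twist rather than an elliptic automorphism; without either ingredient the centralizer would be much larger, as is already visible in the elliptic analysis provided by Theorem \ref{bdcentralizer}.
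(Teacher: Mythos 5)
A preliminary remark: the paper does not prove this theorem at all — it is imported verbatim from \cite{Zhaocent} ("They can all be found with proofs in \cite{Zhaocent}"), so there is no internal proof to compare against. Judged on its own terms, your architecture (normalize $\Gamma_B$, split according to whether $\Gamma$ contains a \Jonqui{} twist, put a distinguished generator into normal form and read off the rest of $\Gamma$ from its centralizer via Theorems \ref{bdcentralizer} and \ref{zhaocentralizer1}) is the natural route and surely close to the one in \cite{Zhaocent}. But two of your steps are genuinely gapped as written.

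First, you assert that "Theorem \ref{zhaocentralizer1} says $(\gamma_1)_B$ has infinite order." It does not: infinite order of $f_B$ is a \emph{hypothesis} of that theorem, not a conclusion. A \Jonqui{} twist can act trivially on the base (e.g.\ $(x,y)\dashmapsto(x,yR(x))$), so you must separately exclude twists lying in the kernel $\Gamma_r$ of $\Gamma\to\Gamma_B$ before your case division is exhaustive. This is true but requires an argument; for instance, if $\tau=(x,g(x)(y))\in\Gamma_r$ and $\sigma\in\Gamma$ has $\sigma_B$ of infinite order, commutativity gives $g(\sigma_B(x))=h(x)\,g(x)\,h(x)^{-1}$ in $\PGL(2,\CC(x))$, so the trace invariant of $g(x)$ is $\sigma_B$-invariant, hence constant, and an element whose generic specializations are all conjugate has bounded degree growth, i.e.\ is elliptic. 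Second, in the elliptic case the conjugation furnished by Theorem \ref{bdcentralizer} lives in $\Bir(\PP^2)$, whereas the statement demands conjugation in $\Jonq$. Your parenthetical — that the normal forms preserve the $x$-fibration — does not settle this: the question is whether the conjugating map carries the \emph{original} invariant fibration to the coordinate projection $(x,y)\mapsto x$. The normal form $(\alpha x,\beta y)$ preserves many pencils (both coordinate fibrations and all $x^py^q=c$), any of which could be the image of the original one, and the normal form can interchange the roles of base and fibre (e.g.\ $(x+1,2y)$ becomes the Blanc--D\'eserti form $(2x,y+1)$ only after swapping coordinates — which is precisely where your additive-base cases (2) and (4) must come from). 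One needs to intersect $\Cent(\gamma_1)$ with the stabilizer of the correct pencil and re-straighten by a fibered birational map to make the conjugation land in $\Jonq$; this is checkable but not automatic. The final functional-equation step is likewise only argued for diagonal fibre action; for general $\gamma_i\in\PGL(2,\CC(x))$ the reduction to diagonal or unipotent form (again via constancy of the trace invariant) has to precede the $\eta$-invariance argument.
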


\section{Birational Kleinian groups are elementary}\label{delzantpychapter}
The goal of this section is to show
\begin{theorem}\label{kleinianareelementary}
Let $(Y,U,\Gamma,X)$ be a birational Kleinian group in dimension two. If $X$ is not a surface of class VII then $\Gamma$ is an elementary subgroup of $\Bir(Y)$. 
\end{theorem}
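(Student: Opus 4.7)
My plan is to assume $\Gamma$ is non-elementary and derive a contradiction, using Manin's isometric action of $\Bir(Y)$ on $\HH_Y$ together with Delzant--Py's theorem on isometric actions of K\"ahler groups on infinite-dimensional hyperbolic spaces.

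First I would reduce to the case where $X$ is K\"ahler. The non-K\"ahler compact complex surfaces fall into three families: class VII surfaces (excluded by hypothesis), primary or secondary Kodaira surfaces, and non-K\"ahler properly elliptic surfaces. Each of the latter two carries a canonical genus-one fibration on which every birational self-map acts finitely on the base, so Lemma \ref{ellipticfibrationlem} rules them out as quotients of a birational Kleinian group. Hence $X$ is K\"ahler and $\pi_1(X)$ is a K\"ahler group.

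Composing the natural surjection $\pi_1(X) \twoheadrightarrow \Gamma$ with Manin's embedding yields a homomorphism $\rho \colon \pi_1(X) \to \Iso(\HH_Y)$. Non-elementarity of $\Gamma$ translates into $\rho$ having no fixed point in $\HH_Y \cup \partial \HH_Y$ and preserving no geodesic. The main theorem of Delzant--Py \cite{DelPy} then forces $\rho$ to factor through the orbifold fundamental group $\pi_1^{\mathrm{orb}}(\Sigma)$ of a hyperbolic $2$-orbifold $\Sigma$, the factorization being induced by a surjective holomorphic map $\phi \colon X \to \Sigma$ with connected fibers.

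To close the argument I would lift $\phi$ to a $\Gamma$-equivariant holomorphic map $\tilde\phi \colon U \to \HH^2$, whose fibers foliate $U$ by complex curves, and then extend this transverse structure to a $\Gamma$-invariant meromorphic foliation $\mathcal{F}$ on $Y$. Such an extension yields a $\Gamma$-invariant class in the Picard--Manin space --- for instance through the canonical class $c_1(K_\mathcal{F})$, or through the class of a leaf closure --- and hence a $\Gamma$-fixed point in $\HH_Y \cup \partial \HH_Y$, contradicting non-elementarity. The hard part is precisely this extension step: the cocompactness of the $\Gamma$-action on $U$, built into the Kleinian hypothesis, should bound the degrees of the Zariski closures of the leaves in $Y$ uniformly and thereby produce a genuine algebraic family of curves via a Chow-scheme or Hilbert-scheme argument.
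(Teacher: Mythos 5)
Your overall skeleton (Manin's action on $\HH_Y$ plus Delzant--Py) is the same as the paper's, but two of your steps have genuine gaps. First, the reduction to the K\"ahler case fails. Lemma \ref{ellipticfibrationlem} concerns a genus-one fibration on the ambient surface $Y$ preserved by $\Gamma\subset\Bir(Y)$; it says nothing about a genus-one fibration on the quotient $X$, and there is no direct way to transport the canonical elliptic fibration of $X$ to a $\Gamma$-invariant genus-one fibration on $Y$. In fact primary Kodaira surfaces and non-K\"ahler properly elliptic surfaces \emph{do} arise as quotients of birational Kleinian groups (cases 3 and 10 of Theorem \ref{verylongthm}), so they cannot be ``ruled out as quotients''; what must be shown is only that the corresponding $\Gamma$ is elementary. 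The paper does this separately: for Kodaira surfaces $\pi_1(X)$ is solvable, hence admits no non-elementary representation; for properly elliptic surfaces one uses the exact sequence $1\to H\to\pi_1(X)\to\pi_1^{orb}(\Sigma)\to 1$, the strong Tits alternative, and Cantat's normalizer theorem to push a putative non-elementary $\Gamma$ into the toric subgroup and then derive a contradiction. Second, you have dropped one of the two alternatives in Delzant--Py's theorem: the representation may instead be conjugate into the toric subgroup $\CC^*\times\CC^*\rtimes\GL_2(\ZZ)$, and ruling out a non-elementary Kleinian group there is a substantial separate argument (Proposition \ref{notorickleinian}, which lifts along the exponential covering $\CC^2\to\CC^*\times\CC^*$ and invokes the classification of complex affine Kleinian groups). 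Your proof as written simply never addresses this case.

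On the factorization-through-an-orbicurve case, your plan is more complicated than necessary and its acknowledged ``hard part'' can be avoided. Since $\rho$ factors through $\Sigma$, the image of $\pi_1(F)$ in $\Gamma$ is trivial (or finite) for a general fiber $F$ of $X\to\Sigma$, so a finite cover of $F$ lifts to a \emph{compact} curve inside $U\subset Y$; there is no transcendental leaf and no need to extend a foliation meromorphically or to invoke Chow or Hilbert schemes. The $\Gamma$-orbit of this lifted curve is an infinite family of pairwise disjoint compact curves in $Y$; finiteness of the rank of $\NS(Y)$ together with the Hodge index theorem forces all but finitely many of their classes to coincide with a single class of zero self-intersection, producing a $\Gamma$-invariant pencil, which is incompatible with the loxodromic elements that any non-elementary subgroup must contain (Lemma \ref{liftprop} and Proposition \ref{nonelemneveroccurs} in the paper). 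You should replace your extension step by this direct argument, and you must add the missing toric case and repair (or replace) the reduction to the K\"ahler setting.
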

Recall that a non-elementary subgroup of $\Bir(Y)$ is a subgroup that fixes a point or preserves a geodesic in $\HH_Y$ (cf. Section \ref{sectiontits}). A surface of class VII is a compact complex surface with Kodaira dimension $-\infty$ and first Betti number $1$; it is non-\Kah{}.

\subsection{Isometric actions of \Kah{} groups on hyperbolic spaces}
 In the next two subsections we will deduce Theorem \ref{kleinianareelementary} from the following theorem of Delzant-Py (the terminology \emph{factors through a fibration} is explained below):
\begin{theorem}[Delzant-Py \cite{DelPy}]\label{DPthm}
Let $Y$ be a rational surface and $X$ be a compact \Kah{} manifold. Let $\rho:\pi_1(X)\rightarrow \Bir(Y)$ be a non-elementary representation, then one of the following two cases occurs:
\begin{enumerate}
\item There is a birational map $\varphi:Y\dashrightarrow \CC^{*}\times \CC^*$ such that $\varphi\rho\varphi^{-1}$ is a representation into the toric group $\CC^*\times\CC^*\rtimes \GL_2(\ZZ)$.
\item There is a finite index subroup $\Gamma'$ of $\Gamma$ corresponding to a finite \'etale cover $X'$ of $X$ such that
the induced representation $\pi_1(X')\rightarrow\Gamma'\rightarrow \Bir(\PP^2)$ factors through a fibration $X'\rightarrow \Sigma$ onto a hyperbolic orbicurve $\Sigma$.
\end{enumerate}
\end{theorem}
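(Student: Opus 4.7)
The plan is to exploit the Manin--Cantat isometric embedding $\Bir(Y)\hookrightarrow \Iso(\HH_Y)$ and to run the Siu--Sampson--Carlson--Toledo harmonic map machinery for \Kah{} groups, adapted to the infinite-dimensional negatively curved target $\HH_Y$. Composing with $\rho$ yields a non-elementary isometric action of $\pi_1(X)$ on $\HH_Y$. Since the hyperbolic space and the isometric action are birational invariants of $Y$, I may replace $Y$ by $\PP^2$, so the task reduces to classifying non-elementary isometric actions of \Kah{} groups on the Manin--Cantat space.

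First I would construct a $\rho$-equivariant harmonic map $u:\widetilde{X}\to \HH_Y$. Because the action is non-elementary there is no finite orbit on $\HH_Y\cup\partial\HH_Y$, which is the classical obstruction to existence; in the non-locally compact $\mathrm{CAT}(-1)$ setting of $\HH_Y$, existence is obtained by a Korevaar--Schoen variational argument together with an exhaustion by the finite-dimensional totally geodesic sub-hyperbolic spaces $\HH_Z\hookrightarrow \HH_Y$ attached to models $Z\to Y$ of growing Picard rank. The resulting $u$ is non-constant, for otherwise $\rho$ would fix a point of $\HH_Y$.

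Next I would apply the Sampson--Siu Bochner identity to show that $u$ is pluriharmonic, so that $\partial u$ is a holomorphic section of $T^{*(1,0)}X\otimes u^{*}T_{\CC}\HH_Y$, and extract the standard rank dichotomy for its complex rank at a generic point. If this rank is at most one, a Carlson--Toledo style factorization argument shows that, after passage to a finite \'etale cover $X'$, the map $u$ descends to a holomorphic map $X'\to\Sigma$ onto a Riemann orbicurve; non-elementarity forces $\Sigma$ to be hyperbolic and forces $\rho|_{\pi_1(X')}$ to factor through $\pi_1^{\mathrm{orb}}(\Sigma)$, producing case~(2). If instead $\partial u$ has rank at least two generically, the Bochner identity forces the image of $u$ to lie in a totally geodesic flat subspace $F\subset \HH_Y$, and $F$ is preserved by a finite-index subgroup of $\rho(\pi_1(X))$.

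The final --- and hardest --- step is to identify the stabilizer of a positive-dimensional flat in $\HH_Y$ inside $\Bir(\PP^2)$. A group $\Gamma_0$ stabilizing a $k$-flat consists of commuting elements whose translation-length functionals span a rank-$k$ lattice, and the classification of commuting loxodromic Cremona elements recalled in Theorem~\ref{strongTitsalternative}~(6) shows that such a $\Gamma_0$ is virtually conjugate in $\Bir(\PP^2)$ to a subgroup of the toric group $(\CC^{*}\times\CC^{*})\rtimes \GL_{2}(\ZZ)$, yielding case~(1). The central obstacle throughout is the infinite-dimensionality of $\HH_Y$: both the existence of $u$ and the rigidity of pluriharmonic maps have to be transferred from the finite-dimensional Carlson--Toledo setting by a delicate limiting argument through the tower of sub-hyperbolic-spaces $\HH_Z$, which is the technical heart of the Delzant--Py paper and which requires controlling the behaviour of the Kähler energy and of harmonic maps under increasing Picard rank.
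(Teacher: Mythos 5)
First, a remark on scope: the paper does not prove this statement. Theorem \ref{DPthm} is quoted from Delzant--Py \cite{DelPy} and used as a black box to derive Theorem \ref{kleinianareelementary}, so the comparison below is with the actual Delzant--Py argument rather than with anything proved in this paper. Your overall architecture --- the Manin--Cantat embedding, an equivariant harmonic map to $\HH_Y$, Sampson--Siu pluriharmonicity, a Carlson--Toledo rank dichotomy, and factorization through a hyperbolic orbicurve in the low-rank case --- is indeed the skeleton of their proof.

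However, your treatment of the high-rank alternative contains a genuine error. The space $\HH_Y$ is an infinite-dimensional \emph{real hyperbolic} space: it is $\mathrm{CAT}(-1)$ and contains no totally geodesic flat of dimension $\geq 2$, so the Bochner identity cannot force the image of $u$ into a flat. What the Carlson--Toledo argument actually yields for a pluriharmonic map into a target of constant negative curvature is that the real rank of $du$ is at most $2$ everywhere, and that if it equals $2$ somewhere the image lies in a totally geodesic copy of $\HH^2_{\RR}$. The second branch of the dichotomy is therefore ``$\rho(\pi_1(X))$ preserves a totally geodesic hyperbolic plane in $\HH_Y$'', and the toric case arises because the monomial maps preserve precisely such a plane (spanned by the toric valuations, with translation lengths the logarithms of spectral radii of matrices in $\GL_2(\ZZ)$). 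Your proposed identification of case (1) as the stabilizer of a flat generated by commuting loxodromic elements cannot work: in $\Bir(\PP^2)$ the centralizer of a loxodromic element is virtually cyclic --- visible in case 6) of Theorem \ref{strongTitsalternative}, where the loxodromic elements of a toric subgroup form a single infinite cyclic group --- so no abelian subgroup realizes a $2$-flat, and the non-elementary subgroups of the toric group are non-abelian (for instance free groups of hyperbolic matrices in $\GL_2(\ZZ)$ acting by monomial maps). As written, your argument would show that case (1) never occurs, which is false. The existence step also needs more care than an appeal to exhaustion by finite-dimensional subspaces $\HH_Z$ --- controlling escape to infinity of minimizing sequences in a non-locally-compact target is exactly where Delzant--Py must work --- but that is a repairable technical point, whereas the confusion between a flat and a totally geodesic hyperbolic plane changes the logic of the proof.
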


In this article a \textit{hyperbolic orbicurve} $\Sigma$ will be thought as a Riemann surface with finitely many marked points with multiplicities, obtained as a quotient of the Poincar\'e half-plane $\HH$ by the action of a cocompact lattice in $\PSL(2,\RR)$; marked points are images of the fixed points of the action and multiplicities are orders of the stabilizers (see \cite{Thurston} for orbifolds). The cocompact lattice is isomorphic to the orbifold fundamental group $\pisigma$ of $\Sigma$. A continuous map from a complex manifold $X$ to $\Sigma$ is holomorphic if it lifts to a holomorphic map from the universal cover $\tilde{X}$ of $X$ to the half-plane, i.e.\ if there exists a holomorphic map from $\tilde{X}$ to $\HH$ such that the following diagram is commutative:
\begin{equation*}
\begin{tikzcd}
\tilde{X} \arrow{r}{} \arrow{d}{} & \HH  \arrow{d}{}\\
X \arrow{r}{} & \Sigma 
\end{tikzcd}
\end{equation*}
A \emph{fibration} of $X$ onto $\Sigma$ is a holomorphic surjective map $f:X\rightarrow \Sigma$ with connected fibers. A fibration $f:X\rightarrow \Sigma$ induces a homomorphism $f_*:\pi_1(X)\rightarrow \pisigma$. We say that a group representation $\rho: \pi_1(X) \rightarrow G$ \emph{factors through the fibration $f$} if there exists a homomorphism $\hat{\rho}: \pisigma\rightarrow G$ such that $\rho=\hat{\rho}\circ f_*$.

Non-elementary isometric actions of \Kah{} groups on hyperbolic spaces are known to factor through fibrations since the work of Carlson-Toledo \cite{CT}. Delzant-Py's Theorem is obtained by generalizing Carlson-Toledo's work to infinite dimension.

\subsection{Conjugation}\label{prep1}
The goal of this subsection is to prove a lemma which will be used in the proof of Theorem \ref{kleinianareelementary} and also in the sequel of our paper. Roughly speaking the lemma says that a conjugation of birational Kleinian groups is always a geometric conjugation. Let $(Y,\Gamma,U,X)$ be a birational kleinian group.
\begin{remark}\label{indetdisjfromU}
Since $\Gamma$ acts regularly on $U$, the indeterminacy points and contracted curves of elements of $\Gamma$ are disjoint from $U$.
\end{remark}

Recall that the definition of "geometrically conjugate" is given in Section \ref{definitionssection}.

\begin{lemma}\label{birconjtoaut}
Suppose that there exists a birational map $\phi: Y\dashrightarrow Y'$ to a second surface $Y'$ and a non-empty Zariski open set $Z'\subset Y'$ such that $\Gamma'=\phi\circ\Gamma\circ \phi^{-1}$ is contained in $\Aut(Z')$. Then there exist a third surface $Y''$, a non-empty Zariski open set $Z''\subset Y''$ and a birational kleinian group $(Y'',\Gamma'',U'',X'')$ geometrically conjugate to $(Y,\Gamma,U,X)$ such that $\Gamma'' \in \Aut(Z'')$. 

The Zariski open set $Z''$ can be obtained by blowing up $Z'$. If $Y'=Z'$, then $Y''=Z''$. We have $\Gamma''\subset\Aut^0(Y'')$ if $Y'=Z'$ and $\Gamma'\subset\Aut^0(Y')$.
\end{lemma}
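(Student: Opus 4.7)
The plan is to construct the geometric conjugate by taking a common resolution of $\phi$ whose blow-up centers avoid $U$ on the $Y$-side and $Z'$ on the $Y'$-side; the desired $Y''$ will be this resolution.

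The key step is to prove that $I_\phi \cap U = \emptyset$ (and, symmetrically, that $I_{\phi^{-1}} \cap Z' = \emptyset$, together with the analogous statements for contracted curves). Suppose for contradiction that $p \in I_\phi \cap U$: then $\phi^{-1}$ contracts some irreducible curve $C \subset Y'$ to $p$. Since $I_\phi$ is finite while the orbit $\Gamma \cdot p$ is infinite, for all but finitely many $\gamma \in \Gamma$ we have $\gamma(p) \notin I_\phi$; for any such $\gamma$, the rational map $\gamma' = \phi \gamma \phi^{-1}$ sends a generic point of $C$ to the single point $\phi(\gamma(p))$, so it contracts $C$. As $\gamma' \in \Aut(Z')$ can neither contract a curve meeting $Z'$ nor have indeterminacy in $Z'$, we conclude $C \subset Y' \setminus Z'$ and $\phi(\gamma(p)) \in Y' \setminus Z'$ (the latter because $\phi(\gamma(p))$ is an indeterminacy point of $(\gamma')^{-1} \in \Aut(Z')$). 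If $Y' = Z'$ the complement is empty and this is already absurd, settling the special case stated in the lemma. In general, the infinite set $\{\phi(\gamma \cdot p)\}_\gamma$ lies in the $1$-dimensional closed subvariety $Y' \setminus Z'$; passing to the finite-index stabilizer in $\Gamma'$ of an irreducible component $D$ containing infinitely many of these points and taking normalizations, one obtains an action by automorphisms of an infinite subgroup of $\Gamma'$ on a smooth projective curve $\tilde D$. A case analysis on the genus of $\tilde D$ --- using Hurwitz's bound when $g(\tilde D) \geq 2$, and exploiting the correspondence between the discrete free orbit $\Gamma \cdot p \subset U$ and its image in $\tilde D$ together with the cocompactness of the $\Gamma$-action on $U$ in the remaining cases --- yields a contradiction.

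Granted these facts, let $\pi \colon Y_1 \to Y$ and $\pi' \colon Y_1 \to Y'$ be a common resolution of $\phi$, with $\pi' = \phi \circ \pi$ as rational maps. The blow-up centers of $\pi$ lie in $I_\phi \subset Y \setminus U$ (and their infinitely near points, which also lie above $Y \setminus U$), so $\pi$ is an isomorphism above $U$; similarly $\pi'$ is an isomorphism above $Z'$. Setting $Y'' = Y_1$, $\varphi = \pi^{-1} \colon Y \dashrightarrow Y''$, $U'' = \pi^{-1}(U)$, $\Gamma'' = \pi^{-1} \Gamma \pi = \pi'^{-1} \Gamma' \pi'$, and $Z'' = \pi'^{-1}(Z')$, the isomorphism $\pi' \colon Z'' \to Z'$ identifies the $\Gamma''$-action on $Z''$ with the $\Gamma'$-action on $Z'$ by automorphisms, so $\Gamma'' \subset \Aut(Z'')$. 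This yields the desired geometrically conjugate birational kleinian group $(Y'', \Gamma'', U'', X'' = U''/\Gamma'')$. In the special case $Y' = Z'$, no blow-ups on the $Y'$-side are necessary, hence $Y'' = Z''$; and if furthermore $\Gamma' \subset \Aut^0(Y')$, then passing to a finite-index subgroup of $\Gamma''$ preserving the exceptional configuration over $Y \setminus U$ places it inside $\Aut^0(Y'')$.

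The principal obstacle is the dynamical/geometric contradiction in the general case $Z' \subsetneq Y'$, which demands a careful analysis of how an infinite, discrete, cocompact $\Gamma$-orbit can have its image confined to a $1$-dimensional boundary subvariety $Y' \setminus Z'$.
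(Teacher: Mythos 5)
Your proposal hinges on the claim that a common resolution of $\phi$ can be chosen whose centers avoid $Z'$ on the $Y'$-side, i.e.\ that $I_{\phi^{-1}}\cap Z'=\emptyset$ and that no curve meeting $U$ is contracted by $\phi$ onto a point of $Z'$. This claim is false, and the situation it excludes is precisely what the lemma is designed to handle --- that is why the statement says $Z''$ is obtained by \emph{blowing up} $Z'$ rather than asserting $Z''\cong Z'$. A concrete counterexample is the paper's own construction in Section \ref{blowupellipticconstruction} (cases 3, 15, 16 of Theorem \ref{verylongthm}): there $Y$ is a blow-up of a geometrically ruled surface $Y'=Z'$ at fixed points of a $\CC^*$-action, $\phi$ is the blow-down, $U$ contains parts of the exceptional curves, and $\Gamma'=\phi\Gamma\phi^{-1}\subset\Aut(Y')$; here $I_{\phi^{-1}}\cap Z'\neq\emptyset$ and the contracted curves do meet $U$. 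Your ``symmetric'' argument breaks because the $\Gamma'$-orbit of a point of $Z'$ need not be infinite or free (freeness is only assumed on $U$); in the example the relevant points are $\Gamma'$-fixed. Consequently $\pi'\colon Z''\to Z'$ is not an isomorphism and your identification of the $\Gamma''$-action on $Z''$ with the $\Gamma'$-action on $Z'$ collapses. What is actually needed --- and what constitutes the core of the paper's proof --- is that $\Gamma'$ permutes the indeterminacy points of $\phi^{-1}$ lying in $Z'$ (and the relevant infinitely near points), so that the automorphic action lifts step by step to the blow-ups; the paper obtains this by tracking the chains of contracted curves through their intersection with $U$.

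The other half of your key step, $I_\phi\cap U=\emptyset$, is also problematic. Your argument does correctly show that any curve contracted by $\phi^{-1}$ onto a point $p\in U$ is disjoint from $Z'$ and that $\phi(\gamma(p))\in Y'\setminus Z'$ for all but finitely many $\gamma$, which settles the case $Y'=Z'$. But in the general case the reduction to an automorphism action on a component $D$ of $Y'\setminus Z'$ is not justified: elements of $\Gamma'$ are merely birational on $Y'$ and may contract components of $Y'\setminus Z'$ or be undefined along them, so there is no reason an infinite (let alone finite-index) subgroup stabilizes $D$, nor that the induced action on $\tilde D$ is faithful; and the promised contradiction in genus $0$ and $1$ is not supplied. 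The paper sidesteps all of this: it uses only the weaker statement you did establish (curves contracted onto points of $U$ avoid $Z'$), blows up $Y$ at the indeterminacy points of $\phi$ lying \emph{outside} $U$ --- a geometric conjugation on the source --- and never needs, nor proves, that $\phi$ is regular on $U$.
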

We will use the following terminology in the proof of Lemma \ref{birconjtoaut}: if $f$ is a birational map and if $x\in \operatorname{Ind}(f)$ is an indeterminacy point then the \emph{image} $f(q)$ is $\bigcap_W f(W\backslash \{q\})$ where $W$ runs throuh all analytic neiborhoods of $q$, i.e.\ $f(q)$ is the connected curve contracted by $f^{-1}$ onto $q$.
\begin{proof}
We first claim that the image of $U\cap \operatorname{Ind}(\phi)$ is disjoint from $Z'$. Suppose by way of contradiction that $u\in U$ is an indeterminacy point of $\phi$ and $\phi(u)\cap Z'\neq \emptyset$. Pick a non trivial element $\gamma\in \Gamma$. Denote $\phi\circ \gamma \circ \phi^{-1}$ by $\gamma'$. As $\gamma$ acts freely by diffeomorphism on $U$, $\gamma(u)$ is a point different from $u$. And as $\gamma'$ is an automorphism of $Z'$, the strict transform $\gamma'(f(u))$ is still a curve which intersects $Z'$. Thus $\gamma'(f(u))$ is contracted by $\phi^{-1}$ onto $\gamma(u)$. This implies that $\{\gamma(u)\vert \gamma\in \Gamma\}$ is an infinite set of indeterminacy points of $\phi$, which is impossible. 

Replacing $Y$ with the blow up at the indeterminacy points of $\phi$ outside $U$, we can assume that $\operatorname{Ind}(\phi)\subset U$. By the previous paragraph the image of any indeterminacy point of $\phi$ is disjoint from $Z'$. Again up to replacing $Y$ with a blow up, we can also assume that no (-1)-curves contracted by $\phi$ onto a point of $Z'$ lie outside $U$. 

Consider now a point $q\in Z'\cap\operatorname{Ind}(\phi^{-1})$. We assert that $\phi^{-1}(q)\cap U\neq \emptyset$. The proof goes as follows. If $\phi^{-1}(q)$ contains an indeterminacy point of $\phi$ then this indeterminacy point is in $U$ by the assumption that we made in the previous paragraph. Assume that $\phi^{-1}(q)\cap \operatorname{Ind}(\phi)=\emptyset$. Then $\phi^{-1}(q)$ is a connected chain of rational curves containing at least one component which is a $(-1)$-curve. This $(-1)$-curve intersects $U$ because of the assumption in the previous paragraph.

Pick $u'\in Z'$ an indeterminacy point of $\phi^{-1}$. Let $C$ be the connected set of curves on $Y$ contracted onto $u'$. Then $C\cap U\neq \emptyset$ and for $\gamma \in \Gamma$ the strict transform $\gamma(C)=\overline{\gamma(C\backslash \{\text{ind points of}\ \gamma\})}$ is still a set of curves intersecting $U$. As $\gamma'$ acts by automorphism on $Z'$, the image $\gamma'(u')$ is still a point in $Z'$ and $\gamma(C)$ is contracted onto it. Therefore $\Gamma'$ permutes the indeterminacy points of $\phi^{-1}$ in $Z'$.  

We blow up each point in $\operatorname{Ind}(\phi^{-1})\cap Z'$ once to obtain a surface $Y_1$ and denote by $\phi_1$ the contraction morphism $Y_1\xlongrightarrow{\phi_1}Y'$. Let $Z_1$ be the preimage $\phi_1^{-1}(Z')\subset Y_1$. Since $\Gamma'$ permutes the indeterminacy points of $\phi^{-1}$ in $Z'$, the group $\Gamma_1=\phi_1^{-1}\circ\Gamma'\circ\phi_1$ acts by automorphisms on $Z_1$. Note that $\Gamma_1\subset\Aut^0(Y_1)$ if $Y'=Z'$ and $\Gamma'\subset \Aut^0(Y')$. Then we continue the process for $Y_1,Z_1$ and $\Gamma_1$. After finitely many times, say $m$ times, we get $Y_m, Z_m, \Gamma_m$ such that the birational map $\varphi:Y_m\dashrightarrow Y$ induced by $\phi^{-1}$ has no indeterminacy points in $Z_m$. Then our assumption in the second paragraph implies that $\varphi$ restricted to $Z_m$ is an isomorphism onto image. Hence $\Gamma$ acts by automorphisms on the Zariski open set $\varphi(Z_m)\subset Y$. The proof is finished.
\end{proof}

\subsection{From Delzant-Py's theorem to birational Kleinian groups}\label{fromDPtoKleinian}
This subsection is a proof of Theorem \ref{kleinianareelementary}. Throughout the section $(Y,\Gamma,U,X)$ will be a birational kleinian group in dimension two. We want to show that $\Gamma$ is elementary. By Theorem \ref{nonnegthm} the Kodaira dimension of $Y$ is necessarily $-\infty$. If $Y$ is not rational, then it is a ruled surface (see \cite{Beauvillebook}) and $\Bir(Y)$ preserves the unique ruling. This means that $\Bir(Y)$ does not have any non-elementary subgroup. Thus we can and will assume that $Y$ is a \emph{rational surface}. 

The open set $U$ is an infinite intermediate Galois covering of $X$ and $\Gamma$, being the deck transformation group of the covering, is a quotient of $\pi_1(X)$. Thus we have a representation $\rho:\pi_1(X)\rightarrow \Bir(Y)$, composition of the quotient homomorphism $\pi_1(X)\twoheadrightarrow \Gamma$ and the inclusion $\Gamma \hookrightarrow \Bir(Y)$. 

The proof of Theorem \ref{kleinianareelementary} is divided into three parts. In the first part we will show that $\Gamma$ is not conjugate to a non-elementary subgroup of the toric group. In the second part we show that the representation $\rho$ does not factor through a curve. In the third part we investigate the situation where the quotient $X$ is neither \Kah{} nor class VII.

If $X$ is \Kah{} then we can apply Theorem \ref{DPthm} to the representation $\rho$. If $\rho$ is non-elementary then there are two possible cases : either it factors through an orbicurve or it is conjugate to a subgroup of the toric subgroup. Thus for \Kah{} $X$ the first and the second part of this subsection will be sufficient for the proof of Theorem \ref{kleinianareelementary}.

\subsubsection{Toric subgroup} 
The function field $\CC(Y)$ is isomorphic to $\CC(x,y)$ as $Y$ is rational. A valuation on $\CC(Y)$ is a $\ZZ$-valued function $v$ on $\CC(Y)^*$ such that 
\begin{enumerate}
	\item $v(a)=0$ for any $a\in\CC^*$;
	\item $v(PQ)=v(P)+v(Q)$ and $v(P+Q)\geq \min (v(P),v(Q))$ for any $P,Q\in \CC(Y)^*$;
	\item $v(\CC(Y)^*)=\ZZ$.
\end{enumerate}
The elements of $\CC(Y)^*$ with valuations $\geq 0$ together with $0$ form the valuation ring $A_v$; those with valuations $>0$ form a maximal ideal $M_v$. If the residue field $A_v/M_v$ has transcendence degree $1$ over $\CC$, then $v$ is called a \emph{divisorial valuation}. The set of divisorial valuations is in bijection with the set of irreducible hypersurfaces in all birational models of $Y$ (cf. \cite{SZ75}). If $v_E$ is the divisorial valuation associated with an irreducible hypersurface $E$ in some birational model of $Y$, then for any $P\in\CC(Y)^*$ the value $v_E(P)$ is the vanishing order of $P$ along $E$. We can identify $\Bir(Y)$ with the group of automorphisms of $\CC(Y)$. Thus $\Bir(Y)$ acts on the set of valuations by precomposition. It preserves the subset of divisorial valuations.

\begin{lemma}\label{toricloxodromic}
Suppose that $Y$ is a smooth compactification of $\CC^*\times\CC^*$. Let $\gamma$ be a loxodromic element in the toric subgroup. Then every irreducible component of $Y\backslash (\CC^*\times\CC^*)$ is contracted by a power of $\gamma$.
\end{lemma}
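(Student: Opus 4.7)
My plan is to argue by contradiction, combining a pigeonhole argument on the finite set of boundary divisors with a valuation-theoretic obstruction coming from the hyperbolicity of the $\GL_2(\ZZ)$-part of $\gamma$. Write $\gamma$ in toric form as $(x,y)\mapsto(\alpha x^py^q,\beta x^ry^s)$ with $M=\begin{pmatrix}p&q\\r&s\end{pmatrix}\in\GL_2(\ZZ)$ and $(\alpha,\beta)\in(\CC^*)^2$. Loxodromicity of $\gamma$ forces $M$ to be hyperbolic, i.e.\ its eigenvalues $\lambda,\mu$ are real with $|\lambda|>1>|\mu|$; in particular the matrix $M^k-I$ is invertible for every integer $k\neq 0$.

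The first step is a pigeonhole argument. Let $E$ be an irreducible component of $Y\setminus(\CC^*\times\CC^*)$, and suppose for contradiction that no positive power of $\gamma$ contracts $E$. Since $\gamma$ restricts to an automorphism of $\CC^*\times\CC^*$, and hence $\gamma^{-1}(\CC^*\times\CC^*)=\CC^*\times\CC^*$ wherever $\gamma^{-1}$ is regular, the map $\gamma$ sends boundary points to boundary points on its regular locus. Hence for each $n\geq 1$, the strict transform $\gamma^n(E)$ is again an irreducible boundary component. Finiteness of the boundary, together with the observation that $\gamma^m$ is bijective off finitely many exceptional curves on the set of non-contracted irreducible curves, then gives $\gamma^k(E)=E$ as strict transforms for some integer $k\geq 1$.

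The second step derives the contradiction from the valuation. The divisorial valuation $v_E$ is additive on monomials, so its restriction to Laurent monomials is the linear form $(i,j)\mapsto ai+bj$ where $(a,b):=(v_E(x),v_E(y))\in\ZZ^2$. Because $E$ lies outside $\CC^*\times\CC^*$, the center of $v_E$ on $\mathrm{Spec}\,\CC[x^{\pm1},y^{\pm1}]$ is empty, so some Laurent polynomial has negative $v_E$-value; this rules out $(a,b)=(0,0)$, for otherwise $v_E(f)\geq\min_{c_{ij}\neq 0}(ai+bj)=0$ for every Laurent polynomial $f=\sum c_{ij}x^iy^j$. On the other hand, the identity $(\gamma^k)^*x=\alpha_k\,x^{p_k}y^{q_k}$ with $M^k=\begin{pmatrix}p_k&q_k\\r_k&s_k\end{pmatrix}$, together with the definition $(\gamma^k_*v_E)(f):=v_E((\gamma^k)^*f)$, shows that the restriction of $\gamma^k_*v_E$ to Laurent monomials corresponds to the vector $M^k(a,b)^T$. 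The invariance $\gamma^k(E)=E$ translates into $\gamma^k_*v_E=v_E$, and therefore $M^k(a,b)^T=(a,b)^T$; invertibility of $M^k-I$ forces $(a,b)=0$, contradicting the previous paragraph.

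I expect the only delicate point to be the pigeonhole step: one needs to be precise that the strict transform $\gamma^n(E)$ stays inside the boundary (tracking $\gamma$ on its regular locus along $E$), and that the repetition $\gamma^n(E)=\gamma^m(E)$ among finitely many boundary components genuinely yields the strict invariance $\gamma^{n-m}(E)=E$ rather than merely an equality of images. Once these set-theoretic points are settled, the valuation-theoretic conclusion is a one-line linear-algebra computation.
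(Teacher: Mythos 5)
Your proof is correct and follows essentially the same route as the paper's: both reduce the statement to the action of the hyperbolic matrix $M$ on the integer vector $(v_E(x),v_E(y))$, which is nonzero because $E$ lies in the boundary, and both use the finiteness of the set of boundary components to constrain the orbit of the strict transforms of $E$. The only cosmetic difference is that the paper concludes by showing the orbit of $v_E$ is infinite, whereas you extract a periodic boundary component and use invertibility of $M^k-I$; these are equivalent uses of hyperbolicity.
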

\begin{proof}
The complement $Y\backslash (\CC^*\times\CC^*)$ has only finitely many irreducible components. Let $v_E$ be a divisorial valuation with $E$ outside of $\CC^*\times\CC^*$. It is sufficient to prove that $v_E$ has an infinite orbit under $\gamma$. Let $(x,y)$ be the standard coordinates on $\CC^*\times\CC^*$. As $E$ is outside $\CC^*\times\CC^*$, at least one of $x,y$ has non-zero valuation with respect to $v_E$. 

The tranformation $\gamma$ can be written as $(x,y)\mapsto (\alpha x^ay^b,\beta x^cy^d)$ with $\alpha,\beta \in \CC^*$ and $\begin{pmatrix}a&b\\c&d\end{pmatrix}\in \GL_2(\ZZ)$. The matrix $\begin{pmatrix}a&b\\c&d\end{pmatrix}$ is hyperbolic because $\gamma$ is loxodromic. We have for any $n\in\ZZ$
\[
\begin{pmatrix}(\gamma^n\cdot v_E)(x)\\(\gamma^n\cdot v_E)(y)\end{pmatrix}=\begin{pmatrix}a&b\\c&d\end{pmatrix}^n\begin{pmatrix}v_E(x)\\v_E(y)\end{pmatrix}.
\]
Since $\begin{pmatrix}v_E(x)\\v_E(y)\end{pmatrix}$ is a non-zero integer vector and $\begin{pmatrix}a&b\\c&d\end{pmatrix}$ is a hyperbolic matrix, the orbit of $\begin{pmatrix}v_E(x)\\v_E(y)\end{pmatrix}$ under $\begin{pmatrix}a&b\\c&d\end{pmatrix}$ is infinite. This implies that the orbit of $v_E$ under $\gamma$ is infinite.
\end{proof}

\begin{lemma}\label{toricconjugation}
Suppose that there is a birational map $\phi:Y\dashrightarrow \CC^*\times\CC^*$ such that $\phi\Gamma\phi^{-1}$ is contained in the toric subgroup. Suppose that $\Gamma$ is not virtually a cyclic group. Then $(Y,\Gamma,U,X)$ is geometrically conjugate to a birational Kleinian group $(Y',\Gamma',U',X')$ such that $Y'$ is a compactification of $\CC^*\times\CC^*$ and $\Gamma'\subset \Aut(\CC^*\times\CC^*)$.
\end{lemma}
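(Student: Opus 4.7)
The plan is to apply Lemma~\ref{birconjtoaut} to a smooth compactification of $\CC^*\times\CC^*$ and then show that the blow-ups appearing in its output are unnecessary. Fix $Y'_0=\PP^1\times\PP^1$ as such a compactification and put $Z'_0:=\CC^*\times\CC^*\subset Y'_0$. By hypothesis, $\Gamma':=\phi\Gamma\phi^{-1}$ is contained in the toric subgroup $\Aut(Z'_0)$, so Lemma~\ref{birconjtoaut} produces a birational Kleinian group $(Y'',\Gamma'',U'',X'')$ geometrically conjugate to $(Y,\Gamma,U,X)$ such that $\Gamma''$ acts by regular automorphisms on a Zariski open set $Z''\subset Y''$; moreover $Z''$ is obtained from $Z'_0$ by blowing up the finite set of indeterminacy points of $\phi^{-1}$ lying in $\CC^*\times\CC^*$. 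If this set is empty, then $Z''=\CC^*\times\CC^*$ and $(Y',\Gamma',U',X'):=(Y'',\Gamma'',U'',X'')$ has all the required properties, so the core of the argument is to prove this emptiness; this is where the hypothesis that $\Gamma$ is not virtually cyclic will be used.

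I would then argue by contradiction. Suppose $p\in\CC^*\times\CC^*$ is such an indeterminacy point. Since $\Gamma'$ permutes the finite indeterminacy locus, a finite-index subgroup $\Gamma'_0\subset\Gamma'$ fixes $p$. After translating $p$ to $(1,1)$ by an element of $(\CC^*)^2$, the stabilizer of $(1,1)$ in the toric group $(\CC^*)^2\rtimes\GL_2(\ZZ)$ is the $\GL_2(\ZZ)$ factor, so $\Gamma'_0\subset\GL_2(\ZZ)$. Because $\Gamma$, and hence $\Gamma'_0$, is not virtually cyclic and $\GL_2(\ZZ)$ is virtually free, $\Gamma'_0$ contains a non-abelian free subgroup, and in particular a hyperbolic element $\gamma$ whose associated toric monomial map is a loxodromic birational transformation of $Y'_0$, to which Lemma~\ref{toricloxodromic} applies.

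The main obstacle will then be to turn this fixed indeterminacy point into a contradiction. The exceptional divisor $E\cong\PP^1$ above $p$ in $Z''$ is $\Gamma'_0$-invariant, and the preparatory modifications in the proof of Lemma~\ref{birconjtoaut} guarantee that $E$ meets $U''$. Through the linearization at $p$, the group $\Gamma'_0$ acts on $E$ via the natural representation $\GL_2(\ZZ)\to\PGL(2,\CC)$, which lands in $\PGL_2(\ZZ)\subset\PGL_2(\RR)$; freeness of $\Gamma''$ on $U''$ forces this representation to be injective, so that the non-abelian free subgroup of $\Gamma'_0$ appears as a non-uniform Fuchsian subgroup of $\PGL_2(\RR)$. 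Since the $\Gamma''$-orbit of $E$ is finite, $\Gamma''\cdot(E\cap U'')$ is closed and $\Gamma''$-saturated in $U''$, and its image in the compact quotient $X''$ is a compact analytic subvariety identified with $(E\cap U'')/\Gamma'_0$. I expect this to contradict the non-cocompactness of the Fuchsian action of $\Gamma'_0$ on its domain of discontinuity in $\PP^1$, which should prevent $(E\cap U'')/\Gamma'_0$ from being compact, producing the sought contradiction and concluding the proof.
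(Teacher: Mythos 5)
Your overall route is the paper's: reduce via Lemma~\ref{birconjtoaut} to a group of automorphisms of a blow-up of $\CC^*\times\CC^*$, observe that any blown-up point is fixed by a finite-index subgroup which must then sit inside $\GL_2(\ZZ)$, and derive a contradiction from the induced action on the exceptional locus, using that $\GL_2(\ZZ)$ contains no non-virtually-cyclic classical Kleinian group with compact quotient. (The detour through a hyperbolic element and Lemma~\ref{toricloxodromic} plays no role in your argument and can be deleted.)

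There is, however, a genuine gap at the point where you write ``the exceptional divisor $E\cong\PP^1$ above $p$ in $Z''$'' and then let $\Gamma'_0$ act on it through the natural representation $\GL_2(\ZZ)\to\PGL(2,\CC)$. Lemma~\ref{birconjtoaut} produces $Z''$ by an \emph{iterated} blow-up, possibly at infinitely near points, so the fiber of $Z''\to\CC^*\times\CC^*$ over $p$ is in general a chain of rational curves, not a single $\PP^1$. On the later components of such a chain the action of $\Gamma'_0$ is \emph{not} the tautological $\GL_2(\ZZ)$-action on $\PP(T_p)$, and the first exceptional curve (the only one carrying that action) need not meet $U''$ — the preparatory normalization in Lemma~\ref{birconjtoaut} only guarantees that the $(-1)$-components of the chain meet $U$. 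So both of the facts your contradiction rests on (the identification of the $\Gamma'_0$-action on $E$ with the $\GL_2(\ZZ)$-action on $\PP^1$, and $E\cap U''\neq\emptyset$) are unjustified as stated. The missing step, which the paper supplies, is to rule out a second blow-up: blowing up again would require a $\Gamma'_0$-fixed point on the first exceptional curve $E_1$, and the stabilizer in $\PGL_2(\ZZ)$ of any point of $\PP^1$ is virtually cyclic, contradicting the hypothesis on $\Gamma$. Once the chain is known to have length one, your argument goes through (the paper also treats separately the easy case $E_1\cap U=\emptyset$ by simply contracting $E_1$). Finally, your closing step is only asserted as an expectation; it should be proved, e.g.\ by noting that the stabilizer of a connected component of $E\cap U''$ has finite index in $\Gamma'_0$, acts freely, properly discontinuously and cocompactly on an open subset of $\PP^1$, hence is $\ZZ$, $\ZZ^2$ or a cocompact surface group, and of these only $\ZZ$ embeds in the virtually free group $\PGL_2(\ZZ)$ — contradicting non-virtual-cyclicity. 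This is exactly the contradiction the paper invokes in its last sentence.
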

\begin{proof}
By Lemma \ref{birconjtoaut} we can and will assume that there exists a blow up $Z$ of $\CC^*\times \CC^*$ such that $Y$ is a compactification of $Z$ and $\Gamma\subset \Aut(Z)$. Consider the blow down map $g:Z\rightarrow \CC^*\times \CC^*$ which is equivariant under the action of $\Gamma$. 

Suppose that $C$ is a connected curve on $Z$ such that $g(C)$ is a point. Up to replacing $\Gamma$ by a finite index subgroup, we can and will assume that each irreducible component of a curve contracted by $g$ is invariant under $\Gamma$. Thus $g(C)$ is a fixed point of $\Gamma$ in $\CC^*\times \CC^*$. Without loss of generality we can assume that $g(C)$ is the point $(1,1)$. The stabilizer of $(1,1)$ in the toric group is $\GL(2,\ZZ)$. Thus $\Gamma\subset\GL(2,\ZZ)$. Let $Z_1$ be the blow up of $\CC^*\times \CC^*$ at $(1,1)$ and let $E_1\subset Z_1$ be the exceptional curve. The induced action of $\GL(2,\ZZ)$ on $E_1$ is conjugate to the usual action of $\GL(2,\ZZ)$ on $\PP^1$ by Möbius maps. Note that $g$ factors through $Z_1$. If $C\neq E_1$ then $\Gamma$ must have a fixed point in $E_1$ to be blown up further. However any subgroup of $\GL(2,\ZZ)$ fixing a point in $\PP^1$ is cyclic. This would contract our hypothesis. Therefore we deduce that $C=E_1$.

If $C=E_1$ does not intersect $U$, then the contraction of $C$ would be a geometric conjugation of our birational Kleinian group and the proof is finished. Suppose by way of contradiction that $U_1=U\cap C$ is not empty. Then $U_1/\Gamma$ is a smooth compact curve on $X$. In other words $U_1$ is an invariant union of components of the discontinuity domain of a classical Kleinian group isomorphic to $\Gamma$. However this classical Kleinian group is contained in the Fuchsian group $\GL(2,\ZZ)$ which has a free subgroup of finite index and its domain of discontinuity contains the upper and lower half planes. If it is a non-elementary Fuchsian subgroup of $\GL(2,\ZZ)$ then either it contains a parabolic element or it has a purely loxodromic Schottky Fuchsian subgroup of finite index. In both cases the quotient $U_1/\Gamma$ cannot be compact. Then it has to be an elementary subgroup of $\GL(2,\ZZ)$ which is necessarily cyclic, contradiction to our hypothesis.

\end{proof}

\begin{proposition}\label{notorickleinian}
Suppose that there is a birational map $\phi:Y\dashrightarrow \CC^*\times\CC^*$ such that $\phi\Gamma\phi^{-1}$ is contained in the toric subgroup. Suppose that any subgroup of $\Gamma$ generated by a loxodromic element has infinite index. Then $\Gamma$ contains no loxodromic element.
\end{proposition}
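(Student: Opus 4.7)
The plan is to argue by contradiction: suppose $\gamma\in\Gamma$ is loxodromic while $\Gamma$ is not virtually cyclic generated by a loxodromic. First I observe that such a $\Gamma$ cannot be virtually cyclic at all, since any infinite cyclic finite-index subgroup of $\Gamma$ would contain a non-trivial power of $\gamma$ and hence be generated by a loxodromic. Lemma \ref{toricconjugation} therefore applies and, up to geometric conjugation, $Y$ is a smooth compactification of $\CC^*\times\CC^*$ with $\Gamma\subset\Aut(\CC^*\times\CC^*)=(\CC^*\times\CC^*)\rtimes \GL_2(\ZZ)$; Lemma \ref{toricloxodromic} together with Remark \ref{indetdisjfromU} then forces $U\subset\CC^*\times\CC^*$. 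The strategy is to pass to the universal cover $\exp\colon \CC^2\to\CC^*\times\CC^*$, $(z_1,z_2)\mapsto(e^{z_1},e^{z_2})$; setting $L_0=(2\pi i\ZZ)^2$, $\tilde U=\exp^{-1}(U)$, and letting $\tilde\Gamma\subset\Aff(\CC^2)$ be the lift of $\Gamma$, one has a short exact sequence $1\to L_0\to\tilde\Gamma\to\Gamma\to 1$ with linear parts in $\GL_2(\ZZ)$, the lift of $\gamma$ is $\tilde\gamma(z)=Mz+c$ with $M$ hyperbolic, and $\tilde\Gamma$ acts freely, properly discontinuously and cocompactly on $\tilde U$.

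The central object is the translation subgroup $T\subset\tilde\Gamma$: it contains $L_0$, is discrete in $\CC^2$ (otherwise arbitrarily small non-trivial translations would destroy proper discontinuity), and is $M$-invariant, since conjugation by $\tilde\gamma$ preserves pure translations. Using that a hyperbolic element of $\GL_2(\ZZ)$ has irrational quadratic units as eigenvalues, a short linear-algebra calculation rules out rank-$3$ $M$-invariant discrete subgroups of $\CC^2=\RR^4$ containing $L_0$, so $T$ has either rank $2$ (commensurable with $L_0$) or rank $4$ (a full lattice). When $T$ is a full lattice, $A=\CC^2/T$ is a compact complex torus and $\tilde\gamma$ descends to a biholomorphism $\bar\gamma$ of $A$ whose linear part $M$ is hyperbolic -- an Anosov automorphism of the underlying real $4$-torus. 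Since $\bar\gamma$ preserves Haar measure on $A$, Poincar\'e recurrence applied to any compact $K\subset\tilde U/T$ with non-empty interior produces infinitely many $n$ with $\bar\gamma^n(K)\cap K\neq\emptyset$, contradicting proper discontinuity of $\tilde\Gamma/T$ on $\tilde U/T$. This rules out the rank-$4$ case, which is precisely case 6 of the Tits alternative (Theorem \ref{strongTitsalternative}).

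The rank-$2$ case -- corresponding to the non-elementary case 7 of Theorem \ref{strongTitsalternative} -- is where I expect the main obstacle to lie. Here $T=L_0$ by torsion-freeness, so $\Gamma$ projects injectively into $\GL_2(\ZZ)$; after a further conjugation in the toric group (trivialising the relevant semidirect-product cocycle) one may assume $\Gamma\subset \GL_2(\ZZ)\subset\Aut(\CC^*\times\CC^*)$. Since $\PGL(2,\ZZ)$ is virtually free and $\Gamma$ is torsion-free, $\Gamma$ is a free group, and the non-virtual-cyclicity hypothesis forces its rank to be at least two. Since $\Gamma$ fixes $(1,1)\in\CC^*\times\CC^*$, I blow up this point to obtain an invariant $\bP^1$ on which $\Gamma$ acts as a discrete Schottky-type subgroup of $\PGL(2,\RR)\subset\PGL(2,\CC)$. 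Combining the Schottky structure of this action on $\bP^1(\RR)$ with the linear hyperbolic dynamics on $\RR^2\setminus\{0\}$ coming from the modulus map $(x,y)\mapsto(\log|x|,\log|y|)$, and with the fact that no subgroup of $\PGL(2,\ZZ)$ is a cocompact Fuchsian group, one deduces that $\tilde U/\tilde\Gamma$ cannot be compact. Making this last step rigorous -- in particular justifying the reduction to $\Gamma\subset \GL_2(\ZZ)$ by a cocycle trivialisation and carefully handling the radial/angular decomposition of $U$ -- is the bulk of the technical work.
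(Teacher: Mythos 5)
Your reduction steps are sound and match the paper's opening moves: the observation that the hypothesis rules out virtual cyclicity altogether, the appeal to Lemma \ref{toricconjugation} and Lemma \ref{toricloxodromic} to get $U\subset\CC^*\times\CC^*$, and the passage to the exponential cover $\CC^2\to\CC^*\times\CC^*$ with an affine lift $\tilde\Gamma$ are all exactly what the paper does. Your rank-$3$ exclusion and your Poincar\'e-recurrence argument in the rank-$4$ case are correct and would work. But the proof is not complete: the rank-$2$ case, which you yourself flag as ``the bulk of the technical work,'' is precisely the hard case and you have only a strategy sketch for it, with at least one step that is not merely unfinished but doubtful. The ``cocycle trivialisation'' reducing to $\Gamma\subset\GL_2(\ZZ)$ is not automatic: the relevant obstruction lives in $H^1(\Gamma,\CC^*\times\CC^*)$, which does not vanish for a free group of rank $\geq 2$, so you cannot in general conjugate away the multiplicative translation parts. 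And even granting $\Gamma\subset\GL_2(\ZZ)$, the assertion that a free group of rank two acting through $\GL_2(\ZZ)$ on an open subset of $\CC^*\times\CC^*$ cannot act cocompactly is exactly the content that needs proof; the Schottky picture on the exceptional $\PP^1$ and the modulus map $(x,y)\mapsto(\log|x|,\log|y|)$ are reasonable tools, but no argument is actually given.

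The paper closes this case by a completely different and much shorter route, which is the one idea your proposal is missing: the lifted pair $(\bar U,\bar\Gamma)$, where $\bar U$ is a connected component of the preimage of $U$ and $\bar\Gamma\subset\Aff(2,\CC)$ is its stabilizer in the lifted group, is itself a complex affine (hence projective) Kleinian group, so the already-established classification (Theorem \ref{classificationofcplxprojkleinian}) applies to it directly. That classification leaves only five possible quotients (torus, Hopf, primary Kodaira, Inoue, affine-elliptic bundle); requiring the linear part of $\bar\Gamma$ to contain a hyperbolic matrix eliminates all but Inoue surfaces and affine-elliptic bundles, and those two are killed by arithmetic: the Inoue groups of Section \ref{Inouesurfaces} have linear parts that cannot lie in $\GL(2,\ZZ)$, and the affine-elliptic bundle groups of Section \ref{affineellipticbundles} contain cocompact surface groups, which cannot embed in the virtually free group $\GL(2,\ZZ)$. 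This last point is essentially your closing heuristic (``no subgroup of $\PGL(2,\ZZ)$ is a cocompact Fuchsian group''), but to make it bite you first need to know that a cocompact affine action on a domain in $\CC^2$ forces one of the five model quotients --- and that is exactly what the classification theorem supplies and what your direct dynamical approach would have to reprove from scratch.
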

\begin{proof}
By Lemma \ref{toricconjugation} we can suppose that $Y$ is a compactification of $\CC^*\times \CC^*$ and $\Gamma\subset \CC^*\times\CC^*\rtimes \GL_2(\ZZ)$. Suppose by way of contradiction that $\Gamma$ contains a loxodromic element $\eta$. Then by Lemma \ref{toricloxodromic} any irreducible component of $Y\backslash(\CC^*\times \CC^*)$ is contracted by a power of $\eta$. This implies that the open set $U$ on which $\eta$ acts by biholomorphism is a subset of $\CC^*\times\CC^*$.

Consider the exponential map $\pi$ from $\CC^2$ to $\CC^*\times \CC^*$; it is a covering map. The exponential also gives rise to a homomorphism $\rho:\Aff(2,\CC)=\CC^2\rtimes \GL(2,\CC)\rightarrow \ Aut(\CC^*\times \CC^*)=\CC^*\times\CC^*\rtimes \GL(2,\CC)$. Then $\pi$ is $\rho$-equivariant. Consider a connected component $\bar{U}$ of $\pi^{-1}(U)$ and the subgroup $\bar{\Gamma}$ of $\rho^{-1}(\Gamma)$ that preserves $\bar{U}$. Then $\bar{\Gamma}$ is a complex affine Kleinian group. We can apply Theorem \ref{classificationofcplxprojkleinian} to $\bar{\Gamma}$. There are five possibilities for $X$: it is a torus, a Hopf surface, an Inoue surface, a primary Kodaira surface or an affine-elliptic bundle. From the descriptions that we give in Section \ref{examplesofprojstructures}, we see that, for the linear part of an element of $\bar{\Gamma}$ to be a hyperbolic matrix, the only possibilities are Inoue surfaces and affine-elliptic bundles. But in these two cases the group is not contained in $\GL(2,\ZZ)$. For Inoue surfaces of type $S^0$, the parameter $\beta$ (see Section \ref{Inouesurfaces}) is not a real number while for Inoue surfaces of type $S^{\pm}$ the parameter $\alpha$ is a degree two algebraic integer (see \cite{Ino74}). For affine-elliptic bundles it is because the Fuchsian group $\GL(2,\ZZ)$ has no subgroup with compact hyperbolic Riemann surface as quotient (see the argument at the end of the proof of Lemma \ref{toricconjugation}). Hence the assumption that $\Gamma$ contains a loxodromic element is absurd.
\end{proof}

\subsubsection{Factorization through curves}

\begin{lemma}\label{liftprop}
If $\Gamma$ contains a loxodromic element of $\Bir(Y)$, then for any compact curve $C$ on $X$ the image of the composition $\pi_1(C)\rightarrow \pi_1(X) \twoheadrightarrow \Gamma$ is infinite.
\end{lemma}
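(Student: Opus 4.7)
The plan is to derive a contradiction by producing an $f$-invariant fibration $Y\to B$ of the rational surface $Y$, then invoking Theorem \ref{strongTitsalternative}, which forbids loxodromic elements in groups preserving such a fibration.

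First I unwind the hypothesis using covering-space theory. Letting $p\colon U\to X$ be the Galois covering with deck group $\Gamma$, the image of $\pi_1(C)\to\Gamma$ is identified with the stabilizer $H=\operatorname{Stab}_{\Gamma}(\tilde C)$ of a connected component $\tilde C$ of $p^{-1}(C)$, and $\tilde C\to C$ is the corresponding $|H|$-sheeted unramified cover. If $H$ is finite and $C$ is compact, then $\tilde C$ is a compact irreducible curve sitting inside $U\subset Y$. Distinct connected components of $p^{-1}(C)$ are disjoint; since $f$ has infinite order while $H$ is finite, the orbit $\{f^n\tilde C\}_{n\in\ZZ}$ consists of infinitely many pairwise disjoint compact irreducible curves in $U\subset Y$.

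The key step is numerical. Pairwise disjointness in $Y$ forces $[f^m\tilde C]\cdot[f^n\tilde C]=0$ in $N^1(Y)_{\RR}$ for $m\neq n$. The intersection form has signature $(1,\rho-1)$ with $\rho=\operatorname{rk}N^1(Y)$, so its maximal isotropic subspaces are one-dimensional and any pairwise orthogonal set of non-isotropic nonzero vectors is linearly independent. Consequently at most $\rho$ of the classes $[f^n\tilde C]$ can be non-isotropic, and infinitely many lie in a single $1$-dimensional isotropic ray $\RR\cdot F$ with $F\in N^1(Y)_{\ZZ}$ primitive, $F^2=0$, and $F\cdot\omega>0$ for any ample $\omega$. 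Writing $[f^{n_k}\tilde C]=m_kF$ and using that $f^{n_k}\tilde C$ is diffeomorphic to $\tilde C$ via the biholomorphism $f^{n_k}\colon U\to U$ (hence of constant genus $g$), adjunction gives $m_k(K_Y\cdot F)=2g-2$: when $g\neq 1$ this bounds $\{m_k\}$ and pigeonhole produces infinitely many disjoint $f^{n_k}\tilde C$ in a single numerical class, whereas when $g=1$ it directly forces $K_Y\cdot F=0$. In either case $F$ (or a positive multiple) is a nef class with $F^2=0$ on the rational surface $Y$, hence semi-ample, and its base-point-free linear system defines a fibration $\phi\colon Y\to B$ with $B\cong\PP^1$ and generic fibers of genus $0$ or $1$.

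To conclude, the union $\bigcup_n f^n\tilde C$ is Zariski dense in $Y$ (otherwise its Zariski closure would be a proper $f$-invariant subvariety with only finitely many irreducible components, contradicting the presence of infinitely many distinct $f^n\tilde C$), and each $f^n\tilde C$ lies in a fiber of $\phi$; thus the fibers of $\phi$ containing members of the orbit form a Zariski-dense subset of $B$. Since $f$ permutes this family, $\phi\circ f$ descends to a rational self-map of $B$, so $f$ preserves the fibration $\phi$. But Theorem \ref{strongTitsalternative} (cases 2--4) shows that any element preserving a rational or elliptic fibration on a rational surface is elliptic, a \Jonqui twist, or a Halphen twist---never loxodromic---contradicting our choice of $f$. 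The main obstacle is the numerical step: combining the signature of $N^1(Y)$ with adjunction to concentrate the orbit on a single isotropic ray, and invoking semi-ampleness of nef isotropic classes on rational surfaces to produce the fibration; the remainder is covering-theoretic bookkeeping together with the strong Tits alternative.
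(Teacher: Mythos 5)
Your overall strategy is the same as the paper's: assuming the image of $\pi_1(C)$ in $\Gamma$ is finite, you produce infinitely many pairwise disjoint compact curves in $U$, use the signature $(1,\rho-1)$ of the intersection form to force all but finitely many of their classes onto a single isotropic ray, and then contradict the existence of a loxodromic element via an invariant pencil/fibration. The covering-theoretic setup and the Hodge-index step are correct, and identifying the image of $\pi_1(C)$ with the stabilizer of a component of $p^{-1}(C)$ lets you bypass the paper's Lemma \ref{curvenondiscrete}.

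The one step that fails as stated is the passage from ``$F$ nef with $F^2=0$ on a rational surface'' to ``semi-ample''. This is false in general: blow up $\PP^2$ at nine very general points of a smooth cubic $E$; the strict transform $\tilde E=-K_Y$ is nef with $\tilde E^2=0$, yet $h^0(m\tilde E)=1$ for all $m$ because $\OO_E(3H-\sum_i p_i)$ is a non-torsion degree-zero bundle, so $|m\tilde E|$ never moves and defines no fibration. Fortunately you do not need this general fact: you already hold two disjoint effective divisors $D_1,D_2$ with $[D_1]=m_1F$ and $[D_2]=m_2F$; since $\Pic=\NS$ on a rational surface, $m_2D_1$ and $m_1D_2$ are linearly equivalent effective divisors with disjoint supports, so the pencil they span is base-point free and yields your $\phi$. (The paper short-circuits even this: it only extracts from $h^0(\beta)\geq 2$ a pencil, possibly with base points, and quotes the fact that a loxodromic transformation preserves no pencil of curves.) Two smaller points: the generic fiber of $\phi$ need not have genus $0$ or $1$ (if $K_Y\cdot F>0$ it has higher genus), so the fact you want at the end is that a loxodromic element preserves no fibration of any fiber genus --- this is the element-level Diller--Favre classification, not cases 2)--4) of Theorem \ref{strongTitsalternative}, which concerns subgroups; and you should record the reduction to $Y$ rational at the outset (for non-rational ruled $Y$ the hypothesis is vacuous since $\Bir(Y)$ preserves the ruling and contains no loxodromic element), since your argument uses $\NS(Y)=\Pic(Y)$. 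Also, $\tilde C$ is connected but need not be irreducible (a connected finite cover of an irreducible singular curve can be reducible); this is harmless, but adjunction should then be applied with the arithmetic genus of the reduced connected curve.
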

\begin{proof}
By Theorem \ref{nonnegthm} we can assume that $Y$ is a birationally ruled surface. All birational transformations of a non rational ruled surface preserve the ruling, thus are not loxodromic. We can and will further assume that $Y$ is rational.

Suppose by way of contradiction that $C$ is a compact curve on $X$ such that $\pi_1(C)\rightarrow \Gamma$ has finite image. Denote by $D$ the normalization of $C$. Then there is a finite unramified cover $\bar{D}\rightarrow D$ such that the composition $\pi_1(\bar{D})\rightarrow\pi_1(D)\rightarrow \pi_1(X) \twoheadrightarrow \Gamma$ is trivial. This implies the existence of a map $\imath: \bar{D}\rightarrow U$ lifting the composition map $\bar{D}\rightarrow D\rightarrow C\rightarrow X$. Only a finite subgroup of $\Gamma$ preserves $\imath(\bar{D})$ because an infinite group can never act in a discrete way on the compact space $\imath(\bar{D})$. Therefore $\{\gamma(\imath(\bar{D}))\}_{\gamma\in\Gamma}$ form an infinite family of disjoint smooth compact curves in $U\subset Y$. Every element of $\Gamma$ permutes these curves. Denote by $\alpha_j, j\in\NN^*$ the classes of these curves in the N\'eron-Severi group of $Y$. Note that the Picard group is isomorphic to the N\'eron-Severi group because $Y$ is rational. The intersection number $\alpha_i\cdot \alpha_j$ is zero for $i\neq j$ since the corresponding curves are disjoint. As the N\'eron-Severi group has finite rank, we can suppose that for some $r\in\NN^*$, the classes $\alpha_1,\cdots,\alpha_r$ are linearly independent and for any $n>r$ the class $\alpha_n$ is equal to a linear combination of the $\alpha_j, j\leq r$. Among the $\alpha_j, j\leq r$, at most one has zero self-intersection because otherwise there would exist a two-dimensional totally isotropic subspace of the N\'eron-Severi group which contradicts the Hodge index theorem. When we write $\alpha_n$ as a linear combination of the the $\alpha_j, j\leq r$, if the coefficient before $\alpha_j$ is non-zero then $\alpha_j$ has zero self-intersection because $\alpha_n\cdot \alpha_j=0$. This implies that all but finitely many of the $\alpha_j$ are equal to a class $\beta$ of zero self-intersection. Thus the linear system associated with $\beta$ has dimension $\geq 1$. Since the group $\Gamma$ permutes the curves $\gamma(\imath(\bar{D}))$, the class $\beta$ is $\Gamma$-invariant. Hence we obtain a pencil of curves invariant under $\Gamma$. However a loxodromic birational transformation preserves no pencils of curves (see \cite{Can11}).
\end{proof}

%\begin{lemma}\label{curvenondiscrete}
%An infinite order automorphism of an irreducible compact curve cannot be free and properly discontinuous.
%\end{lemma}
%\begin{proof}
%An automorphism of a singular irreducible curve permutes the singular points and is not free. An automorphism of $\PP^1$ has a fixed point. An automorphism of infinite order of a genus one curve without fixed points is an irrational translation; its action is not properly discontinous by Lemma \ref{ellipticnondiscrete}. An automorphism of a general type curve has finite order.
%\end{proof}

\begin{proposition}\label{nonelemneveroccurs}
Suppose that $\Gamma$ is a non-elementary subgroup. Then the representation $\rho:\pi_1(X)\twoheadrightarrow \Gamma \hookrightarrow \Bir(Y)$ does not factor through a hyperbolic orbicurve. 
\end{proposition}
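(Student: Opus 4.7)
The plan is to derive a contradiction from the hypothetical factorization by invoking Lemma \ref{liftprop}. Suppose towards contradiction that $\rho = \hat\rho \circ f_{*}$, where $f: X \to \Sigma$ is a holomorphic fibration onto a hyperbolic orbicurve and $\hat\rho : \pisigma \to \Bir(Y)$ is a homomorphism. Since $\Gamma$ is non-elementary, Theorem \ref{strongTitsalternative} places it in the seventh case, and in particular $\Gamma$ contains a loxodromic element. This is precisely the hypothesis needed to apply Lemma \ref{liftprop}, which will be our source of contradiction.

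Next I would pick a fiber $F = f^{-1}(s)$ over a non-marked point $s \in \Sigma$; such $s$ exist because the set of marked points is finite. The claim is then that the image of $\pi_{1}(F) \to \pi_{1}(X) \xrightarrow{f_{*}} \pisigma$ is trivial. Unwinding the definition of a fibration onto an orbicurve, the map $f$ lifts to a holomorphic map $\tilde f : \tilde X \to \HH$ from the universal cover of $X$. A loop $\gamma$ in $F$ based at $x_{0}$ lifts to a path in $\tilde X$ whose image under $\tilde f$ stays inside the discrete set $\tilde f^{-1}(\tilde s)$ (for any chosen lift $\tilde s \in \HH$ of $s$), hence is constant at $\tilde s$. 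The element of $\pisigma$ associated to such a loop lies in the stabilizer of $\tilde s$, which is trivial since $s$ is not a marked point. Composing with $\hat\rho$ yields that the image of $\pi_{1}(F) \to \Gamma$ is trivial.

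On the other hand, the fiber $F$ is a compact curve in $X$, since $X$ is compact and $f$ is proper. Lemma \ref{liftprop} then forces the image of $\pi_{1}(F) \to \Gamma$ to be infinite, a contradiction. Thus the factorization cannot exist.

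The hard work, namely the cohomological/N\'eron--Severi argument controlling the orbits of $\Gamma$ on compact curves of $X$ under loxodromic dynamics, is already packaged into Lemma \ref{liftprop}; relative to that, the present proposition is essentially a formal consequence. The only subtlety I would take care to verify is that the factorization through $\pisigma$ really does kill the fundamental group of a sufficiently generic fiber; choosing $s$ outside the discrete marked locus is exactly what guarantees this, so the argument is robust.
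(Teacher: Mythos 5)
Your proof is correct and follows the same route as the paper: derive a contradiction with Lemma \ref{liftprop} by observing that the fundamental group of a (general) fiber of $X\to\Sigma$ dies in $\pi_1^{orb}(\Sigma)$, hence in $\Gamma$, while non-elementarity supplies the loxodromic element needed for that lemma. The only blemish is notational: the image of the lifted loop under $\tilde f$ lies in the (discrete) preimage of $s$ under the orbifold covering $\HH\to\Sigma$, not in $\tilde f^{-1}(\tilde s)\subset\tilde X$ as written, but the argument you intend is the right one.
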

\begin{proof}
Suppose by way of contradiction that $\rho$ is non-elementary and factors through a hyperbolic orbicurve $\Sigma$. Let $F$ be a general fibre of the fibration $X\rightarrow \Sigma$. The image of $\phi: \pi_1(F)\rightarrow \pi_1(X)$ is in the kernel of $\pi_1(X)\rightarrow \pisigma$, thus in the kernel of $\rho$. This contradicts Lemma \ref{liftprop}. 
\end{proof}

\subsubsection{Elliptic surfaces}
According to Kodaira's classification of surfaces, a non-\Kah{} surface which is not of class VII fits in one of the two following possibilities: either $X$ is a primary or secondary Kodaira surface, or $X$ is an elliptic surface with Kodaira dimension $1$. The fundamental group of a Kodaira surface is solvable and has no non-elementary representations into $\Bir(Y)$. To finish the proof of Theorem \ref{kleinianareelementary}, it remains to consider the case where $X$ is an elliptic surface of Kodaira dimension $1$. 

\begin{proposition}
If $X$ is an elliptic surface of Kodaira dimension $1$, then $\Gamma$ is an elementary subgroup of $\Bir(Y)$.
\end{proposition}
\begin{proof}
Let $X\rightarrow \Sigma$ be a genus one fibration and assume that $X$ has Kodaira dimension one.
%Assume firstly that $X\rightarrow \Sigma$ has a singular fiber which is not a multiple of an elliptic curve. By Theorem 2.3 of Chapter II \cite{FM94} the induced homomorphism $\pi_1(X)\rightarrow \pisigma$ is an isomorphism. Thus any representation of $\pi_1(X)$ into $\Bir(Y)$ automatically factors through a hyperbolic orbicurve. Hence $\Gamma$ is elementary by Proposition \ref{nonelemneveroccurs}. 
We have an exact sequence 
\[1\rightarrow H \rightarrow \pi_1(X)\xrightarrow{\varphi} \pisigma \rightarrow 1\]
where $H$ is the image of the fundamental group of a regular fiber in $\pi_1(X)$ (see \cite{GurSha85} Theorem 1). 

Suppose by way of contradiction that the representation $\rho:\pi_1(X)\rightarrow \Gamma\hookrightarrow \Bir(Y)$ is non-elementary. By Proposition \ref{liftprop} $\rho(H)$ is not finite. As a regular fiber is an elliptic curve, its fundamental group is isomorphic to $\ZZ^2$. Therefore $\rho(H)$ is an infinite abelian group. Since $\Gamma$ is non-elementary, there exists at least one element $a\in\pi_1(X)$ such that $\varphi(a)$ has infinite order and $\rho(a)$ is a loxodromic element. Consider $<\varphi(a)>$ the infinite cyclic subgroup of $\pisigma$ generated by $\varphi(a)$. Denote by $G$ the subgroup $\varphi^{-1}(<\varphi(a)>)$ of $\pi_1(X)$. Then $G$ is an extension of $<\varphi(a)>$ by $H$; in particular it is solvable. The group $\rho(G)$ is solvable but not virtually cyclic; it contains a loxodromic element $\rho(a)$. By Theorem \ref{strongTitsalternative} we infer that a finite index subgroup of a conjugate of $\rho(G)$ is contained in the toric subgroup and that $\rho(H)$ is an infinite elliptic subgroup. The whole group $\Gamma$ normalizes $\rho(H)$. By a theorem of Cantat on normalizers (see Appendix of \cite{DelPy}) we infer that $\Gamma$ is up to conjugation contained in the toric subgroup. This contradicts Proposition \ref{notorickleinian}.
\end{proof}

\subsection{Surfaces of class VII}
It would be nice to drop the hypothesis that $X$ is not a class VII surface in Theorem \ref{kleinianareelementary}. Note that no known surface of class VII has non-solvable fundamental group. Thus an analogue of Theorem \ref{kleinianareelementary} for class VII surfaces could be a vacuous statement. On the other hand in \cite{CT97} Carleson-Toledo applied the same method of \cite{CT} to study isometric actions of fundamental groups of class VII surfaces on finite dimensional hyperbolic spaces, by replacing harmonic mappings with hermitian harmonic mappings. So one could hope for an analogue of Delzant-Py's theorem for class VII surfaces.

\section{Foliated surfaces}\label{foliationintro}
 
This section is a glossary of holomorphic foliations on surfaces. We present Brunella's classification in \ref{Brunellathmsection}. In \ref{foliatedcomplexprojectivestructuressection} we state Deroin-Guillot's theorem on foliated $(\PGL(2,\CC),\PP^1)$-structures. In this paper we only consider holomorphic foliations without singularities, i.e.\ regular holomorphic foliations.

\subsection{Brunella's Theorem}\label{Brunellathmsection}
The vocabulary in the following theorem will be explained below.
\begin{theorem}[Brunella \cite{Bru97}]\label{Brunellathm}
Let $X$ be a compact complex surface and $\Fol$ a regular holomorphic foliation on $X$. Then one of the following situations holds:
\begin{enumerate}
	\item $\Fol$ comes from a fibration of $X$ onto a curve;
	\item $X$ is a complex torus and $\Fol$ is an irrational linear foliation;
	\item $\Fol$ is an obvious foliation on a non-elliptic Hopf surface;
	\item $\Fol$ is an obvious foliation on an Inoue surface;
	\item $\Fol$ is an infinite suspension of $\PP^1$ or of an elliptic curve over a compact Riemann surface;
	\item $\Fol$ is a turbulent foliation with at least one invariant fiber;
	\item $\Fol$ is a transversely hyperbolic foliation with dense leaves whose universal cover is a fibration of disks over a disk.
\end{enumerate}
\end{theorem}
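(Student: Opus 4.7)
The plan is to combine the numerical invariants of the foliation with the Enriques--Kodaira classification of compact complex surfaces. For a regular foliation $\Fol$, the tangent--normal exact sequence $0\to T_{\Fol}\to TX\to N_{\Fol}\to 0$ gives the numerical identity $-K_X\equiv T_{\Fol}+N_{\Fol}$, and Baum--Bott vanishing in the absence of singularities yields $N_{\Fol}^2=0$ and $T_{\Fol}\cdot N_{\Fol}=c_2(X)=0$. The first step is to invoke Miyaoka's pseudo-effectivity theorem for $K_{\Fol}=-T_{\Fol}$: either $K_{\Fol}$ is pseudo-effective, or $X$ is covered by rational curves tangent to $\Fol$. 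In the non-pseudo-effective case, regularity of $\Fol$ forces these curves to assemble into a smooth $\PP^1$-fibration, placing us in case~(1).

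Assuming $K_{\Fol}$ is pseudo-effective, I would stratify by the Kodaira dimension $\kappa(\Fol):=\kappa(X,K_{\Fol})\in\{0,1,2\}$. When $\kappa(\Fol)=1$, the Iitaka fibration of $K_{\Fol}$ produces a surjective morphism $\pi\colon X\to C$ with connected fibers. Either the leaves of $\Fol$ coincide with the fibers of $\pi$, giving case~(1), or a general fiber is transverse to $\Fol$. In the latter situation, numerical triviality of $K_{\Fol}$ along each leaf combined with the foliated version of Iitaka's inequality forces the generic fiber of $\pi$ to be elliptic or rational. An analysis of the monodromy around singular or multiple fibers of $\pi$ then separates suspensions with no invariant fiber (case~(5)) from turbulent foliations with at least one invariant fiber (case~(6)).

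When $\kappa(\Fol)=0$, $K_{\Fol}$ is torsion, so $T_{\Fol}$ is numerically trivial and therefore $K_X\equiv -N_{\Fol}$ with $K_X^2=0$, restricting $X$ to Kodaira dimension $\leq 1$. A case-by-case inspection, using the structure of holomorphic line subbundles of $TX$ on each surface class in this range, identifies exactly: complex tori carrying irrational linear foliations (case~(2)), non-elliptic Hopf surfaces (case~(3)), and Inoue surfaces (case~(4)); on all other surfaces of Kodaira dimension $\leq 1$ the foliation is either a fibration or a suspension already accounted for. In the remaining case $\kappa(\Fol)=2$, the positivity $K_{\Fol}^2>0$ together with Brunella's analysis of foliations of general type forces $\Fol$ to be transversely hyperbolic; uniformization of the transverse structure then shows that the universal cover fibers as disks over a disk, and minimality of transversely hyperbolic foliations without algebraic leaves yields density of leaves, i.e.\ case~(7).

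The main obstacle will be the $\kappa(\Fol)=0$ analysis on non-K\"ahler surfaces. There standard Hodge-theoretic tools are unavailable, and pinning down Hopf versus Inoue requires combining the numerical triviality of $T_{\Fol}$ with Kodaira's fine classification of minimal surfaces having $b_1=1$ and $\kappa(X)=-\infty$, together with the explicit classification of their holomorphic vector fields. A secondary difficulty is the passage in case~(7) from a transversely hyperbolic structure to an actual fibration of disks over a disk on the universal cover: one must rule out algebraic leaves via Bogomolov's theorem on subfoliations by algebraic curves, then globalize the developing map of the transverse $(\PSL(2,\RR),\HH)$-structure.
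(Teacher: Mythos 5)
This statement is Brunella's classification theorem, which the paper quotes from \cite{Bru97} without proof, so there is no internal argument to compare against; what follows measures your sketch against Brunella's actual strategy. Brunella stratifies by the Kodaira dimension of the \emph{surface} $X$ and runs through the Enriques--Kodaira classification, using the index-theoretic identities $N_{\Fol}^2=0$ and $T_{\Fol}\cdot N_{\Fol}=c_2(X)$ valid for regular foliations; you instead stratify by the Kodaira dimension of the \emph{foliation}, in the spirit of the later foliated minimal model program. That reorganization is legitimate in principle, but as written it has concrete gaps.

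First, the identity ``$T_{\Fol}\cdot N_{\Fol}=c_2(X)=0$'' is wrong: the correct statement is $T_{\Fol}\cdot N_{\Fol}=c_2(X)$, and $c_2$ is not zero in general (a bidisk quotient has $c_2>0$, as does any smooth fibration whose base and fiber both have genus $\neq 1$). Second, the $\kappa(\Fol)=1$ dichotomy fails for isotrivial fibrations by curves of genus $\geq 2$: for $X=C_1\times C_2$ foliated by the fibers of the first projection, one has $T_{\Fol}=pr_2^*TC_2$, hence $K_{\Fol}=pr_2^*K_{C_2}$ and the Iitaka fibration of $K_{\Fol}$ is $pr_2$; its fibers are transverse to $\Fol$ and of genus $g(C_1)\geq 2$, so neither branch of your dichotomy applies and the claimed conclusion that the generic fiber of $\pi$ is elliptic or rational is false. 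Third, the $\kappa(\Fol)=2$ case cannot ``force $\Fol$ to be transversely hyperbolic'': fibrations by curves of genus $\geq 2$ with $K_{X/B}$ big (e.g.\ Kodaira fibrations) also have $\kappa(\Fol)=2$ and belong to case (1), not case (7); moreover invoking ``Brunella's analysis of foliations of general type'' at this point is circular, since that analysis is part of the theorem being proved. Separating case (7) from fibrations within $\kappa(\Fol)=2$ is precisely where the hard work lies --- it rests on the inequality $c_1^2\leq 2c_2$ for such foliations and on harmonic-measure and uniformization arguments for the transverse structure --- and your sketch does not supply it.
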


The possible overlaps in the above list are: case 2 with case 5, case 3 with case 6, case 5 with case 6. When a complex torus admits an elliptic fibration, it is straightforward to see that an irrational linear foliation is also a suspension of an elliptic curve. For overlaps between case 3 and case 6, or between case 5 and case 6, see Section \ref{examplesturbulent}.

\subsubsection{Fibrations.}
Let $X$ be compact complex surface and let $f:X\rightarrow B$ be a fibration whose singular fibers are all multiples of smooth curves. The fibration equips $X$ with a regular foliation whose leaves are the underlying manifolds of the fibers. Let $mF_b$ be a fiber with multiplicity $m$ lying over a point $b\in B$. Let $w$ be a local coordinate on $B$ which vanishes at $b$, let $h$ be a local equation of $F_b$ in $X$, then $f^*(dw)/h^{m-1}$ is a local differential form defining the foliation.

\subsubsection{Linear foliations on tori}
Let $\Lambda$ be a lattice in $\CC^2$. The quotient $X=\CC^2/\Lambda$ is a two dimensional complex torus. Let $(z,w)$ be the natural coordinates on $\CC^2$. A constant holomorphic differential $adz+bdw$ on $\CC^2$ with $a,b\in\CC$ and $ab\neq 0$ descends onto $X$ and defines a regular foliation $\Fol$ on $X$; this is called a \emph{linear foliation} on the torus $X$. Choosing such a linear foliation amounts to choose a one dimensional $\CC$-linear subspace $W=\ker(adz+bdw)$ of $\CC^2$. If $W\cap \Lambda$ is a lattice in $W$, then the leaves of $\Fol$ are elliptic curves and the foliation is a fibration. If $W\cap \Lambda$ is non empty but not a lattice, then the leaves of $\Fol$ are biholomorphic to $\CC^*$. If $W\cap \Lambda$ is empty, then the leaves of $\Fol$ are biholomorphic to $\CC$ and are dense in $X$. When the leaves of $\Fol$ are not compact, we say the linear foliation $\Fol$ is \emph{irrational}.

\subsubsection{Obvious foliations on Hopf surfaces}\label{Hopfnormalform}
A \emph{Hopf surface} is a compact complex surface covered by $\CC^2\backslash\{0\}$. A \emph{primary Hopf surface} is a surface biholomorphic to the quotient of $\CC^2\backslash\{0\}$ by a transformation of the form 
\[H_{(\alpha,\beta,\gamma,m)}:(z,w)\mapsto (\alpha z+\gamma w^m,\beta w), \quad \alpha,\beta,\gamma \in\CC, m\in\NN^*, 0<\vert \alpha \vert \leq \vert \beta \vert<1\]
with $\alpha=\beta^m$ if $\gamma\neq 0$. We call the model $(\CC^2\backslash\{0\})/<H_{(\alpha,\beta,\gamma,m)}>$ a standard primary Hopf surface. Any Hopf surface has a finite unramified cover which is a primary Hopf surface. See \cite{Kod66} for the above assertion and the following:
\begin{enumerate}
	\item The standard primary Hopf surface has an elliptic fibration if and only if $\gamma=0$ and $\alpha^k=\beta^l$ for some $k,l\in\NN^*$.
	\item If $\gamma=0$ but $\alpha^k\neq\beta^l$ for any $k,l\in\NN^*$, then the two smooth elliptic curves given by $z=0$ and $w=0$ are the only curves on the primary Hopf surface.
	\item If $\gamma\neq 0$ then the smooth elliptic curve given by $w=0$ is the only curve on the primary Hopf surface.
\end{enumerate}

Suppose that $\gamma=0$. The complex lines in $\CC^2$ parallel to $z=0$ and to $w=0$ form two regular foliations on $\CC^2\backslash\{0\}$ that descend to regular foliations on the Hopf surface. A vector field of the form $az\frac{\partial}{\partial z}+bw\frac{\partial}{\partial w}$ with $a,b\in\CC$ and $ab\neq 0$ is invariant under $(z,w)\mapsto (\alpha z,\beta w)$, thus descends to the Hopf surface. It gives rise to a regular foliation whose leaves in $\CC^2\backslash \{0\}$ which are different from the two axes are given by $(e^{az},ce^{bz})$. When $a=b$ this is just the foliation by complex vector lines.

Suppose that $\gamma\neq 0$. The complex lines parallel to $w=0$ give a regular foliation on the Hopf surface. There exist also regular foliations associated with certain vector fields on $\CC^2\backslash\{0\}$ of Poincar\'e-Dulac's form $(mz+aw^m)\frac{\partial}{\partial z}+w\frac{\partial}{\partial w}$.
When $m=1$ the normal form $(z,w)\mapsto (\alpha z+\gamma w,\beta w)$ is a linear transformation and the foliation by complex vector lines in $\CC^2\backslash\{0\}$ is one of these foliations.

All the foliations described above will be called \emph{obvious foliations on Hopf surfaces}.

\subsubsection{Obvious foliations on Inoue surfaces}
See Paragraph \ref{Inouesurfaces} for the formulas defining Inoue surfaces. The vertical and horizontal foliations on $\HH\times \CC$ descend to two regular foliations on an Inoue surface of type $S^0$. Only the vertical foliation on $\HH\times \CC$ descends to a regular foliation on an Inoue surface of type $S^+$ or $S^-$. The foliations described above will be called \emph{obvious foliations on Inoue surfaces}.

\subsubsection{Suspensions}
Let $M$ be a Riemann surface. We denote by $\hat{M}$ a Galois covering of $M$ with deck transformation group $G$. Let $N$ be another Riemann surface. Let $\alpha:G\rightarrow \Aut(N)$ be a homomorphism of groups. The group $G$ acts on $\hat{M}\times N$ in the following way: $g\cdot (m,n)=(g\cdot m,\alpha(g)\cdot n)$. Then the quotient $X=(\hat{M}\times N)/G$ fibers onto $M=\hat{M}/G$ via the first projection; the fibers are all isomorphic to $N$. The foliation by $\{\hat{M}\times \{n\}\}_{n\in N}$ descends to a foliation on $X$ which is everywhere transverse to the fibration. We call such a foliation a \emph{suspension of $N$ over $M$} with monodromy $\alpha:G\rightarrow \Aut(N)$. If $M,N$ are compact, then $X$ is compact; this is the only case we will consider. If moreover $\alpha(G)$ is a finite subgroup of $\Aut(N)$, then the leaves of the foliation are compact and the foliation is in fact a fibration. When $\alpha(G)$ is infinite (since $N$ is compact this is only possible if $N$ is $\PP^1$ or an elliptic curve) the leaves are non-compact and we call such a suspension \emph{infinite}.

Remark that if $X$ is a compact complex surface with an infinite suspension foliation, then $X$ is \Kah{}. This is clear if it is a suspension of $\PP^1$ because then $X$ is ruled. Assume that it is a suspension of an elliptic curve. We use the notations in the previous paragraph. Up to replacing $G$ by a subgroup of finite index we can assume that $\alpha(G)\subset \Aut(N)$ is an abelian group of translations on the elliptic curve $N$. Thus there is an action of $N$ on $X$ by translations in the fibers of the elliptic bundle $X\rightarrow M$. In other words $X\rightarrow M$ is a principal elliptic bundle (see \cite{BHPV} V.5.1). Furthermore the suspension process says exactly that the bundle $X\rightarrow M$ is defined by a locally constant cocycle. By \cite{BHPV} V.5.1 and V.5.3 we infer that the second Betti number of $X$ is even. This implies that $X$ is \Kah{} (see \cite{BHPV} IV.3.1).

\subsubsection{Turbulent foliations} 
Let $X$ be a compact complex surface and $X\rightarrow B$ an elliptic fibration with constant functional invariant (i.e.\ all regular fibers are isomorphic) whose singular fibers are all multiples of smooth elliptic curves. Let $\Fol$ be a regular foliation on $X$. If a finite number of fibers of the elliptic fibration are $\Fol$-invariant and all other fibers are transverse to $\Fol$, then $\Fol$ is called a \emph{turbulent foliation}. The underlying elliptic fibration is locally trivial outside the invariant fibers of $\Fol$; the trivialization is given by the foliation. The invariant fibers are regular or multiples of elliptic curves. Locally around an invariant fiber, let $(z,w)$ be a system of local coordinates such that the fibration is given by $(z,w)\mapsto z^m$ where $m$ is the multiplicity of the fiber. Then the foliation $\Fol$ is locally defined by a local differential form $dz-A(z)dw$ where $A$ is a holomorphic function vanishing at $0$.

\subsubsection{Transversely hyperbolic foliations with dense leaves}

This is the most difficult type of foliations. We refer to \cite{Bru97} and \cite{Tou16} for precise definition and descriptions of transversely hyperbolic foliations. The only known example is as follows. Let $\Gamma$ be a torsion free cocompact irreducible lattice in $\PSL(2,\RR)\times \PSL(2,\RR)$, then the bidisk quotient $\bD\times\bD/\Gamma$ is a general type surface with two foliations induced by the product structure of $\bD\times \bD$. Each leaf is dense in $\bD\times\bD/\Gamma$ because the projection of an irreducible lattice onto a factor $\PSL(2,\RR)$ is dense. It is not known whether there exists other examples of regular foliations with dense leaves on general type surfaces. It is in this sense that Brunella's classification is still incomplete. Thanks to Deroin-Guillot's work that we will see shortly, these mysterious foliations will not appear in our study of birational Kleinian groups.

\subsection{Foliated complex projective structures}\label{foliatedcomplexprojectivestructuressection}
\subsubsection{Deroin-Guillot's Theorem}
Let $\Fol$ be a regular holomorphic foliation on a complex manifold $X$. A \emph{foliated $(\PGL(2,\CC),\PP^1)$-structure} on $\Fol$ is an open covering $\{U_i\}$ of $X$ and submersions $\phi_i:U_i\rightarrow \PP^1$, transverse to $\Fol$, immersive on the leaves, such that the restrictions on a leaf $L$ satisfy $\phi_i\vert _L \circ \phi_j\vert _L^{-1}\in \PGL(2,\CC)$. Obvious foliations on Hopf and Inoue surfaces, turbulent foliations and suspension foliations carry natural foliated $(\PGL(2,\CC),\PP^1)$-structures (see \cite{DG}). 

Though every Riemann surface carries a $(\PGL(2,\CC),\PP^1)$-structure, not every foliation carries a foliated $(\PGL(2,\CC),\PP^1)$-structure. The existence of a \emph{foliated $(\PGL(2,\CC),\PP^1)$-structure} imposes strong restrictions not only on the foliation but also on the topology of the manifold. We state one of the results of \cite{DG} in the generality that we need:
\begin{theorem}[Deroin-Guillot]
If $X$ is a compact complex surface having a regular foliation with a foliated $(\PGL(2,\CC),\PP^1)$-structure, then the Chern classes of $X$ satisfy $c_1(X)^2=2c_2(X)$.
\end{theorem}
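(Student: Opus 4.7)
The plan is to decompose $c_1(X)^2 - 2c_2(X)$ via the foliation and kill each piece separately. From the exact sequence $0 \to T\Fol \to TX \to N_\Fol \to 0$ of holomorphic line bundles, setting $t = c_1(T\Fol)$ and $n = c_1(N_\Fol)$, one has $c_1(X) = t + n$ and $c_2(X) = tn$, hence
\[
c_1(X)^2 - 2c_2(X) \;=\; t^2 + n^2
\]
in $H^{2,2}(X) = H^2(X, \Omega^2_X)$. The term $n^2$ vanishes by Bott's vanishing theorem: the Bott partial connection on $N_\Fol$ along $\Fol$ lifts the Atiyah class of $N_\Fol$ from $H^1(X, \Omega^1_X)$ to $H^1(X, N^*_\Fol)$, and since $N^*_\Fol$ is a line bundle, $n^2$ is represented by a class factoring through the wedge map $(N^*_\Fol)^{\otimes 2} \to \Omega^2_X$, which is identically zero.

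For the term $t^2$, I would exploit the foliated projective structure. The goal is to manufacture, possibly after replacing $X$ by an \'etale double cover (an operation that preserves the identity to prove since both $c_1^2$ and $c_2$ pull back to themselves multiplied by the degree), a holomorphic rank $2$ vector bundle $V$ on $X$ satisfying: (i) $\det V \cong \OO_X$; (ii) $V$ carries a partial holomorphic flat connection along $\Fol$; (iii) there is a holomorphic line subbundle $\Theta \hookrightarrow V$ with quotient $V/\Theta \cong \Theta^{-1}$ and $\Theta^{\otimes 2} \cong T^*\Fol$. This is the foliated analog of Gunning's description of projective structures on Riemann surfaces as flat $\SL(2, \CC)$-bundles with a Borel reduction: along each leaf $L$, the fiber of $V$ should be the two-dimensional space of pullbacks of sections of $\OO_{\PP^1}(1)$ under the developing map of the $(\PGL(2, \CC), \PP^1)$-structure, and $\Theta|_L$ should be the sub-line of those sections vanishing at the image of the developing point. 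The \'etale double cover is the one lifting the monodromy cocycle from $\PGL(2,\CC) = \SL(2,\CC)/\{\pm 1\}$ to $\SL(2,\CC)$, equivalently the one providing a global square root $\Theta$ of $T^*\Fol$.

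Given such $V$, the conclusion is quick. Applying the Bott-type argument to $V$: the partial flat connection along the codimension one foliation $\Fol$ lifts the Atiyah class of $V$ to $H^1(X, \End V \otimes N^*_\Fol)$, hence $c_2(V) \in H^2(X, \Omega^2_X)$ factors through the zero wedge map $(N^*_\Fol)^{\otimes 2} \to \Omega^2_X$ and vanishes. On the other hand, computing $c(V)$ from the filtration yields $c(V) = (1 + c_1(\Theta))(1 - c_1(\Theta)) = 1 - c_1(\Theta)^2$, so $c_2(V) = -c_1(\Theta)^2$. Combining the two gives $c_1(\Theta)^2 = 0$, and since $2c_1(\Theta) = c_1(T^*\Fol) = -t$, this forces $t^2 = 4\, c_1(\Theta)^2 = 0$, as desired.

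The main obstacle is the construction in the second paragraph: producing the rank $2$ bundle $V$ with its filtration, partial flat connection along $\Fol$, and trivial determinant, globally on (a double cover of) $X$. This is where Gunning's classical mechanism must be adapted to work in families transverse to the foliation, and where the \'etale double cover is needed to trivialize the mod $2$ obstruction to lifting the foliated $\PGL(2,\CC)$-cocycle to $\SL(2,\CC)$; by contrast, the Chern-class manipulations are routine once $V$ is in hand.
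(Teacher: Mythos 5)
First, a remark on the comparison itself: the paper does not prove this statement — it is quoted from Deroin--Guillot \cite{DG} — so your attempt can only be measured against the standard argument, not against anything in the text. Your reduction is the natural one and its first half is essentially complete: from $0\to T_\Fol\to TX\to N_\Fol\to 0$ one gets $c_1(X)^2-2c_2(X)=t^2+n^2$, and $n^2=0$ follows from Bott vanishing for the Bott partial connection on $N_\Fol$ exactly as you say. (One point worth a sentence: you prove vanishing in $H^2(X,\Omega^2_X)$, and to convert this into the vanishing of a Chern \emph{number} on a possibly non-K\"ahler surface you need the degeneration at $E_1$ of the Fr\"olicher spectral sequence, which does hold for all compact complex surfaces.)

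The gap is in the $t^2=0$ half, and it is twofold. (i) The central object — a holomorphic $\PP^1$- (or rank-two) bundle over $X$ carrying a holomorphic flat partial connection along $\Fol$ together with a holomorphic section whose leafwise derivative identifies $T_\Fol$ with the vertical tangent bundle along the section — is the only place where the foliated $(\PGL(2,\CC),\PP^1)$-structure enters, and you do not construct it; you flag it yourself as ``the main obstacle.'' It cannot be waved away: for the fibration foliation of a Kodaira fibration one has $n^2=0$ but $t^2=c_1^2-2c_2>0$, so without this input the conclusion is false. (ii) The reduction you propose to carry out the construction — an \'etale double cover furnishing an $\SL(2,\CC)$-lift of the cocycle, equivalently a square root $\Theta$ of $T^*_\Fol$ — is unjustified and false in general: the obstruction is a class in $H^2(X,\ZZ/2)$, and pullback along the double cover classified by $\alpha\in H^1(X,\ZZ/2)$ kills only classes of the form $\alpha\cup\beta$; if $X$ is simply connected there is no nontrivial cover at all. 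Fortunately this step is avoidable. Run your Bott argument on the adjoint rank-three bundle $E$ of the leafwise-flat $\PGL(2,\CC)$-bundle, which exists with no lifting: the Borel reduction defined by the developing section filters $E$ with graded pieces isomorphic to $T_\Fol$, $\OO_X$ and $T^*_\Fol$ (vector fields on the fiber vanishing to orders $0,1,2$ at the section, identified with $T_\Fol$ via the nondegeneracy of the section), whence $c_2(E)=-t^2$, while the flat partial connection along $\Fol$ forces $c_2(E)=0$ in $H^2(X,\Omega^2_X)$. With (i) actually established and (ii) replaced by this, the argument closes; as written, it does not.
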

The strict inequality $c_1^2>2c_2$ is satisfied by any Kodaira fibration (see \cite{BHPV} V.14) and by any general type surface equipped with a regular foliation with dense leaves which is not a bidisk quotient (see \cite{Bru97} the last paragraph). Therefore the following statement is a consequence of Deroin-Guillot's theorem:
\begin{corollary}\label{DGcorollary}
If $X$ is a general type surface having a regular foliation with a foliated $(\PGL(2,\CC),\PP^1)$-structure, then the universal cover of $X$ is the bidisk. 
\end{corollary}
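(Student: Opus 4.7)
The plan is to combine Deroin--Guillot's Chern number equality with Brunella's classification (Theorem \ref{Brunellathm}). Applying the stated theorem of Deroin--Guillot to the foliated surface $(X,\Fol)$ gives the equality $c_1(X)^2=2c_2(X)$. Since $X$ is of general type, I would then walk through Brunella's seven cases: tori (2), Hopf surfaces (3), and Inoue surfaces (4) are excluded because they are not of general type; infinite suspensions (5) yield ruled surfaces or principal elliptic bundles, so have Kodaira dimension $\leq 1$; and turbulent foliations (6) live on elliptic fibrations, so again Kodaira dimension $\leq 1$. Only case (1), where $\Fol$ comes from a fibration onto a curve, and case (7), where $\Fol$ is transversely hyperbolic with dense leaves, can survive.

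In case (7), the hint from the last paragraph of \cite{Bru97} stated in the excerpt gives $c_1^2>2c_2$ for every general type surface admitting a regular foliation with dense leaves which is not a bidisk quotient. Combined with the Deroin--Guillot equality, this forces $X$ itself to be a bidisk quotient, and hence the universal cover of $X$ is $\bD\times\bD$.

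In case (1), regularity of $\Fol$ implies that all singular fibers of the fibration $f\colon X\to C$ are multiples of smooth curves, and since $X$ is of general type the generic fiber must have genus $\geq 2$ (otherwise $\kappa(X)\leq 1$). If the family is non-isotrivial, then $f$ is a Kodaira fibration (possibly after a finite étale cover to eliminate multiple fibers), so by \cite{BHPV} V.14 one has $c_1^2>2c_2$, contradicting Deroin--Guillot. If the family is isotrivial, then a finite étale cover of $X$ is a product of two curves of genus $\geq 2$, whose universal cover is the bidisk; passing to a finite étale cover does not change the universal cover, so we conclude.

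The main technical point is the handling of case (1): one must exclude the non-isotrivial subcase via the Kodaira fibration inequality, and then verify in the isotrivial subcase that, after a finite étale cover, the surface splits as a product of two curves of genus $\geq 2$ (a standard fact about smooth isotrivial fibrations of general type surfaces). Once this dichotomy is established, the conclusion in both remaining cases is that the universal cover is $\bD\times\bD$.
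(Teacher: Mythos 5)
Your proof is correct and follows essentially the same route as the paper: the Deroin--Guillot equality $c_1^2=2c_2$ against the strict inequality $c_1^2>2c_2$ for Kodaira fibrations and for non-bidisk-quotient general type surfaces with dense-leaf foliations, with Brunella's classification reducing to those two cases. You merely make explicit the isotrivial sub-case of a fibration (finite étale cover splitting as a product of hyperbolic curves), which the paper leaves implicit.
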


It's well known that all $(\PGL(2,\CC),\PP^1)$-structures on a given Riemann surface form an affine space directed by the vector space of quadratic differentials. For a foliation $\Fol$, let $T_{\Fol}$ be the tangent bundle of $\Fol$. Consider holomorphic sections of $T_{\Fol}^{*2}$. They are quadratic differentials along the leaves of $\Fol$. If $\Fol$ admits at least one foliated $(\PGL(2,\CC),\PP^1)$-structure, then all foliated $(\PGL(2,\CC),\PP^1)$-structures on $\Fol$ form an affine space directed by the vector space of holomorphic sections of $T_{\Fol}^{*2}$ (see \cite{DG}).

\begin{proposition}\label{fibrationprojstructure}
Let $\Fol$ be a regular foliation on a surface $X$ defined by a fibration $f:X\rightarrow C$. If $\Fol$ has a foliated $(\PGL(2,\CC),\PP^1)$-structure then regular fibers of $f$ are isomorphic. If moreover the fibers of $f$ have genus $\geq 2$, then the $(\PGL(2,\CC),\PP^1)$-structures on the leaves are the same.
\end{proposition}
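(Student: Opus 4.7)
The plan is to restrict the foliated projective structure to each fiber, obtain a holomorphic family of $(\PGL(2,\CC),\PP^1)$-structures on compact Riemann surfaces, and then deduce rigidity via the affine $\PGL(2,\CC)$-character variety of $\pi_1(\Sigma_g)$.

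For each $c\in C$, the restrictions $\phi_i|_{F_c\cap U_i}$ endow the underlying smooth leaf $F_c$ with a $(\PGL(2,\CC),\PP^1)$-structure whose transition functions $g_{ij}(c)\in\PGL(2,\CC)$ are, by definition of a foliated structure, constant on the leaves of $\Fol$ and hence depend only on $c$. They vary holomorphically in $c$ since the $\phi_i$ depend holomorphically on both variables. Regularity of $\Fol$ guarantees that this construction passes through the multiple fibers $mF$, because the underlying smooth leaf $F$ is still covered by the foliated charts. We thus obtain a holomorphic family of projective structures on topologically equivalent compact curves, with holonomies $\rho_c:\pi_1(F_c)\to\PGL(2,\CC)$ varying holomorphically in $c\in C$. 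For each $w\in\pi_1(\Sigma_g)$, the quantity $\tau_w(c)=\operatorname{tr}^2(\rho_c(w))$ is well defined (using $\PSL(2,\CC)=\PGL(2,\CC)$ so that elements lift to $\SL(2,\CC)$ up to sign) and is a holomorphic function on the compact manifold $C$, hence constant.

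Since the trace functions $\tau_w$ generate the coordinate ring of the affine character variety $\mathcal X=\operatorname{Hom}(\pi_1(\Sigma_g),\PGL(2,\CC))/\!/\PGL(2,\CC)$ and separate generic conjugacy classes, their constancy forces the $\Out(\pi_1(\Sigma_g))$-orbit of $[\rho_c]\in\mathcal X$ to be independent of $c$. I would then finish the first statement by genus. For $g=0$ the fibers are $\PP^1$ trivially. For $g=1$ the $j$-invariant defines a holomorphic map $C\to\CC$, hence constant, so the elliptic fibers are pairwise isomorphic. For $g\geq 2$, I would combine constancy of $[\rho_c]$ with the Hejhal--Earle theorem, which asserts that the holonomy map $\mathcal P_g\to\mathcal X$ from the moduli space of projective structures on $\Sigma_g$ is a local biholomorphism: since the composition $C\to\mathcal P_g\to\mathcal X$ has constant image and the fibers of the holonomy map are discrete, the classifying map $C\to\mathcal P_g$ itself is constant. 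This simultaneously establishes the second statement: all regular fibers carry the same $(\PGL(2,\CC),\PP^1)$-structure.

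The main obstacle, on which I would spend the most care, is the potentially infinite topological monodromy of the family $f:X\to C$, which in general cannot be trivialized on a finite unramified cover; this is precisely why the argument proceeds through $\Out$-invariant trace functions on $\mathcal X$ rather than through direct character coordinates. A secondary subtlety, easily handled but worth noting, is the holomorphic extension of $\rho_c$ through multiple fibers, which follows from the regularity of $\Fol$: the $\PP^1$-bundle associated to the foliated projective structure is globally holomorphic on $X$ and its partial leafwise flat connection extends across $F$, so the holonomy varies holomorphically in the transverse direction.
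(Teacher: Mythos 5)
Your reduction to a holomorphic family of $(\PGL(2,\CC),\PP^1)$-structures on the fibers is fine locally, but the pivotal step --- ``$\tau_w(c)=\operatorname{tr}^2(\rho_c(w))$ is a holomorphic function on the compact manifold $C$, hence constant'' --- does not hold as stated. To define $\tau_w$ you must first identify $\pi_1(F_c)$ with a fixed $\pi_1(\Sigma_g)$, and this identification exists only locally over $C$ minus the critical values: continuing it along a loop changes $w$ by the topological monodromy $\pi_1(C^*)\rightarrow \Out(\pi_1(\Sigma_g))$. An individual trace function is invariant under \emph{inner} automorphisms (conjugation of $\rho_c$) but not under $\Out$, so $\tau_w$ is a multivalued function whose branches are indexed by the (possibly infinite) monodromy orbit of the conjugacy class of $w$, and neither compactness of $C$ nor the maximum principle applies. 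Your closing remark that the argument ``proceeds through $\Out$-invariant trace functions'' is precisely the misconception: these functions are not $\Out$-invariant, and symmetrizing over an infinite orbit produces nothing holomorphic. Note that this is exactly where the real difficulty sits: a Kodaira fibration is a fibration with smooth fibers, infinite monodromy and non-constant fiber moduli, and your argument as written would never detect it. (If the argument were sound it would reprove, with no geometric input, that Kodaira fibrations carry no foliated projective structure --- a statement the paper obtains only from Deroin--Guillot's Chern class identity $c_1^2=2c_2$, against $c_1^2>2c_2$ for Kodaira fibrations.)

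The paper's proof is shorter and avoids holonomy altogether: regularity of $\Fol$ forces all singular fibers of $f$ to be multiples of smooth curves, so a non-isotrivial such fibration is (up to the multiple fibers) a Kodaira fibration, which is excluded by Deroin--Guillot; this gives isotriviality in all genera at once. For the second assertion, once the fibration of genus $\geq 2$ curves is known to be isotrivial it becomes a product $\hat{C}\times F$ after a finite \'etale base change (this is where your worry about non-trivializable monodromy evaporates --- it is a standard consequence of isotriviality and the finiteness of $\Aut(F)$), and then the classifying map from the compact curve $\hat{C}$ to the affine space of $(\PGL(2,\CC),\PP^1)$-structures on $F$ (directed by quadratic differentials) must be constant. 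Your Hejhal-type local-biholomorphism argument for the second statement would also work at that stage, but only after the first statement has been secured by some input equivalent to excluding Kodaira fibrations; as it stands the proposal is missing that ingredient.
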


\begin{proof}
Since $\Fol$ is a foliation without singularities, the singular fibers of $f$ can only be multiples of smooth curves. Such a fibration is not isotrivial if and only if it is a Kodaira fibration. However we just said that there is no foliated $(\PGL(2,\CC),\PP^1)$-structure on a Kodaira fibration.

If fibers of $f$ have genus $\geq 2$, then there exists a finite unramified cover $\hat{C}$ of $C$ such that the fibration $\hat{f}:\hat{X}\rightarrow \hat{C}$ obtained by base change is a product. Consider the induced foliated $(\PGL(2,\CC),\PP^1)$-structure on $\hat{X}$ as a family of $(\PGL(2,\CC),\PP^1)$-structures on a fiber parametrized by $\hat{C}$. The conclusion follows because $(\PGL(2,\CC),\PP^1)$-structures on a given Riemann surface form an affine space and there is no non constant holomorphic map from $\hat{C}$ to an affine space.
\end{proof}

Let $\Gamma$ be a torsion free cocompact irreducible lattice in $\PSL(2,\RR)\times \PSL(2,\RR)$ and consider the bidisk quotient $X=\bD\times\bD/\Gamma$. Both foliations on $X$ are equipped naturally with a foliated $(\PGL(2,\CC),\PP^1)$-structure. All leaves are biholomorphic to the disk and have the same $(\PGL(2,\CC),\PP^1)$-structure: the standard one of a round disk in $\PP^1$.

\begin{proposition}\label{bidiskprojstructure}
If $\Fol$ is one of the natural foliation on the quotient of the bidisk by a torsion free cocompact irreducible lattice, then the natural foliated $(\PGL(2,\CC),\PP^1)$-structure is the unique foliated $(\PGL(2,\CC),\PP^1)$-structure on $\Fol$.
\end{proposition}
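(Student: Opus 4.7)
\proofstep{Proof plan}
I would approach this via the affine description recalled just above the statement: the space of foliated $(\PGL(2,\CC),\PP^1)$-structures on $\Fol$ is an affine space directed by $H^0(X, T_{\Fol}^{*2})$. Hence it suffices to show $H^0(X,T_{\Fol}^{*2})=0$. Assume $\Fol$ has leaves projected from $\{z\}\times\bD$ (the other case is symmetric). A holomorphic section of $T_{\Fol}^{*2}$ pulls back to a holomorphic quadratic differential along the leaves on $\bD\times\bD$, i.e.\ a holomorphic function $q(z,w)$ subject to $q(\gamma_1 z,\gamma_2 w)(\gamma_2'(w))^2=q(z,w)$ for every $\gamma=(\gamma_1,\gamma_2)\in\Gamma$.

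The first step is to form $\phi(z,w)=(1-|w|^2)^2|q(z,w)|$. Using $1-|\gamma_2 w|^2=(1-|w|^2)|\gamma_2'(w)|$ for $\gamma_2\in\Aut(\bD)$, one verifies that $\phi$ is $\Gamma$-invariant, so it descends to a continuous function on the compact surface $X$ and attains a finite maximum $M$ at some point $(z_0,w_0)$. The second step is to apply the maximum principle on the disk $\bD$ to the holomorphic function $z\mapsto q(z,w_0)$: it is bounded by $M/(1-|w_0|^2)^2$ with equality at the interior point $z_0$, so it must be constant in $z$.

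To propagate this to every slice I introduce $V=\{w\in\bD:q(\cdot,w)\text{ is constant in }z\}$. Writing the power series $q(z,w)=\sum_{k\ge 0}c_k(w)(z-z_1)^k$ around a base point $z_1$, one has $V=\bigcap_{k\ge 1}\{c_k=0\}$, so $V$ is a closed analytic subset of $\bD$. The transformation law shows that if $w\in V$ then $\gamma_2 w\in V$ for every $(\gamma_1,\gamma_2)\in\Gamma$. Since $\Gamma$ is an irreducible lattice, the projection $p_2(\Gamma)\subset\Aut(\bD)$ is dense; hence the orbit $p_2(\Gamma)\cdot w_0\subset V$ is dense in $\bD$, and a proper analytic subset cannot be dense, so $V=\bD$.

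Consequently $q(z,w)=f(w)$ for some holomorphic $f$ on $\bD$, and the invariance law reduces to $f(\gamma_2 w)(\gamma_2'(w))^2=f(w)$ for every $\gamma_2\in p_2(\Gamma)$. By density and continuity this identity extends to every $\gamma_2\in\Aut(\bD)$, so $f(w)\,dw^2$ is an $\Aut(\bD)$-invariant holomorphic quadratic differential on the disk. The rotation subgroup alone is enough to conclude $f\equiv 0$: writing $f(w)=\sum a_n w^n$ and comparing coefficients under $w\mapsto e^{i\theta}w$ yields $a_n e^{i(n+2)\theta}=a_n$ for all $\theta$, hence $a_n=0$. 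Therefore $q\equiv 0$ and $H^0(X,T_{\Fol}^{*2})=0$. The main technical point, and where most care is needed, is the analyticity of $V$ and the passage from $p_2(\Gamma)$-invariance to $\Aut(\bD)$-invariance; both rely essentially on the irreducibility of the lattice.
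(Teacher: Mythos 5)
Your proof is correct, but it takes a genuinely different route from the paper. The paper's proof is a one-line reduction plus a citation: it also reduces the statement to $\operatorname{H}^0(X,T_{\Fol}^{*2})=\{0\}$, but then invokes Brunella's computation (Chapter 9.5, Example 9.3 of his book on foliations) that $T_{\Fol}^{*}$ has Kodaira dimension $-\infty$, which kills the sections of \emph{every} positive power $T_{\Fol}^{*n}$ at once. You instead give a self-contained analytic argument: the $\Gamma$-invariant function $(1-|w|^2)^2|q(z,w)|$ descends to the compact quotient and attains a maximum, the maximum modulus principle forces $q(\cdot,w_0)$ to be constant on the slice through the maximum, and the density of the second projection of the irreducible lattice propagates this constancy to all slices via the analytic set $V$; the final reduction to an $\Aut(\bD)$-invariant quadratic differential, killed by rotations, is clean. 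All the individual steps check out: the identity $1-|\gamma_2 w|^2=(1-|w|^2)|\gamma_2'(w)|$ is the standard Schwarz--Pick equality for disk automorphisms, the coefficients $c_k(w)$ are holomorphic by the Cauchy integral formula so $V$ is analytic, and a proper analytic subset of $\bD$ is discrete hence not dense. Your approach buys independence from Brunella's Kodaira-dimension computation at the cost of length; it also generalizes verbatim to $T_{\Fol}^{*n}$ for any $n\geq 1$ by replacing the exponent $2$ with $n$ throughout, which is the stronger statement the paper's citation actually delivers.
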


\begin{proof}
It suffices to show that $\operatorname{H}^0(X,T_{\Fol}^{*2})=\{0\}$. In fact for any positive integer $n$ the line bundle $T_{\Fol}^{*n}$ has no non-zero sections because $T_{\Fol}^{*}$ has Kodaira dimension $-\infty$ by \cite{Bru15} Chapter 9.5 Example 9.3.
\end{proof}

\section{Invariant rational fibration I}\label{rationalfibrationsectionone}

In this and the next two sections we suppose that $(Y,\Gamma,U,X)$ is a birational kleinian group and there is a $\Gamma$-invariant rational fibration $r:Y\rightarrow B$ over a smooth projective curve $B$. Note that when $B$ is not a rational curve, any birational transformation group of $Y$ preserves automatically $r$ because any curve transverse to the fibration is non rational whereas $\Gamma$ has to preserve rational curves. We have a group homomorphism $\Gamma\rightarrow \Aut(B)$ whose image will be denoted by $\Gamma_B$. The ruling $r$ is equivariant with respect to the action of $\Gamma$ on $Y$ and that of $\Gamma_B$ on $B$. 
If $\Gamma_B$ is infinite, then $B$ is $\PP^1$ or an elliptic curve. We will study in this section the case where $\Gamma_B$ is finite. The case where $\Gamma_B$ is infinite will be studied in Sections \ref{rationalfibrationsectiontwo} and \ref{rationalfibrationsectionthree}.

By contracting $(-1)$-curves which are contained in the fibers of $r$ but are disjoint from $U$, we can and will assume, without loss of generality, that if a fiber of $r$ intersects $U$, then its irreducible components of self-intersection $-1$ all intersect $U$.

\subsection{From invariant fibration to foliation}
We recall a basic fact about non relatively minimal ruled surfaces:
\begin{fact}\label{ruledsingfib}
Let $F$ be a singular fiber of a rational fibration. Then $F$ is a tree of rational curves whose components are of self-intersection $\leq -1$. 
\end{fact}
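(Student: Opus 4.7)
The plan is to reduce the statement to a sequence of blowups starting from a relatively minimal model. By the classification of ruled surfaces, any rational fibration $r\colon Y\to B$ is obtained from a $\mathbb{P}^1$-bundle $r_0\colon Y_0\to B$ by a finite sequence of blowups $Y=Y_n\to Y_{n-1}\to\cdots\to Y_0$, each centered at a point lying over a point of $B$. In $Y_0$ every fiber is a smooth $\mathbb{P}^1$ of self-intersection zero, and pulling back a fiber $F_0$ of $r_0$ successively under these blowups produces the corresponding fiber of $r$. A fiber of $r$ is therefore singular precisely when at least one blowup occurs over the corresponding point of $B$, so I would show that the three properties — tree structure, smooth rational components, and all self-intersections $\leq -1$ — hold after the first blowup and are preserved under every subsequent blowup.

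For the first blowup, on a smooth $\mathbb{P}^1$ fiber $F_0$ of self-intersection $0$, I obtain two smooth rational curves (the strict transform and the exceptional divisor) each of self-intersection $-1$, meeting transversely at a single point, a tree satisfying the required properties. For the inductive step, given a fiber $F=\sum n_iC_i$ in some $Y_k$ already satisfying the three properties, I blow up a point $p\in F$. Either $p$ is a smooth point of a unique component $C_{i_0}$, in which case the strict transform $\tilde C_{i_0}$ remains a smooth $\mathbb{P}^1$ with self-intersection $C_{i_0}^2-1$, the exceptional divisor $E$ is a new $(-1)$-curve meeting $\tilde C_{i_0}$ transversely at one point, and no cycle appears; or $p$ is the transverse intersection point of two components $C_{i_0}$ and $C_{i_1}$, in which case each $\tilde C_{i_\ell}^2=C_{i_\ell}^2-1$, and the new $(-1)$-curve $E$ replaces the edge $C_{i_0}$–$C_{i_1}$ by the path $\tilde C_{i_0}$–$E$–$\tilde C_{i_1}$, keeping the dual graph a tree. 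In both cases all three properties persist.

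The main obstacle is really just the notational bookkeeping in the inductive step: one must confirm that a blowup of a smooth surface at a smooth point never identifies two distinct components, never introduces a cycle, and affects self-intersections via $(\tilde C)^2=C^2-\operatorname{mult}_p(C)^2$ together with $E^2=-1$. These are standard properties. A one-step alternative avoiding the induction uses Zariski's lemma directly: the relation $F\cdot C_i=0$ together with the connectedness of $F$ yields $n_iC_i^2=-\sum_{j\neq i}n_jC_i\cdot C_j\leq -1$, hence $C_i^2\leq -1$ for every component; combined with $F^2=0$, $F\cdot K_Y=-2$, and $p_a(F)=0$, the adjunction formula then forces each $C_i$ to be a smooth rational curve and the dual graph to have arithmetic genus zero, i.e.\ to be a tree.
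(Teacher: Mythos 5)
Your proof is correct and follows essentially the same route as the paper's: the paper also argues that a singular fiber arises from a regular fiber of a relatively minimal rational fibration by successive blow-ups, which yields the tree of rational curves, and that self-intersections only decrease under blow-up, starting from $0$. Your write-up simply makes the induction explicit (and the Zariski-lemma aside is a valid alternative, though not the one the paper uses).
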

%\begin{proof}The singular fiber $F$ is obtained by successive blow-ups from a regular fiber of a relatively minimal rational fibration, thus a tree of rational curves. The second assertion holds because original regular fiber is of self-intersection $0$ and that a blow-up decreases the self-intersection of the components.\end{proof}

\begin{lemma}\label{nocontractedcurve}
Let $F$ be a fiber which intersects $U$. Then no element of $\Gamma$ has an indeterminacy point on $F$ and no component of $F$ is contracted by an element of $\Gamma$. 
\end{lemma}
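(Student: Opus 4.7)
The plan is to prove the no-indeterminacy statement first, then deduce the no-contracted-component statement by applying it to $\gamma^{-1}$.

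First I would observe that for any $\gamma \in \Gamma$, the image fiber $F' := r^{-1}(\gamma_B(r(F)))$ also meets $U$: since $\gamma|_U$ is a biholomorphism of $U$ compatible with $r$, it restricts to a biholomorphism $F \cap U \to F' \cap U$, so the standing assumption on $(-1)$-components applies both to $F$ and to $F'$.

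For the indeterminacy claim, I would argue by contradiction. Let $p \in F$ be an indeterminacy of $\gamma$, take a minimal resolution $\pi_1, \pi_2 : Z \to Y$ of $\gamma = \pi_2 \circ \pi_1^{-1}$, and set $E := \pi_1^{-1}(p)$ (a non-empty tree of rational curves) and $C := \pi_2(E)$. The $r$-equivariance $r \circ \pi_2 = \gamma_B \circ r \circ \pi_1$ forces $C \subset F'$, while $\gamma^{-1}$ contracts $C$ to $p$, so Remark \ref{indetdisjfromU} applied to $\gamma^{-1}$ gives $C \cap U = \emptyset$. The core step is then to exhibit inside $C$ a $(-1)$-component of $F'$, contradicting the standing assumption: pick a leaf $E_\ell$ of $E$, necessarily a $(-1)$-curve in $Z$; minimality of the resolution prevents $E_\ell$ from being $\pi_2$-exceptional, so $C_\ell := \pi_2(E_\ell)$ is a smooth rational component of $F'$. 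The self-intersection transformation formula $E_\ell^2 = C_\ell^2 - k$ (with $k \geq 0$ the number of $\pi_2$-centers on $C_\ell$), combined with $E_\ell^2 = -1$, yields $C_\ell^2 \geq -1$. If $F'$ is reducible, Fact \ref{ruledsingfib} pins $C_\ell^2 = -1$, and the standing assumption makes $C_\ell$ meet $U$, contradicting $C_\ell \subset C$; if $F'$ is irreducible, then $C = C_\ell = F'$, so $F' \cap U = \emptyset$, contradicting the preliminary observation.

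For the contraction statement, if a component $F_i \subset F$ were contracted by $\gamma$ to $q$, then $q \in F'$ would be an indeterminacy of $\gamma^{-1}$, contradicting the indeterminacy statement applied to $\gamma^{-1}$ and $F'$. The main obstacle will be the core step of the preceding paragraph: pinning down $C_\ell$ as a $(-1)$-component of $F'$ requires combining the minimality of the resolution (to ensure that leaves of the exceptional tree descend to curves rather than points), the self-intersection transformation formula under blowups, and Fact \ref{ruledsingfib}, to exclude every other possibility for $C_\ell^2$.
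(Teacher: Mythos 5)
Your overall strategy is sound and is essentially the paper's own argument run in the opposite order: the paper first rules out contracted components of $F$ (via the minimal resolution, Fact~\ref{ruledsingfib}, and the standing assumption on $(-1)$-components), and then obtains the indeterminacy statement by applying that result to $\gamma^{-1}$ and the image fiber; you do the reverse. However, there is one step that fails as written: the claim that a leaf $E_\ell$ of the exceptional tree $E=\pi_1^{-1}(p)$ is \emph{necessarily} a $(-1)$-curve. This is false. Blow up $p$ to get $E_1$, then a point of $E_1$ to get $E_2$, then the intersection point $E_1\cap E_2$ to get $E_3$: the dual graph is the chain $E_1-E_3-E_2$ with self-intersections $-3,-1,-2$, so both leaves have self-intersection $\leq -2$ and the unique $(-1)$-curve sits in the interior. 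This matters twice in your argument: the minimality of the resolution only forbids a curve contracted by both $\pi_1$ and $\pi_2$ when that curve is a $(-1)$-curve (a leaf of self-intersection $-3$ can perfectly well be swallowed by $\pi_2$ as part of a larger contractible configuration), and your computation $C_\ell^2=E_\ell^2+k\geq -1$ uses $E_\ell^2=-1$ in an essential way.

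The repair is immediate: take $E_\ell$ to be \emph{any} $(-1)$-curve contained in $\pi_1^{-1}(p)$ --- one always exists, e.g.\ the exceptional curve of the last blow-up in a factorization of $\pi_1$ over $p$. With that choice, minimality does guarantee $E_\ell$ is not $\pi_2$-exceptional, the image $C_\ell$ is a component of $F'$ with $C_\ell^2\geq -1$, and your dichotomy (Fact~\ref{ruledsingfib} forcing $C_\ell^2=-1$ when $F'$ is reducible, versus $C_\ell=F'$ when $F'$ is irreducible) together with $C\cap U=\emptyset$ from Remark~\ref{indetdisjfromU} gives the contradiction exactly as you intend. Note that the paper sidesteps this issue entirely by tracking the strict transform of a component of $F$ through the resolution rather than the exceptional tree over an indeterminacy point; once the leaf claim is corrected, the two proofs are equivalent in content.
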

\begin{proof}
Let $\gamma\in\Gamma$ be a non trivial birational transformation. Let us first show that $\gamma$ does not contract any component of $F$. If $F$ is a regular fiber, then it has only one irreducible component which is itself and this component intersects $U$ by hypothesis. Since the action of $\Gamma$ on $U$ is regular, $F$ cannot be contracted. 

Now assume that $F$ is a singular fiber.  Let $Y\xleftarrow{\epsilon} Z\xrightarrow{\delta} Y$ be the minimal resolution of indeterminacy of $\gamma$. Here $Z$ is a smooth projective surface, $\epsilon,\delta$ are birational morphisms and $\gamma=\delta\circ\epsilon^{-1}$. Suppose by way of contradiction that some component of $F$ is contracted by $\gamma$. Then there exists a component $C$ of $F$ such that the strict transform of $C$ in $Z$ has self-intersection $(-1)$ and is contracted by $\delta$. By Fact \ref{ruledsingfib} $C$ has self-intersection $\leq -1$. As the self-intersection number decreases after a blow up, the strict transform of $C$ in $Z$ has self-intersection $(-1)$ if and only if $C$ has self-intersection $(-1)$ and no point in $C$ is blown up by $\epsilon$. But by the hypothesis of our initial setting the $(-1)$-components of $F$ all intersect $U$ and can not be contracted by $\gamma$. This means $\delta$ does not contract $C$, contradiction. 

We remark that the total transform of $F$ by $\gamma$ is also a fiber of $r$ which intersects $U$. The above reasoning, applied to $\gamma^{-1}$, says that $\gamma^{-1}$ does not contract anything onto $F$, i.e.\ $\gamma$ does not have any indeterminacy point on $F$.
\end{proof}
We deduce immediately from Lemma \ref{nocontractedcurve} the following corollary:
\begin{corollary}\label{regularonacylinder}
The group $\Gamma$ acts by biholomorphisms on $r^{-1}(r(U))$. In particular if $r(U)=B$ then $\Gamma\subset \Aut(Y)$. 
\end{corollary}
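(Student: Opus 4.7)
The plan is to deduce the Corollary directly from Lemma \ref{nocontractedcurve} combined with the equivariance of $r$. First I would set up the open set in question: since $r$ is a fibration, $r(U)$ is open in $B$, hence $V := r^{-1}(r(U))$ is an open subset of $Y$. Moreover, for every $\gamma \in \Gamma$, equivariance $r \circ \gamma = \gamma_B \circ r$ together with $\gamma(U)=U$ gives $\gamma_B(r(U)) = r(U)$, so $V$ is $\Gamma$-invariant (wherever $\gamma$ is defined).

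Next I would show each $\gamma\in\Gamma$ is a local biholomorphism at every point of $V$. Any $p \in V$ lies on a fiber $F = r^{-1}(b)$ with $F \cap U \ne \emptyset$. By the first conclusion of Lemma \ref{nocontractedcurve}, $\gamma$ has no indeterminacy point on $F$, so in particular $\gamma$ is regular at $p$. It remains to rule out that $p$ lies on a curve contracted by $\gamma$. Let $E$ be any irreducible curve contracted by $\gamma$ and assume by contradiction that $E$ meets $V$. Two cases:
\begin{itemize}
\item If $E$ is contained in a fiber of $r$, then $E$ is a component of some fiber $F'$ meeting $V$; as $E \cap V \neq \emptyset$, the fiber $F'$ also meets $U$, so the second conclusion of Lemma \ref{nocontractedcurve} forbids $\gamma$ from contracting $E$, a contradiction.
\item If $E$ is not contained in any fiber, then $r|_E : E \to B$ is dominant, so $\gamma_B \circ r|_E = r \circ \gamma|_E$ is non-constant, which contradicts the fact that $\gamma(E)$ is a point.
\end{itemize}
Hence $\gamma$ is a local biholomorphism everywhere on $V$. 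Applying the same argument to $\gamma^{-1}$ (which is also in $\Gamma$ and satisfies the same hypotheses), we get that $\gamma^{-1}$ is regular on $V$ too, and the two are mutual inverses on $V$. Therefore $\gamma|_V$ is a holomorphic diffeomorphism of $V$ onto itself.

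For the final clause: if $r(U) = B$ then $V = r^{-1}(B) = Y$, so every $\gamma$ is a biholomorphism of $Y$, i.e.\ $\Gamma \subset \Aut(Y)$. I do not expect any real obstacle here; this is a bookkeeping argument, and the only subtle point is the second case above, where one must use equivariance of $r$ under $\gamma$ to preclude contraction of a multisection.
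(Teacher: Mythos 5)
Your proof is correct and follows essentially the same route as the paper, which states this corollary as an immediate consequence of Lemma \ref{nocontractedcurve} without writing out the details. Your write-up supplies exactly the expected expansion, including the one point the lemma does not literally cover (a contracted curve not contained in a fiber), which you correctly rule out via the equivariance $r\circ\gamma=\gamma_B\circ r$.
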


The following proposition shows how holomorphic foliations arise in our study.
\begin{proposition}\label{withoutsingularpoints}
Let $(Y,\Gamma,U,X)$ be a birational kleinian group.
Suppose $\Gamma\subset \Bir(Y)$ preserves a fibration $r:Y\rightarrow B$. If a fiber $F$ of the invariant fibration intersects $U$, then $F\cap U$ contains no singular point of $F$. Therefore the fibration descends to a regular holomorphic foliation on $X$.
\end{proposition}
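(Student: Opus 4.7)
The plan is to argue by contradiction, exploiting the standing hypothesis that $\Gamma_B$ is finite. Suppose, for a fiber $F$ meeting $U$, that $p\in F\cap U$ is a singular point of $F$. Since $\Gamma$ is infinite and $\Gamma_B$ is finite, the kernel $\Gamma_0=\ker(\Gamma\to\Gamma_B)$ is an infinite normal subgroup of finite index in $\Gamma$, and by construction each $\gamma\in\Gamma_0$ preserves every fiber of $r$ set-theoretically; in particular $\gamma(F)=F$.

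The next step is to promote this set-theoretic statement to an honest automorphism of the algebraic curve $F$. By Lemma \ref{nocontractedcurve}, no element of $\Gamma$ has an indeterminacy point on $F$ and no component of $F$ is contracted, so for every $\gamma\in\Gamma_0$ the birational map $\gamma$ is holomorphic in a neighbourhood of $F$ and $\gamma|_F$ is an automorphism of the variety $F$. By Fact \ref{ruledsingfib}, $F$ is a tree of finitely many rational curves glued along finitely many nodes, and $\Gamma_0$ permutes this finite set of nodes. Passing to the stabilizer $\Gamma_1\subset\Gamma_0$ of $p$ yields a finite-index subgroup of $\Gamma_0$---still infinite---every element of which fixes $p\in U$. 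This contradicts the freeness of the $\Gamma$-action on $U$, so no such $p$ exists.

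Once singular points of singular fibers are excluded from $U$, the map $r|_U$ has no critical points (the critical locus of a rational fibration on a smooth rational surface consists exactly of the nodes of its singular fibers). The level sets of $r|_U$ then define a regular holomorphic foliation on $U$, and this foliation is $\Gamma$-invariant because $\Gamma$ preserves $r$. Since $\Gamma$ acts freely and properly discontinuously on $U$, the foliation descends to a regular holomorphic foliation on $X=U/\Gamma$.

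The only delicate point is the second paragraph: one must upgrade the birational identity $\gamma(F)=F$ to the statement that $\gamma|_F$ is a genuine automorphism of $F$ permuting the nodes. This is handled cleanly by Lemma \ref{nocontractedcurve}, which itself relies on the preliminary reduction that every $(-1)$-component of a fiber meeting $U$ also meets $U$; once this is in hand, the rest is the standard observation that an infinite group acting freely on a manifold cannot stabilize a finite subset of that manifold.
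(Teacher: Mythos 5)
There is a genuine gap: your proof is built on ``the standing hypothesis that $\Gamma_B$ is finite,'' but that hypothesis is not part of the statement of the proposition, and the proposition is needed precisely in the later sections where $\Gamma_B$ is infinite (Sections~\ref{rationalfibrationsectiontwo} and \ref{rationalfibrationsectionthree} apply Brunella's theorem to the foliation $\Fol$ on $X$, so its regularity must be established there too). When $\Gamma_B$ is infinite the kernel $\Gamma_0=\ker(\Gamma\to\Gamma_B)$ can be finite or even trivial (e.g.\ in the bidisk case the map $\Gamma\to\Gamma_B$ is injective), so your infinite fiber-preserving subgroup does not exist and the argument collapses. The repair is short and is exactly what the paper does: drop the reduction to $\Gamma_0$ entirely. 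Each $\gamma\in\Gamma$ is a holomorphic diffeomorphism of $U$ carrying fibers to fibers, so it sends a singular point of a singular fiber lying in $U$ to a singular point of another singular fiber. The set of all singular points of all singular fibers of $r$ is finite, so the $\Gamma$-orbit of $p$ is finite and the stabilizer of $p$ is an infinite subgroup of $\Gamma$ fixing a point of $U$, contradicting freeness. Note that this also makes Lemma~\ref{nocontractedcurve} and Fact~\ref{ruledsingfib} unnecessary for this particular proposition: one only needs condition (1) in the definition of a birational Kleinian group.

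A secondary, minor inaccuracy: in the last step you assert that the critical locus of $r|_U$ consists exactly of the nodes of the singular fibers. Components of a singular fiber of a rational fibration can occur with multiplicity $>1$, and $dr$ vanishes along such components, so this is not literally the critical locus of $r$. What is true, and what is needed, is that the foliation defined by the fibration (locally by $r^*(dw)/h^{m-1}$, as in Section~\ref{foliationintro}) is regular away from the singular points of the reduced fibers; once those points are excluded from $U$, the foliation on $U$ is regular and $\Gamma$-invariant, hence descends to $X$.
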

\begin{proof}
Suppose by way of contradiction that $F$ is a singular fiber and $p\in F\cap U$ is a singular point of $F$. If $\gamma\in \Gamma$, then
$\gamma(p)$ is a singular point of the singular fiber $\gamma(F)$ since $\Gamma$ preserves the fibration and acts 
by biholomorphisms on $U$. However a fibration has only finitely many singular fibers and each singular fiber has only finitely many singular points. Thus the infinite group $\Gamma$ does not act freely on $U$, contradiction.
\end{proof}
From now on we will denote by $\Fol$ the foliation on $X$ induced by the ruling $r$. 

\begin{proposition}\label{hasafoliatedprojectivestructure}
In the setting of Proposition \ref{withoutsingularpoints}, if moreover $r$ has no singular fibers then the foliation $\Fol$ is equipped with a foliated $(\PGL(2,\CC),\PP^1)$-structure along the leaves.
\end{proposition}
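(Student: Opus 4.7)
My plan is to exploit the local triviality of the $\PP^1$-bundle $r\colon Y\to B$ to build explicit charts for a foliated projective structure. Since $r$ has no singular fibers, every fiber is a smooth rational curve, so $r\colon Y\to B$ is a $\PP^1$-bundle; in particular it is analytically locally trivial, and the transition functions of any atlas of trivializations over open sets of $B$ take values in $\PGL(2,\CC)$.

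Given a point $x\in X$, I would choose a preimage $\tilde{x}\in U$ and a small connected neighborhood $W\subset U$ of $\tilde{x}$ such that the quotient map $U\to X$ is injective on $W$ and such that $r(W)$ is contained in a trivializing open set $V\subset B$, fixing an isomorphism $\tau\colon r^{-1}(V)\xrightarrow{\sim} V\times\PP^1$ compatible with $r$. Define $\phi_W\colon W\to \PP^1$ as the composition of the inclusion $W\hookrightarrow r^{-1}(V)$, the isomorphism $\tau$, and the projection to the second factor. The fibers of $\phi_W$ are horizontal sections of $\tau$, so they are transverse to the fibers of $r$ inside $r^{-1}(V)$; since the leaves of $\Fol$ lift, inside $U$, to pieces of fibers of $r$ (Proposition \ref{withoutsingularpoints} and the assumption of no singular fibers), $\phi_W$ is a submersion transverse to the lifted foliation. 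Shrinking $W$ so that it is a single plaque makes $\phi_W$ injective on local leaves. Passing to the quotient, $\phi_W$ descends to a chart on the image of $W$ in $X$ with the properties required by the definition of a foliated $(\PGL(2,\CC),\PP^1)$-structure.

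For compatibility on overlaps, consider two such charts coming from $W_1,W_2\subset U$ whose images in $X$ overlap. This overlap lifts to $W_1\cap \gamma(W_2)$ for some $\gamma\in\Gamma$, and on a single plaque $L\subset W_1\cap\gamma(W_2)$ the transition function restricted to the leaf is the composition of (a) the change of trivialization of $r$ between the two local trivializations, pulled back to $\gamma(F)$, and (b) the restriction of $\gamma$ to the fiber $F$ through $\gamma^{-1}(L)$. The transition (a) lies in $\PGL(2,\CC)$ by local triviality of the $\PP^1$-bundle. For (b), the fiber $F$ meets $U$, so by Lemma \ref{nocontractedcurve} the birational map $\gamma$ is regular on $F$ and does not contract it; hence $\gamma|_F\colon F\to\gamma(F)$ is a morphism between two copies of $\PP^1$, and because $\gamma$ is a local biholomorphism on $F\cap U$ it must be of degree one, i.e.\ an element of $\PGL(2,\CC)$. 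Composing, the transition of the two charts restricted to the leaf is in $\PGL(2,\CC)$, as required.

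I expect the main obstacle to be purely bookkeeping: keeping straight which plaque in $U$ is identified with which leaf of $\Fol$ on $X$ and which $\gamma\in\Gamma$ effects the identification, so as to correctly factor the transition function into a bundle-trivialization change plus a fiberwise action of $\gamma$. The two geometric inputs — local triviality of smooth $\PP^1$-bundles and Lemma \ref{nocontractedcurve} giving a Möbius restriction of each $\gamma$ to a fiber meeting $U$ — are already in hand, so once the matching of plaques is arranged the $\PGL(2,\CC)$-compatibility is immediate.
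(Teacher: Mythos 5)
Your argument is essentially the paper's proof written out in full: the paper simply observes that $r^{-1}(r(U))$ is a $\PP^1$-bundle over $r(U)$ and that, by Corollary \ref{regularonacylinder}, each $\gamma\in\Gamma$ sends a fiber over a point of $r(U)$ to another fiber by a M\"obius transformation, so the local trivializations and the factorization of the transition maps that you spell out are exactly what is meant by ``the proposition follows from the definition''. One justification in step (b) is off, though harmlessly so: a morphism $\PP^1\to\PP^1$ of degree $d>1$ is still a local biholomorphism away from its finitely many ramification points, so being a local biholomorphism on the open set $F\cap U$ does not by itself force degree one; the correct and immediate reason is that $\gamma^{-1}$ is likewise regular on the fiber $\gamma(F)$ (which meets $U$), so $\gamma|_F$ is inverted by $\gamma^{-1}|_{\gamma(F)}$ --- equivalently, one can just invoke Corollary \ref{regularonacylinder}, which already asserts that $\Gamma$ acts by holomorphic diffeomorphisms on $r^{-1}(r(U))$.
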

\begin{proof}
The open set $r^{-1}(r(U))$ is a $\PP^1$-bundle over $r(U)$. According to Corollary \ref{regularonacylinder} an element of $\Gamma$ sends a fiber over a point of $r(U)$ to another fiber by a M\"obius transformation. Then the proposition follows directly from the definition of foliated $(\PGL(2,\CC),\PP^1)$-structure.
\end{proof}

\subsection{Finite action on the base}
Recall that a decomposable ruled surface is a relatively minimal ruled surface with two disjoint sections.
\begin{theorem}\label{thmruled1}
Let $(Y,\Gamma,U,X)$ be a birational kleinian group on a surface $Y$ which is ruled over a curve $B$. Assume that $\Gamma$ preserves the ruling, and $\Gamma$ induces a finite action on the base curve $B$. Then up to geometric conjugation and up to taking finite index subgroup, we are in one of the following situations:
\begin{enumerate}
	\item $Y=B\times \PP^1$, $\Gamma\subset \{\Id\}\times \PGL(2,\CC)$ and $U=B\times D$ where $D\subset\PP^1$ is an invariant component of the domain of discontinuity of $\Gamma$ viewed as a classical Kleinian group.
	\item $Y$ is $\PP(\EE)$ where $\EE$ is an extension of $\OO_B$ by $\OO_B$. The extension determines a section $s$ of the ruling. We have $U=Y-s$. The subgroup of $\Aut_B(Y)$ fixing $s$ is isomorphic to $\CC$ in which $\Gamma$ is a lattice. The group $\Gamma$ is isomorphic to $\ZZ^2$ and the surface $X$ is a principal elliptic fiber bundle.
	\item $Y$ is obtained by blowing up a decomposable ruled surface, $U$ is a Zariski open set of $Y$ whose intersection with each fiber is biholomorphic to $\CC^*$. The group $\Gamma$ is cyclic and is generated by an automorphism which acts by multiplication in each fiber. The quotient surface $X$ is an elliptic fibration over $B$ with isomorphic regular fibers and whose only singular fibers are multiples of smooth elliptic curves.
\end{enumerate}
\end{theorem}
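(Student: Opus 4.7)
The plan is to first reduce to the case where $\Gamma$ acts by fiber-preserving automorphisms of $Y$, and then classify by the structure of the underlying ruled surface and the action on a generic fiber. Since $\Gamma_B$ is finite, after replacing $\Gamma$ by a finite-index subgroup I may assume $\Gamma_B$ is trivial, so that every fiber of $r$ is setwise preserved; the ruling then descends to a proper holomorphic map $X\to B$, and its image $r(U)$ is both open in $B$ and the image of the compact $X$, hence equal to $B$. Corollary~\ref{regularonacylinder} then upgrades $\Gamma$ to a subgroup of $\Aut(Y)$ acting as fiber-preserving automorphisms. Any element acting trivially on a generic fiber acts trivially on a Zariski-dense subset of $Y$, hence everywhere, so restriction to a generic fiber yields an injective homomorphism $\Gamma \hookrightarrow \Aut(\PP^1) = \PGL(2,\CC)$, whose image is discrete because $\Gamma$ acts properly discontinuously on $U$.

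Next I would contract the $\Gamma$-invariant $(-1)$-curves in fibers that are disjoint from $U$ via Lemma~\ref{birconjtoaut}, keeping track of the ones intersecting $U$ that resist contraction. On the relatively minimal model $\PP(\EE)\to B$ the identity component of the group $\operatorname{Aut}_B(\PP(\EE))$ of fiber-preserving automorphisms is determined by $\EE$: it is $\PGL(2,\CC)$ for the trivial bundle, $\mathrm{H}^0(B,L) \rtimes \CC^*$ for a decomposable $\EE = \OO_B \oplus L$ with $L\not\cong \OO_B$, the additive group $\CC$ for a non-split extension of $\OO_B$ by itself, and finite for other indecomposable $\EE$. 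The infinitude of $\Gamma$ rules out the last type, and the first three lead respectively to cases~(1), (3), and~(2) of the statement. In case~(1), $\Gamma\subset \PGL(2,\CC)$ is a discrete subgroup acting classically on $D_b = U\cap F_b\subset\PP^1$. In case~(2), the discreteness of $\Gamma\subset \CC$ and the compactness of the quotient of each punctured fiber $\CC\cong F\setminus(F\cap s)$ force $\Gamma$ to be a rank-$2$ lattice $\ZZ^2$. In case~(3), I would first conjugate inside $\mathrm{H}^0(B,L)\rtimes\CC^*$ to kill the translation part, reducing $\Gamma$ to a cyclic group generated by a fiberwise multiplication.

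The main obstacles lie in the precise description of $U$ in case~(1) and of the blow-up structure in case~(3). For case~(1), one must show that $U = B\times D$ for a single $\Gamma$-invariant component $D$ of the domain of discontinuity $\Omega_\Gamma$: the fiber of $X\to B$ over $b$ is $D_b/\Gamma$, so $D_b$ is necessarily a union of $\Gamma$-invariant components of $\Omega_\Gamma$ having compact quotient, and the openness of $U$, connectedness of $U$ and $X$, and compactness of $X$ together force $D_b$ to be a single component $D$ independent of $b$. For case~(3), the surviving $(-1)$-curves (those meeting $U$) must come from blow-ups of the decomposable model centered at points of the two $\Gamma$-fixed sections $s_0, s_\infty$; a local analysis of the $\Gamma$-action near a blown-up fixed point together with Fact~\ref{ruledsingfib} shows that the singular fibers of the resulting elliptic fibration $X\to B$ are precisely multiples of smooth elliptic curves as stated.
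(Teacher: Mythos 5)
Your skeleton follows the paper's proof quite closely: reduce to $\Gamma_B=\{\mathrm{Id}\}$ after passing to a finite-index subgroup, deduce $r(U)=B$ from compactness of $X$ and then $\Gamma\subset\Aut(Y)$ from Corollary~\ref{regularonacylinder}, embed $\Gamma$ into $\PGL(2,\CC)$ by restriction to a fiber, and invoke Maruyama's description of $\Aut_B$ of the (relatively minimal) model. Two small remarks: injectivity of the restriction to a fiber meeting $U$ follows immediately from freeness of the action on $U$ (a $\gamma$ acting trivially on such a fiber fixes points of $U$), and this is the version you need later, not just injectivity on a very general fiber; and the trichotomy of the statement is really governed by the structure of $\Gamma$ as a classical Kleinian group rather than by the bundle type of the minimal model --- for instance case~(2) also occurs on the split extension $\EE=\OO_B\oplus\OO_B$, i.e.\ the trivial bundle. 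Since the three cases of the theorem overlap, this mislabelling is harmless.

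The genuine gap is your treatment of singular fibers. You contract down to the relatively minimal model and only return to the blown-up surface in case~(3); nothing in your argument excludes the possibility that $r:Y\to B$ has singular fibers while $\Gamma$ is non-solvable, or while $\Gamma\cong\ZZ^2$ acts by fiberwise translations --- and in either of those situations the descriptions of $Y$ and $U$ in cases~(1) and~(2) would fail. Ruling this out is exactly the content of Proposition~\ref{geometricruled}, which is the technical core of the paper's proof. The non-solvable case is quick: a component of a singular fiber meeting $U$ carries at most two $\Gamma$-fixed singular points (three or more would force the component to be pointwise fixed, contradicting freeness), so the faithful restriction of $\Gamma$ to that component lands in a solvable subgroup of $\PGL(2,\CC)$. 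The $\ZZ^2$ case is more delicate: using isotriviality of the induced genus-one fibration on $X$, the action near a singular fiber is locally generated by two translations $(x,y)\mapsto(x,y+a_j)$ lifted through blow-ups at their common (infinitely near) fixed points; the induced action on each exceptional divisor is then trivial, so a pointwise-fixed $(-1)$-component would sit in a fiber meeting $U$ (by the normalization that all $(-1)$-components of such fibers meet $U$), contradicting freeness. Without this step you cannot conclude that $Y$ is geometrically ruled in cases~(1) and~(2), so the proposal as written does not close.
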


The rest of Section \ref{rationalfibrationsectionone} is the proof of Theorem \ref{thmruled1}.

\subsection{Fiberwise dynamics}
We start the proof of Theorem \ref{thmruled1} by making the following reduction: replacing $\Gamma$ with a finite index subgroup, we can and will assume that $\Gamma_B=\{\mathrm{Id}\}$. 

\begin{lemma}\label{ruledsurfacefullimage}
We have $r(U)=B$, $\Gamma \subset \Aut(Y)$. The foliation $\Fol$ on $X$ is a fibration.
\end{lemma}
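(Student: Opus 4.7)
The plan is to exploit the compactness of $X$ together with the hypothesis $\Gamma_B=\{\mathrm{Id}\}$. After the announced reduction, every $\gamma\in\Gamma$ preserves each fiber of $r$, so the restriction $r|_U\colon U\to B$ is $\Gamma$-invariant and descends to a holomorphic map $\tilde r\colon X\to B$ with $\tilde r(X)=r(U)$.

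I would then argue that $r(U)=B$ by a standard open-and-closed argument. By Proposition~\ref{withoutsingularpoints}, the set $U$ avoids all singular points of fibers of $r$, so $r|_U$ has no critical points and is a submersion; consequently $r(U)$ is an open subset of $B$. On the other hand, $r(U)=\tilde r(X)$ is the image of the compact manifold $X$ under a continuous map, so it is closed in $B$. Since $B$ is connected and $r(U)$ is nonempty, we conclude $r(U)=B$.

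Once $r(U)=B$ is established, Corollary~\ref{regularonacylinder} immediately gives $\Gamma\subset \Aut(Y)$. Finally, the induced holomorphic map $\tilde r\colon X\to B$ is the sought-after fibration: by construction the connected components of its fibers are exactly the images in $X$ of the sets $U\cap r^{-1}(b)$, which are precisely the leaves of $\Fol$. Thus $\Fol$ is the foliation defined by $\tilde r$, i.e.\ a fibration.

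There is no real obstacle here; the only point to be careful about is checking submersivity of $r|_U$, which is immediate from Proposition~\ref{withoutsingularpoints} since the critical locus of a holomorphic fibration between smooth varieties coincides with the singular locus of the fibers. The argument does not require any of the later classification machinery, and it sets up the fiberwise analysis of $\Gamma$ on the $\PP^1$-bundle structure that Theorem~\ref{thmruled1} will need.
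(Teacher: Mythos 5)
Your proof is correct and follows essentially the same route as the paper: openness of $r(U)$ in $B$, closedness via compactness of $X$ using that $r$ descends to $X$ because $\Gamma_B=\{\mathrm{Id}\}$, connectedness of $B$ to conclude $r(U)=B$, then Corollary~\ref{regularonacylinder} for $\Gamma\subset\Aut(Y)$ and compactness of the leaves for the fibration claim. The only cosmetic difference is that you justify openness via submersivity of $r|_U$ (using Proposition~\ref{withoutsingularpoints}), while the paper simply invokes openness of holomorphic maps; both are valid.
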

\begin{proof}
 The set $V=r(U)$ is an open subset of $B$ because a holomorphic submersion is open. Since $\Gamma_B=\{\Id\}$ the fibration $r$ induces a commutative diagram:
\begin{equation*} 
\begin{tikzcd}
U \arrow[r]{}{\pi} \arrow{d}{} & X \arrow{d}{} \\
V \arrow{r}{} & V 
\end{tikzcd} 
\end{equation*}
As $X$ is compact, $V$ should be compact too. This implies that $V$ is the whole curve $B$. By Corollary \ref{regularonacylinder}, we have $\Gamma \subset \Aut(Y)$. The leaves of the foliation on $X$ are the fibers of the map $X\rightarrow V$; they are compact. Thus the foliation is a fibration.
\end{proof}

Since $\Gamma\subset \Aut(Y)$, we can and will assume, up to replacing $\Gamma$ by a finite index subgroup, that any irreducible curve contained in a fiber is $\Gamma$-invariant. In particular every singular point of a singular fiber is fixed by $\Gamma$.

\begin{proposition}\label{geometricruled}
If $F\cong \PP^1$ is a regular fiber of $r:Y\rightarrow B$, then $F\cap U$, as a subset of $\PP^1$, is a union of components of the domain of discontinuity of a classical Kleinian group. If moreover $r:Y\rightarrow B$ has at least one singular fiber, then for any fiber $F$ (singular or non-singular), the intersection $F\cap U$ is biholomorphic to $\CC^*$ and $\Gamma$ is cyclic.
\end{proposition}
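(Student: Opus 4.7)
For the first part, let $F\cong\PP^1$ be a regular fiber. Since $\Gamma_B=\{e\}$, every $\gamma\in\Gamma$ preserves $F$ and acts by a Möbius transformation, giving a homomorphism $\rho\colon\Gamma\to\Aut(F)=\PGL(2,\CC)$. The plan is to check that $\rho$ is injective and that the restricted action on $F\cap U$ is properly discontinuous. Injectivity: an element in $\ker\rho$ would fix every point of $F\cap U$, which is nonempty since $r(U)=B$; this contradicts the free action of $\Gamma$ on $U$. Properness is inherited because a compact $K\subset F\cap U$ is also compact in $U$, so only finitely many $\gamma\in\Gamma$ satisfy $\gamma K\cap K\neq\emptyset$. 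Hence $\rho(\Gamma)$ is a classical Kleinian group, $F\cap U$ lies in its domain of discontinuity, and being open and $\Gamma$-invariant, is a union of connected components.

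For the second part, assume a singular fiber $F_0$ exists. By Fact \ref{ruledsingfib} and Proposition \ref{withoutsingularpoints}, $F_0$ is a tree of smooth $\PP^1$'s whose nodes lie outside $U$; after passing to a finite-index subgroup, each component and each node is $\Gamma$-invariant. A component $C\subset F_0$ carrying three or more nodes would force $\Gamma$ to fix three points of $C\cong\PP^1$, hence act trivially on $C$; together with freeness on $U$ this forces $C\cap U=\emptyset$. By the initial reduction, every $(-1)$-component of $F_0$ meets $U$, and therefore carries at most two nodes.

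The key technical step is to contract one such $(-1)$-component $E$, producing $\pi\colon Y\to Y'$ with $p=\pi(E)$; then $\Gamma\subset\Aut(Y')$ fixes $p$ and acts on $E=\PP(T_pY')$ by the projectivized derivative $d_p\gamma$. Because $\Gamma$ preserves the fibration and acts trivially on $B$, one eigenvalue of $d_p\gamma$ is $1$ (horizontal) while the other, $\alpha(\gamma)\in\CC^*$, equals the derivative of the fiber restriction of $\gamma$ at $p$; thus $\gamma$ acts on $E$ as multiplication by $\alpha(\gamma)$ with the horizontal and vertical directions as its two fixed points. Freeness on $E\cap U$ (nonempty by the reduction) forces $\alpha(\gamma)\neq 1$ for every $\gamma\neq e$, so $\Gamma\hookrightarrow\CC^*$. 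Discreteness, infiniteness, and the cocompactness of the action give $\Gamma=\langle\lambda\rangle\cong\ZZ$ with $|\lambda|\neq 1$. The same embedding applies fiberwise: on any $F\cong\PP^1$, $\Gamma$ acts as a loxodromic cyclic subgroup with two fixed points, and since parabolic or elliptic-irrational actions admit no cocompact free quotient of an invariant open subset of $\PP^1$, cocompactness forces $F\cap U=\CC^*$, the complement of the two fixed points.

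The remaining point for $F_0$ is that $F_0\cap U$ is a single $\CC^*$, i.e., only one component of $F_0$ meets $U$. This is where proper discontinuity enters essentially. If $C_1,C_2\subset F_0$ both met $U$, local analysis at a node they share (or across a chain of intermediate components not meeting $U$) shows that $\Gamma$ acts in local coordinates $(x,y)$ at the node as $(x,y)\mapsto(\lambda x,\lambda^{-1}y)$—the determinant being forced to $1$ by the preservation of the fibration—so that iterates $\gamma^n$ of a small compact $K_1\subset C_1\cap U$ contract in one direction and expand in the transverse direction, and inevitably meet any fixed compact $K_2\subset C_2\cap U$ for infinitely many $n$. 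This violates proper discontinuity of $\Gamma$ on $U$, so at most one component of $F_0$ meets $U$; combined with the previous paragraph, $F_0\cap U\cong\CC^*$ and $\Gamma$ is cyclic. The hard part is exactly this hyperbolic-dynamics argument at the node: the reciprocal eigenvalues $\lambda,\lambda^{-1}$ make orbits from opposite sides of the node unavoidably accumulate on one another, and handling the non-adjacent case requires tracking orbits as they pass through the ambient $B\times\CC^*$ region.
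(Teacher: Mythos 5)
Your first part follows the paper's route (restrict to the fiber, injectivity from freeness, proper discontinuity inherited from $U$), with one small omission: an open invariant subset of the domain of discontinuity need not be a union of full components — you need the cocompactness of the fiberwise action, which comes from the fact that $(F\cap U)/\Gamma$ is a fiber of the induced map $f:X\to B$ and hence compact. The paper gets this from the diagram relating $r$ and $f$ and from \cite{BHPV} III.11.

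The second part contains a genuine gap at its central step. You contract a $(-1)$-component $E$ to a fixed point $p$ and claim that freeness on $E\cap U$ forces $\alpha(\gamma)\neq 1$, hence $\Gamma\hookrightarrow\CC^*$. But $d_p\gamma$ is only upper triangular with respect to the flag $T_pF'\subset T_pY'$: in a suitable basis it is $\bigl(\begin{smallmatrix}\alpha(\gamma) & c\\ 0 & 1\end{smallmatrix}\bigr)$, and when $\alpha(\gamma)=1$ with $c\neq 0$ the induced action on $E=\PP(T_pY')$ is \emph{parabolic}, fixing only the vertical direction — which is a node of the fiber and hence outside $U$. Such a $\gamma$ acts freely on $E\cap U$, so your injectivity argument fails, and the possibility $\Gamma\cong\ZZ^2$ acting by translations with $F\cap U\cong\CC$ survives. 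This is precisely the case the paper spends its last paragraph excluding, via a different mechanism: in that case $f:X\to B$ is an isotrivial genus one fibration, the local action near the singular fiber is by $(x,y)\mapsto(x,y+a_j)$, and the blow-ups needed to create a singular fiber would have to occur at common fixed points on which translations act trivially on the exceptional divisors — so no singular fiber can exist. Your proof has no substitute for this step. Separately, your final "hyperbolic dynamics at the node" argument is misstated: if $K_1\subset C_1\cap U$ then $\gamma^nK_1\subset C_1$ for all $n$ and never meets $C_2\cap U$; the non-Hausdorff/non-proper behaviour only appears for two-dimensional compact neighbourhoods in $U$ straddling the node (and the determinant is $\mu^a\nu^b=1$ for the multiplicities $a,b$, not $\mu\nu=1$). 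The paper avoids this entirely by deducing connectedness of $F_0\cap U$ from connectedness of the fibers of $f$, which in turn follows from the general fibers being $\CC/\Gamma$ or $\CC^*/\Gamma$.
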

\begin{proof}
Let $F_b$ be the (possibly singular) fiber over a point $b\in B$ such that $F_b\cap U\neq \emptyset$. There is a homomorphism $\Gamma \rightarrow \Aut (F_b)$ with image $\Gamma_b$ and $\Gamma_b$ preserves $U\cap F_b$. The group $\Gamma_b$ is isomorphic to $\Gamma$ because the action of $\Gamma$ on $U$ is free. We have a commutative diagram
 \begin{equation}\label{ruleddiagram} \begin{CD}
 \begin{tikzcd}
U \arrow[r]{}{\pi} \arrow{d}{r} & X \arrow{d}{f} \\
B \arrow{r}{} & B
\end{tikzcd}
 \end{CD} \end{equation}
where $f$ is a surjective proper holomorphic map. By \cite{BHPV} III.11 the sheaf $f_* \OO_X$ is locally free and the non-singular fibers of $f$ have the same number of connected components. If $F_b$ is a non-singular fiber then it is isomorphic to the projective line $\PP^1$. The quotient of $U\cap F_b$ by the action of $\Gamma_b$ is a general fiber of $f$, that is, a disjoint union of smooth compact curves. This means that $\Gamma_b$ is a classical Kleinian group and $U\cap F_b$ is a union of connected components of the discontinuity set of $\Gamma_b$.

Suppose that the fiber $F_b$ is singular. Every irreducible component of $F_b$ contains at least one singular point. If an irreducible component of $F_b$ intersects $U$ then it contains at most two singular points because otherwise it would be pointwise fixed by $\Gamma_b$. In particular $\Gamma_b$ restricted to each component of $F_b$ is isomorphic to a solvable subgroup of $\PGL(2,\CC)$. So if there exists at least one singular fiber, then the group $\Gamma$ is solvable. A solvable torsion free classical Kleinian group is elementary, i.e.\ fixes a point in $\HH^3\cup\PP^1$ or preserves a geodesic in $\HH^3$, thus isomorphic to $\ZZ$ or $\ZZ^2$ and its domain of discontinuity is biholomorphic to $\CC$ or $\CC^*$. This implies that the intersection of $U$ with any fiber is biholomorphic to $\CC$ or $\CC^*$. In particular $f:X\rightarrow B$ has connected fibers. Thus for the singular fiber $F_b$ the intersection $F_b\cap U$ is connected and is contained in one component $C_b$ of $F_b$. The component $C_b$ is necessarily a $(-1)$-curve because by our hypothesis made at the beginning of Section \ref{rationalfibrationsectionone} every (-1)-curve in the fiber $F_b$ intersects $U$. 

Under the hypothesis that there is a singular fiber $F_b$, we need to rule out the situation where $\Gamma$ is isomorphic to $\ZZ^2$ and the intersection of $U$ with any fiber is biholomorphic to $\CC$. Suppose to the contrary that $\Gamma$ is isomorphic to $\ZZ^2$. Then the fibration $f:X\rightarrow B$ is a genus one fibration whose singular fibers are multiples of smooth elliptic curves. Non-singular fibers of such a genus one fibration are all isomorphic to each other because otherwise the $j$-function would be a non constant holomorphic function on $B$ (see \cite{BHPV} V.10). Let $\Delta$ be a small disk in $B$ centred at $b$ and let $(x,y)$ be the local coordinates around a point of the singular fiber $F_b$ where $r$ is given by $x$ and $F_b$ is defined by $x=0$. The isotriviality of $f$ implies that, locally over $\Delta\backslash\{0\}$, up to a holomorphic change of coordinates the action of $\Gamma$ is generated by two transformations $\gamma_j:(x,y)\mapsto (x,y+a_j), j=1,2$ where $\ZZ a_1+\ZZ a_2$ is a lattice in $\CC$. In other words there is birational morphism $r^{-1}(\Delta)\rightarrow \Delta\times \PP^1$, composition of blow ups, which sends $F_b$ to $\PP^1$ such that the action of $\Gamma$ on $r^{-1}(\Delta)$ is the lift of the action of $\ZZ^2$ on $\Delta\times \PP^1$ generated by $\gamma_j:(x,y)\mapsto (x,y+a_j), j=1,2$. Thus $r^{-1}(\Delta)$ is necessarily obtained from $\Delta\times \PP^1$ by blowing up the common fixed points of $\gamma_j, j=1,2$ (and a priori other common fixed infinitely near points). However the action of $\gamma_j, j=1,2$ on the exceptional divisors are always trivial. Thus there was no blowup. So $U$ does not intersect any exceptional divisor. Since $U$ must, by assumption, intersect any $(-1)$-curve in a fiber, there are no exceptional curves and $r=\operatorname{id}$. This contradicts that $F_b$ is a singular fiber.
\end{proof}

\subsection{Automorphism groups of geometrically ruled surfaces}\label{maruruled}
Before continuing our\\ study of birational Kleinian groups preserving a rational fibration, we collect some preliminaries on automorphism groups of geometrically ruled surfaces, following Maruyama's paper \cite{Maru}. Our notations are those of \cite{Hart} but are different from those of \cite{Maru}. Let $Y$ be a geometrically ruled surface (i.e.\ relatively minimal). The geometrically ruled surface $Y$ over $B$ is isomorphic to the projective bundle $\PP(\EE)$ where $\EE$ is a rank two vector bundle (not unique) over $B$ such that $\mathrm{H}^0(B,\EE)\neq \{0\}$ and $\mathrm{H}^0(B,\EE\otimes \mathcal{L})=\{0\}$ for every line bundle $\mathcal{L}$ with $\deg(\mathcal{L})<0$. The opposite of the degree of $\bigwedge^2\EE$, denoted by $e$, is an invariant of $Y$; the integer $-e$ is the minimal self-intersection number of a section of $Y\rightarrow B$. A section with self-intersection number $-e$ is called a minimal section; the line sub-bundle of $\EE$ which corresponds to the minimal section is called a maximal line bundle. The geometrically ruled surface $Y$ is called \emph{decomposable} if there exists a decomposition $\EE=\mathcal{L}_1\oplus \mathcal{L}_2$ where $\mathcal{L}_1, \mathcal{L}_2$ are line bundles, otherwise $Y$ is called indecomposable. There exist two sections of $Y\rightarrow B$ which do not intersect each other if and only if $Y$ is decomposable.

The group of automorphisms of $Y$ which preserve fiberwise the ruling is denoted by $\Aut_B(Y)$. 
\begin{theorem}[Maruyama \cite{Maru}]\label{Maruyamathm}
 \begin{enumerate}
  \item If $e<0$, then $\Aut_B(Y)$ is a finite group.
  \item If $e\geqslant 0$ and if $Y$ is indecomposable, then $\Aut_B(Y)\cong\CC^k$ where $\mathcal{L}\subset \EE$ is the unique maximal line bundle and $k=\dim \mathrm{H}^0(B,(\det\EE)^{-1}\otimes \mathcal{L}^2)$.
  \item If $Y\cong\PP^1\times B$, then $\Aut_B(Y)\cong\PGL(2,\CC)$.
  \item If $Y\cong\PP(\EE)$ with $\EE=\mathcal{L}\oplus (\mathcal{L}\otimes \mathcal{N})$, $\mathcal{N}^2= \OO_B$ 
  and $\mathcal{N}\neq \OO_B$, then $\Aut_B(Y)$ is an extension of $\CC^*$ by $\ZZ/2\ZZ$.
  \item If $Y$ is decomposable and does not fit in the previous two cases, then $\Aut_B(Y)$ is isomorphic to the following subgroup of $\GL({k+1},\CC)$:
  \[
  H_k=\Bigg\lbrace 
  \begin{pmatrix}
         \alpha & t_1 & \cdots &\cdots & t_k\\
         0 & 1 & 0&\cdots &0\\
         \vdots & &\ddots & &\vdots\\
         \vdots & & &\ddots &\vdots\\
         0 & &\cdots &0 & 1
  \end{pmatrix}\alpha \in \CC^*, t_1,\cdots,t_k \in \CC
 \Bigg\rbrace
  \]
  where $k=\dim \mathrm{H}^0(B,(\det\EE)^{-1}\otimes \mathcal{L}^2)$ and $\mathcal{L}$ is a maximal line bundle.
 \end{enumerate}
\end{theorem}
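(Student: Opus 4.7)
My plan is to realize $\Aut_B(Y)$ as an algebraic group attached to $\EE$ via a global section computation on $B$, and then to analyze each case using the structure of rank two vector bundles on curves. First, a $B$-automorphism of $Y = \PP(\EE)$ is a fiberwise projective linear map, hence a global section of the sheaf of groups $\underline{\PGL}(\EE) := \underline{\GL}(\EE)/\underline{\OO_B^\times}$. The short exact sequence $1 \to \OO_B^\times \to \underline{\GL}(\EE) \to \underline{\PGL}(\EE) \to 1$ yields
\[
1 \to \CC^\times \to \mathrm{Aut}_{\OO_B}(\EE) \to \Aut_B(Y) \to \Pic(B),
\]
in which the image in $\Pic(B)$ classifies line bundles $\mathcal{M}$ admitting an isomorphism $\EE \simeq \EE \otimes \mathcal{M}$. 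Taking determinants forces $\mathcal{M}^{\otimes 2} \cong \OO_B$, so non-identity components of $\Aut_B(Y)$ are indexed by a subgroup of the $2$-torsion of $\Pic(B)$. The identity component is $\mathrm{Aut}(\EE)/\CC^\times$, with Lie algebra $H^0(B, \mathcal{E}nd_0(\EE))$, the trace-zero part of $H^0(B, \mathcal{E}nd(\EE))$.

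I would then compute $H^0(B, \mathcal{E}nd_0(\EE))$ and the discrete twist group in each case. In case (1), $e < 0$ forces $\EE$ to be stable (every line subbundle has degree strictly less than $\tfrac{1}{2}\deg \EE$), hence simple, so $\mathrm{End}(\EE) = \CC \cdot \mathrm{Id}$ and the identity component is trivial; stability also excludes any nontrivial twist, so $\Aut_B(Y)$ is a $0$-dimensional algebraic group, hence finite. In case (2), the canonical extension $0 \to \mathcal{L} \to \EE \to (\det \EE) \otimes \mathcal{L}^{-1} \to 0$ attached to the unique maximal subbundle filters $\mathcal{E}nd_0(\EE)$; indecomposability forbids any diagonal non-scalar endomorphism (since an idempotent projection onto $\mathcal{L}$ would split the extension), so only strictly upper-triangular endomorphisms survive, giving $H^0(\mathcal{E}nd_0(\EE)) \cong H^0(B, \mathcal{L}^{\otimes 2} \otimes (\det\EE)^{-1})$ of dimension $k$, an abelian nilpotent Lie algebra whose exponential is the unipotent group $\CC^k$; uniqueness of $\mathcal{L}$ prevents any twist. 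In case (3), $\EE = \OO_B^{\oplus 2}$ gives $\mathrm{End}(\EE) = \mathrm{Mat}_2(\CC)$ and $\Aut_B(Y) = \PGL(2,\CC)$. In case (4), $\mathcal{E}nd(\EE)$ splits as $\OO_B^{\oplus 2} \oplus \mathcal{N}^{\oplus 2}$, and since $\mathcal{N}$ is nontrivial of degree zero, $H^0(\mathcal{N}) = 0$, leaving only the one-dimensional trace-zero diagonal contribution $\CC^\times$; the isomorphism $\EE \cong \EE \otimes \mathcal{N}$ swapping the summands produces the $\ZZ/2\ZZ$ extension. In case (5), $\mathcal{E}nd(\EE)$ decomposes as $\OO_B^{\oplus 2} \oplus (\mathcal{L}_1 \otimes \mathcal{L}_2^{-1}) \oplus (\mathcal{L}_2 \otimes \mathcal{L}_1^{-1})$; the off-diagonal summand of non-negative degree contributes a $k$-dimensional space (with $k = h^0(B,\mathcal{L}^{\otimes 2} \otimes (\det\EE)^{-1})$), the diagonal contributes $\CC^\times$ modulo scalars, and twisting is ruled out because $\mathcal{L}_1 \otimes \mathcal{L}_2^{-1}$ is not $2$-torsion; the resulting semi-direct product $\CC^\times \ltimes \CC^k$ is exactly the matrix group $H_k$.

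The main obstacle will be the cohomological computation in case (2): one must verify both that no non-scalar diagonal endomorphism exists globally (using precisely the indecomposability hypothesis, since any such endomorphism would have eigenspaces splitting $\EE$) and that the upper-triangular contribution realizes the full predicted dimension $k$, via a boundary-map analysis of the long exact sequence associated with the filtration of $\mathcal{E}nd_0(\EE)$. Once the Lie algebra is pinned down in each case, one must recognize the corresponding algebraic group and verify that the twist classes computed above exhaust all non-identity components, using the uniqueness of the maximal line subbundle in case (2) and the explicit form of the splitting in cases (4) and (5).
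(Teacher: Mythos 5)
The paper does not prove this statement: it is imported verbatim from Maruyama's article \cite{Maru} as a known classification, so there is no internal proof to compare against. Judged on its own, your reconstruction via the central extension $1\to \OO_B^{\times}\to \underline{\GL}(\EE)\to\underline{\PGL}(\EE)\to 1$, the resulting sequence $1\to\CC^{*}\to\Aut(\EE)\to\Aut_B(Y)\to\Pic(B)$, and the computation of $\mathrm{H}^0(B,\mathcal{E}nd(\EE))$ case by case is sound and captures the content of the theorem; in particular your arguments in cases (2)--(5) (an idempotent would split $\EE$; uniqueness of the maximal subbundle kills the twists in case (2); Krull--Schmidt reduces the twist analysis in cases (4) and (5) to whether $\mathcal{L}_1\otimes\mathcal{L}_2^{-1}$ is $2$-torsion) are correct, and the ``boundary-map analysis'' you worry about in case (2) is actually immediate, since the strictly triangular endomorphisms are exactly $\mathrm{Hom}(\EE/\mathcal{L},\mathcal{L})\cong \mathrm{H}^0(B,\mathcal{L}^2\otimes(\det\EE)^{-1})$ and all of them are global.

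The one genuine slip is in case (1): stability of $\EE$ (which is indeed equivalent to $e<0$) does \emph{not} exclude nontrivial twists. A stable bundle can well satisfy $\EE\cong\EE\otimes\mathcal{M}$ for a nontrivial $2$-torsion line bundle $\mathcal{M}$ --- for instance the unique stable rank-two bundle of odd degree with fixed determinant on an elliptic curve is invariant under every such twist, and the corresponding ruled surface with $e=-1$ has $\Aut_B(Y)\cong(\ZZ/2\ZZ)^2\neq\{1\}$. The conclusion you want is weaker and still follows from your setup: since $\EE$ is simple, $\Aut(\EE)/\CC^{*}$ is trivial, so every nonempty fibre of the coboundary map $\Aut_B(Y)\to\Pic(B)$ is a single point, and $\Aut_B(Y)$ injects into the $2$-torsion subgroup of $\Pic(B)$, which is finite. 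With that correction the argument goes through; you should also record that the uniqueness of the maximal line subbundle in case (2) (asserted in the statement, and needed for your exclusion of twists there) is itself one of the facts Maruyama establishes rather than something free.
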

In the second and last case, the action of an element of $H_k$ on the fiber $F_b$ over $b\in B$ is basically $x\mapsto \alpha x+t_1l_1(b)+\cdots+t_kl_k(b)$ where $l_1,\cdots,l_k$ form a base of $\mathrm{H}^0(B,(\det\EE)^{-1}\otimes \mathcal{L}^2)$ (see \cite{Maru} for the exact meaning of the formula).

We deduce immediately from Maruyama's theorem the following:
\begin{corollary}\label{nonsolvablethenproduct}
If $\Aut_B(Y)$ is infinite and not solvable, then $Y=B\times \PP^1$.
\end{corollary}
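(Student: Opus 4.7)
The plan is to simply go through the cases of Maruyama's classification in Theorem \ref{Maruyamathm} and eliminate all but one. Since $\Aut_B(Y)$ is assumed infinite, case (1) ($e<0$) is ruled out immediately, so we must have $e\geq 0$.

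Next I would dispose of the indecomposable case (2): there $\Aut_B(Y)\cong\CC^k$ is abelian, hence solvable, contradicting the assumption that $\Aut_B(Y)$ is not solvable. For the decomposable cases, I would handle (4) and (5) separately. In case (4), $\Aut_B(Y)$ sits in a short exact sequence $1\to\CC^*\to\Aut_B(Y)\to\ZZ/2\ZZ\to 1$, which is metabelian and therefore solvable. In case (5), $\Aut_B(Y)$ is isomorphic to the group $H_k$ of upper triangular matrices displayed in the theorem; such a subgroup of $\GL_{k+1}(\CC)$ is clearly solvable (it is an extension of $\CC^*$ by the abelian group $\CC^k$ consisting of matrices with $\alpha=1$).

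The only remaining possibility in Maruyama's list is case (3), namely $Y\cong B\times\PP^1$ with $\Aut_B(Y)\cong\PGL(2,\CC)$, and $\PGL(2,\CC)$ is indeed infinite and non-solvable. This proves the corollary. There is no real obstacle here; the result is an immediate bookkeeping consequence of Theorem \ref{Maruyamathm} once one observes that the first, second, fourth and fifth groups in the classification are either finite or solvable.
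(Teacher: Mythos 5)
Your proof is correct and matches the paper's intent: the paper states this corollary as an immediate consequence of Maruyama's theorem (Theorem \ref{Maruyamathm}) without writing out the case analysis, and your argument is exactly that analysis spelled out. Each elimination (finite in case (1), abelian $\CC^k$ in case (2), virtually abelian hence solvable in case (4), metabelian $H_k$ in case (5)) is sound, leaving only $Y\cong B\times\PP^1$.
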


\subsection{Proof of Theorem \ref{thmruled1}}
We turn back to the proof of Theorem \ref{thmruled1} without assuming that $Y$ is geometrically ruled. We still assume that $\Gamma_B=\{\mathrm{Id}\}$ and that any component of any singular fiber of $r$ is fixed by $\Gamma$. By Proposition \ref{geometricruled} we know that if $Y$ is not geometrically ruled then the intersection of $U$ with a regular fiber is biholomorphic to $\CC^*$.

\subsubsection{Non-abelian case}
\begin{proposition}\label{propruledtrivial}
Suppose that $\Gamma$ is not solvable. Then $Y=\PP^1\times B$ and $U=W\times B$ where $W\subset \PP^1$ is an invariant component of $\Gamma$, viewed as a non-elementary classical Kleinian group. The quotient surface $X$ is the product of $B$ with a hyperbolic Riemann surface.
\end{proposition}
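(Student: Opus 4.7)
The plan is as follows.

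First, since $\Gamma$ is non-solvable (hence not cyclic), Proposition \ref{geometricruled} excludes the existence of any singular fiber of $r$ intersecting $U$; combined with Lemma \ref{ruledsurfacefullimage} giving $r(U)=B$, this forces $r$ to have no singular fibers at all, so $Y$ is geometrically ruled over $B$. Corollary \ref{nonsolvablethenproduct} then yields $Y=\PP^1\times B$ with $\Gamma\subset \Aut_B(Y)=\PGL(2,\CC)$ acting on the $\PP^1$-factor and trivially on $B$. A non-solvable subgroup of $\PGL(2,\CC)$ is in particular a non-elementary classical Kleinian group; let $\Omega_\Gamma\subset \PP^1$ denote its domain of discontinuity.

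Second, I would show $U\subset \Omega_\Gamma\times B$. If some $(w,b)\in U$ had $w\in \PP^1\setminus \Omega_\Gamma$, any neighborhood $V'\times V''$ of $(w,b)$ would satisfy $\gamma(V'\times V'')\cap (V'\times V'')=(\gamma V'\cap V')\times V''\neq \emptyset$ for infinitely many $\gamma\in\Gamma$, contradicting the proper discontinuity of $\Gamma$ on $U$. Hence for each $b\in B$, the open set $V_b:=U\cap (\PP^1\times\{b\})$ is contained in $\Omega_\Gamma$; being $\Gamma$-invariant, it is a union of $\Gamma$-orbits of connected components of $\Omega_\Gamma$. The induced map $\bar{r}:X\to B$ is proper, so each fiber $V_b/\Gamma$ is compact, with one component per $\Gamma$-orbit of components of $\Omega_\Gamma$ contained in $V_b$.

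Third, for each such $\Gamma$-orbit $O$ with compact quotient $O/\Gamma$, set $B_O:=\{b\in B:O\subset V_b\}$; the key step is that $B_O$ is clopen. For openness: if $b\in B_O$, compactness of $O/\Gamma$ furnishes a compact fundamental domain $K\subset O$ for $\Gamma$; since $K\times \{b\}\subset U$, openness of $U$ and compactness of $K$ yield a neighborhood $N$ of $b$ with $K\times N\subset U$, and by $\Gamma$-invariance $O\times N=(\Gamma\cdot K)\times N\subset U$, so $N\subset B_O$. For closedness: let $b_n\to b$ with $b_n\in B_O$ and fix $w\in K$. Then $(w,b_n)\in U$, and properness of $\bar{r}$ lets us extract a subsequence such that $\pi(w,b_n)$ converges in $X$ to some $\bar{x}\in \bar{r}^{-1}(b)$; lifting through the covering $\pi:U\to X$ near a preimage of $\bar{x}$, we obtain $\gamma_n\in \Gamma$ with $\gamma_n(w,b_n)\to (w',b)\in U$. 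Then $\gamma_n(w)\to w'$ inside $\Omega_\Gamma$, and proper discontinuity of $\Gamma$ on $\Omega_\Gamma$ applied to a compact neighborhood of $\{w,w'\}$ forces $\{\gamma_n\}$ to be finite, so after a further subsequence $\gamma_n\equiv \gamma_0$ and $w'=\gamma_0(w)\in O$. Hence $O\cap V_b\neq \emptyset$ and $O\subset V_b$, so $b\in B_O$.

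Finally, connectedness of $B$ gives $B_O\in \{\emptyset,B\}$, whence $U=\bigsqcup_{B_O=B} O\times B$. Connectedness of $U$ singles out a unique surviving orbit; an orbit with more than one component would render $O\times B$ disconnected, so this orbit is a single invariant component $W\subset \Omega_\Gamma$. Therefore $U=W\times B$ and $X=(W/\Gamma)\times B$, with $W/\Gamma$ a compact hyperbolic Riemann surface since $\Gamma$ is non-elementary. The main obstacle is the closedness of $B_O$: one must combine proper discontinuity of $\Gamma$ on $\Omega_\Gamma$ with properness of $\bar{r}$ to control the accumulation of deck-transformation lifts of the sequence $(w,b_n)$.
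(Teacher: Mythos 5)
Your proof is correct and follows essentially the same route as the paper: Proposition \ref{geometricruled} rules out singular fibers since $\Gamma$ is not cyclic, Corollary \ref{nonsolvablethenproduct} gives $Y=\PP^1\times B$, and then one identifies $U$ fiberwise with components of the domain of discontinuity and concludes by connectedness of $U$. Your clopen argument for $B_O$ is valid but heavier than needed: once you know $U\subset \Omega_\Gamma\times B$, the sets $W\times B$ (over components $W$ of $\Omega_\Gamma$) are open and closed in $\Omega_\Gamma\times B$, so connectedness of $U$ places it inside a single $W\times B$, and since each $V_b$ is a nonempty union of components contained in $W$ it must equal $W$ — which is the paper's one-line conclusion.
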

\begin{proof}
In particular $\Gamma$ is not cyclic and Proposition \ref{geometricruled} implies that $Y$ is geometrically ruled. By Corollary \ref{nonsolvablethenproduct} $Y= \PP^1\times B$. In this case $\Aut_B(Y)=\PGL(2,\bC)$ and the intersection of $U$ with each fiber is a union of components of some classical Kleinian group. This classical kleinian group does not depend on the fiber. Since $U$ is connected, the intersection of $U$ with each fiber is the same component of the discontinuity set. 
\end{proof}

\subsubsection{Translation in the fibers}
As $\Gamma$ is isomorphic to a classical Kleinian group by Proposition \ref{geometricruled}, if it is solvable then it is isomorphic to $\ZZ^2$ or $\ZZ$, depending on whether the intersection of $U$ with a fiber is $\CC$ or $\CC^*$. 

We first investigate the case where $\Gamma=\ZZ^2$; in this case $\Gamma$ acts on each fiber by translations (translation means parabolic automorphism of $\PP^1$). In case 2 (resp. case 5) of Maruyama's theorem, the action is given, in local coordinate $z$ of the fiber, by
\begin{equation}
 z\mapsto z+t_1 e_1+\cdots+t_k e_k \ (\text{resp. } z\mapsto \alpha z+t_1 e_1+\cdots+t_k e_k)
\end{equation}
where $e_1,\cdots,e_k$ extend to a global base of $\mathrm{H}^0(B,(\det\EE)^{-1}\otimes \mathcal{L}^2)$. Since a line bundle has a nowhere vanishing section if and only if it is trivial, an automorphism can act by non-trivial translation in every fiber only if the line bundle $(\det\EE)^{-1}\otimes \mathcal{L}^2$ is trivial. Writing $0\rightarrow \mathcal{L} \rightarrow \EE \rightarrow \mathcal{L}'\rightarrow 0$, we have $(\det\EE)^{-1}\otimes \mathcal{L}^2=\mathcal{L}\otimes(\mathcal{L}')^{-1}$. Therefore if $(\det\EE)^{-1}\otimes \mathcal{L}^2$ is trivial then $\mathcal{L}=\mathcal{L}'$ and $\mathcal{L}^{-1}\otimes\EE$ is an extension of two trivial line bundles. Since $\PP(\EE)$ is isomorphic to $\PP(\mathcal{L}^{-1}\otimes\EE)$ we obtain:
\begin{proposition}\label{propruledtrivial2}
Suppose that the intersection of $U$ with a fiber is $\CC$. There is a rank two vector bundle $\EE$ on $B$ such that
\begin{enumerate}
	\item $Y$ is isomorphic to $\PP(\EE)$;
	\item We can write $\EE$ as an extension $0\rightarrow \OO_B \rightarrow \EE \rightarrow \OO_B\rightarrow 0$ such that $U=Y-s(B)$ where  $s:B\rightarrow Y$ is the section determined by this extension, i.e.\ $s\mapsto s(b)$ with $s(b)=\PP(\OO_b)\in \PP(\EE_b)$.
\end{enumerate}
As $(\det\EE)^{-1}\otimes \mathcal{L}^2=\OO_B$, the subgroup of $\Aut_B(Y)$ fixing $s(B)$ is isomorphic to $\CC$ by Theorem \ref{Maruyamathm}. We can identify $\Gamma$ as a lattice in this subgroup. The surface $X$ is an elliptic fiber bundle over $B$ with fibers isomorphic to $\CC/\Gamma$.
\end{proposition}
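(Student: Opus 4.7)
The plan is to first extract a $\Gamma$-invariant section from the fiberwise dynamics, then pin down the isomorphism class of the underlying vector bundle by matching what $\Aut_B(Y)$ looks like (via Maruyama's theorem) against the fact that $\Gamma$ acts by \emph{non-vanishing} translations on every fiber.

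First I would produce the section $s$. Since $\Gamma_B$ is trivial, every non-trivial $\gamma\in\Gamma$ restricts to an automorphism of each fiber $F_b\cong\PP^1$, preserving the $\CC$-disc $U\cap F_b$. The group $\Gamma\cong\ZZ^2$ acts freely, properly discontinuously and cocompactly on this $\CC$, with quotient the corresponding fiber of $X\to B$; that fiber must therefore be an elliptic curve, and $\Gamma|_{F_b}$ must be a lattice of parabolic translations sharing a single common fixed point $s(b)\in F_b\setminus U$. The map $b\mapsto s(b)$ is holomorphic (it is cut out from $Y$ by the $\Gamma$-fixed locus) and gives a holomorphic section $s:B\to Y$ with $U=Y\setminus s(B)$. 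Also, Proposition~\ref{geometricruled} already forces $r$ to have no singular fibers under the hypothesis $U\cap F\cong\CC$, so $Y$ is geometrically ruled.

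Next I would normalize the bundle. Write $Y=\PP(\EE')$; the section $s$ corresponds to a line-bundle quotient $\EE'\twoheadrightarrow\mathcal{Q}$, and twisting by $\mathcal{Q}^{-1}$ I can replace $\EE'$ by $\EE=\EE'\otimes\mathcal{Q}^{-1}$ so that $s$ corresponds to a quotient $\EE\twoheadrightarrow\OO_B$. Writing $\mathcal{L}=\ker(\EE\to\OO_B)$ gives the extension
\[
0\to\mathcal{L}\to\EE\to\OO_B\to 0.
\]
A short local computation with the matrix form $\begin{pmatrix}a&t\\0&1\end{pmatrix}$ acting on $\EE$ shows that the subgroup of $\Aut_B(Y)$ that fixes $s(B)$ pointwise and acts as a translation on each fiber is canonically identified with $H^0(B,\mathcal{L})$, the extension class being irrelevant for the translation part. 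Each non-trivial $\gamma\in\Gamma$ acts as a non-zero translation on \emph{every} fiber, so the corresponding section of $\mathcal{L}$ is nowhere vanishing; hence $\mathcal{L}\cong\OO_B$, giving the required extension $0\to\OO_B\to\EE\to\OO_B\to 0$.

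Finally, since $(\det\EE)^{-1}\otimes\mathcal{L}^2=\OO_B$, Theorem~\ref{Maruyamathm} identifies the subgroup of $\Aut_B(Y)$ fixing $s(B)$ with $\CC$, in which $\Gamma\cong\ZZ^2$ is necessarily a lattice (cocompactness of the action on the $\CC$-fibers). The quotient $X=U/\Gamma$ then has structure that of the fibers $\CC$ modulo the same lattice, so $X\to B$ is an elliptic fiber bundle with fiber $\CC/\Gamma$.

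The main obstacle is step three, the identification of the translation subgroup with $H^0(B,\mathcal{L})$: one must check that, after normalizing $s$ to be $\infty$ on each fiber, elements of $\Aut_B(Y)$ fixing $s$ are indeed given by upper-triangular automorphisms of $\EE$ modulo scalar, and that the purely unipotent ones correspond to global sections of $\mathcal{L}$ independently of whether the extension splits. Once this identification is in place, the rest is bookkeeping using the structure theorem of Maruyama.
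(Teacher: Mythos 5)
Your argument is correct and takes essentially the same route as the paper: both identify the group of fiberwise translations fixing the section with the global sections of a line bundle (your $\mathrm{H}^0(B,\mathcal{L})$ is exactly Maruyama's $\mathrm{H}^0(B,(\det\EE)^{-1}\otimes \mathcal{L}^2)$ after your normalization $\EE/\mathcal{L}\cong\OO_B$), and then use the fact that a translation which is non-trivial on \emph{every} fiber gives a nowhere-vanishing section, forcing that line bundle to be trivial and hence $\EE$ to be an extension of $\OO_B$ by $\OO_B$. The only cosmetic difference is that the paper quotes Maruyama's normal form for the translation part directly, whereas you re-derive it by the local upper-triangular computation and make the construction of the section $s$ from the common parabolic fixed points explicit.
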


The action of $\CC$ on $U$ descends to an action of $\CC/ \Gamma$ on $X$; this implies that $X$ is a principal elliptic bundle by Lemma V.5.1 of \cite{BHPV}. A principal elliptic bundle is topologically a product of the circle $S^1$ with an $S^1$-bundle (Proposition V.5.2 of \cite{BHPV}). We claim that in our case the corresponding $S^1$-bundle is a product. The reason is that the rank two vector bundle $\EE$ is topologically trivial because an extension of a trivial bundle by a trivial bundle always splits in the topological category. If $B=\PP^1$ and $X$ is \Kah{}, then the only possibility is $X=(\CC/\Gamma )\times \PP^1$; if $B$ is an elliptic curve and $X$ is \Kah{}, then $X$ is a complex torus or a hyperelliptic surface (see \cite{BHPV} Chapter V).

\subsubsection{Multiplication in the fibers}\label{blowupellipticconstruction}
We now investigate the case where $\Gamma=\ZZ$. Let us see first that it is indeed possible that $Y$ is not geometrically ruled (cf. Proposition \ref{geometricruled}).

Given a decomposable geometrically ruled surface $Y_0=\PP(\EE)$ over $B$, up to tensoring $\EE$ with a line bundle, we can suppose that it decomposes as $\EE=\OO_B\oplus \mathcal{L}$. The decomposition determines two sections $S_1,S_2$ on $Y_0$ without intersection. Pick a finite family of fibers $F_1,\cdots,F_n$ over $b_1,\cdots,b_n\in B$. Choose a positive integer $m\leqslant n$. Denote by $p_1,\cdots,p_m$ the intersection points of $F_1,\cdots,F_m$ with $S_1$ and $p_{m+1},\cdots,p_n$ the intersection points of $F_{m+1},\cdots,F_n$ with $S_2$. Construct a birationally ruled surface $Y$ over $B$ as follows: $Y$ is obtained from $Y_0$
by successive blow-ups at $p_1,\cdots,p_n$ and some infinitely near points. For $i\leq m$ (resp. $i> m$) the successive blow-ups on the fiber over $p_i$ are executed either at the intersection point of two irreducible components of the fiber or at the intersection point of the fiber with (the strict transform of) $S_1$ (resp. $S_2$). We denote by $\pi:Y\rightarrow Y_0$ the contraction map. The singular fibers of $Y\xrightarrow{\phi}B$ are chains of smooth rational curves, i.e. the singular fiber corresponding to $F_i$ is 
\[
\tilde{F_i}=\sum_{k=0}^{l(i)} d_{i,k}F_{i,k};
\]
here $F_{i,0}$ is the strict transform of $F_i$ and the $F_{i,k}$ are all smooth rational curves among which the only non vanishing intersection pairings are $F_{i,k}\cdot F_{i,k+1}=1$. We have $d_{i,0}=1$ and $d_{i,1},\cdots,d_{i,l(i)}\in \NN^+$. Each $F_{i,k}$ has two distinguished points $\{0_{i,k},\infty_{i,k}\}$ which are the two intersection points of $F_{i,k}$ with other components of the fiber or with (strict transforms of) the two sections $S_1,S_2$. Let's say that the one which is closer to $S_1$ is $0_{i,k}$.
\definecolor{ududff}{rgb}{0.30196078431372547,0.30196078431372547,1}
\begin{center}
\begin{tikzpicture}[scale=1.5][line cap=round,line join=round,x=1cm,y=1cm]
\clip(-1.8,-1.8) rectangle (3,2.5);
\draw [line width=0.4pt] (-2,2)-- (2.2416385876815585,2.1271400955600788);
\draw [line width=0.4pt] (-2.0050731618034243,-1.4333651822093394)-- (2.371111506873174,-1.4592597660476623);
\draw [shift={(-7.297028138613542,0.3908467755477395)},line width=0.4pt]  plot[domain=-0.3943144753125969:0.39386861073636076,variable=\t]({1*6.158445205849685*cos(\t r)+0*6.158445205849685*sin(\t r)},{0*6.158445205849685*cos(\t r)+1*6.158445205849685*sin(\t r)});
\draw [shift={(-5.8087863060660085,0.314526681570943)},line width=0.4pt]  plot[domain=-0.30636715261604675:0.34453863177974037,variable=\t]({1*7.844651362997909*cos(\t r)+0*7.844651362997909*sin(\t r)},{0*7.844651362997909*cos(\t r)+1*7.844651362997909*sin(\t r)});
\draw [line width=0.4pt] (-0.19420441022575907,2.762024182327671)-- (0.8485122739501322,0.8771132532404875);
\draw [line width=0.4pt] (0.7241408066385573,1.4141086370680898)-- (0.674726159920817,-0.7003299356409851);
\draw [line width=0.4pt] (0.92872124965597,-0.20570791878831993)-- (-0.7021945896960651,-1.7564147824345133);
\draw [->,line width=0.4pt] (-1.4898923043411383,-0.5945763666089969) -- (-1.3985730493273896,1.1404894786522255);
\draw [->,line width=0.4pt] (0.5191313059613344,-0.046660836526505683) -- (0.5191313059613344,0.7637975517205127);
\draw [->,line width=0.4pt] (0.4049822371941485,1.391617429940034) -- (0.15385428590633937,1.9052882393923696);
\draw [->,line width=0.4pt] (-0.28190205233612664,-1.155399211263577) -- (0.28884329149980315,-0.5960687743043666);
\draw [->,line width=0.4pt] (1.831845596783973,-0.8228745041433684) -- (1.8546754105374101,1.1176596648987882);
\begin{scriptsize}
\draw[color=black] (-0.6601902867384064,2.256178060276532) node {$S_1$};
\draw[color=black] (0.5767553984320192,-1.6149344375173749) node {$S_2$};
\draw[color=black] (0.48068194715664636,1.8557189898054542) node {$F_{i2}$};
\draw[color=black] (0.7409573894392947,0.2305529508089169) node {$F_{i1}=\text{blow up of}\ F_{i0}\cap F_{i2}$};
\draw[color=black] (0.18045241192110614,-1.0024661856368757) node {$F_{i0}$};
\draw[color=black] (-1.5049890693580865,0.51461722067486784) node {$\times \lambda$};
\draw[color=black] (0.31647995614937096,0.49059885785602466) node {$\times \lambda^2$};
\draw[color=black] (0.03634223500804685,1.7515629058452879) node {$\times \lambda$};
\draw[color=black] (-0.0239332072746014,-0.5022366504013368) node {$\times \lambda$};
\draw[color=black] (1.6575358182328558,0.2945254065806523) node {$\times \lambda$};
\draw [fill=black] (0.19101059420271718,2.0656739820146583) circle (0.5pt);
\draw[color=black] (0.2404983189682142,2.2301046090011595) node {$0_{i2}$};
\draw [fill=black] (0.7171410712537292,1.1145919658070615) circle (0.5pt);
\draw[color=black] (1.0529665708487162,1.1) node {$\infty_{i2}=0_{i1}$};
\draw [fill=black] (0.6807759969533673,-0.4414591426694826) circle (0.5pt);
\draw[color=black] (1.0649757522581378,-0.4) node {$\infty_{i1}=0_{i0}$};
\draw [fill=black] (-0.37259479898099074,-1.4430248174923122) circle (0.5pt);
\draw[color=black] (-0.532245375194936,-1.2625120926839835) node {$\infty_{i0}$};
\end{scriptsize}
\end{tikzpicture}\end{center}

By Theorem \ref{Maruyamathm}, the subgroup of $\Aut_B(Y_0)$ fixing $S_1,S_2$ is isomorphic to $\CC^*$; it acts by multiplication in the fibers. The $\CC^*$-action lifts to an action on $Y$ by automorphisms because the blow-ups which we did to obtain $Y$ are all at the fixed points of the $\CC^*$-action. An element $\lambda\in \CC^*$ acts on a component $F_{i,k}$ of the singular fiber $\tilde{F_i}$ by multiplication by $\lambda^{d_{i,k}}$ with fixed points $\{0_{i,k},\infty_{i,k}\}$.

Now for each $\tilde{F_{i}}$, choose a component $F_{i,k_0(i)}$ and consider the open set
\begin{equation*}
U=Y\backslash \left ( S_1\bigcup S_2 \bigcup (\cup_{i}\cup_{k\neq k_0(i)}F_{i,k}) \right ).
\end{equation*}
We remark that $U$ is Zariski open in $Y$. The ruling $\phi$ restricted to $U$ is a surjective holomorphic map onto $B$ with fibers biholomorphic to $\CC^*$. The quotient of $U$ by $\lambda\in \CC^*, \vert \lambda\vert \neq 1$ is a compact complex surface $X$ which admits an elliptic fibration over $B$. If a fiber of $X\rightarrow B$ comes from a non-singular fiber of $\phi:Y\rightarrow B$, i.e.\ the fiber over a point that is not one of the $p_1,\cdots,p_n$, then it is isomorphic to the elliptic curve $\CC^*/<\lambda>$. If a fiber of $X\rightarrow B$ comes from $F_i$, then it is the quotient of $\CC^*=F_{i,k_0(i)}\backslash\{0_{i,k_0(i)},\infty_{i,k_0(i)}\}$ by $<\lambda^{d_{i,k_0(i)}}>$. Hence the only singular fibers of the elliptic fibration $X\rightarrow B$ are multiples of smooth elliptic curves, the multiplicities being the coefficient $d_{i,k_0(i)}$.

Actually the above construction exhausts all possibilities:
\begin{proposition}\label{propruled3}
If the intersection of $U$ with a fiber is $\CC^*$, i.e.\ if $\Gamma=\ZZ$ then $X$ is obtained from the above process. In particular the surface $X$ is an elliptic fibration over $B$ whose singular fibers are all multiples of smooth elliptic curves.
\end{proposition}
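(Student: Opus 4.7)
The plan is to reverse-engineer the blow-up construction of Section \ref{blowupellipticconstruction}. Let $\gamma$ be a generator of $\Gamma\cong\ZZ$; we use the standing assumption that each component of each fiber of $r$ is $\Gamma$-invariant. First I would contract all $(-1)$-curves contained in fibers iteratively, obtaining a $\gamma$-equivariant birational morphism $\pi:Y\to Y_0$ onto a geometrically ruled surface $r_0:Y_0\to B$ on which $\Gamma$ still acts by fiber-preserving automorphisms; the morphism $\pi$ is $\gamma$-equivariant because each center of blow-up is a $\gamma$-fixed point.

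Next, using Maruyama's classification (Theorem \ref{Maruyamathm}), I would show that $Y_0$ is decomposable and that $\gamma$ lies in the $\CC^*$-subgroup of $\Aut_B(Y_0)$ acting diagonally with two pointwise fixed sections $\sigma_1,\sigma_2$. Since $\gamma$ preserves $U\cap F\cong\CC^*$ on each regular fiber $F$ of $r$ and has infinite order, its induced action on the corresponding fiber of $Y_0$ is loxodromic. This rules out the case $e<0$ (where $\Aut_B$ is finite) and the indecomposable case (where $\Aut_B$ consists of translations). In the remaining decomposable cases the infinite order of $\gamma$ together with the loxodromic fiberwise action forces it into the diagonal $\CC^*$, and the two sections are globally well-defined on $B$ because the multipliers $\mu,\mu^{-1}$ of $\gamma$ at its two fixed points are distinct constants on the compact base, so no monodromy can exchange them.

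Then I would reverse $\pi$ as a sequence of blow-ups centred at $\CC^*$-fixed points. Since $\gamma$ generates a Zariski dense cyclic subgroup of $\CC^*$, the $\gamma$-fixed locus on any intermediate surface agrees with the $\CC^*$-fixed locus, which at each stage consists of the current strict transforms of $\sigma_1\cup\sigma_2$ together with the nodes of the chain. Hence every blow-up is either at a node of the current chain or at a section--intersection; inductively this preserves the chain structure of each singular fiber, yielding exactly the configurations described in Section \ref{blowupellipticconstruction}. Finally, $U\cap F$ is a $\CC^*$ contained in a single irreducible component of each fiber of $r$, which by the initial reduction of this section must be the unique $(-1)$-component of the chain; this matches the choice $F_{i,k_0(i)}$ in the construction and identifies $(Y,\Gamma,U,X)$ with a birational Kleinian group produced by that process, so that $X$ is an elliptic fibration over $B$ whose singular fibers are the described multiples of smooth elliptic curves.

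The main obstacle I anticipate is the middle step --- establishing that $Y_0$ is decomposable with two globally defined $\gamma$-invariant sections and that $\gamma$ lies in the diagonal $\CC^*$ of $\Aut_B(Y_0)$. This requires combining Maruyama's structural description of $\Aut_B(Y_0)$ with a careful analysis of the multipliers of $\gamma$ on the compact base $B$ to preclude both non-diagonal members of $\Aut_B(Y_0)$ and monodromy exchanging the two fixed sections. Once this is settled, the reconstruction of $\pi$ and the identification of $U$ reduce to a straightforward combinatorial bookkeeping of $\CC^*$-equivariant blow-ups.
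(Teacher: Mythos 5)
Your proof is correct and follows essentially the same route as the paper's: equivariantly contract the fibral $(-1)$-curves to a geometrically ruled surface $Y_0$, recognize the action there as coming from the fiberwise $\CC^*$-action via Maruyama's theorem, and recover $Y$ by reversing the contraction as blow-ups at ($\CC^*$-)fixed points. The only difference is one of detail: the paper simply asserts that the descended action "comes from a $\CC^*$-action on $Y_0$", whereas you spell out (correctly) why the loxodromic fiberwise action and the multiplier argument force $Y_0$ to be decomposable with two disjoint invariant sections and $\gamma$ to lie in the diagonal $\CC^*$.
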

\begin{proof}
Recall that by Lemma \ref{ruledsurfacefullimage} $\Gamma\subset \Aut(Y)$. We have also assumed that $\Gamma$ preserves any component of any fiber. Therefore the action of $\Gamma$ descends to an action by automorphisms, after contracting some exceptional curves in a fiber. Continuing the contraction until we get a geometrically ruled surface, we infer that the action of $\Gamma$ on $Y$ comes from a $\CC^*$-action on a geometrically ruled surface $Y_0$; and that $Y$ is obtained from $Y_0$ by blowing-up fixed points (or infinitely near fixed points) of the $\CC^*$-action. This is exactly how the above process works.
\end{proof}

We denote by $X_0$ the quotient of $Y_0\backslash (S_1\cup S_2)$ by $<\lambda>$. It is a principal elliptic fiber bundle over $B$ because the $\CC^*$-action descends to $X_0$ (Proposition V.5.2 of \cite{BHPV}). The surface $X$ is obtained from $X_0$ by logarithmic transformations which replace the fibers over $b_1,\cdots,b_n\in B$ with $d_{1,k_0(1)}E_1,\cdots,d_{n,k_0(n)}E_n$ where $E_i=\CC^*/<\lambda^{d_{i,k_0(i)}}>$ (see \cite{Kod64} for logarithmic transformations).

\section{Invariant rational fibration II: elliptic base}\label{rationalfibrationsectiontwo}
The base curve $B$ is assumed to be an elliptic curve in this section.

We continue to use the notations of section \ref{maruruled}. It is well known that, up to isomorphisms, there are only two indecomposable geometrically ruled surfaces over an elliptic curve $B$: $R_0$ with $e=0$ and $R_1$ with $e=1$ (see \cite{Hart} Chapter 5). Now we describe the whole automorphism group of a geometrically ruled surface $Y$ according to Maruyama \cite{Maru}:

\begin{enumerate}
	\item If $Y=\PP^1\times B$, then $\Aut(Y)=\PGL(2,\CC)\times \Aut(B)$.
	\item If $Y$ is decomposable and $e\neq 0$, then we have an exact sequence of algebraic groups
	\[
	0\rightarrow \Aut_B(Y) \rightarrow \Aut(Y) \rightarrow G \rightarrow 0
	\]
	where $G$ is a finite subgroup of $\Aut(B)$.
	\item If $Y$ is decomposable with $e=0$ and if $Y\neq \PP^1\times B$, then we have an exact sequence of algebraic groups
	\[
	0\rightarrow \Aut_B(Y) \rightarrow \Aut(Y) \rightarrow G \rightarrow 0
	\]
	where $\Aut_B(Y)$ is isomorphic to $\CC^*$ or an extension of $\CC^*$ by $\ZZ/2\ZZ$, and $G$ is a subgroup of $\Aut(B)$
	containing $\Aut^0(B)$. 
	If $s_1$, $s_2$ are the minimal sections, then $Y-(s_1\cup s_2)$ is a principal $\CC^*$-bundle over $B$ which
	has a structure of commutative algebraic group. The algebraic group $Y-(s_1\cup s_2)$ is isomorphic to the connected component of identity $\Aut^0(Y)$.
	\item If $Y=R_0$, then we have an exact sequence of algebraic groups
	\[
	0\rightarrow \CC \rightarrow \Aut(Y) \rightarrow \Aut(B) \rightarrow 0.
	\]
	If $s$ is the minimal section, then $Y-s$ is a principal $\CC$-bundle over $B$ which has the structure of  commutative algebraic group.
	The algebraic group $Y-s$ is isomorphic to the connected component of identity $\Aut^0(Y)$.
	\item If $Y=R_1$, then we have 
	\[
	0\rightarrow \Aut_B(Y) \rightarrow \Aut(Y) \rightarrow \Aut(B) \rightarrow 0
	\]
	where $\Aut_B(Y)$ is isomorphic to $\ZZ/2\ZZ\times \ZZ/2\ZZ$.
\end{enumerate}

We prove the following statement which encapsulates both the case where $\Gamma_B$ is finite and the case where it is infinite:
\begin{theorem}\label{thmruledelliptic}
Let $(Y,\Gamma,U,X)$ be a birational kleinian group on a surface $Y$ which is ruled over an elliptic curve $B$. 
Then up to geometric conjugation and up to taking finite index subgroup, we are in one of the following situations:
\begin{enumerate}
  \item $Y=\PP^1\times B$, $U=D\times B$ where $D\subset \PP^1$ is an invariant component of the domain of discontinuity of a classical Kleinian group. $X$ is a fiber bundle over $B$ or a suspension of $B$.
	\item $Y=R_0$, $U=Y-s$ and $U$ has the structure of an algebraic group. As a complex analytic Lie group $U$ is isomorphic to $\CC^*\times\CC^*$ and the group $\Gamma$ is isomorphic to a lattice in $U$. The quotient $X$ is a complex torus.
	\item $Y$ is a decomposable geomtrically ruled surface with $e=0$, $U=Y-(s_1\cup s_2)$ and $X$ is a complex torus.
	\item As in Case 3) of Theorem \ref{thmruled1}. 	
\end{enumerate}
\end{theorem}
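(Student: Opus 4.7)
The plan is to split the argument according to whether the image $\Gamma_B$ in $\Aut(B)$ is finite or infinite; in the second case the key new phenomenon is a translation action on $B$ that enlarges the automorphism group.

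\textbf{Step 1 (reduction).} Since $\Aut(B)=B\rtimes F$ with $F$ finite, after replacing $\Gamma$ by a finite index subgroup I may assume $\Gamma_B\subset \Aut^0(B)=B$. If $\Gamma_B$ is finite then it is trivial, and Theorem \ref{thmruled1} applies. I would then check how its three cases translate when $B$ is elliptic: case (1) of Theorem \ref{thmruled1} yields case (1) of the present theorem with $\Gamma_B=\{1\}$; case (3) yields case (4); case (2), where $Y=\PP(\EE)$ with $\EE$ a self-extension of $\OO_B$, bifurcates according to whether this extension splits: the non-trivial Atiyah extension gives $Y=R_0$ and feeds into case (2), while the split extension gives $Y\cong\PP^1\times B$ minus a section, which fits in case (1) (with $D=\CC$) or case (3) depending on how $\Gamma$ sits inside $\Aut(Y)$.

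\textbf{Step 2 ($r(U)=B$ when $\Gamma_B$ is infinite).} I would show that $r(U)=B$. The set $r(U)\subset B$ is $\Gamma_B$-invariant and open; the natural map $X=U/\Gamma\to r(U)/\Gamma_B$ has compact source, so its image is compact. By Lemma \ref{ellipticnondiscrete} the closure of a $\Gamma_B$-orbit in $B$ is a finite union of real subtori. If this closure is all of $B$ then invariance forces $r(U)=B$; otherwise the orbit closure is a proper real subtorus $T$, and any $\Gamma_B$-invariant open subset of $B$ is a union of $T$-orbits, i.e.\ a saturated open set whose image in $B/T$ cannot have compact quotient by the action of $\Gamma_B$ (which is dense in $T$ but acts trivially modulo $T$ up to a discrete remainder) — this forces the proper-open alternative to be incompatible with compactness of $X$. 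Having $r(U)=B$, Corollary \ref{regularonacylinder} gives $\Gamma\subset \Aut(Y)$.

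\textbf{Step 3 (classifying the ruled surface).} Since $\Gamma\subset\Aut(Y)$ surjects onto an infinite subgroup of $B$, the map $\Aut^0(Y)\to\Aut^0(B)=B$ is surjective, so $Y=\PP(\EE)$ must be a translation-homogeneous ruled surface. Invoking Atiyah's classification of rank-two homogeneous bundles on an elliptic curve, the only possibilities (up to line-bundle twist) are $\EE=\OO_B\oplus\OO_B$, $\EE=\OO_B\oplus\mathcal{L}$ with $\deg\mathcal{L}=0$ and $\mathcal{L}\ne\OO_B$, and the Atiyah bundle $F_2$, giving $Y=\PP^1\times B$, a decomposable surface with $e=0$, or $R_0$.

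\textbf{Step 4 (structure on each surface).} Maruyama's Theorem \ref{Maruyamathm} provides $\Aut^0(Y)$ in each case, and I would then identify $U$ as a dense open orbit and $\Gamma$ as a lattice. For $Y=\PP^1\times B$, $\Aut^0(Y)=\PGL_2(\CC)\times B$; after a finite index subgroup $\Gamma$ is contained in the product of a classical Kleinian group $\Gamma_1\subset \PGL_2(\CC)$ with an infinite translation subgroup of $B$, with $U=D\times B$ for $D\subset\PP^1$ an invariant component, giving case (1) in which $X$ is a suspension of $D/\Gamma_1$ over $B/\Gamma_B$. For $Y=R_0$, $\Aut^0(Y)$ acts simply transitively on $U=Y-s$, and $U$ has a canonical complex Lie group structure isomorphic to $\CC^*\times\CC^*$ (the non-trivial $\CC^*$-bundle associated to $F_2$); $\Gamma$ is a cocompact lattice in $U$ and $X$ is a complex torus, giving case (2). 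For $Y=\PP(\OO_B\oplus\mathcal L)$ with $e=0$, removing the two disjoint sections $s_1,s_2$ leaves the $\CC^*$-bundle total space of $\mathcal L$, again with a complex Lie group structure in which $\Gamma$ is a lattice and $X$ is a torus, giving case (3).

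The main obstacle will be Step 2: ensuring that a proper $\Gamma_B$-invariant open subset of $B$ cannot support a compact quotient requires a careful orbit-topology argument in the subtorus case. After that, Step 4 is mostly bookkeeping via Maruyama's classification, but identifying the complex Lie group structures on $U$ for $R_0$ and for the $e=0$ decomposable surface, and confirming that the quotient is indeed a torus, will require invoking the explicit description of these bundles as cousin of $\CC^*\times\CC^*$.
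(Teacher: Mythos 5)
Your overall architecture coincides with the paper's: split on whether $\Gamma_B$ is finite or infinite, reduce the finite case to Theorem \ref{thmruled1}, prove $r(U)=B$ in the infinite case from Lemma \ref{ellipticnondiscrete} plus cocompactness (your Step 2 is essentially the paper's Lemma \ref{lemmaellipticbasegeometricallyruled}: the complement of $U$'s image is closed and invariant, hence saturated under the closure torus $T$, and a proper $T$-saturated open set cannot carry a cocompact quotient), and then run through the possible geometrically ruled surfaces. Two preliminary gaps: before invoking any classification of $\PP(\EE)$'s you must rule out singular fibers of $r$, which the paper does by observing that every $\Gamma_B$-orbit is now infinite so one singular fiber would produce infinitely many; and your appeal to Atiyah's classification of homogeneous \emph{bundles} misses $R_1$, which is a homogeneous ruled surface ($\Aut^0(R_1)$ surjects onto $B$) even though its rank-two bundle is only homogeneous up to a degree-zero twist --- the paper excludes $R_1$ separately because $\Aut(R_1)$ is compact and therefore contains no infinite discrete subgroup.

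The substantive gap is in Step 4 for $Y=\PP^1\times B$. Discreteness of the projection $\Gamma_1\subset\PGL(2,\CC)$ does follow from discreteness of $\Gamma$ and compactness of $B$, but the assertion $U=D\times B$ --- i.e.\ that the slices $U\cap(\PP^1\times\{b\})$ do not vary with $b$ --- is exactly the hard point, since $\Gamma_B$ is non-discrete in $B$ and the equivariance $D_{b+\rho(g)}=g(D_b)$ only constrains the slices along a dense orbit with $g\to\infty$ in $\PGL(2,\CC)$. The paper derives the product structure from Proposition \ref{gammabinfinite}: the ruling descends to a regular foliation on the compact surface $X$, Brunella's Theorem \ref{Brunellathm} (after excluding fibrations, turbulent foliations, Hopf and Inoue cases, and suspensions of $\PP^1$) leaves only a linear foliation on a complex torus or an infinite suspension of $B$ over a hyperbolic curve, and in the suspension case the transverse elliptic fibration induced by $Y\to\PP^1$ forces $U=D\times B$. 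Your proposal offers no substitute for this foliation-theoretic input. Finally, the suspension in case (1) goes the other way around: $X$ is a suspension of $B$ (fibres isomorphic to $B$) over the hyperbolic curve $D/\Gamma_1$ with monodromy $\Gamma_1\to\Aut(B)$; the object $B/\Gamma_B$ you write down is generally not defined, precisely because $\Gamma_B$ is a non-discrete subgroup of $B$.
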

In any case of the above theorem, the open set $U$ is not simply connected. Except in the first case, $U$ is a Zariski open set.

In the rest of this section $(Y,\Gamma,U,X)$ will be a birational Kleinian group satisfying the hypothesis of the theorem. 

\begin{lemma}\label{lemmaellipticbasegeometricallyruled}
Suppose that $\Gamma_B$ is an infinite group. Up to geometric conjugation of birational Kleinian groups, $Y$ is $\PP^1$-fiber bundle, $r(U)=B$ and $\Gamma\subset \Aut(Y)$.
\end{lemma}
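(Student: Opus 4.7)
The plan is to establish the three conclusions in the order $r(U)=B$, then $\Gamma\subset\Aut(Y)$, then $Y$ is geometrically ruled; no geometric conjugation is actually needed beyond the convention, adopted at the start of Section~\ref{rationalfibrationsectionone}, of having contracted the $(-1)$-components in fibers disjoint from $U$.

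To establish $r(U)=B$, I argue by compactness. The subset $r(U)\subset B$ is open and $\Gamma_B$-invariant (since for $\gamma\in\Gamma$ and $u\in U$ one has $r(\gamma u)=\gamma_B\, r(u)$ and $\gamma u\in U$), hence it is invariant under the closure $\overline{\Gamma_B}$ inside the compact Lie group $\Aut(B)$. Since $\Gamma_B$ is infinite, $\overline{\Gamma_B}$ is non-discrete, and because $\Aut(B)^{0}$ coincides with the translation group $B$, the identity component $T$ of $\overline{\Gamma_B}$ is a positive-dimensional closed real subtorus of $B$. I then consider the continuous surjection
\[
X=U/\Gamma \longrightarrow r(U)/T,\qquad [u]_\Gamma \longmapsto [r(u)]_T,
\]
well-defined because $\Gamma_B\cdot r(u)\subset T\cdot r(u)$ for every $u\in U$. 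Its image $r(U)/T$ is compact (image of the compact space $X$) and open in $B/T$ (since $r(U)$ is open and $T$-saturated). The quotient $B/T$ is a connected real torus, so its only non-empty open and compact subset is itself; hence $r(U)/T=B/T$, whence $r(U)=B$. Corollary~\ref{regularonacylinder} then immediately yields $\Gamma\subset \Aut(Y)$.

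It remains to show that $Y$ is geometrically ruled. Now that $\Gamma$ acts by biholomorphisms of $Y$, every $\gamma\in\Gamma$ sends singular fibers to singular fibers, so $\Gamma_B$ permutes the finite set $S\subset B$ of singular values of $r$, and a finite-index subgroup $\Gamma_B'$ of $\Gamma_B$ fixes each point of $S$. Since the $\Aut(B)$-stabilizer of any point of the elliptic curve $B$ is a finite cyclic group (of order at most $6$), this would force $\Gamma_B'$ to be finite whenever $S\neq\emptyset$, contradicting the infiniteness of $\Gamma_B$. Hence $S=\emptyset$ and $Y$ is geometrically ruled. The main obstacle is the first step; the delicate situation is when $T$ has real dimension $1$, in which case $B/T$ is a circle and $r(U)/T$ could \emph{a priori} be a proper open arc. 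This is precisely what the open-and-compact dichotomy on the connected space $B/T$, driven by the cocompactness of $X$, rules out.
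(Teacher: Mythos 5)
Your overall route is the same as the paper's: force $r(U)=B$ from cocompactness, deduce $\Gamma\subset\Aut(Y)$ from Corollary \ref{regularonacylinder}, and eliminate singular fibers by noting that the finite set of singular values would be $\Gamma_B$-invariant while every $\Gamma_B$-orbit on the elliptic curve $B$ is infinite (point stabilizers in $\Aut(B)$ being finite). Your second and third steps are correct and coincide with the paper's argument. For the first step the paper argues by contradiction that a proper invariant $r(U)$ would be a band on which $\Gamma_B$ acts by translations along the band direction, which is never cocompact; your open-and-compact dichotomy in a connected quotient is the same idea in a cleaner package. Your remark that the only geometric conjugation needed is the contraction convention from the start of Section \ref{rationalfibrationsectionone} is also accurate.

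There is, however, one false claim in your first step: the map $[u]_\Gamma\mapsto[r(u)]_T$ is in general not well defined, because $\Gamma_B$ need not be contained in the identity component $T$ of $\overline{\Gamma_B}$ --- it is only contained in $\overline{\Gamma_B}$, which is a finite disjoint union of cosets of $T$. For instance, if $\Gamma_B$ is generated by the translation by $(1/2,\sqrt{2})$ on $\RR^2/\ZZ^2$, then $\overline{\Gamma_B}=\{0,1/2\}\times S^1$ is disconnected, the generator does not lie in $T$, and $\Gamma_B\cdot r(u)\not\subset T\cdot r(u)$. The repair is immediate: map instead to $r(U)/\overline{\Gamma_B}$, which is well defined, compact as the continuous image of $X$, and open in the connected compact Hausdorff space $B/\overline{\Gamma_B}$, hence equal to it, giving $r(U)=B$ by saturation; alternatively, first replace $\Gamma$ by the finite-index subgroup whose image in $\Aut(B)$ lies in $T$, which changes neither $U$ nor $r(U)$ nor cocompactness. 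With that one-line fix the argument is complete.
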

\begin{proof}
The open set $r(U)$ is invariant under the infinite group $\Gamma_B$. Suppose by way of contradiction that $r(U)\neq B$. Then according to Lemma \ref{ellipticnondiscrete}, it is a connected component of the complement of a finite union of real subtori, thus a band-like region. The action of $\Gamma_B$ on $r(U)$ is by translations along the direction of the band. Such an action on a band is never cocompact, i.e.\ the union of the images under $\Gamma$ of a compact subset of the band never covers the band. This contradicts the cocompactness of the action of $\Gamma$ on $U$.

By Corollary \ref{regularonacylinder}, the fact $r(U)=B$ implies that $\Gamma\subset \Aut(Y)$. Since $\Gamma_B$ is infinite and $B$ is an elliptic curve, every point of $B$ has an infinite $\Gamma_B$-orbit. If there existed a singular fiber of $r$, then its translations by elements of $\Gamma$ would give rise to an infinite number of singular fibers. Therefore $Y$ is geometrically ruled.
\end{proof}

The ruling $r:Y\rightarrow B$ is identified with the Albanese morphism and is preserved by each element of $\Gamma$. We have an exact sequence
\[
0\rightarrow\Gamma_r\rightarrow\Gamma\rightarrow\Gamma_B\rightarrow 0
\]
where $\Gamma_r$ is the subgroup preserving fiberwise the fibration.

\begin{proposition}\label{gammabinfinite}
Suppose that $\Gamma_B$ is infinite. Then the foliation $\Fol$ is a linear irrational foliation or a suspension of $B$.  
\end{proposition}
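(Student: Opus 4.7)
The plan is to apply Brunella's classification (Theorem~\ref{Brunellathm}) to $\Fol$, after equipping it with additional structure. First, by Lemma~\ref{lemmaellipticbasegeometricallyruled}, up to geometric conjugation we may assume $Y$ is geometrically ruled over $B$, $r(U)=B$, and $\Gamma\subset\Aut(Y)$; consequently $\Fol$ carries a foliated $(\PGL(2,\CC),\PP^1)$-structure by Proposition~\ref{hasafoliatedprojectivestructure}. After replacing $\Gamma$ by a finite-index subgroup, $\Gamma_B$ consists of translations of the elliptic curve $B$, so pulling back the translation-invariant $1$-form $dz$ on $B$ via $r$ gives a $\Gamma$-invariant nowhere-vanishing closed holomorphic $1$-form $\omega=r^{*}dz$ on $Y$ which descends to a form of the same kind on $X$. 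In particular, $\omega$ trivializes the normal bundle $N\Fol=TX/T\Fol$.

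I then run through Brunella's list and eliminate all cases other than Case 2 (linear irrational foliation on a torus) and the suspension-of-$B$ subcase of Case 5. Case 1 (fibration $X\to C$) is excluded because the leaf of $\Fol$ through the image of a fiber $F$ of $r$ is closed in $X$ iff the $\Gamma_B$-orbit of $r(F)$ is closed in $B$, which fails for an infinite subgroup of the compact Lie group $B$. Cases 3 and 4 are incompatible with the existence of $\omega$ since a direct computation on the standard covers of Hopf and Inoue surfaces yields $h^{1,0}=0$. The suspension-of-$\PP^1$ subcase of Case 5 is ruled out since then $X\to M$ would be a $\PP^1$-bundle transverse to $\Fol$, giving $N\Fol\cong T_{X/M}$ and forcing its restriction to a fiber to have degree $2$, contradicting triviality. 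Case 7 is eliminated via Corollary~\ref{DGcorollary}: $X$ would be an irreducible cocompact bidisk quotient, hence of general type with $h^{1,0}=0$, again contradicting the existence of $\omega$.

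The subtle case is Case 6 (turbulent foliation). If $\Fol$ were turbulent, an $\Fol$-invariant fiber of the auxiliary elliptic fibration $X\to C$ would provide a compact elliptic leaf of $\Fol$, necessarily of the form $(F\cap U)/\Gamma_r$ for some fiber $F$ of $r$. Since $\Gamma_r\subset\Aut_B(Y)$ acts by the same conjugacy class of $\PP^1$-automorphism on every fiber, the property that $(F_b\cap U)/\Gamma_r$ be a compact curve is $\Gamma_B$-invariant and open in $b\in B$; combined with the density of $\Gamma_B$-orbits in a positive-dimensional real subtorus of $B$, this forces the property to hold either on no fiber (contradicting the turbulence hypothesis, which requires at least one invariant fiber) or on every fiber (placing us back in Case 1, already excluded). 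After Case 6 is ruled out, the two remaining possibilities give the desired conclusion: Case 2 is a torus with a linear irrational foliation, and in the remaining subcase of Case 5 integration of $\omega$ along local transversals to $\Fol$ identifies the elliptic curve in the suspension with $B$, while the constraints imposed by $\omega$ and the projective structure force $X$ to be a complex torus. The main obstacle is the elimination of Case 6, which requires a careful combination of the fiberwise action of $\Gamma_r$ with the density of $\Gamma_B$ on $B$.
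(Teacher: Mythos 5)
Your overall strategy --- reduce to $r(U)=B$ and $\Gamma\subset\Aut(Y)$ via Lemma \ref{lemmaellipticbasegeometricallyruled}, then eliminate cases of Theorem \ref{Brunellathm} --- is the paper's, and the auxiliary form $\omega=r^*dz$ (giving $h^{1,0}>0$ on a finite cover of $X$ and a trivialization of the normal bundle of $\Fol$) is a pleasant alternative device for the Hopf, Inoue, suspension-of-$\PP^1$ and bidisk-quotient cases. But the last step is a genuine error. You assert that in the suspension case ``the constraints imposed by $\omega$ and the projective structure force $X$ to be a complex torus.'' No argument is given, and the conclusion is false: take $Y=\PP^1\times B$, let $\Gamma\cong\Gamma_1$ be a cocompact Fuchsian group acting on the $\PP^1$ factor with invariant disk $D$ and on $B$ by translations through a homomorphism $\Gamma_1\to B$ with infinite image, and set $U=D\times B$. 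Then $\Gamma_B$ is infinite, $\omega=r^*dz$ exists, $\Fol$ is an infinite suspension of $B$ over the hyperbolic curve $D/\Gamma_1$, and $X$ is a $B$-bundle over a genus $\geq 2$ curve --- not a torus. The proposition is worded misleadingly, but the paper's own proof only reaches the alternative ``either $X$ is a complex torus with an irrational linear foliation, or $\Fol$ is an infinite suspension of (an isogeny copy of) $B$ over a hyperbolic curve,'' and that is exactly how it is invoked in the proof of Theorem \ref{thmruledelliptic} (compare cases 1) and 4) there, and case 4 of Theorem \ref{verylongthm}). The suspension alternative cannot be upgraded to ``$X$ is a torus.''

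A second, smaller problem is the turbulent case. Your elimination rests on the claim that ``$(F_b\cap U)/\Gamma_r$ is a compact curve'' is an \emph{open} condition in $b$, which you do not justify, and the detour is unnecessary: a turbulent foliation has at least one compact leaf (an invariant fiber of the auxiliary elliptic fibration), and the same observation you already used against Case 1 --- a compact leaf of $\Fol$ would force a discrete, hence finite, $\Gamma_B$-orbit in the compact group $\Aut^0(B)$ --- kills it at once. This is the paper's argument, and it simultaneously disposes of the Hopf case, since the obvious foliations on Hopf surfaces also carry compact elliptic leaves. Finally, note that $\omega$ only exists after replacing $\Gamma$ by the finite-index subgroup whose image in $\Aut(B)$ consists of translations, i.e.\ after passing to a finite \'etale cover of $X$; this is harmless for excluding Hopf surfaces, Inoue surfaces and irreducible bidisk quotients (all stable under finite covers with $h^{1,0}=0$), but it should be said.
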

\begin{proof}
We apply Theorem \ref{Brunellathm} to the foliation $\Fol$ on $X$ induced by the rational fibration $Y\rightarrow B$. The orbit of a point on $B$ under $\Gamma_B$ is not discrete and its closure contains a real subtorus (cf. Lemma \ref{ellipticnondiscrete}). This implies that no leaf of $\Fol$ is a submanifold of $X$. Thus the foliation is neither a fibration, nor a turbulent foliation, nor an obvious foliation on a Hopf surface. Here the base curve $B$ is elliptic so that $\Fol$ is transversely Euclidean. This rules out obvious foliations on Inoue surfaces (cf. \cite{Zhaobirinoue}). The foliation $\Fol$ cannot be a suspension of $\PP^1$ because the only rational curves in $Y$ are the fibers of the rational fibration. There are only two possibilities left: either $X$ is a complex torus with an irrational linear foliation or $X$ is an infinite suspension of a hyperbolic curve over an elliptic curve. 
\end{proof}

\begin{proof}[Proof of Theorem \ref{thmruledelliptic}]
We already treated the case where $\Gamma_B$ is finite in Theorem \ref{thmruled1}. This is covered by case 1), 2), 4) of Theorem \ref{thmruledelliptic}. From now on we assume that $\Gamma_B$ is infinite. We analyse the situation case by case according to the above description of automorphism group. Since $\Aut_B(Y)$ is infinite we infer that $Y$ is not decomposable with $e\neq 0$. The surface $Y$ is not $R_1$ because $\Aut(R_1)$ is compact and does not contain infinite discrete subgroup.  

Assume that $Y$ is decomposable with $e=0$. The two minimal sections $s_1$ and $s_2$ are disjoint from $U$ because they are invariant under $\Aut(Y)$. The universal cover of the complex analytic Lie group $Y-(s_1\cup s_2)$ is $\CC^2$ because it is commutative. Hence we obtain immediately that $U=Y-(s_1\cup s_2)$ and $X$ is a complex torus. If $Y=R_0$, then the same reasoning tells us that $U=Y-s$ and $X$ is a complex torus.

Assume that $Y=\PP^1\times B$. By Proposition \ref{gammabinfinite} the foliation on $X$ induced by $Y\rightarrow B$ is a linear foliation on a torus or a suspension of $B$ over a hyperbolic curve. If $X$ is a torus then the intersection of $U$ with any fiber of $Y\rightarrow B$ is biholomorphic to $\CC$ or $\CC^*$ because the intersections with fibers cover leaves of the foliation. The conclusion follows immediately in this case. Assume that $X$ is a suspension over a hyperbolic curve. The projection $Y\rightarrow \PP^1$ induces a second regular foliation on $X$ which is transverse to the first one; this second foliation must be the elliptic fibration subjacent to the suspension. Therefore we are in the first case of Theorem \ref{thmruledelliptic}.
\end{proof}

\section{Invariant rational fibration III: rational base}\label{rationalfibrationsectionthree}

In this section we work under the assumption that $B=\PP^1$ and $\Gamma_B$ is infinite. We will investigate the foliation $\Fol$ case by case according to Theorem \ref{Brunellathm} and Corollary \ref{DGcorollary}. 

The situation where $\Fol$ is an obvious foliation on an Inoue surface is treated in our previous paper \cite{Zhaobirinoue} where results stronger than what we need here are proved. We see from the description of Inoue surfaces in Section \ref{Inouesurfaces} that their standard realizations by complex affine Kleinian groups fit in our setting of this section. The main result of \cite{Zhaobirinoue} implies:
\begin{theorem}[\cite{Zhaobirinoue}]\label{inoue}
Let $S$ be an Inoue surface. Then up to geometric conjugation there is only one birational Kleinian group $(Y,\Gamma,U,X)$ such that $X=S$. It corresponds to the standard construction of the Inoue surface by taking the quotient of $\HH\times \CC$ by a group of affine transformations. 
\end{theorem}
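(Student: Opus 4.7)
The plan is to show that any birational Kleinian group realizing an Inoue surface $S$ must, after geometric conjugation, reduce to a rational fibration case whose induced foliation on $S$ is one of the obvious foliations, and then match this foliation to the standard affine realization. I proceed in three stages.

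First, I would reduce to the case where $Y$ is rational. By Theorem \ref{nonnegthm}, $Y$ has Kodaira dimension $-\infty$, hence is birationally ruled. If $Y$ were ruled over a curve $C$ of genus $\geq 1$, every element of $\Bir(Y)$ would preserve the ruling, yielding a nontrivial surjective holomorphic map $X \to C/\Gamma_C$ onto a compact curve. But Inoue surfaces have algebraic dimension zero (no non-constant meromorphic functions) and trivial Albanese, which rules this out. So $Y$ is rational. Note also that $\pi_1(X)$ is solvable for every Inoue surface, as can be read directly from the affine generators listed in \ref{Inouesurfaces}; hence $\Gamma$ is a solvable infinite subgroup of $\Bir(\PP^2)$.

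Second, I would run the Strong Tits Alternative (Theorem \ref{strongTitsalternative}) on $\Gamma$. The non-solvable cases (5 and 7) are excluded, as are the torsion cases. In Case 1, Lemma \ref{birconjtoaut} lets me geometrically conjugate $\Gamma$ into $\Aut(Y')$ for some rational $Y'$, placing us in the setting of complex projective Kleinian groups, where Theorem \ref{classificationofcplxprojkleinian} directly identifies the only Inoue realization with the standard affine construction of \ref{Inouesurfaces}. In the toric Case 6, Proposition \ref{notorickleinian} combined with the fact that Inoue surfaces are not tori or Hopf surfaces forces a contradiction. The remaining cases (2, 3, 4) place $\Gamma$, up to birational conjugation, inside the stabilizer of a fibration $r : Y \to B$; since $a(X)=0$ forces $B = \PP^1$, we land in the setting of Section \ref{rationalfibrationsectionthree}.

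Third, I would use Proposition \ref{withoutsingularpoints} to obtain a regular holomorphic foliation $\Fol$ on $X$ induced by $r$, and apply Brunella's classification (Theorem \ref{Brunellathm}). Of the seven classes, only the obvious foliations on Inoue surfaces are compatible with $X$ being Inoue: fibrations and turbulent foliations give $a(X)\geq 1$; torus and Hopf foliations give wrong surface classes; suspensions force $X$ to be K\"ahler or ruled; and transversely hyperbolic foliations with dense leaves give general type surfaces. Thus $\Fol$ is the vertical foliation on $\HH \times \CC$ in the uniformization of $S$. Pulling back to the universal cover, the $r$-fibers of $U$ correspond to the leaves $\{x\}\times\CC$, and the foliated $(\PGL(2,\CC),\PP^1)$-structure along leaves (Proposition \ref{hasafoliatedprojectivestructure}) forces the transverse identifications to be affine in the $y$-coordinate. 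Using the explicit form of the affine generators of $\pi_1(S)$ in \ref{Inouesurfaces}, one verifies that $\Gamma$ must coincide with the full deck group of the standard realization, and that the birational map $Y \dashrightarrow \PP^2$ intertwining the two actions is well defined on $U$.

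The main obstacle is the rigidity step inside stage three: once $\Fol$ is pinned down, one must show that the pair $(Y, U)$ is uniquely determined up to birational conjugation. This requires combining the (very rigid) affine arithmetic of the Inoue generators with the classification of fiber-preserving birational transformations (Theorems \ref{bdcentralizer}, \ref{zhaocentralizer1}, \ref{zhaocentralizer}) to exclude any exotic birational twist of the standard model. The $S^+$ and $S^-$ cases are the subtlest, since the generator $g_0$ acts on fibers as a translation rather than a dilation, and one must track carefully how an abstract birational conjugation could reshuffle fibers without destroying properness.
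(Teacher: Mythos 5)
There is a genuine gap, and it sits exactly where the paper says the real difficulty lies. First, note that the paper does not prove Theorem \ref{inoue} at all: it is imported from \cite{Zhaobirinoue}, and the paper explicitly states that the proof there relies on ``particular geometric features of Inoue surfaces as well as ergodic properties of loxodromic birational transformations,'' precisely in order to drop the standing hypotheses of Theorem \ref{verylongthm}. Your sketch reproduces the architecture of the paper's general classification (reduce to $Y$ rational, run Theorem \ref{strongTitsalternative}, pass to a foliation and Brunella's list), but it fails at the decisive point: you dismiss Case 5 of the Tits alternative as ``non-solvable.'' A virtually cyclic group generated by a loxodromic element is solvable, and the fundamental group of an Inoue surface does admit infinite cyclic quotients (kill the normal subgroup generated by $g_1,g_2,g_3$), so a priori $\Gamma$ could be an infinite cyclic group generated by a loxodromic birational map not conjugate into the toric subgroup. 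Neither solvability of $\pi_1(S)$, nor Proposition \ref{notorickleinian} (which only treats groups conjugated into the toric subgroup and which explicitly excludes the virtually-cyclic-loxodromic case from its hypotheses), nor Theorem \ref{kleinianareelementary} (which assumes $X$ is not of class VII, and Inoue surfaces are class VII) rules this out. Excluding it is the content that forces the ergodic-theoretic input of \cite{Zhaobirinoue}; without it you have not proved uniqueness of the realization.

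Two further points. Your handling of Case 4 is off: Halphen twists preserve a genus one fibration, not a rational one, so that case does not land in Section \ref{rationalfibrationsectionthree}; it is excluded by Lemma \ref{ellipticfibrationlem} via the finiteness of the induced action on the base (\cite{CF03}). And your Stage 3 rigidity argument --- that the foliated projective structure plus the affine arithmetic of the generators pins down $(Y,U,\Gamma)$ up to geometric conjugation --- is only described, not carried out; as you yourself acknowledge, this is the main obstacle, and asserting that ``one verifies'' the conclusion is not a proof. So the proposal is a plausible reduction scheme consistent with the paper's framework, but it does not constitute a proof of the stated theorem.
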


\subsection{Turbulent foliations, Hopf surfaces, suspensions of $\PP^1$}
The following lemma will be repeatedly used in this section:
\begin{lemma}\label{elliptictransverse}
Let $E\subset X$ be an elliptic curve which is transverse to the foliaiton $\Fol$. Let $\iota:\CC\rightarrow U$ be the lift of $E$ to its universal cover. Let $G\subset \Gamma$ be the subgroup that preserves $\iota(\CC)$ and let $G_B$ be its induced action on $B$. We have: 
\begin{enumerate}
\item The submanifold $\iota(\CC)\subset U$ is biholomorphic to $\CC$ or $\CC^*$.
\item The projection $r(\iota(\CC))\subset B$ is biholomorphic to $\CC$,$\CC^*$ or $\PP^1$.
\item the group $G_B$ is an infinite abelian subgroup of $\PGL(2,\CC)$. 
\end{enumerate}
Assume now that $r(\iota(\CC))\neq \PP^1$. Then $r\vert_{\iota(\CC)}$ is a covering map. And we have:
\begin{enumerate}[label=(\alph*)]
\item If $G_B$ has rank one then it is a lattice in $\CC^*$ and $r(\iota(\CC))=\CC^*$. In this case $r\circ \iota$ induces an isomorphism of elliptic curves $E\rightarrow \CC^*/G_B$. 
\item If $G_B$ is a group of translations then it is a lattice in $\CC$ and $r(\iota(\CC))=\CC$. In this case $r\circ \iota$ induces an isomorphism of elliptic curves $E\rightarrow \CC/G_B$.
\item If $G_B$ is a group of multiplications of rank two then $r(\iota(\CC))=\CC^*$ and $r\circ\iota$ is an exponential map of the form $z\mapsto c\exp(\lambda z), c,\lambda\in\CC^*$.
\end{enumerate}

\end{lemma}
\begin{proof}
Since $E$ is transverse to $\Fol$, the image $\iota(\CC)$ is transverse to $r$. Therefore $r$ restricted to $\iota(\CC)$ is locally a biholomorphism. An elliptic curve cannot be mapped locally biholomorphically onto an open subset of $\PP^1$. Thus as a covering space of $E$, the submanifold $\iota(\CC)$ is biholomorphic to $\CC$ or $\CC^*$. 

We denote the composition $r\circ \iota$ by $h$ and consider it as a meromorphic function on $\CC$. By Little Picard Theorem it omits at most two values in $\PP^1$. Therefore $h(\CC)$ is biholomorphic to $\CC,\CC^*$ or $\PP^1$.

Assume by way of contradiction that $G_B$ is finite. Up to replacing $G$ with the kernel of $G\rightarrow G_B$ we can assume that $G_B$ is trivial. Then $r$ induces a holomorphic surjection, which is locally biholomorphic, from the elliptic curve $E=\iota(\CC)/G$ to $h(\CC)$. This is impossible.

Assume from now that $r(\iota(\CC))\neq \PP^1$. We identify $\pi_1(E)$ with a lattice in $\CC$. Denote by $\rho$ the composition $\pi_1(E)\rightarrow G\rightarrow G_B$. Up to replacing $G$ with a subgroup of finite index, we can assume that $G_B$ is free abelian of rank one or two. Let $K\subset \pi_1(E)$ be the kernel of $\rho$. It is trivial of cyclic. For a suitable affine coordinate $x$ on $B=\PP^1$, either all elements of $G_B$ are translations, or all of them have the form $x\mapsto \lambda x,\lambda\in \CC^*$. As $r(\iota(\CC))\neq \PP^1$ we will think of $h=r\circ \iota$ as a $\rho$-equivariant holomorphic map on $\CC$. 

Assume by way of contradiction that $G_B$ is generated by $x\mapsto bx$ with $\vert b\vert=1$. Then the kernel $K$ of $\rho$ is cyclic and a generator of $G_B=\pi_1(X)/K$ acts on $\CC/K=\CC^*$ by $z \mapsto az$ for some $a\in \CC^*$ with $\vert a \vert> 1$. Hence $h$ induces a holomorphic map $\bar{h}:\CC^*\rightarrow \CC^*$ such that $\bar{h}(az)=b\bar{h}(z)$ for any $z\in \CC^*$. Consider the closed annulus $A=\{z\in\CC, 1\leq \vert z \vert \leq \vert a \vert\}$. By the maximum modulus principle, $h(A)$ is bounded. Since the iterates of $A$ under $\pi_1(E)$ covers $\CC^*$ and since $\vert b\vert=1$, the equation $\bar{h}(az)=b\bar{h}(z)$ implies that $h$ is bounded, thus constant. This contradicts that $h$ is locally biholomorphic. 

Assume by way of contradiction that $G_B$ has rank two and is generated by two translations $f_1:x\mapsto x+b_1$ and $f_2:x\mapsto x+b_2$ such that $b_1,b_2\in \CC$ are $\RR$-colinear. Let $F_1:z\mapsto z+a_1,F_2:z\mapsto z+a_2$ be two elements of $\pi_1(E)$ such that $\rho(F_i)=f_i,i=1,2$. Then the $\rho$-equivariance of $h$ can be written as $h(z+a_i)=h(z)+b_i,i=1,2$. Let $P\subset \CC$ be the closed parallelogram corresponding to $a_1,a_2$. By the maximum modulus principle, $h(P)$ is bounded. Since the iterates of $P$ under $\pi_1(E)$ covers $\CC$ and since $b_1,b_2$ are on the same real vector line, the equations $h(z+a_i)=h(z)+b_i,i=1,2$ imply that $h$ has values in a strip region. Thus $h$ is constant by Little Picard Theorem. Contradiction.
 
Assume by way of contradiction that $G_B$ has rank one and is generated by $x\mapsto x+b$. Then $K$ is cyclic and a generator of $G_B=\pi_1(X)/K$ acts on $\CC/K=\CC^*$ by $z \mapsto az$ for some $a\in \CC^*$ with $\vert a \vert> 1$. Hence $h$ induces a holomorphic map $\bar{h}$ on $\CC^*$ such that $\bar{h}(az)=\bar{h}(z)+b$ for any $z\in \CC^*$. Considering the closed annulus $A=\{z\in\CC, 1\leq \vert z \vert \leq \vert a \vert\}$, a similar argument as in the previous two paragraphs shows that $\bar{h}$ has values in a strip region. Thus $h$ is constant by Little Picard Theorem, contradiction.

The three previous paragraphs together show that $G_B$ is a lattice in $\CC^*$, a lattice in $\CC^2$ or a group of multiplications of rank two. If $G_B$ is a lattice in $\CC$ then $h$ induces an isomorphism from $E$ to $\CC/G_B$. In particular we have $h(\CC)=\CC$.

Assume that $G_B$ is a group of multiplications generated by $g_i:x\mapsto a_ix, i=1,2$. Here we allow $a_2$ to be $1$ if $G_B$ has rank one. Assume that $f_i:z\mapsto z+b_i,i=1,2$ are two elements of $\pi_1(X)$ such that $\rho(f_i)=g_i,i=1,2$. Denote by $\Lambda$ the lattice $\langle b_1,b_2\rangle$. Then $h$ satisfies $h(z+b_i)=a_ih(z),i=1,2$ and the zeros of $h$ form a $\Lambda$-invariant set. We denote by $\mathcal{Z}\subset \CC$ the set of zeros of $h$. Up to precomposing $h$ with a translation, we can assume that $0\notin \mathcal{Z}$. Remark that $h$ has only finitely many zeros in a fundamental parallelogram of $\Lambda$. Since $\sum_{w\in \Lambda\backslash\{0\}}\frac{1}{w^3}$ is convergent while $\sum_{w\in \Lambda\backslash\{0\}}\frac{1}{w^2}$ is divergent, we obtain that $\sum_{w\in \mathcal{Z}} \frac{1}{w^3}$ is convergent while $\sum_{w\in \mathcal{Z}} \frac{1}{w^2}$ is divergent. By Weierstrass Factorization Theorem, the entire function $h$ can be written as
\[
h(z)=\exp(h^\vee(z))\prod_{w\in \mathcal{Z}}(1-\frac{z}{w})\exp(\frac{z}{w}+\frac{z^2}{2w^2})
\]
where $h^\vee$ is another entire function. Remark that
\[
\prod_{w\in \mathcal{Z}}\frac{w-z-b_i}{w-z}=1, \quad i=1,2.
\] 
because $\mathcal{Z}$ is $\Lambda$-invariant. The equations $h(z+b_i)=a_ih(z),i=1,2$ then imply that
\[
\frac{1}{a_i}\exp\big(h^\vee(z+b_i)-h^\vee(z)\big)=\prod_{w\in \mathcal{Z}}\exp\Big( \frac{b_iz}{w^2}\Big)\prod_{w\in \mathcal{Z}}\exp\Big( \frac{b_i}{w}+\frac{b_i^2}{2w^2} \Big), \quad i=1,2.
\]
The right hand side would not coverge if $\mathcal{Z}\neq \emptyset$. Therefore we conclude that $h$ has no zeros, i.e.\ $r(\iota(\CC))=\CC^*$. And we have $h(z)=\exp(h^\vee(z))$. Then there are two complex numbers $c_1,c_2$ such that $\exp(c_i)=a_i,i=1,2$ and $h^\vee(z+b_i)=h^\vee(z)+c_i,i=1,2$. The arguments in the previous paragraphs show that $\langle c_1, c_2 \rangle$ is a lattice. Then $h(z)=c\exp(\lambda z)$ with $c,\lambda\in \CC^*$ and $\exp(\lambda b_i)=a_i,i=1,2$. In particular if $G_B$ has rank one, i.e.\ if $a_2=1$, then $\vert a_1\vert\neq 1$ and $\langle a_1 \rangle $ is a lattice in $\CC^*$.

\end{proof}

\subsubsection{Examples}\label{examplesturbulent}

The Hirzebruch surface $\cH_n$ is the projectivization of the rank two vector bundle $\OO\oplus \OO(n)$ on $\PP^1$. The subbundles  $\OO$ and $\OO(n)$ determine two sections. The complement of these two sections, denoted by $W_n$, is isomorphic to the principal $\CC^*$-bundle over $\PP^1$ associated with the line bundle $\OO(n)$. The quasi-projective variety $W_0$ is a product $\CC^*\times \PP^1$. The quasi-projective variety $W_1$ is isomorphic to $\CC^2\backslash\{0\}$. For $n\geq 1$, $W_1$ is an unramified cover of degree $n$ of $W_n$; the cover is given in each fiber by $w\mapsto w^n$. 

The automorphism group of $\cH_n$ fits in the following exact sequence (cf. \cite{Maru}):
\[
1\rightarrow H_{n+1}\rightarrow \Aut(\cH_n)\rightarrow \Aut(\PP^1)\rightarrow 1
\]
where $H_{n+1}$ is the group defined in Theorem \ref{Maruyamathm}. The rational fibration $H_{n+1}\rightarrow \PP^1$ can be trivialized over $\PP^1\backslash\{0\}$ and over $\PP^1\backslash\{\infty\}$. The change of coordinates is $(x,y)\mapsto (\frac{1}{x},\frac{y}{x^n})$ where $x$ is the affine coordinate on the base of the fibration while $y$ is an affine coordinate on the fibers. Using coordinates we have
\begin{align*}
\Aut(\cH_n)=&\Bigg\{(x,y)\mapsto \Big(\frac{ax+b}{cx+d},\frac{y+t_0+t_1x+\cdots+t_nx^n}{(cx+d)^n}\Big)\mid
\\
&\begin{pmatrix}
	a&b\\c&d\end{pmatrix}\in \operatorname{GL}(2,\CC), t_0,\cdots,t_n\in\CC\Bigg\}.
\end{align*}
The action around the fiber at $x=\infty$ can be computed by the change of coordinates. In particular for the automorphism $(x,y)\mapsto (a x, b y)$ with $a,b\in\CC^*$, the action on the fiber at infinity is multiplication by $\frac{b}{a^n}$; for the automorphism $(x,y)\mapsto (x+c, b y)$ with $a,b\in\CC^*$, the action on the fiber at infinity is multiplication by $b$.

\begin{example}\label{hopfruledex}
Take $a,b\in\CC^*$ such that $0<\vert b \vert <1$ and $\vert b \vert<\vert a\vert ^n$. The automorphism $f_{n,a,b}:(x,y)\mapsto (a x, b y)$ of $\cH_n$ preserves $W_n$ and acts freely properly discontinuously and cocompactly on $W_n$. The action of $f_{1,a,b}$ on $W_1$ is conjugate to the linear contraction $(z,w)\mapsto (bz,\frac{b}{a} w)$ on $\CC^2\backslash\{0\}$. Let $X$ be the quotient surface $=W_n/<f_{n,a,b}>$ and $\Fol$ be the foliation on $X$ induced by the $\CC^*$-bundle structure of $W_n$. 

Assume that $a$ is not a root of unity. When $n=0$, the quotient is a ruled surface over the elliptic curve $\CC^*/<b>$; the foliation $\Fol$ is a suspension of $\PP^1$. When $n>0$ the surface $X$ is a Hopf surface with two elliptic curves isomorphic to $\CC^*/<b>$ and $\CC^*/<\frac{b}{a^n}>$; the leaves of $\Fol$ come from vector lines on the universal cover $\CC^2\backslash\{0\}$. If $X$ is equipped with an elliptic fibration then $\Fol$ is a turbulent foliation with respect to that elliptic fibration. 

We can modify the above examples by adapting the construction in Section \ref{blowupellipticconstruction}. We blow up $\cH_n$ at points in the fibers over $x=0$ or $x=\infty$ to obtain a new surface $Y$; the points we blow up are intersection points of irreducible components with other irreducible components or with the strict transforms of the two sections of $\cH_n$. The points we blew up are fixed points of the $f_{n,a,b}$. Thus $f_{n,a,b}$ acts also by automorphism on $Y$; its action on an irreducible component of a fiber is by multiplication. Let $U\subset Y$ be a $\CC^*$-fibration over $\PP^1$ which is the complement in $Y$ of the union of strict transforms of the two sections and some components of the singular fibers, as in Section \ref{blowupellipticconstruction}. Then for suitable choices of $a,b$, the action of $f_{n,a,b}$ on $U$ is free, properly discontinuous and cocompact. The quotient is either a Hopf surface or a ruled surface over an elliptic curve. If the quotient is a ruled surface then $U$ is foliated by rational curves of self-intersection $0$; this forces $U$ to be isomorphic to the product $\PP^1\times \CC^*$. Hence the quotient is a ruled surface if and only if the birational Kleinian group is geometrically conjugate to $<f_{0,a,b}>$ on $\cH_0=\PP^1\times \PP^1$.
\end{example}

\begin{example}\label{hopfparabolicex}
Take $0<\vert b \vert <1$ and $c\in \CC^*$. The automorphism $g_{n,b,c}:(x,y)\mapsto (x+c, b y)$ of $\cH_n$ preserves $W_n$; its action on the fiber at infinity is multiplication by $b$. If $n=0$ then the quotient of $W_0$ by $g_{0,b,c}$ is a ruled surface over an elliptic curve equipped with a suspension foliation. The action of $g_{1,b,c}$ on $W_1$ is conjugate to the contraction $(z,w)\mapsto (bz+bcw,bw)$ on $\CC^2\backslash\{0\}$. For $n\geq 1$ the quotient of $W_n$ by $g_{n,b,c}$ is a Hopf surface with only one elliptic curve. It has an unramified cover which is a primary Hopf surface corresponding to the normal form $(z,w)\mapsto (bz+\gamma w,bw)$ where $\gamma$ is some number depending on $n,b,c$. Remark that this construction does not give all primary Hopf surfaces with only one elliptic curve, but only those with linear normal form. The foliation induced by the rational fibration corresponds to the foliation by lines in $\CC^2\backslash\{0\}$. As in Example \ref{hopfruledex} we can modify the above construction by blowing up the invariant fiber. 
\end{example}

\begin{example}\label{ruledsuspensionex}
Consider two automorphisms $f_i:(x,y)\mapsto (x+a_i,y+b_i), i=1,2$ of $\PP^1\times \PP^1$. Suppose that $\ZZ b_1+\ZZ b_2$ is a lattice in $\CC$ and that $a_1,a_2$ are not both zero. Then $f_1$ and $f_2$ preserve $U=\PP^1\times \CC\subset \PP^1\times \PP^1$. The data $(\PP^1\times \PP^1,\langle f_1,f_2\rangle,U)$ give a birational Kleinian group. The quotient $X=U/\langle f_1,f_2\rangle$ is a ruled surface over the elliptic curve $\CC/(\ZZ b_1+\ZZ b_2)$. The foliation $\Fol$ is a suspension. We observe that:
\begin{lemma}\label{turbulentellipticfibration}
There is a genus one fibration on $X$ with respect to which $\Fol$ is turbulent if and only if there exists $\lambda\in \CC^*$ such that $\lambda a_i=b_i, i=1,2$. 
\end{lemma}

\begin{proof}
If there exists $\lambda\in \CC^*$ such that $\lambda a_i=b_i, i=1,2$, then the curves $\{(x,\lambda x +c),x\in \CC\}\subset U, c\in \CC$ are $\langle f_1,f_2\rangle$-invariant and are fibers of the map $(x,y)\mapsto \lambda x-y$. Hence $X$ is isomorphic to $\PP^1\times \big(\CC/(\ZZ b_1+\ZZ b_2)\big)$. The quotient of $\{\infty\}\times\CC\subset U$ is an elliptic curve, both leaf of $\Fol$ and fiber of the trivial genus one fibration.

Conversely assume that $\eta:X\rightarrow \PP^1$ is a genus one fibration with respect to wich $\Fol$ is turbulent. Consider the holomorphic map $\iota:\CC\rightarrow U$ obtained by lifting a generic fiber of $\eta$. By Lemma \ref{elliptictransverse} $\iota(\CC)$ is biholomorphic to $\CC$ or $\CC^*$. Let $G$ be the an infinite subgroup of $\langle f_1,f_2\rangle$ which preserves $\iota(\CC)$. Denote by $q$ the projection $U=\PP^1\times \CC\rightarrow \CC$. The open set $q(\iota(\CC))$ is invariant under a subgroup $G_q\subset \langle b_1,b_2 \rangle$ induced by $G$. The argument in the proof of Lemma \ref{elliptictransverse} shows that $G_q$ is a lattice and that $q(\iota(\CC))=\CC$. In particular we obtain that $G\rightarrow G_q$ is an isomorphism and that $G$ has finite index in $\langle f_1,f_2\rangle$. Also we have that $\iota(\CC)=\CC$ and $q\vert_{\iota(\CC)}$ is a biholomorphism. Thus we can think of $\iota(\CC)$ as the graph of a meromorphic function $h$, i.e.\ $\iota(\CC)=\{(h(y),y), y\in \CC\}$. As $\{\infty\}\times\CC$ is $G$-invariant, the function $h$ is actually holomorphic. Assume from now that $G$ is generated by $f_i:(x,y)\mapsto (x+a_i,y+b_i), i=3,4$. The fact that $\iota(\CC)$ is $G$-invariant can be translated into $h(y+b_i)=h(y)+a_i,i=3,4$. As in the proof of Lemma \ref{elliptictransverse} we obtain that $\langle a_3,a_4 \rangle$ is a lattice. Then $h$ induces an isomorphism $h^\vee$ between the two elliptic curves $\CC/(\ZZ b_3+\ZZ b_4)$ and $\CC/(\ZZ a_3+\ZZ a_4)$ and has the form $x\mapsto \lambda x+c$ for some $\lambda\in \CC^*$ and $c\in \CC$. In particular we have $\lambda a_i=b_i, i=3,4$. Writing $f_3=m_1f_1+m_2f_2$ and $f_4=n_1f_1+n_2f_2$ with $m_i,n_i\in \ZZ$, we obtain 
\[
\begin{pmatrix}
a_3&b_3\\a_4&b_4
\end{pmatrix} =
\begin{pmatrix}
m_1&m_2\\n_1&n_2
\end{pmatrix}
\begin{pmatrix}
a_1&b_1\\a_2&b_2
\end{pmatrix}.
\]
Since $\langle f_3,f_4\rangle$ is isomorphic to $\ZZ^2$, the matrix $\begin{pmatrix} m_1 & m_2 \\n_1 & n_2\end{pmatrix}$ is invertible. We conclude that $(a_1,a_2)$ and $(b_1,b_2)$ are colinear. 

\end{proof}

\end{example}

\begin{example}\label{ruledsuspensionextwo}

Consider two automorphisms $g_1,g_2$ of $\PP^1\times \PP^1$ defined by $g_1:(x,y)\mapsto (a_1x,y+b_1)$ and $g_2:(x,y)\mapsto (a_2x,y+b_2)$ where $a_1,a_2$ are not both equal to $1$. Suppose that $\ZZ b_1+\ZZ b_2$ is a lattice in $\CC$. Then $g_1$ and $g_2$ preserve $U=\PP^1\times \CC\subset \PP^1\times \PP^1$. The data $(\PP^1\times \PP^1,<g_1,g_2>,U)$ give a birational Kleinian group. The quotient $X=U/\langle g_1,g_2\rangle$ is a ruled surface over the elliptic curve $\CC/(\ZZ b_1+\ZZ b_2)$. The foliation $\Fol$ is a suspension. We observe that:
\begin{lemma}\label{turbulentellipticfibrationtwo}
There is a genus one fibration on $X$ with respect to which $\Fol$ is turbulent if and only if there exists $\lambda\in \CC^*$ such that $\exp(\lambda b_i)=a_i,i=1,2$. 
\end{lemma}
\begin{proof}
Assume that $\exp(\lambda b_i)=a_i,i=1,2$ for some $\lambda\in \CC^*$. Then the curves \\ $\{\left(c \exp(\lambda y),y\right),y\in \CC\}\subset U, c\in \CC^*$ are $G$-invariant and are fibers of the map $(x,y)\mapsto \exp(\lambda y)/x$. Hence $X$ is isomorphic to $\PP^1\times\big( \CC/(\ZZ b_1+\ZZ b_2)\big)$. The quotients of $\{\infty\}\times\CC$ and $\{0\}\times\CC$ are both leaves of $\Fol$ and fibers of the trivial genus one fibration.

Conversely assume that $\eta:X\rightarrow \PP^1$ is a genus one fibration with respect to which $\Fol$ is turbulent. Note that the quotients of both $\{\infty\}\times\CC$ and $\{0\}\times\CC$ are necessarily fibers of $\eta$ because otherwise they would be transverse to $\eta$ and be coverings of $\PP^1$. A generic fiber of $\eta$ gives rise to a holomorphic map $\iota:\CC\rightarrow U$. Denote by $G$ the subgroup of $\langle g_1,g_2 \rangle$ that preserves $\iota(\CC)$. The same arguments as in the proof of Lemma \ref{turbulentellipticfibration} show that $\iota(\CC)=\{(h(x),x), x\in \CC\}$ is the graph of a holomorphic function with values in $\CC^*$ and that $G$ is isomorphic to $\ZZ^2$. Suppose that $G$ is generated by $g_i:(x,y)\mapsto (a_ix,y+b_i), i=3,4$. The same arguments as in the proof (last paragraph) of Lemma \ref{elliptictransverse} show that there are two complex numbers $c_3,c_4$ such that $\exp(c_i)=a_i,i=3,4$ and that $\langle c_3, c_4 \rangle$ is a lattice. Moreover $h(y)=c\exp(\lambda y)$ with $c,\lambda\in \CC^*$ and $\exp(\lambda b_i)=a_i,i=3,4$. Writing $f_3=m_1f_1+m_2f_2$ and $f_4=n_1f_1+n_2f_2$ with $m_i,n_i\in \ZZ$, we obtain, similarly as in the proof of Lemma \ref{turbulentellipticfibration}, that there exists $\lambda'\in \CC^*$ such that $\exp(\lambda'b_i)=a_i,i=1,2$.
\end{proof}

\end{example}

\begin{remark}\label{rmkClaudon}
There are no analogues of Example \ref{ruledsuspensionex} for $\cH_n$ with $n\geq 1$. For $n\geq 1$ the action of $(x,y)\mapsto (x+a,y+b)$ on the fiber at infinity is trivial and $(x,y)\mapsto (ax,y+b)$ acts as multiplication by $\frac{1}{a^n}$ on the fiber at infinity.

We can see in another way that there are no such analogues when $n\geq 1$. Let $U_n$ be the complement of the minimal section in $\cH_n$; it is a fibration over $\PP^1$ with fibers biholomorphic to $\CC$. If the quotient of $U_n$ by a $\ZZ^2$-action was a compact surface then this compact surface would be \Kah{} because it has even second Betti number (cf. Theorem IV.3.1 \cite{BHPV}). However $U_n$ is not the universal cover of any compact \Kah{} surface: if the universal cover of a compact \Kah{} surface is non-compact and quasi-projective then it is $\CC^2$ or $\CC\times\PP^1$ (cf. \cite{Claudonnote}, \cite{CHK13}).
\end{remark}

\subsubsection{Characterizations}

\begin{lemma}\label{r(B)oftorus}
Suppose that $\Fol$ is a linear foliation on a torus. Then $r(U)$ is biholomorphic to $\CC$ or $\CC^*$. 
\end{lemma}
\begin{proof}
%Let $\pi:U\rightarrow X$ be the covering map and $F$ be a leaf of $\Fol$. Any connected component of $\pi^{-1}(F)$ is contained in a fiber of the rational fibration $r$. As the universal cover of $F$ is $\CC$, such a component has to be biholomorphic to $\CC$ or $\CC^*$. In particular we obtain that $U$ intersects each fiber of $r$ in a connected set.
 
In the universal cover $\CC^2$ of $X$ we take a vector line which is transverse to the foliation. This vector line intersects every leaf. It induces $\iota:\CC\rightarrow U$, a covering map onto its image, which is everywhere transverse to the rational fibration $r:U\rightarrow B$. The composition $r\circ \iota$ can be thought of as a non constant meromorphic function. It omits at most two values in $\PP^1$ by Little Picard Theorem. Remark that $r\circ \iota(\CC)=r(U)$ because $\iota(\CC)$ intersects every leaf. It remains to prove that $r(U)\neq \PP^1$.

A connected component of the preimage in $U$ of a leaf of $\Fol$ is contained in a fiber of $r$. Such a component is a quotient of $\CC$, thus biholomorphic to $\CC$ or $\CC^*$. In any covering space of $X$, the leaves of the foliation are isomorphic to each other. Therefore $U$ is topologically a $\CC$-bundle or $\CC^*$-bundle over $r(U)$. As $U$ is a covering space of the torus $X$, the base $r(U)$ cannot be $\PP^1$.
\end{proof}

\begin{lemma}\label{r(B)ofbidiskquotient}
Suppose that $X$ is a bidisk quotient and that $\Fol$ is one of the natural foliations. Then $U$ is biholomorphic to the bidisk and $r(U)$ is a round disk in $\PP^1$, i.e.\ is up to M\"obius transform the upper half plane.
\end{lemma}
\begin{proof}
Consider the composition $\pi_1(X)\rightarrow \Gamma\rightarrow \Gamma_B\subset \PGL(2,\CC)$. It is a representation of $\pi_1(X)$, an irreducible lattice in the rank two semisimple Lie group $\PSL(2,\RR)\times \PSL(2,\RR)$, into $\PGL(2,\CC)$. Margulis supperrigidity (\cite{Mar91}) says that such a representation, if not trivial, extends to a morphism of Lie groups $\PSL(2,\RR)\times \PSL(2,\RR)\rightarrow \PGL(2,\CC)$. This implies that, up to conjugation in $\PGL(2,\CC)$ and up to a Galois conjugate, $\Gamma_B$ is exactly the projection of the lattice $\pi_1(X)\subset \PSL(2,\RR)\times \PSL(2,\RR)$ into one of the factors $\PSL(2,\RR)$. In particular $\Gamma_B$ is a dense subgroup of $\PSL(2,\RR)$ (up to conjugation). The open set $r(U)$ is $\Gamma_B$-invariant, thus $\PSL(2,\RR)$-invariant. It has to be one of the half planes. Since the three groups $\pi_1(X), \Gamma$ and $\Gamma_B$ are isomorphic, the open set $U$ is the universal cover of $X$, thus biholomorphic to the bidisk.
\end{proof}

\begin{proposition}\label{fullbasecase}
Suppose that $r(U)=B=\PP^1$. Then $\Fol$ is a turbulent foliation, an obvious foliation on a Hopf surface or a suspension of $\PP^1$.
\end{proposition}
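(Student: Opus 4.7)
The plan is to apply Brunella's classification (Theorem~\ref{Brunellathm}) to the regular foliation $\Fol$ and eliminate every case except the three named: an obvious foliation on a Hopf surface, an infinite suspension of $\PP^1$, or a turbulent foliation. First I would set the stage: since $r(U)=B=\PP^1$, Corollary~\ref{regularonacylinder} gives $\Gamma\subset\Aut(Y)$ and every fiber of $r$ meets $U$. Because $\Gamma_B$ is an infinite subgroup of $\Aut(\PP^1)$ it has at most two fixed points on $\PP^1$, and the finitely many singular fibers of $r$, being permuted by $\Gamma_B$, must sit over those fixed points. Proposition~\ref{withoutsingularpoints} gives regularity of $\Fol$, and Proposition~\ref{hasafoliatedprojectivestructure}, applied over the smooth locus of $r$ and extended across the at most two exceptional leaves, equips $\Fol$ with a foliated $(\PGL(2,\CC),\PP^1)$-structure.

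I would then dispatch Brunella's cases 1, 4, and 7 quickly. A fibration $f\colon X\to C$ (case~1) pulled back along $\pi\colon U\to X$ and collapsed along the leaves of $\Fol|_U$ produces a $\Gamma_B$-equivariant holomorphic map $\PP^1\to C$ with trivial $\Gamma_B$-action on $C$; a non-constant such map has finite fibers, whereas an infinite subgroup of $\Aut(\PP^1)$ has at most two finite orbits. For case~7, Corollary~\ref{DGcorollary} would force $X$ to be a bidisk quotient, and Proposition~\ref{bidiskprojstructure} would identify the projective structure as the standard round-disk one, whose developing map on $\bD\times\bD$ surjects onto a disk $\subsetneq\PP^1$ and cannot therefore coincide with the $r$-developing map surjecting onto $\PP^1$. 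For case~4, Theorem~\ref{inoue} gives that the only birational Kleinian realization of an Inoue surface has $r(U)=\HH\subsetneq\PP^1$.

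The remaining task is to rule out case~2 (irrational linear foliation on a torus) and the elliptic-curve-fiber subcase of case~5 (suspension of an elliptic curve over a compact curve $M$). In both situations each $\Gamma_B$-fixed point $b_0\in\PP^1$ gives a $\Gamma$-invariant fiber $F_{b_0}\cong\PP^1$ that meets $U$, and the restriction homomorphism $\rho_{b_0}\colon\Gamma\to\Aut(F_{b_0})\cong\PGL(2,\CC)$ must be injective, for otherwise a non-identity element would fix $F_{b_0}$ pointwise and violate freeness of the $\Gamma$-action on $U$. In case~2, $\pi_1(\text{torus})=\ZZ^4$ is abelian so $\Gamma$ is abelian, but a discrete abelian subgroup of $\PGL(2,\CC)$ has rank at most two, while a torus quotient demands rank at least three. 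This can be made concrete via Theorem~\ref{zhaocentralizer}, which, combined with $\Gamma\subset\Aut(Y)$ so that Jonqui\`eres twists are excluded, restricts $\Gamma$ up to finite index and conjugation in $\Jonq$ to one of the four standard abelian forms $(a_ix,b_iy)$, $(x+a_i,b_iy)$, $(a_ix,y+b_i)$, or $(x+a_i,y+b_i)$; in each, rank exceeding two forces a non-trivial kernel of the projection onto the invariant-fiber action. For the elliptic subcase of case~5, if $g(M)\geq 2$ then $\Aut(M)$ is finite, so up to finite index $\Gamma$ preserves each fiber of the elliptic fibration $X\to M$, whereupon Lemma~\ref{ellipticfibrationlem} yields non-discreteness of the $\Gamma$-action on $U$, a contradiction; if $M$ is elliptic then $X$ is virtually a torus and the case~2 argument applies; and $M=\PP^1$ would force the suspension monodromy to be trivial, reducing to case~1 already handled.

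The hardest step will be this last one: the delicate combination of the abelian-rank obstruction in $\PGL(2,\CC)$, the four-form classification from Theorem~\ref{zhaocentralizer}, and Lemma~\ref{ellipticfibrationlem} to simultaneously dispose of all torus and elliptic-bundle quotients, while the dispatch of cases 1, 4, and 7 falls out directly from the already-established structural theorems.
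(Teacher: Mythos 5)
Your overall strategy---run through Brunella's list armed with the foliated projective structure and Deroin--Guillot, and eliminate every case except the three in the statement---is the same as the paper's, but two of your case eliminations have genuine gaps. For the fibration case, ``collapsing along the leaves of $\Fol|_U$'' does not produce a map $\PP^1\to C$: the function $f\circ\pi$ is constant on each \emph{leaf} of $\Fol|_U$, i.e.\ on each connected component of $U\cap r^{-1}(x)$, but $U\cap r^{-1}(x)$ may be disconnected, with different components lying over different points of $C$, so $f\circ\pi$ need not factor through $r$. Controlling exactly this is the content of the paper's Lemma~\ref{fibrationcaselemma}, whose proof uses the normality and fiber-independence of $\Gamma_{\Omega}$ together with Ahlfors' finiteness theorem to bound the number of components before concluding $r(U)\neq B$; your one-line shortcut skips the hard step. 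For the transversely hyperbolic case, your contradiction rests on a misreading of the foliated $(\PGL(2,\CC),\PP^1)$-structure: it is a projective structure \emph{along the leaves}, and the leaves of $\Fol|_U$ lie inside the fibers of $r$. Uniqueness of the structure (Proposition~\ref{bidiskprojstructure}) therefore only tells you that each component of $U\cap r^{-1}(x)$ is a round disk; it constrains the fiber direction, not the base image $r(U)$, so there is no clash between ``developing image is a disk'' and ``$r(U)=\PP^1$''. The paper instead invokes Margulis superrigidity to show $\pi_1(X)\to\Gamma_B$ is injective, hence $U$ is the universal cover $\bD\times\bD$ and $r(U)$ is a disk.

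Two smaller problems. A complex torus is covered by $\CC^*\times\CC^*$ with deck group $\ZZ^2$, so ``a torus quotient demands rank at least three'' is false and your abelian-rank obstruction in $\PGL(2,\CC)$ does not by itself close the torus case (the paper uses the leaf space of the linear foliation: an equivariant surjection $\CC\to r(U)$ forces $r(U)$ to be $\CC$ or $\CC^*$; one can also finish your fiber-restriction argument by noting that a rank-two parabolic group would produce a compact leaf, impossible for an irrational linear foliation). In the elliptic-suspension subcase, Lemma~\ref{ellipticfibrationlem} is not applicable as you use it: it requires a genus one fibration on the projective surface $Y$ with finite action on its base, and the suspension structure on $X$ provides no such fibration on $Y$. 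The paper's exclusion of this subcase is the transversality argument of the Suspensions subsection: $r$ restricted to a leaf of the lifted elliptic fibration is an unramified covering of $r(U)$ which also covers an elliptic curve, forcing $r(U)$ to be $\CC$ or $\CC^*$ rather than $\PP^1$.
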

\begin{proof}
Inoue surfaces are excluded by Theorem \ref{inoue} because in the standard construction of Inoue surfaces $r(U)$ is the upper half plane. Linear foliations on tori are excluded by Lemma \ref{r(B)oftorus} and bidisk quotients are excluded by Lemma \ref{r(B)ofbidiskquotient}. Suspensions of elliptic curves over elliptic curves are up to finite covering also linear foliations on tori. Fibrations and suspensions of elliptic curves over hyperbolic curves are excluded by Lemma \ref{fibrationcaselemma} and Lemma \ref{r(B)ofellipticsuspension} (the proofs can be read right away and are written there for sake of notations).

By Corollary \ref{regularonacylinder} and Proposition \ref{hasafoliatedprojectivestructure} $\Fol$ has a foliated projective structure. By looking at the list of possible foliations from Theorem \ref{Brunellathm} and Corollary \ref{DGcorollary} we see that the remaining possibilities are those listed in the proposition.
\end{proof}

\begin{proposition}\label{turbulentclassification}
Suppose that $\Fol$ is a turbulent foliation. Then up to a geometric conjugation preserving the rational fibration and up to replacing $\Gamma$ with a finite index subgroup, either we have the normal form of a Hopf surface as in section \ref{Hopfnormalform} or we are in one of the cases described in Examples \ref{hopfruledex}, \ref{ruledsuspensionex} and \ref{ruledsuspensionextwo}. 
\end{proposition}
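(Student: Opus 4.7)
\medskip
\noindent\emph{Proof plan.} The plan is to first reduce to the case $r(U)=B=\PP^1$ with $\Gamma\subset \Aut(Y)$. Proposition \ref{fullbasecase} characterises $\Fol$ when $r(U)=B$, and one checks (using the finiteness of invariant fibers and a commutative-diagram argument at the level of $X=U/\Gamma$) that if $r(U)\subsetneq B$ then $\Fol$ is forced to be a fibration, contradicting turbulence. Let $\pi\colon X \to C$ be the elliptic fibration underlying $\Fol$, with finitely many $\Fol$-invariant fibers; these correspond to $\Gamma$-orbits of $r$-fibers in $Y$ over base points with infinite stabiliser in $\Gamma$ (cyclic acting on a $\CC^*$-slice, or $\ZZ^2$ acting by translations on a $\CC$-slice). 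Since $\Gamma_B$ is an infinite subgroup of $\PGL_2(\CC)$ and the set of such special points is finite, $\Gamma_B$ must be elementary, and up to finite index and conjugation one obtains: Case~A, where $\Gamma_B\subset \CC^*\subset \PGL_2(\CC)$ fixing $\{0,\infty\}$; or Case~B, where $\Gamma_B\subset \CC$ fixing only $\infty$.

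Next I will reduce $Y$ to a blown-up Hirzebruch surface. Since $\Gamma_B$ is infinite, all singular fibers of $r$ lie over the special fixed points; contracting $\Gamma$-invariant $(-1)$-curves outside $U$ brings $Y$ to the form $\cH_n$ possibly blown up in chains only over the special base points, exactly as in the construction of Section \ref{blowupellipticconstruction}. I then examine a generic fiber $F$ of $r$: the intersection $F\cap U$ projects isomorphically onto a non-invariant leaf of $\Fol$, which, being transverse to the isotrivial elliptic fibration $\pi$, is an unramified cover of the base orbicurve $C$. The foliated $(\PGL_2(\CC),\PP^1)$-structure on $\Fol$ (Proposition \ref{hasafoliatedprojectivestructure}, applied on the regular part) combined with the argument of Proposition \ref{fullbasecase} rules out disk-type leaves, and $C$ cannot be $\PP^1$ (else the generic leaf would be $\PP^1$ forcing $\Fol$ to be a fibration); hence $C$ is elliptic and $F\cap U$ is biholomorphic to $\CC$ or $\CC^*$.

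The final step is the explicit matching. Using the description of $\Aut(\cH_n)$ in Theorem \ref{Maruyamathm}, every element of $\Gamma$ has the form $(x,y)\mapsto ((ax+b)/(cx+d),\,(\alpha y + P(x))/(cx+d)^n)$. In Case~A with $F\cap U\cong \CC^*$, the open set $U$ is the complement $W_n$ of the two $\Gamma$-invariant sections in $\cH_n$: invariance of both sections forces $P=0$, cocompactness reduces $\Gamma$ to a cyclic subgroup of $\{(x,y)\mapsto(ax,by)\}$ generated by some $f_{n,a,b}$, and any blow-up pattern over the special fibers is governed by the multiplicative cyclic action precisely as in Section \ref{blowupellipticconstruction}, matching Example \ref{hopfruledex}. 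In Case~A with $F\cap U\cong\CC$ or in Case~B, $U$ is obtained from $\cH_n$ by removing a single $\Gamma$-invariant section; Remark \ref{rmkClaudon} forces $n=0$, and solving the cocompactness equations for the remaining translation/scaling parameters yields generators of the form $(x,y)\mapsto(x+a_i,y+b_i)$ or $(x,y)\mapsto(ax,y+b_i)$ as in the two halves of Example \ref{ruledsuspensionex}.

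The main obstacle is showing that $F\cap U$ is biholomorphic to $\CC$ or $\CC^*$ and nothing more exotic: this requires combining the transversality and isotriviality of $\pi$ with the foliated projective structure, ruling out leaves that are hyperbolic planar domains (which would force $C$ to have higher genus and ultimately bring us back to the bidisk-quotient case already excluded by Proposition \ref{fullbasecase}). A secondary subtlety is the verification that $n=0$ in the $\CC$-fiber cases via the topological argument of Remark \ref{rmkClaudon}, and that the blow-up combinatorics in the $\CC^*$-fiber case match Section \ref{blowupellipticconstruction} exactly.
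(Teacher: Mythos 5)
Your proposal has the right ingredients in places (the transverse leaves as covers, the role of the invariant fibers, Maruyama's theorem and Remark \ref{rmkClaudon} at the end), but two of its pivotal claims are not justified and one of them is actually backwards.

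First, you assert at the outset that turbulence forces $r(U)=B$, citing only ``finiteness of invariant fibers and a commutative-diagram argument.'' In the paper this equality is not an input but the \emph{output} of the whole case analysis: one first extracts from the exact sequence $1\to H\to\pi_1(X)\to\pi_1^{orb}(C)\to 1$ a normal subgroup $G\subset\Gamma$ (the image of the fiber group), proves $G_B$ is infinite and $[\Gamma:G]<\infty$, splits into three cases according to the ranks of $\Gamma$ and $\Gamma_B$, and only at the end of each case deduces that the generic leaf is a \emph{finite-type} cover of $C\setminus S$, hence that $C=\PP^1$ and $r(U)=B$. Without that analysis you cannot invoke Corollary \ref{regularonacylinder} to get $\Gamma\subset\Aut(Y)$, so your reduction to $\Aut(\cH_n)$ and Maruyama's theorem is not yet available.

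Second, your deduction ``$C$ cannot be $\PP^1$\ldots hence $C$ is elliptic'' is the opposite of what is true. A non-invariant leaf is transverse to $f$ only over $C\setminus S$ with $S\neq\emptyset$, so it covers the \emph{punctured} base, not $C$; when $C=\PP^1$ and $|S|\in\{1,2\}$ the leaf is $\CC$ or $\CC^*$, which is exactly the surviving situation. The genuinely dangerous case — $\Gamma$ of rank $2$ with $\Gamma_B$ cyclic — is the one where $C$ could a priori be an elliptic curve and $r^{-1}(b)\cap U$ an infinite cyclic cover of $C\setminus S$ (a planar domain of infinite type, not ruled out by transversality or by the foliated $(\PGL(2,\CC),\PP^1)$-structure). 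The paper excludes it by writing the generators explicitly via the centralizer theorem (Theorem \ref{zhaocentralizer}) and deriving a contradiction with the translation action on the invariant fiber. Your proposal embraces precisely this excluded case and supplies no mechanism to kill it, so the argument as written would not close.
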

\begin{proof}
Let $f:X\rightarrow C$ be the genus one fibration subjacent to the turbulent foliation. At least one fiber of $f$ is a leaf of $\Fol$. Let $E$ be such a fiber. Denote by $\pi$ the covering map from $U$ to $X$.  We have an exact sequence $1\rightarrow H\rightarrow \pi_1(X)\rightarrow \pi_1^{orb}(C)\rightarrow 1$ where $H$ is the image of the fundamental group of a regular fiber (see \cite{GurSha85} Theorem 1). If $E_m$ is a multiple fiber of $f$ then the image of $\pi_1(E_m)$ in $\pi_1(X)$ contains $H$ as a subgroup of finite index. Let $G\subset \Gamma$ be the image of the composition $H\rightarrow \pi_1(X)\rightarrow \Gamma$. Then $G$ is a normal subgroup of $\Gamma$ because $H$ is the kernel of $\pi_1(X)\rightarrow \pi_1^{orb}(C)$ and $\pi_1(X)\rightarrow \Gamma$ is surjective. It is the image of $\pi_1(E)$ in $\Gamma$ if $E$ is a non-multiple fiber and is a finite index subgroup of it if $E$ is multiple. Let $E'$ be a fiber of $f$, let $\Omega$ be a connected component of $\pi^{-1}(E')$ and let $G'$ be the subgroup of $\Gamma$ that preserves $\Omega$. Then under the identification of $\Gamma$ as the deck transformation group and as a quotient of $\pi_1(X)$, the group $G'$ is conjugate to the image of $\pi_1(E')\rightarrow \pi_1(X)$. Since $G$ is normal, we have $G'=G$ if $E'$ is a smooth fiber and $G\subset G'$ has finite index if $E'$ is a multiple fiber. Hence any connected component of $\pi^{-1}(E')$ is $G$-invariant. Thus the intermediate Galois cover $U/G$ is a genus one fibration over a possibly non-compact base. Note that $G$ is an infinite quotient of $\ZZ^2$ because if it was finite then $E$, a leaf of $\Fol$, would lift to an elliptic curve contained in the intersection between $U$ and one fiber of the rational fibration $r$, which is impossible. 

Denote by $G_B$ the image of $G$ in $\Gamma_B$. By Lemma \ref{elliptictransverse} $G_B$ is infinite. We claim that $G$ is a subgroup of $\Gamma$ of finite index. Let $\gamma\in\Gamma$ be an arbitrary element. Let $F_E$ be a connected component of $\pi^{-1}(E)$. By the previous paragraph $F_E$ is $G$-invariant. It is a copy of $\CC$ or $\CC^*$ contained in a $G$-invariant fiber $F$ of $r$. The image $\gamma(F_E)$ is also a connected component of $\pi^{-1}(E)$ which is preserved by $G$. This implies in particular that $\gamma(F)$ is a $G$-invariant fiber. As $G_B$ is infinite, there are at most two $G$-invariant fibers of $r$. Up to replacing $\Gamma$ with a subgroup of index two, we can assume that $F$ is $\Gamma$-invariant. The possibly singular fiber $F$ has finitely many irreducible components. Excatly one of them contains $F_E$. Since $F_E$ is biholomorphic to $\CC$ or $\CC^*$, it is the only connected component of the corresponding irreducible componenet of $F$. Again up to taking a subgroup of finite index, we can assume that $F_E$ is $\Gamma$-invariant. However by the previous paragraph $G$ is a subgroup of finite index of the subgroup of $\Gamma$ that preserves $F_E$. We conclude that $\Gamma/G$ is finite. 

Up to replacing $\Gamma$ with a subgroup of finite index, we can assume that $G=\Gamma$ and that $\Gamma,\Gamma_B$ are free abelian groups of rank one or two. Note that $G=\Gamma$ implies that $\pi^{-1}(E)$ is connected and that $f$ is a genus one fiber bundle. We choose coordinates in $B=\PP^1$ so that the fiber $F$ containing $\pi^{-1}(E)$ is over $\infty\in B$. Denote by $R$ the set of fixed points of $G_B$ in $B\cap r(U)$. It contains $\infty$ and has at most two elements. We let the other element be $0$ if it exists. The complement $r(U)\backslash R$ is $\CC$ or $\CC^*$, on which $\Gamma_B$ acts respectively by translations or multiplications. Any point in $r(U)\backslash R$ has an infinite $\Gamma_B$-orbit. Thus by Corollary \ref{regularonacylinder} $r$ has no singular fibers over $r(U)\backslash R$. The fibers over points of $R$ descend to fibers of $f$ in $X$. 

Denote by $S$ the set of points in $C$ over which the fibers of $f$ are leaves of $\Fol$. It has the same cardinality as $R$. The foliation $\Fol$ restricted to $f^{-1}(C\backslash S)$ is everywhere transverse to the genus one bundle $f$. We have 
\[\pi^{-1}\left(f^{-1}(C\backslash S)\right)=r^{-1}(r(U)\backslash R)\cap U\]
and we denote this open set by $M$. 

Let $E'$ be a fiber of $f$ over $C\backslash S$. Let $\Omega$ be its preimage in $M$. It is biholomorphic to $\CC$ if $\Gamma$ has rank two and to $\CC^*$ if $\Gamma$ has rank one. It is everywhere transverse to the rational fibration $r$. The projection $r(\Omega)$ is not $\PP^1$ because it omits $R$ which contains at least $\infty$. Applying Lemma \ref{elliptictransverse}, we obtain that $\Omega\rightarrow r(U)\backslash R$ is a covering map and we are in one of the following situations:
\begin{enumerate}[label=(\Alph*)]
	\item $\Gamma$ is cyclic and $\Gamma_B$ is a lattice in $\CC^*=r(U)\backslash R$. 
	\item $\Gamma$ has rank two and $\Gamma_B$ is a lattice in $\CC=r(U)\backslash R$;
	\item $\Gamma$ has rank two while $\Gamma_B$ is a lattice in $\CC^*=r(U)\backslash R$
	\item $\Gamma$ and $\Gamma_B$ have rank two and $\Gamma_B$ is a group of multiplications in $\CC^*=r(U)\backslash R$.
\end{enumerate}

Assume that we are in case (D). By Theorem \ref{zhaocentralizer}, the group $\Gamma$ is conjugate in $\Jonq$ to a subgroup of $\Aut(\PP^1\times \PP^1)$. By Lemma \ref{birconjtoaut} and Corollary \ref{regularonacylinder}, we can assume without conjugation that $\Gamma$ is generated by $g_i:(x,y)\mapsto (a_i x, \alpha_i(y)), i=1,2$ where $a_i\in \CC^*$ and $\alpha_1,\alpha_2$ are two M\"obius transformations. Since $r^{-1}(\infty)\cap U$ is biholomorphic to $\CC$ and descends to an elliptic curve, we have, up to a change of coordinate, $\alpha_i(y)=y+b_i, i=1,2$ where $b_1,b_2$ generate a lattice in $\CC$. Thus we are in Example \ref{ruledsuspensionextwo}.

Assume that we are in case (A) or (B). Then the covering map $r\vert_\Omega$ is a biholomorphism. Thus a leaf of $\Fol$ transverse to $f$ intersects a fixed fiber of $f$ only once. Thus such a leaf is biholomorphic to $C\backslash S$ and we have $f^{-1}(C\backslash S)=\pi(M)=C\backslash S\times E'$. Recall that $\Gamma=G$ is the image of $\pi_1(E')\rightarrow \pi_1(X)\rightarrow \Gamma$. Therefore any leaf of $\Fol$ in $\pi(M)$ is bihilomorphic to its preimage in $M$. In other words, for any $x\in r(U)\backslash R$, $r^{-1}(x)\cap U$ is biholomorphic to $C\backslash S$, a Riemann surface of finite type. As $r^{-1}(x)$ is $\PP^1$, this is only possible if $C=\PP^1$. Hence $X$ is a genus one bundle over $\PP^1$. Assume that $\#S=2$. Then $\#R=2$, $r(U)=\PP^1$ and we are in Case (A). In this case $U$ is a $\CC^*$-bundle over $\PP^1$. The fact $r(U)=B$ allows us to suppose that $\Gamma\subset \Aut(Y)$ by Corollary \ref{regularonacylinder}. We are thus in Example \ref{hopfruledex}.

Assume that $\#S=\#R=1$ and that we are in Case (A). Then $C\backslash S$ is biholomorphic to $\CC$ and $r(U)=\PP^1\backslash\{0\}$. The fiber $r^{-1}(\infty)\cap U$ is biholomorphic to $\CC^*$ while $M=\CC^*\times \CC$. Therefore $U$ is biholomorphic to $\CC^2\backslash\{0\}$ and $X$ is a Hopf surface. A generator of $\Gamma$ has the form $(x,y)\mapsto (ax,by)$ because it acts by multiplication on $r^{-1}(\infty)\cap U$ and preserves both the horizontal and the vertical foliations on $M$. Therefore this situation corresponds to the normal form of a Hopf surface as in Section \ref{Hopfnormalform}.

Assume that $\#S=\#R=1$ and that we are in Case (B). Then $C\backslash S$ is biholomorphic to $\CC$ and $r(U)=\PP^1$.  We can suppose that $\Gamma\subset \Aut(Y)$ by Corollary \ref{regularonacylinder}. The fiber $r^{-1}(\infty)\cap U$ is biholomorphic to $\CC$ while $M=\CC\times \CC$. Therefore $U$ is a $\CC$-bundle over $\PP^1$, the complement of a section in a ruled surface. Then Remark \ref{rmkClaudon} allows us to conclude that $U=\PP^1\times \CC$ and that we are in Example \ref{ruledsuspensionex}.

Finally let us consider Case (C). In this case $\Omega$ and $r^{-1}(\infty)\cap U$ are biholomorphic to $\CC$. Since $\Gamma=G$ is the image of $\pi_1(E')\rightarrow \pi_1(X)\rightarrow \Gamma$, the set $M$ has a structure of $\CC$-bundle over $C\backslash S$. Denote by $\bar{f}:M\rightarrow C\backslash S$ the fibration map. Consider the map $\eta:M\rightarrow (C\backslash S)\times \CC^*$ defined by $\eta(z)=(\bar{f}(z),r(z))$ where we identified $r(U)\backslash R$ with $\CC^*$. The preimage $\Omega$ of any fiber of $f$ is a fiber of $\bar{f}$. The projection $r:\Omega\rightarrow \CC^*$ is an exponential map by Lemma \ref{elliptictransverse}, i.e.\ an infinite cyclic covering. This implies that $\eta$ is a cyclic covering map. In particular the $\CC$-bundle structure of $M$ is the pull back of the $\CC^*$-bundle structure on $(C\backslash S)\times \CC^*$, thus trivial. In other words $M$ is biholomorphic to $(C\backslash S)\times \CC$. Let $x\in \CC^*$. We have $r^{-1}(x)\cap M=\eta^{-1}\left ((C\backslash S)\times \{x\}\right )$. Thus $r^{-1}(x)\cap M$  
is an infinite cyclic covering (possibly disconnected) of $C\backslash S$. However a punctured hyperbolic curve does not admit cyclic coverings contained in $\PP^1$ (see \cite{Mas65}), so $C$ is $\PP^1$ or an elliptic curve. 

Suppose by way of contradiction that $C$ is an elliptic curve. Let $\Gamma_r$ be the kernel of the projection $\Gamma\rightarrow \Gamma_B$ with generator $\gamma_1$. For $x\in \CC^*$, as $r^{-1}(x)\cap M$ is a cyclic covering of $(C\backslash S)$ where $\Gamma_r$ acts by deck transformations, $r^{-1}(x)\cap M$ is contained in a copy of $\CC^*$ where $\gamma_1$ acts by $y\mapsto ay$ with $a\in \CC^*$. Let $\gamma_2$ be an element of $\Gamma$ whose image in $\Gamma_B$ generates $\Gamma_B$. Then $\Gamma=\ZZ^2$ is generated by $\gamma_1$ and $\gamma_2$. By Theorem \ref{zhaocentralizer} (the information we use here is due to \cite{CD12}), up to conjugation $\gamma_1, \gamma_2$ can be written respectively as $(x,y)\mapsto (x,ay)$ and $(x,y)\mapsto(bx,R(x)y)$ with $a,b\in\CC^*$ and $R\in\CC(x)^*$. By Lemma \ref{birconjtoaut} and Corollary \ref{regularonacylinder}, we can assume these formulas without conjugation at least over $\CC^*=r(U)\backslash R$. Recall that $r^{-1}(\infty)\cap U$ has a component biholomorphic to $\CC$ on which $\Gamma$ acts as a lattice. This gives a contradiction because $\gamma_1$ acts on any irreducible component of $F$ by multiplication. Therefore we obtain that $C=\PP^1$. 

Since $C\backslash S=\PP^1\backslash S$ has an infinite cyclic covering, we have necessarily $\#S=\#R=2$, $C\backslash S\equiv\CC^*$ and that $r^{-1}(x)\cap U$ is biholomorphic to $\CC$ for any $x\notin R$. We have $R=\{0,\infty\}$ and $r(U)=B$. Then $U$ is a $\CC$-bundle over $\PP^1$. The fact that $r(U)=B$ implies that $\Gamma\subset \Aut(Y)$ by Corollary \ref{regularonacylinder}. And since $U$ is the complement of a section of the Hirzebruch surface $Y$, we obtain by Remark \ref{rmkClaudon} that $U=\PP^1\times \CC$ and $Y=\PP^1\times\PP^1$. Hence we are in Example \ref{ruledsuspensionextwo}.
\end{proof}

\begin{proposition}\label{Hopfclassification}
Suppose that $\Fol$ is an obvious foliation on a non-elliptic Hopf surface. Then up to a geometric conjugation preserving the rational fibration and up to replacing $\Gamma$ with a finite index subgroup we are in one of the following situations:
\begin{enumerate}
	\item $X$ contains two elliptic curves and we are in Example \ref{hopfruledex}.
	\item $X$ contains one elliptic curve and we are in Example \ref{hopfparabolicex}.
	\item $U$ is the standard open set $\CC^2\backslash\{0\}$ of $Y=\PP^1\times \PP^1$ and $\Gamma$ is generated by the normal form $(x,y)\mapsto (\alpha x+\gamma y^m, \beta y)$.
\end{enumerate}
\end{proposition}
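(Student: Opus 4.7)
The plan is to lift to the universal cover $\CC^2\setminus\{0\}$ of $X$ and match the pulled-back rational fibration to one of the three models. Since every non-elliptic Hopf surface has a finite unramified cover which is a primary Hopf surface, I first replace $\Gamma$ by a finite index subgroup to assume $X$ is primary. Then $\pi_1(X)\cong\ZZ$ and (since $\Gamma$ is infinite) $\Gamma\cong\ZZ$, so $U$ is the universal cover and, via a biholomorphism $U\cong\CC^2\setminus\{0\}$, the generator of $\Gamma$ corresponds to a standard contraction $H=H_{(\alpha,\beta,\gamma,m)}$ as in Paragraph \ref{primarykodaira}. The rational fibration $r:Y\to B$ pulls back to an $H$-invariant regular foliation $\tilde\Fol$ on $\CC^2\setminus\{0\}$, each leaf of which is a connected open subset of a rational fiber of $r$, hence of $\PP^1$.

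Next, I split cases according to the subcases of non-elliptic Hopf surfaces. When $\gamma=0$ (two elliptic curves), $H(z,w)=(\alpha z,\beta w)$, and the $H$-invariant obvious foliations on $\CC^2\setminus\{0\}$ are the axis foliations $\{z=\mathrm{const}\}$, $\{w=\mathrm{const}\}$ together with the foliations coming from vector fields $az\partial_z+bw\partial_w$ with $ab\neq 0$. The rationality constraint on leaves forces $a/b\in\QQ^\times$, and a geometric conjugation constructed from the resolution of the base locus of the pencil $\{z^b=\mu w^a\}$ reduces to the axis case; compactifying yields $Y=\cH_n$ for a suitable $n$ (possibly after the further blowups over the two invariant fibers permitted in Section \ref{blowupellipticconstruction}), and $H$ becomes a lift of some $f_{n,a',b'}$, matching Example \ref{hopfruledex}. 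When $\gamma\neq 0$ (one elliptic curve), with $\alpha=\beta^m$, the $H$-invariant obvious foliations with an algebraic first integral are only the horizontal foliation $\{w=\mathrm{const}\}$ and the Poincaré--Dulac foliation $\{z=cw^m\}$ (the first integral $z/w^m-a\log w$ of the Poincaré--Dulac vector field is multivalued when $a\neq 0$, and its level sets cannot be the fibers of a morphism). The horizontal subcase is realized on $Y=\cH_n$ by compactifying the $\CC$-bundle structure of $\{w=\mathrm{const}\}$; the lift of $H$ becomes $g_{n,b,c}:(x,y)\mapsto(x+c,by)$, giving Example \ref{hopfparabolicex}. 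The Poincaré--Dulac subcase, after a geometric conjugation that blows down to $\PP^1\times\PP^1$ and identifies $(x,y)=(z,w)$, gives case 3 with $H:(x,y)\mapsto(\alpha x+\gamma y^m,\beta y)$.

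The hardest step is the reduction to the axis case in the two-elliptic-curves subcase: given an oblique pencil $\{z^b=\mu w^a\}$ with $\gcd(a,b)=1$ on $U=\CC^2\setminus\{0\}$, one must produce an explicit birational map to a Hirzebruch compactification which is biholomorphic on $U$ and conjugates $H$ to $f_{n,a',b'}$. This amounts to tracking the resolution of the indeterminacy of $(z,w)\mapsto z^b/w^a$ at the origin and at the point at infinity, and matching the resulting chain of exceptional divisors to the blowup pattern described in Section \ref{blowupellipticconstruction}; a secondary technical point will be verifying that the two invariant fibers of $r$ are the ones corresponding to the two elliptic curves in $X$, so that the exponents $(\alpha,\beta)$ of $H$ translate correctly into the parameters $(a',b')$ permitted by Example \ref{hopfruledex}.
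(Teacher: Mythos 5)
Your overall plan (lift to $\CC^2\setminus\{0\}$, enumerate the $H$-invariant obvious foliations, discard the ones with transcendental leaves) is in the right spirit, but the matching of foliations to the three conclusions is wrong in both branches, and since the proposition classifies the data together with the invariant rational fibration this is not a harmless relabelling. The dichotomy that actually governs the proof, and around which the paper's (much shorter) argument is organized, is the topology of the leaves. When every leaf is $\CC^*$ --- the pencils $\{z^q=\mu w^p\}$ for $\gamma=0$, and the Poincar\'e--Dulac foliation $\{z=cw^m\}$ for $\gamma\neq 0$ --- the leaf space is $\PP^1$, hence $r(U)=B$, hence $\Gamma\subset\Aut(Y)$ by Corollary \ref{regularonacylinder}, and $U$ has to be one of the $\CC^*$-fibrations of Examples \ref{hopfruledex} and \ref{hopfparabolicex}; one lands in case 1 or case 2 according to whether $\Gamma_B$ is generated by a multiplication or a translation. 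When instead the foliation is by parallel lines (generic leaf $\CC$, one leaf $\CC^*$), $r(U)$ is a proper subset of $B$, $U$ is Zariski open and isomorphic to $\CC^2\setminus\{0\}$, and one is in case 3 --- and this happens for \emph{both} $\gamma=0$ and $\gamma\neq 0$.

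Concretely, two of your assignments fail. In the $\gamma=0$ branch you list the axis foliations $\{z=\mathrm{const}\}$, $\{w=\mathrm{const}\}$ but then send the whole branch to Example \ref{hopfruledex}; those foliations have generic leaf $\CC$, so no fibration-preserving geometric conjugation can take them to Example \ref{hopfruledex}, whose open sets meet every fiber in $\CC^*$. They give case 3 with $\gamma=0$, a possibility your proof omits entirely. In the $\gamma\neq 0$ branch you have swapped cases 2 and 3: for the horizontal foliation $\{w=\mathrm{const}\}$ the induced action on the leaf space is the multiplication $w\mapsto\beta w$, whereas $g_{n,b,c}$ of Example \ref{hopfparabolicex} acts on the base by the translation $x\mapsto x+c$, and that example's foliation is explicitly the one by lines through the origin (all leaves $\CC^*$). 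So the horizontal subcase is case 3 (the normal form on $\PP^1\times\PP^1$ preserving $(x,y)\mapsto y$), while it is the Poincar\'e--Dulac subcase $\{z=cw^m\}$, on whose leaf space $H$ acts by the translation $c\mapsto c+\gamma/\alpha$, that yields Example \ref{hopfparabolicex}. A smaller point: replacing $\Gamma$ by a finite index subgroup does not change $U$, so you cannot reduce to $U$ simply connected; the paper only assumes $U$ is a finite quotient of $\CC^2\setminus\{0\}$, which is why the quotients $W_n$ with $n\geq 2$ occur in the examples.
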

\begin{proof}
Since the fundamental group of a Hopf surface is virtually cyclic, the open set $U$ is a finite quotient of $\CC^2\backslash\{0\}$. There always exists an immersion $\CC\rightarrow U$ which is transverse to the foliation and intersects each leaf at most once. Consequently $r(U)$ is $\PP^1$ or $\CC$.  

Assume that $r(U)=\PP^1$. Then $U$ is a $\CC^*$-bundle over $r(U)$. By Corollary \ref{regularonacylinder} we have $\Gamma\subset \Aut(Y)$. The complement of $U$ in $Y$ is $\Gamma$-invariant, so it is necessarily the union of two sections of $r$. Hence we are in Example \ref{hopfruledex} or \ref{hopfparabolicex}. 

Assume that $r(U)=\CC$. Then the intersection of $U$ with a fiber of $r$ is $\CC$ except for one which is $\CC^*$. By Corollary \ref{regularonacylinder} $\Gamma$ acts by automorphism on $\CC\times \PP^1$. The complement of $U$ in $\CC\times \PP^1$ is $\Gamma$-invariant, so it is necessarily the union of a point with an algebraic curve isomorphic to $\CC$. Consequently $U=\CC^2\backslash\{0\}\subset \CC^2\subset \CC\times \PP^1$ and $r$ is the projection onto the first coordinate. As $\Gamma$ preserves $U$, a generator of $\Gamma$ has the form $(x,y)\mapsto (ax,B(x)y+R(x))$ where $a\in \CC^*$ and $B,R\in \CC(x)$. As $\Gamma$ acts regularly on $U$, the rational function $B$ has no poles nor zeros while $R$ has no poles. This means that $B$ is a non zero constant and $R$ is a polynomial. Using conjugations of the form $(x,y)\mapsto (x,y+x^n),n\in \NN$, we can reduce the situation to the normal form of a Hopf surface as in Section \ref{Hopfnormalform}.
\end{proof}

\begin{proposition}\label{suspensionrationalcase}
Suppose that $\Fol$ is a suspension of $\PP^1$. Up to a geometric conjugation preserving the rational fibration and up to replacing $\Gamma$ with a finite index subgroup, we have $Y=\PP^1\times \PP^1$ and $\Gamma\subset \PGL(2,\CC)\times \PGL(2,\CC)$. The projection of $\Gamma$ onto the second factor is a classical Kleinian group; we have $U=\PP^1\times D$ where $D\subset \PP^1$ is an invariant component of that classical Kleinian group.
\end{proposition}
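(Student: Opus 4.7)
The plan is to first rule out that $\Fol$ is a suspension of an elliptic curve, then use the transverse fibration from the suspension to produce a second rational fibration on $Y$, and finally geometrically conjugate to $\PP^1\times\PP^1$. Writing the suspension as $X=(\hat{M}\times N)/G$ and the intermediate cover as $U=(\hat{M}\times N)/K$ for some $K\subset G$, the free action of $K$ on $\hat{M}$ (inherited from $G$) implies that the projection $U\to\hat{M}/K$ has fibers biholomorphic to $N$. If $N$ were elliptic, each such fiber $E$ would be a compact elliptic curve in $U$ appearing as a leaf of the second foliation on $U$, hence transverse to $r$; then $r|_E: E\to\PP^1$ would be a non-constant \'etale holomorphic map from an elliptic curve to $\PP^1$, contradicting Riemann--Hurwitz. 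Hence $N=\PP^1$.

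With $N=\PP^1$, each leaf of the second foliation on $U$ is a compact rational curve, being a connected component of the preimage in $U$ of a simply connected $\PP^1$-fiber of $X\to M$. The resulting one-parameter family of smooth rational curves on $Y$ extends to a pencil, and after possibly blowing up its base points lying in $Y\setminus U$ (which by Lemma~\ref{birconjtoaut} does not alter the birational Kleinian group data) we obtain a second rational fibration $g: Y\to\PP^1$ with $r\cdot g=1$. The birational map $(r,g): Y\dashrightarrow\PP^1\times\PP^1$ restricts to a biholomorphism on $U$, so by Lemma~\ref{birconjtoaut} we may assume $Y=\PP^1\times\PP^1$ with $r,g$ the two coordinate projections. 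Since $\Gamma$ preserves $r$ by hypothesis and also preserves $g$ because the subjacent fibration of the suspension is intrinsic to $\Fol$ (its fibers being the compact smooth $\PP^1$'s transverse to $\Fol$), the group $\Gamma$ preserves both projections; as the subgroup of $\Bir(\PP^1\times\PP^1)$ preserving both projections is $\PGL(2,\CC)\times\PGL(2,\CC)$, we conclude $\Gamma\subset\PGL(2,\CC)\times\PGL(2,\CC)$.

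Finally I would identify $U$ and describe $\Gamma$. Since the leaves of the second foliation in $U$ are the full $\PP^1$-fibers of the second projection, the set $D=\{y\in\PP^1: \PP^1\times\{y\}\subset U\}$ is open, and any $(x,y)\in U$ must have its entire second-projection fiber in $U$, forcing $y\in D$; hence $U=\PP^1\times D$. An element $(\gamma_1,e)\in\Gamma$ with $\gamma_1\neq e$ would have fixed points $(x_0,y)\in U$ at any fixed point $x_0$ of $\gamma_1$ in $\PP^1$, contradicting the freeness of the action, so the projection of $\Gamma$ onto the second factor is injective; proper discontinuity and cocompactness of the $\Gamma$-action on $\PP^1\times D$ then transfer to the action of this projection on $D$, making it a classical Kleinian group with $D$ an invariant component of its domain of discontinuity. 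The hardest step is arranging that $r\cdot g=1$, which requires analyzing the numerical class of the second pencil and performing suitable blowups in $Y\setminus U$ to clear its base points.
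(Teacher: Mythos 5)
Your overall strategy is the same as the paper's: lift the fibration transverse to the suspension to $U$, observe that its leaves are compact rational curves meeting each fiber of $r$ exactly once, extract from them a second rational fibration and a geometric conjugation to $\PP^1\times\PP^1$, and then read off $U=\PP^1\times D$ and the structure of $\Gamma$. Your second and third paragraphs in fact supply details the paper leaves implicit (base-point freeness of the second pencil, injectivity of the projection of $\Gamma$ to the second factor via fixed points of the first coordinate, transfer of proper discontinuity and cocompactness to $D$), and these parts are correct; note that once $N=\PP^1$ is known, the compactness of the leaves of the lifted transverse fibration is automatic, since each is a connected component of the preimage under the covering $U\to X$ of a simply connected compact fiber.

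The gap is in your exclusion of the elliptic case. You assert that $U=(\hat{M}\times N)/K$ for some $K\subset G$, so that the lifted transverse fibration on $U$ has compact fibers biholomorphic to $N$. This is unjustified and false in general: $U$ is the cover of $X$ determined by $\ker(\pi_1(X)\to\Gamma)$, and there is no reason for this kernel to contain the images of $\pi_1(\hat{M})$ and $\pi_1(N)$. Indeed, in every elliptic suspension that actually occurs in this setting (Proposition \ref{suspensionprop}), the image of $\pi_1(N)$ in $\Gamma$ is an \emph{infinite} group $\Gamma_1$, so a connected component $L$ of the preimage in $U$ of an $N$-fiber is an infinite cover of the elliptic curve $N$, i.e.\ $\CC$ or $\CC^*$, not a compact elliptic curve. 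For such $L$ your Riemann--Hurwitz contradiction evaporates: an \'etale map from $\CC$ or $\CC^*$ to $\PP^1$ is not absurd (consider the inclusion), so transversality to $r$ alone does not conflict with $r(U)=\PP^1$. What closes the case is the stronger statement, invoked by the paper at the start of its treatment of suspensions, that $r$ restricted to $L$ is an unramified \emph{covering} onto $r(U)$; combined with $r(U)=B=\PP^1$ simply connected this forces $L\cong\PP^1$, and since $L$ also covers $N$ this forces $N=\PP^1$. Your proof needs this covering property (or a substitute for it) and neither states nor establishes it.
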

\begin{proof}
As $\Fol$ is suspension of $\PP^1$, any covering space of $X$ has a $\PP^1$-bundle structure. In particular $U$ is covered by rational curves with auto-intersection zero which are transverse to $r$. This implies that up to geometric conjugation $Y= \PP^1\times \PP^1$, that the first projection is $r$ and the rational curves in $U$ are fibers of the second projection. The conclusion follows.
\end{proof}

\subsection{Infinite suspensions of elliptic curves}

We denote the covering map $U\rightarrow X$ by $\pi$. We now consider the case where there exists a compact Riemann surface $M$ and an elliptic curve $N$ such that the foliation $\Fol$ on $X$ is an infinite suspension of $N$ over $M$. Recall that $X$ is \Kah{} when $\Fol$ is a suspension. If $M$ is an elliptic curve then by Enriques-Kodaira's classification the \Kah{} surface $X$ is a finite quotient of a torus and we can, by taking a finite index subgroup of $\Gamma$, reduce to the case of linear foliations on tori which will be studied later. We assume in this section that the Riemann surface $M$ is hyperbolic.   

Recall the construction of a suspension. There exists an infinite Galois covering $\bar{M}$ of $M$ with deck transformation group $G$ and a representation $\alpha:G\rightarrow \Aut(N)$ with infinite image such that $X$ is the quotient of $\bar{M}\times N$ by the action of $G$ defined by $g\cdot (m,n)=(g\cdot m, \alpha(g)\cdot n)$. 
We have a fibration $f:X\rightarrow C$ whose fibers are all isomorphic to $N$. The fibration $f$ is everywhere transverse to $\Fol$. Up to replacing $\bar{M}$ with a quotient, we can assume that $\alpha$ is injective. Denote by $\tilde{M}$ the universal cover of $M$. The universal cover of $X$ is $\tilde{M}\times \CC$. The fundamental group of $\bar{M}\times N$ is $\pi_1(\bar{M})\times \pi_1(N)$ and the fundamental group of $X$ fits in the exact sequence
\begin{equation} \label{exactsequencesuspension}
1\rightarrow \pi_1(\bar{M})\times \pi_1(N)\rightarrow \pi_1(X)\rightarrow G  \rightarrow 1.
\end{equation}

We will think of $\pi_1(\bar{M}),\pi_1(N)$ as subgroups of $\pi_1(X)$. Note that the birational Kleinian group $\Gamma$ can be identified with a quotient of $\pi_1(X)$. Thus up to replacing $\Gamma, \pi_1(X)$ with subgroups of finite index, we can assume that $G$ is an abelian group, i.e.\ $\alpha$ sends $G$ injectively to a group of translations on $N$.
 
\begin{lemma}\label{centerGammaone}
The subgroup $\pi_1(N)$ is contained in the center of $\pi_1(X)$.
\end{lemma}
\begin{proof}
The action of an element of $\pi_1(X)$ on the universal cover $\tilde{M}\times \CC$ has the form $(x,y)\mapsto (\beta(x),y+b)$ with $\beta \in \pi_1(M)$ and $b\in \CC$ whereas an element of $\pi_1(N)$ has the form $(x,y)\mapsto (x,y+c), c\in \CC$. 
\end{proof}

Denote respectively by $\Gamma_1$ and $\Gamma_2$ the images of $\pi_1(N)$ and $\pi_1(\bar{M})$ in $\Gamma$. As images of normal subgroups under a surjective homomorphism, $\Gamma_1$ and $\Gamma_2$ are normal subgroups of $\Gamma$. Up to replacing $\Gamma$ with a subgroup of finite index, we can assume that $\Gamma_1$ is a free abelian group of rank one or two. The center of $\Gamma$ contains $\Gamma_1$ by Lemma \ref{centerGammaone}. 
%Note that we do not have necessarily $\Gamma_1\cap \Gamma_2=\{1\}$ because we are taking quotients. 
Denote by $\Gamma_3$ the quotient $\Gamma/(\Gamma_1\Gamma_2)$. Then \eqref{exactsequencesuspension} implies that $\Gamma_3$ is isomorphic to a quotient of $G$, thus is abelian. For the subgroups $\Gamma_i\subset \Gamma, i=1,2$ we denote by $\Gamma_{iB}$ the induced subgroup of $\PGL(2,\CC)$. By Lemma \ref{elliptictransverse}, $\Gamma_{1B}$ is infinite. By construction of the suspension, $\Gamma_2$ preserves each leaf of $\pi^*\Fol$ in $U$. In particular $\Gamma_2$ preserves each fiber of the rational fibration $r$. We summarize the above assertions as follows:
\begin{lemma}\label{suspensionGammai}
The center of $\Gamma$ contains $\Gamma_1$. The subgroup $\Gamma_2$ is normal and satisfies $\Gamma_{2B}=\{Id\}$ whereas $\Gamma_{1B}$ is infinite. The quotient $\Gamma_3=\Gamma/(\Gamma_1\Gamma_2)$ is an abelian group.
\end{lemma}

\begin{lemma}\label{suspensionGammaiinfinite}
At least one of $\Gamma_2,\Gamma_3$ is infinite. In other words $\Gamma_1$ is not a subgroup of finite index of $\Gamma$.
\end{lemma}
\begin{proof}
 Assume that $\Gamma_1$ has finite index in $\Gamma$. Then $U$ would be foliated by finite coverings of $M$. These coverings of $M$ induce a pencil of hyperbolic curves on $Y$ preserved by $\Gamma$. This is not possible for an infinite subgroup of $\Bir(Y)$ (see Theorem \ref{strongTitsalternative}). 
\end{proof}

\begin{corollary}\label{existenceofZ2suspension}
There is a subgroup of $\Gamma$ containing $\Gamma_1$ and isomorphic to $\ZZ^n$ for some $n\geq 2$.
\end{corollary}
\begin{proof}
Let $\delta\in \Gamma$ be element of infinite order of $\Gamma_2$ or the preimage of an element of infinite order of $\Gamma_3$. Then $\delta$ and $\Gamma_1$ generate an infinite abelian subgroup of rank at least two.  
\end{proof}

By Lemma \ref{elliptictransverse}, the projection $r(U)$ is $\CC,\CC^*$ or $\PP^1$. We can actually rule out $\PP^1$:
\begin{lemma}\label{r(B)ofellipticsuspension}
We have $r(U)\neq \PP^1$.
\end{lemma}
\begin{proof}
Assume by way of contradiction that $r(U)=\PP^1$. By Corollary \ref{regularonacylinder} we have $\Gamma\subset \Aut(Y)$. Therefore all elements of $\Gamma$ are elliptic birational transformations. Let $\Gamma_4$ be a subgroup isomorphic to $\ZZ^n,n\geq 2$ as in Corollary \ref{existenceofZ2suspension}. Note that $\Gamma_{4B}$ is infinite because it contains $\Gamma_{1B}$. We can conjugate $\Gamma_{4B}$ in $\Jonq$ to a group of one of the first four forms in Theorem \ref{zhaocentralizer}. Then we apply Theorem \ref{bdcentralizer} to $\Gamma$. Note that when we use the descriptions of centralizers in Theorem \ref{bdcentralizer}, we take only those elements preserving the rational fibration $r$. We then deduce that the whole group $\Gamma$ is conjugate in $\Jonq$ to a subgroup of $\PGL(2,\CC)\times \PGL(2,\CC)=\Aut^0(\PP^1\times \PP^1)$. A priori the conjugation may not be geometric. 
%Then by Corollary \ref{regularonacylinder} and Lemma \ref{birconjtoaut}, we can assume that $\Gamma\subset \PGL(2,\CC)\times \PGL(2,\CC)$ and that $Y$ is (a blow up at some $\Gamma$-fixed points of) $\PP^1\times \PP^1$. 

Assume by way of contradiction that all elements of $\Gamma_1$, after the above conjugation, have the form $(x,y)\mapsto (A(x),y)$ where $A(x)=x+1$ or $A(x)=ax, a\in \CC^*$. Then the fibers of the genus one fibration $f:X\rightarrow C$ come from the horizontal foliation on $U$ induced by a pencil of raitonal curves on $Y$ transverse to $r$. But then leaves of this horizontal foliation on $U$, which are either all biholomorphic to $\CC$ or all biholomorphic to $\CC^*$,  project via $r$ onto the same open subset $r(U)$. In particular $r(U)\neq \PP^1$.

Therefore there exists an element of $\Gamma_1$ which after conjugation has the form $(x,y)\mapsto (A(x),B(y))$ where both $A,B\in \PGL(2,\CC)$ have infinite order. Then Theorem \ref{bdcentralizer} implies that the whole group $\Gamma$ is conjugate to a group having one of the first four forms of Theorem \ref{zhaocentralizer}. In particular $\Gamma$ is abelian.

If the rank of $\Gamma$ was $\geq 3$, then the action of $\Gamma$ would not be discrete on the invariant fiber $r^{-1}(\infty)$ (or $r^{-1}(0)$). Therefore $\Gamma$ has rank two and its action on an invariant fiber can be identified with a lattice in $\CC$. This means that 1) $\Gamma$ is generated by two elements $g_i:(A_i(x),y+b_i),i=1,2$ where $A_i \in \PGL(2,\CC)$ and $\langle b_1,b_2\rangle$ is a lattice in $\CC$; 2) $U=\PP^1\times \CC\subset Y=\PP^1\times \PP^1$. We are thus in the situation of Example \ref{ruledsuspensionex} or \ref{ruledsuspensionextwo} and $X$ is a ruled surface. Contradiction to the assumption that $M$ is hyperbolic.
\end{proof}

\begin{lemma}\label{suspensionnotvirtuallycyclic}
The group $\Gamma_{B}$ is not virtually cyclic.
\end{lemma}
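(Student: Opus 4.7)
\emph{Proof plan.} Suppose for contradiction that $\Gamma_B$ is virtually cyclic; after passing to a finite index subgroup I assume $\Gamma_B \cong \ZZ$. By Lemma \ref{lemmainsuspensionsection}(1), the image $\Gamma_{1B}$ of the central subgroup $\Gamma_1$ in $\Gamma_B$ is infinite, so one further finite index reduction yields $\Gamma_B = \Gamma_{1B}$; choose $\gamma_0 \in \Gamma_1$ lifting a generator of $\Gamma_B$. The $\Gamma$-equivariance of $r : U \to r(U)$ produces a descended holomorphic map $\tilde r : X \to E := r(U)/\Gamma_B$, which is non-constant because $r(U)$ is an open subset of $B$ and so not a single $\Gamma_B$-orbit.

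The heart of the argument is a dimension count for closures of leaves of $\Fol$. Since the leaves of $\Fol$ are images under $\pi$ of connected components of fibers of $r|_U$, the fibers of $\tilde r$ are $\Fol$-saturated and contain the closure in $X$ of any leaf they meet. In the suspension model $X = (\bar M \times N)/G$, a typical leaf has the form $L = \pi_X(\bar M \times \{n_0\})$ for some $n_0 \in N$; because $G$ acts diagonally as $(m,n) \mapsto (g\cdot m,\, \alpha(g)\cdot n)$ and preserves $\bar M$, the preimage of $L$ in $\bar M \times N$ equals $\bar M \times (n_0 + \alpha(G))$, so
\[
\overline{L} \;=\; \pi_X\bigl(\bar M \times (n_0 + \overline{\alpha(G)})\bigr),
\]
of real dimension $2 + \dim_\RR \overline{\alpha(G)}$. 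Since $\alpha(G)$ is an infinite subgroup of the compact real $2$-torus $N$, its closure has positive real dimension, so $\overline{L}$ has real dimension at least $3$. Provided $E$ is a compact Riemann surface, the fibers of $\tilde r$ are compact complex $1$-dimensional (real dimension $2$), which cannot contain $\overline{L}$ --- the desired contradiction.

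It remains to check that $E$ is compact, i.e., that the generator of $\Gamma_B$ acts on $r(U)$ with compact quotient. Analyzing $\gamma_0 \in \Gamma_1$ on the universal cover $\tilde X = \tilde M \times \CC$, where $\Gamma_1$ acts by translations of the $\CC$-factor by lattice vectors, one reads off that $\gamma_{0B}$ must be a multiplication $z \mapsto \alpha z$ on $r(U) = \CC^*$, so $E = \CC^*/\langle\alpha\rangle$. If $|\alpha| \neq 1$ then $E$ is a compact elliptic curve and the above argument applies. If $|\alpha| = 1$ then $\log|r|$ is a $\Gamma$-invariant pluriharmonic function on $U$; it descends to a plurisubharmonic function on the compact \Kah{} surface $X$ (\Kah{} because $X$ carries an infinite suspension foliation), so by the maximum principle it is constant, forcing $|r|$ to be constant on $U$ and contradicting $r(U) = \CC^*$. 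Either way we obtain a contradiction, so $\Gamma_B$ cannot be virtually cyclic. The main delicate point is the identification of leaf closures in the suspension model, which hinges on the diagonal form of the $G$-action.
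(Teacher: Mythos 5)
Your argument is sound and is, at bottom, the paper's own proof: both hinge on descending $r$ to a map from $X$ to $r(U)/\Gamma_B$ and contradicting the infinite-suspension structure of $\Fol$. The paper asserts (via Lemma \ref{lemmainsuspensionsection}(1)) that the $\Gamma_B$-action on $r(U)$ is discrete, so that the quotient is an orbifold Riemann surface and the leaves of $\Fol$, being fibers of $X\to r(U)/\Gamma_B$, would be compact; your dimension count on leaf closures $\overline{L}=\pi_X\bigl(\bar M\times\overline{(n_0+\alpha(G))}\bigr)$ is a slightly more careful way to land the same contradiction (fibers are a priori only unions of leaves, so comparing the $\geq 3$-dimensional closure with a $1$-dimensional analytic fiber is the cleaner statement), and your separate treatment of $|\alpha|=1$ via the invariant psh function $\log|r|$ patches the discreteness point the paper passes over quickly. (That case is in fact vacuous: a leaf $\Omega$ of $\mathcal{G}$ covers both $r(U)$ and the elliptic fiber $N$, and $\Gamma_1$ acts cocompactly on $\Omega$, which forces a cyclic $\Gamma_{1B}$ to be generated by a multiplication of modulus $\neq 1$; but handling it does no harm.)

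The one step you should not merely ``read off'' is the claim that $r(U)=\CC^*$ and that $\gamma_{0B}$ is a multiplication. Lemma \ref{lemmainsuspensionsection}(1) allows a priori $r(U)=\CC$ with $\Gamma_{1B}$ an infinite group of translations, and nothing in the universal-cover picture $\tilde M\times\CC$ immediately rules this out when $\Gamma_B$ is cyclic. To exclude it: if $r(U)=\CC$, then a leaf $\Omega$ of $\mathcal{G}$, being a covering of the simply connected $r(U)$, maps biholomorphically onto $\CC$ under $r$; since $\Gamma_1$ acts freely, properly discontinuously and cocompactly on $\Omega$ with quotient the elliptic curve $N$, it acts as a rank-two lattice of translations, and the $r$-equivariant bijection $\Omega\to r(U)$ identifies this action with that of $\Gamma_{1B}$, so $\Gamma_{1B}\cong\ZZ^2$, contradicting virtual cyclicity. (Alternatively: if $\Gamma_B$ is cyclic generated by a translation, then $E=r(U)/\Gamma_B\cong\CC^*$ is non-compact and the non-constant holomorphic map $\tilde r\colon X\to\CC^*$ from the compact $X$ already violates the maximum principle, in the same spirit as your $|\alpha|=1$ argument.) With that supplied, the proof is complete.
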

\begin{proof}
Suppose by way of contradiction that $\Gamma_B$ is virtually cyclic. We know by Lemma \ref{suspensionGammai} that $\Gamma_{1B}$ is infinite. Thus $\Gamma_{1B}$ is a subgroup of finite index of $\Gamma_B$. Up to taking a subgroup of finite index we can assume that $\Gamma_B=\Gamma_{1B}$. By Lemma \ref{elliptictransverse} and Lemma \ref{r(B)ofellipticsuspension} $\Gamma_{1B}$ is a lattice in $\CC^*=r(U)$. Therefore $r(U)/\Gamma_B$ is an elliptic curve and $r$ induces a surjective holomorphic map from $X$ to $r(U)/\Gamma_B$ whose fibers are (unions of) leaves of $\Fol$. This contradicts that $\Fol$ is an infinite suspension.
\end{proof}

Lemma \ref{suspensionnotvirtuallycyclic} allows us to apply Theorem \ref{zhaocentralizer}:
\begin{lemma}\label{suspensioncentralizerlemma}
Up to geometric conjugation realized by elementary transformations, we have that $Y=\PP^1\times \PP^1$ (where $r$ is the projection onto the first factor) and that $\Gamma_1$ is a subgroup of one of the following four groups: $\{(x,y)\mapsto (ax,by)\vert a,b\in \CC^*\}, \{(x,y)\mapsto (ax,y+b)\vert a\in\CC^*,b\in\CC\}, \{(x,y)\mapsto (x+a,by)\vert a\in\CC,b\in\CC^*\}$ or $\{(x,y)\mapsto (x+a,y+b)\vert a,b\in\CC\}$. In the first two cases $r(U)=\CC^*$; in the last two cases $r(U)=\CC$.
\end{lemma}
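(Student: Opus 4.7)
The plan is first to reduce to the model $Y=\PP^1\times\PP^1$ with $r$ being the first projection, then to apply the classification of abelian subgroups of $\Jonq$ (Theorem \ref{zhaocentralizer}) to the central subgroup $\Gamma_1$, and finally to rule out the two cases in which a generator is a \Jonqui twist by combining the centrality of $\Gamma_1$ in $\Gamma$ with Lemma \ref{suspensionnotvirtuallycyclic}.

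For the model reduction I would invoke Lemma \ref{lemmainsuspensionsection}: the fibration $r$ has no singular fiber over $r(U)$ and is trivial there, so $B\setminus r(U)$ consists of at most two points. Contracting the $(-1)$-curves contained in the fibers over these finitely many points (none of which meets $U$) brings $Y$ to a Hirzebruch surface, and further elementary transformations at points of the same exceptional fibers reduce the Hirzebruch invariant to zero. All of these birational maps preserve $r$ and are regular on $U$, so they realize a geometric conjugation after which $Y=\PP^1\times\PP^1$, $r$ is the first projection, and $\Gamma_1\subset\Jonq$ is a free abelian group of rank at most two whose image $\Gamma_{1B}$ in $\Aut(\PP^1)$ is infinite.

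Applying Theorem \ref{zhaocentralizer} to $\Gamma_1$, after passing to a finite index subgroup and a \Jonqui conjugation, its generators take one of six standard forms. In Forms 5 and 6 the distinguished generator $\gamma_1$ is a \Jonqui twist while all the remaining generators act trivially on the base; the infiniteness of $\Gamma_{1B}$ then forces $\gamma_{1B}$ itself to be of infinite order. Theorem \ref{zhaocentralizer1} yields that $\Cent_B(\gamma_1)$ is virtually cyclic, and since $\Gamma_1$ lies in the center of $\Gamma$ one has $\Gamma\subset\Cent(\gamma_1)$, hence $\Gamma_B\subset\Cent_B(\gamma_1)$ is virtually cyclic, contradicting Lemma \ref{suspensionnotvirtuallycyclic}. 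The four remaining Forms 1--4 coincide with the four subgroups listed in the statement, with multiplicative base action (Forms 1 and 3) corresponding to $r(U)=\CC^*$ and translational base action (Forms 2 and 4) corresponding to $r(U)=\CC$, in accordance with Lemma \ref{lemmainsuspensionsection}(1).

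The main technical subtlety is to ensure that all the conjugations used (both in the Hirzebruch reduction and in the \Jonqui conjugation produced by Theorem \ref{zhaocentralizer}) can be chosen to be genuinely geometric, i.e.\ regular on $U$. This rests on the observation, already exploited in the model reduction, that the nontrivial birational geometry of the ruled surface $Y$ is concentrated over the at-most-two points of $B\setminus r(U)$, which lies entirely in the complement of $U$, so the required modifications of the fibration can be performed away from $U$.
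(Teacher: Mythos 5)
Your proposal is correct and follows essentially the same route as the paper: centrality of $\Gamma_1$ together with Theorem \ref{zhaocentralizer1} and Lemma \ref{suspensionnotvirtuallycyclic} rules out \Jonqui{} twists in $\Gamma_1$, Theorem \ref{zhaocentralizer} supplies the four normal forms, and the conjugating elementary transformations are made geometric by performing them over the (at most two) points of $B\setminus r(U)$, which lie outside $U$. The only difference is cosmetic — you reduce to $\PP^1\times\PP^1$ before invoking Theorem \ref{zhaocentralizer} and eliminate its twist cases afterwards, whereas the paper excludes the twists first; your ordering even makes slightly more explicit why the relevant twist has infinite-order base action, as required by Theorem \ref{zhaocentralizer1}.
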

\begin{proof}
Recall that $\Gamma_1$ is contained in the center of $\Gamma$. The image in $\Aut(B)$ of the centralizer of a \Jonqui twist is virtually cyclic by Theorem \ref{zhaocentralizer1}. Thus by Lemma \ref{suspensionnotvirtuallycyclic} we deduce that $\Gamma_1$ contains no \Jonqui twists and is an elliptic free abelian group. Then there exists a birational map $\phi:Y\dashrightarrow \PP^1\times \PP^1$, composition of elementary transformations, such that $\phi\Gamma_1\phi^{-1}$ has one of the first four forms in Theorem \ref{zhaocentralizer}. It remains to show that the conjugation $\phi$ can be chosen to be a geometric conjugation. The reason is as follows. Up to a coordinate change on $\PP^1=B$ we observe that the elementary transformations in $\phi$ can be made over any point in $B$. As $r(U)$ is a proper subset of $B$ by Lemma \ref{r(B)ofellipticsuspension}, we can do elementary transformations on a fiber over a point outside $r(U)$. Such an elementary transformation has no effects on $U$ or on the dynamics of $\Gamma$ on $U$. In other words elementary transformations outside $U$ realize a geometric conjugation of $\Gamma$. 
\end{proof}

Now we study Cases (a), (b), (c) of Lemma \ref{elliptictransverse} one by one. 
\begin{lemma}\label{casecsuspension}
The group $\Gamma_{1B}$ is not a group of multiplications of rank two.
\end{lemma}
\begin{proof}
Assume by way of contradiction that $\Gamma_{1B}$ is a group of multiplications of rank two. We have $r(U)=\CC^*$ by Lemma \ref{elliptictransverse}. By Lemma \ref{suspensioncentralizerlemma} $\Gamma_{1}$ is a group generated by two elements $g_i:(x,y)\mapsto (a_ix,A_i(y)),i=1,2$ where $a_i\in \CC^*$ and $A_i\in \PGL(2,\CC)$. At least one of the $A_i$ has infinite order because otherwise the action of $\Gamma_{1}$ would be nowhere discrete. Then by Theorem \ref{bdcentralizer} we deduce that $\Gamma$ is a subgroup of $\{(x,y)\mapsto (ax,y+b), a\in \CC^*, b\in \CC\}$ or $\{(x,y)\mapsto (ax,by), a\in \CC^*, b\in \CC^*\}$. Observing that the action is not discrete in any neiborhood of the invariant fiber $r^{-1}(\infty)$ (or $r^{-1}(0)$), we obtain that $U$ is necessarily the Zariski open set $\CC^*\times \CC$ or $\CC^*\times \CC^*$. This implies that $X$ is a torus, contradiction to our hypothesis that $M$ is hyperbolic.
\end{proof}

\begin{lemma}\label{casebsuspension}
If $\Gamma_{1B}$ is a lattice in $\CC$, then $U=\CC\times \Omega$ where $\Omega$ is an invariant component of a classical Kleinian group and $\Gamma\subset\PGL(2,\CC)\times \PGL(2,\CC)$. The genus one fibration on $X$ is induced by the second projection $\PP^1\times \PP^1\rightarrow \PP^1$.
\end{lemma}
\begin{proof}
Suppose that $\Gamma_{1B}$ is a lattice in $\CC$. We have $r(U)=\CC$ by Lemma \ref{elliptictransverse}. By Lemma \ref{suspensioncentralizerlemma} $\Gamma_{1}$ is a group generated by two elements $g_i:(x,y)\mapsto (a_i+x,A_i(y)),i=1,2$ where $a_i\in \CC^*$ and $A_i\in \PGL(2,\CC)$. 

Assume by way of contradiction that one of the $A_i$ has infinite order. Then by Theorem \ref{bdcentralizer} we deduce that $\Gamma$ is a subgroup of $\{(x,y)\mapsto (x+a,y+b), a\in \CC, b\in \CC\}$ or $\{(x,y)\mapsto (x+a,by), a\in \CC, b\in \CC^*\}$. Then similar arguments as in the proof of Lemma \ref{casecsuspension} rule this situation out.

Assume that $A_i=Id,i=1,2$. Then the the genus one fibration on $X$ is simply induced by the second projection $\PP^1\times \PP^1\rightarrow \PP^1$. This implies that $U$ is a product $\CC\times \Omega$ where $\Omega$ is an open subset of $\PP^1$. By Theorem \ref{bdcentralizer}, any element of $\Gamma$ has the form $(x,y)\mapsto (x+R(y),B(y)), R\in \CC(y), B\in \PGL(2,\CC)$. Since $\Gamma$ preserves the rational fibration $r:(x,y)\mapsto x$, the rational function $R$ in the above formula is always constant. In other words $\Gamma$ is a subgroup of $\PGL(2,\CC)\times \PGL(2,\CC)$. Then $\Gamma$ induces via the second projection a classical Kleinian group that preserves $\Omega$. 
 
\end{proof}

\begin{lemma}\label{caseasuspension}
If $\Gamma_{1B}$ is a lattice in $\CC^*$, then $U=\CC^*\times \Omega$ where $\Omega$ is an invariant component of a classical Kleinian group and $\Gamma\subset\PGL(2,\CC)\times \PGL(2,\CC)$. The genus one fibration on $X$ is induced by the second projection $\PP^1\times \PP^1\rightarrow \PP^1$.
\end{lemma}
\begin{proof}
Suppose that $\Gamma_{1B}$ is a lattice in $\CC^*$. We have $r(U)=\CC^*$ by Lemma \ref{elliptictransverse}. By Lemma \ref{suspensioncentralizerlemma} $\Gamma_{1}$ is a subgroup of $\{(x,y)\mapsto (ax,A(y))\vert a\in \CC^*, A\in\PGL(2,\CC)\}$. 

Let $g:(x,y)\mapsto (ax,A(y))$ be the preimage in $\Gamma_1$ of a generator of $\Gamma_{1B}$. Similar arguments as in the proof of Lemma \ref{casebsuspension} show that $A$ has finite order. 

From now on we assume that $A=Id$. We distinguish two cases: either $\Gamma_1$ is cyclic generated by $g$ or $\Gamma_1$ has rank two.

If $\Gamma_1$ is cyclic, then the the genus one fibration on $X$ is simply induced by the second projection $\PP^1\times \PP^1\rightarrow \PP^1$ and we conclude as in the proof of Lemma \ref{casebsuspension}. 

Assume now that $\Gamma_1$ has rank two. Then it is generated by $g$ and another element $h$ of the form $g:(x,y)\mapsto (x,B(y))$ where $B(y)=y+1$ or $B(y)=by,b\in \CC^*$. Applying Theorem \ref{bdcentralizer} to both $g$ and $h$, we see that $\Gamma$ is a subgroup of $\{(x,y)\mapsto (cx,y+d), c\in \CC^*, d\in \CC\}$ or $\{(x,y)\mapsto (cx,dy), a\in \CC^*, d\in \CC^*\}$. Again this is a torus situation.
\end{proof}

Finally the results in this section can be summarized as follows:

\begin{proposition}\label{suspensionprop}
Suppose that $\Fol$ is an infinite suspension of an elliptic curve over a hyperbolic curve. Then up to geometric conjugation and up to taking a subgroup of finite index, $\Gamma$ is a subgroup of $\PGL(2,\CC)\times\PGL(2,\CC)$ acting on $Y=\PP^1\times \PP^1$. The open subset $U\subset Y$ is $\Omega_1\times \Omega_2$ where $\Omega_1$ is $\CC$ or $\CC^*$ and $\Omega_2$ is an invariant component of a classical Kleinian group. The genus one fibration on $X$ is induced by the second projection $\PP^1\times \PP^1\rightarrow \PP^1$.
\end{proposition}

\subsection{Fibrations}\label{jonquieresfibration}
In this subsection we consider the case where the induced foliation $\Fol$ on $X$ is a fibration $f:X\rightarrow C$. We still denote the covering map $U\rightarrow X$ by $\pi$, and by $\Gamma_r$ the kernel of the homomorphism $\Gamma\rightarrow \Gamma_B$. By Corollary \ref{regularonacylinder} $\Gamma$ acts by biholomorphisms on $r^{-1}(r(U))$. Up to replacing $\Gamma$ with a finite index subgroup, we can assume that $\Gamma$ fixes any singular point of any fiber in $r^{-1}(r(U))$. For a point $x\in r(U)$ we denote by $\Gamma_x$ the subgroup of $\Gamma$ that preserves the fiber $r^{-1}(x)$. The group $\Gamma_x$ acts by biholomorphisms on $r^{-1}(x)$. The group $\Gamma_r$ is a normal subgroup of $\Gamma_x$ for all $x$. Recall that we assume that $\Gamma_B$ is infinite throughout Section 9.

\begin{lemma}\label{fibrationcaselemma}
\begin{enumerate}
	\item For any $x\in r(U)$, the intersection of $U$ with $r^{-1}(x)$ has finitely many connected components.
	\item If $\Omega$ is a connected component of $U\cap r^{-1}(x)$, then the subgroup $\Gamma_{\Omega}$ of $\Gamma$ preserving $\Omega$ satisfies 
\[
\Gamma_\Omega\subset \Gamma_r\subset \Gamma_x
\] 
and is a finite index subgroup of $\Gamma_x$.
	\item For any $x\in r(U)$, the fiber $r^{-1}(x)$ is non-singular, i.e.\ $r^{-1}(x)=\PP^1$.
	\item $r(U)$ is a proper subset of $B=\PP^1$.
	
\end{enumerate}
\end{lemma}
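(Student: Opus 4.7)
My plan is to prove the four assertions in the order (2), (1), (3), (4), with the unifying observation that every connected component $\Omega$ of $r^{-1}(x)\cap U$ is a leaf of the pulled-back foliation $\mathcal{G}$ on $U$ and $\pi|_\Omega : \Omega \to X_c$ is a covering map onto a fiber of $f:X\to C$ with deck group $\Gamma_\Omega$. Since $\Gamma$ acts transitively on the connected components of $\pi^{-1}(X_c)$ (as $X_c$ is connected), two components of $r^{-1}(x)\cap U$ project to the same $X_c$ precisely when they lie in a common $\Gamma_x$-orbit.

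For part (2), because $X_c$ is compact, $\Gamma_\Omega$ acts cocompactly on $\Omega$ with a compact fundamental domain $K$. The translates $\gamma K$ for $\gamma$ running over coset representatives of $\Gamma_x/\Gamma_\Omega$ lie in pairwise disjoint components $\gamma\Omega$ of the compact curve $r^{-1}(x)$ and together constitute a fundamental domain for the $\Gamma_x$-action on $\Gamma_x\cdot\Omega$, whose quotient is the single compact surface $X_c$; hence finitely many translates suffice, giving $[\Gamma_x:\Gamma_\Omega]<\infty$. Given (2), part (1) reduces to bounding the number of $\Gamma_x$-orbits of components. I will show that $\pi(r^{-1}(x)\cap U)$ is closed (hence compact) in $X$ by lifting any convergent sequence $\pi(u_n)\to y$ back to $U$ via appropriate $\gamma_n\in\Gamma$ and invoking properness of the $\Gamma$-action to force the $\gamma_n$ to be eventually constant and the limit to lie in $r^{-1}(x)\cap U$. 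The decomposition of this compact image as a disjoint union of positive-dimensional fibers $\bigsqcup X_{c_i}$ then bounds the number of $c_i$.

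For part (3), suppose $r^{-1}(x_0)$ is singular with a node $p = C_1\cap C_2$; by Proposition \ref{withoutsingularpoints} $p\notin U$. Since $\Gamma_{x_0}$ fixes $p$ and acts freely on each $C_i\cap U$ as a non-trivial group of Möbius transformations fixing $p$, its orbits accumulate at $p$, so $p$ lies in the closure of $C_i\cap U$ in $C_i$ for both $i$. I choose $q_1\in C_1\cap U$ and $q_2\in C_2\cap U$ close to $p$, and in local coordinates $(u,v)$ near $p$ with $r = uv$, $C_1=\{u=0\}$, $C_2=\{v=0\}$, consider the holomorphic sections $s_1(c)=(c/v_1,v_1)$ and $s_2(c)=(u_2,c/u_2)$, which trace out the connected local hyperbola $\{uv=c\}$ for $c\neq 0$ and so lie on a common local leaf of $\mathcal{G}$. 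Then $f\circ\pi(s_1(c))=f\circ\pi(s_2(c))$ for small $c\neq 0$; letting $c\to 0$ gives $f(\pi(q_1))=f(\pi(q_2))$. Hence the components of $r^{-1}(x_0)\cap U$ containing $q_1$ and $q_2$ project to the same fiber $X_{c_\ast}$, and by the orbit observation some $\gamma\in\Gamma_{x_0}$ sends one to the other and hence $C_1$ to $C_2$, contradicting our running assumption that $\Gamma$ preserves each irreducible component of each fiber.

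For part (4), assume $r(U)=B=\PP^1$; then $\Gamma\subset\Aut(Y)$ by Corollary \ref{regularonacylinder}. Consider the holomorphic map $\Phi=(r,f\circ\pi):U\to\PP^1\times C$; by part (1) each fiber of $\Phi$ over a point of $\PP^1$ is finite, so $\Phi(U)$ has generic rank one and its Zariski closure $Z\subset\PP^1\times C$ is a one-dimensional algebraic subset with finitely many irreducible components. Each component either projects finitely to $C$ or is vertical of the form $\PP^1\times\{c_0\}$. But for generic $c\in C$, the set $T(c)$ of $x\in r(U)$ over which some component of $r^{-1}(x)\cap U$ projects to $X_c$ is a single $\Gamma_B$-orbit in $\PP^1$, which is infinite since $\Gamma_B\subset\PGL_2(\CC)$ is infinite with only finitely many finite orbits on $\PP^1$. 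So generic $c$ must be associated with a vertical component of $Z$, contradicting their finiteness. The main technical obstacles throughout will be justifying the closedness of $\pi(r^{-1}(x)\cap U)$ in $X$ and the analytic/algebraic character of $Z$ in part (4); both rely on careful use of the proper $\Gamma$-action on $U$ together with standard extension results for holomorphic maps into compact projective targets.
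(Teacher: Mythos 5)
Your overall architecture inverts the paper's logical order and, more importantly, replaces its one essential external input with arguments that do not hold up. The paper proves (1) first: for a smooth fiber, $\Gamma_\Omega$ is the image of $\pi_1(\text{fiber of }f)$ in $\Gamma$, hence a normal subgroup of $\Gamma$ that is independent of the leaf; therefore \emph{every} component of $U\cap r^{-1}(x)$ is invariant under (a finite-index subgroup of) the single finitely generated Kleinian group $\Gamma_\Omega$, and Ahlfors' finiteness theorem bounds the number of components. Assertion (2) then follows because $\Gamma_x$ permutes these finitely many components and the stabilizer of $\Omega$ in $\Gamma_x$ is exactly $\Gamma_\Omega$. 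Your proof of (2) is a non sequitur: compactness of $X_c=\Omega/\Gamma_\Omega$ gives a compact fundamental domain $K$ for $\Gamma_\Omega$ on $\Omega$, but then $K$ alone is already a fundamental domain for the $\Gamma_x$-action on $\Gamma_x\cdot\Omega$, and nothing bounds $[\Gamma_x:\Gamma_\Omega]$ --- infinitely many disjoint translates $\gamma\Omega$, each with compact quotient by $\Gamma_\Omega$, can coexist inside the compact curve $r^{-1}(x)$; ruling this out is precisely the content of Ahlfors' theorem, not a compactness exercise. Your proof of (1) then fails twice over: the points $u_n\in r^{-1}(x)\cap U$ need not stay in a compact subset of $U$ (they may escape to $\partial U$ inside the fiber), so properness of the $\Gamma$-action does not force the $\gamma_n$ to stabilize; and even granting that $\pi(r^{-1}(x)\cap U)=f^{-1}(S)$ is compact, that only forces $S$ to be compact, not finite.

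For (3) and (4) the paper deduces both almost immediately from (2): $[\Gamma_x:\Gamma_r]<\infty$ forces every $\Gamma_B$-orbit in $r(U)$ to be infinite, which is incompatible with the finitely many singular fibers, and with the fixed point in $\PP^1$ of an infinite-order element of $\Gamma_B$ lying in $r(U)$. Your argument for (3) is a genuinely different idea, but it has two gaps: it requires both branches $C_1,C_2$ at the node to meet $U$ (only the $(-1)$-components are guaranteed to), and it requires the local annulus $\{uv=c\}$ to be contained in $U$ so that $s_1(c)$ and $s_2(c)$ lie on the \emph{same} leaf; since that annulus approaches the node $p\notin U$ as $c\to 0$, this containment is not guaranteed and the identity $f\circ\pi(s_1(c))=f\circ\pi(s_2(c))$ is unjustified. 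Your argument for (4) rests on the Zariski closure $Z$ of $\Phi(U)$ being one-dimensional; but the image of a holomorphic map from an open set is typically Zariski-dense (the graph of a transcendental function already is), and nothing in your setup makes the locally defined function $h$ with $f\circ\pi=h\circ r$ algebraic. You flag this as a technical obstacle, but it is the whole difficulty, and it is circumvented entirely by the paper's two-line deduction of (4) from (2).
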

\begin{proof}
Let $\Omega$ be a connected component of $U\cap r^{-1}(x)$ for some point $x\in r(U)$. It is a leaf of the foliation $\pi^*\Fol$ on $U$, i.e.\ $\pi(\Omega)$ is a fiber of $f:X\rightarrow C$. We assume that it is not a multiple fiber. The group $\Gamma_{\Omega}$ the subgroup of $\Gamma$ that preserves $\Omega$; it is a subgroup of $\Gamma_x$. The quotient of $\Omega$ by $\Gamma_{\Omega}$ is the fiber $\pi(\Omega)$. The group $\Gamma_{\Omega}$ is the image of the composition $\pi_1(\pi(\Omega))\rightarrow \pi_1(X)\rightarrow \Gamma$. 
%Up to replacing $X$ with a finite unramified cover, the fibration $f:X\rightarrow C$ induces an exact sequence \[\pi_1(\pi(\Omega))\rightarrow \pi_1(X)\rightarrow \pi_1(C)\rightarrow 1.\]
Therefore $\Gamma_{\Omega}$ is a normal subgroup of $\Gamma$ and it does not depend on the leaf $\Omega$ as long as $\pi(\Omega)$ is a regular fiber of $f:X\rightarrow C$. In particular $\Gamma_{\Omega}$ preserves fiberwise the rational fibration, i.e.\ $\Gamma_{\Omega}\subset \Gamma_r$. For a leaf $\Omega'$ such that $\pi(\Omega')$ is a multiple fiber, $\Gamma_{\Omega}$ is a subgroup of finite index of $\Gamma_{\Omega'}$.  

Let $x\in r(U)$. Suppose that $r^{-1}(x)$ is a singular fiber of the rational fibration. By our hypothesis at the beginning of the section, $\Gamma_{\Omega}$ fixes all singular points of $r^{-1}(x)$. Thus it is solvable. Consequently $\Omega$ is biholomorphic to $\CC$ or $\CC^*$ and fibers of $f$ are elliptic curves. This implies that the intersection of $U$ with an irreducible component of $r^{-1}(x)$ is either empty or biholomorphic to $\CC$ or $\CC^*$. Thus the first assertion of our lemma is true if $r^{-1}(x)$ is a singular fiber. Let us assume now that $r^{-1}(x)$ is not singular, i.e.\ is isomorphic to $\PP^1$. Then we can think of $\Gamma_{\Omega}$ as a classical Kleinian group with an invariant component $\Omega$. The intersection of $U$ with $r^{-1}(x)$ is a union of connected components of $\Gamma_{\Omega}$. By the above discussion all of them are $\Gamma_{\Omega}$-invariant. Ahlfors's finiteness theorem (see \cite{Ahl64} , \cite{Sul85}) says that $(U\cap r^{-1}(x))/\Gamma_{\Omega}$ is a finite union of compact Riemann surfaces. This implies that $r^{-1}(x)\cap U$ has only finitely many connected components. 

In the first paragraph of the proof we infered $\Gamma_{\Omega}\subset \Gamma_r$; we also have an inclusion $\Gamma_r\subset \Gamma_x$. The above finiteness of connected components implies that $\Gamma_{\Omega}$ has finite index in $\Gamma_x$. From this and the infiniteness of $\Gamma_B$ we deduce that any point $x\in r(U)$ has an infinite orbit under $\Gamma_B$. As $\Gamma$ acts by biholomorphisms on $r^{-1}(r(U))$ by Corollary \ref{regularonacylinder}, we conclude that $r$ has no singular fibers over $r(U)$.

Any element of $\Gamma_B$ has a fixed point in $B=\PP^1$. If $r(U)=B$ then a fixed point $x$ of some element of infinite order of $\Gamma_B$ would give rise to a $\Gamma_x$ such that $\Gamma_r$ has infinite index in $\Gamma_x$, contradicting the second point of the lemma.
\end{proof}

Up to making elemantary transformations on a fiber over a point outside $r(U)$ we can and will assume that $Y=\PP^1\times \PP^1$ and that $r$ is the projection onto the first factor. As a finitely generated subgroup of $\PGL(2,\CC)$, $\Gamma_B$ has a torsion free subgroup of finite index by Selberg's lemma (see \cite{Sellem}). Up to replacing $\Gamma$ with the preimage of such a subgroup of $\Gamma_B$, we can and will assume that \emph{$\Gamma_B$ is torsion free}. Since $\Gamma_r$ has finite index in $\Gamma_x$ by Lemma \ref{fibrationcaselemma}, this is equivalent to say that \emph{for any $x\in B$, we have $\Gamma_x=\Gamma_r$}.

By Proposition \ref{hasafoliatedprojectivestructure}, $\Fol$ is equipped with a foliated $(\PGL(2,\CC),\PP^1)$-structure. Remark that regular fibers of $f$ are isomorphic to each other and have the same $(\PGL(2,\CC),\PP^1)$-structure. This is clear if the fibers are $\PP^1$. If a fiber of $f$ is an elliptic curve, then its $(\PGL(2,\CC),\PP^1)$-structure is induced by one of its infinite covers $\CC$ or $\CC^*$, depending on whether $\Gamma_r$ has rank one or two. If fibers of $f$ have genus $\geq 2$, then the claim follows from Proposition \ref{fibrationprojstructure}. 

We will think of $\Gamma_r$ as a family of classical Kleinian groups parametrized by $r(U)$. Denote still by $\Omega$ a connected component of the preimage of a regular fiber of $f$. The pair $(\Omega,\Gamma_{\Omega})$ gives the developing map and holonomy group of the $(\PGL(2,\CC),\PP^1)$-structure on the corresponding fiber of $f$. Therefore up to conjugation they do not depend on the fiber containing $\Omega$. More precisely there exists a holomorphic map $H:r(U)\rightarrow \PGL(2,\CC)$ such that $H\Gamma_{\Omega}H^{-1}$ is a constant family of classical Kleinian groups. In other words, denoting the field of meromorphic functions on $r(U)$ by $\CC\{x\}$, there are $H\in \PGL(2,\CC\{x\})$ and a subgroup $\Gamma_c\subset \PGL(2,\CC)\subset \PGL(2,\CC\{x\})$ such that $H^{-1}\Gamma_c H$ is $\Gamma_{\Omega}$, a subgroup of $\PGL(2,\CC(x))\subset \PGL(2,\CC\{x\})$. Therefore the conjugation $H$ is actually algebraic, i.e.\ it is an element of $\PGL(2,\CC(x))$. In other words $H$ realizes a geometric conjugation of our birational Kleinian group so that $\Gamma_{\Omega}$ becomes contained in the subgroup $\{\Id\}\times \PGL(2,\CC)$ of $\Aut(\PP^1\times \PP^1)$. Moreover we obtain that $U=r(U)\times \Omega$ by connectedness of $U$.  Thus $\Gamma_r=\Gamma_{\Omega}$. Consequently $f:X\rightarrow C$ is a locally trivial fibration without multiple fibers and $r(U)/\Gamma_B=C$. In summary we have:
\begin{proposition}\label{fibrationproposition}
Up to geometric conjugation and up to taking finite index subgroup, $Y=\PP^1\times \PP^1$, $U=r(U)\times \Omega$ and $\Gamma_r=\Gamma_{\Omega}$ is a subgroup of $\{\Id\}\times \PGL(2,\CC)$.
\end{proposition}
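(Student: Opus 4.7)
The plan is to justify in order the four reductions and the one substantive rigidity claim that make up the statement. First, by Lemma \ref{fibrationcaselemma}(4) the set $B\setminus r(U)$ is non-empty, so I would compose $Y$ with a sequence of elementary transformations based at fibers over points of $B\setminus r(U)$ to reach $Y=\PP^1\times\PP^1$ with $r$ the first projection; such transformations do not touch $U$ and so realize a geometric conjugation of the birational Kleinian group. Next, Selberg's lemma applied to the finitely generated linear group $\Gamma_B\subset\PGL(2,\CC)$ yields a torsion-free subgroup of finite index; pulling it back to $\Gamma$ I can assume $\Gamma_B$ is torsion-free, which combined with the fact that (by Lemma \ref{fibrationcaselemma}) fibers over $r(U)$ are smooth $\PP^1$'s on which the stabilizer acts without torsion, gives $\Gamma_x=\Gamma_r$ for every $x\in r(U)$.

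By Proposition \ref{hasafoliatedprojectivestructure}, $\Fol$ carries a foliated $(\PGL(2,\CC),\PP^1)$-structure. I would next verify that all regular fibers of $f:X\to C$ carry the \emph{same} $(\PGL(2,\CC),\PP^1)$-structure: if fibers are rational this is automatic; if fibers are elliptic the holonomy of the foliated projective structure on any such fiber is conjugate to the constant family uniformizing $\CC/\Gamma_r$ or $\CC^{*}/\Gamma_r$ according to whether $\Gamma_r$ has rank two or one; if fibers have genus $\geq 2$, Proposition \ref{fibrationprojstructure} applies (after a finite étale cover removing multiple fibers, which can be absorbed into passing to a further finite index subgroup of $\Gamma$). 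The upshot is that the family $\{(r^{-1}(x),\Gamma_r)\}_{x\in r(U)}$ of classical Kleinian groups acting on $\PP^1$ has a constant conjugacy class.

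The main step, and the one requiring genuine care, is the upgrade from holomorphic to algebraic conjugation. The constancy of the conjugacy class furnishes a holomorphic map $H:r(U)\to\PGL(2,\CC)$ with $H(x)\,\gamma(x)\,H(x)^{-1}$ independent of $x$ for every $\gamma\in\Gamma_r$. Viewing $\gamma\in\Gamma_r\subset\PGL(2,\CC(x))$ as a rational function of $x$ with values in $\PGL(2,\CC)$, the relation $H(x)\gamma(x)H(x)^{-1}\in\PGL(2,\CC)$ reads as a system of rational equations in $H(x)$ whose coefficients are rational in $x$; solving them shows that $H$ itself belongs to $\PGL(2,\CC(x))$. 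This is the place where the birationality of the elements of $\Gamma$ (rather than mere holomorphicity on $U$) is essential, and it is where I expect the main technical obstacle, since one must make precise the argument that a holomorphic solution of a system of rational equations is automatically rational.

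Once $H$ is rational, it is an element of the \Jonqui group preserving $r$, hence induces a birational self-map of $\PP^1\times\PP^1$. Since $H$ is defined and regular on all of $r(U)$ (the values at $B\setminus r(U)$ being outside $U$), post-composition by $H$ gives a geometric conjugation of the birational Kleinian group that brings $\Gamma_r$ into the constant subgroup $\{\Id\}\times\PGL(2,\CC)$. In this new model a chosen connected component $\Omega$ of $U\cap r^{-1}(x_0)$ is $\Gamma_r$-invariant in a fiberwise-constant way, so $r(U)\times\Omega\subset U$; by connectedness of $U$ and the fact that the intersection of $U$ with every fiber of $r$ has only finitely many components (Lemma \ref{fibrationcaselemma}(1)) permuted by $\Gamma/\Gamma_r=\Gamma_B$, we conclude $U=r(U)\times\Omega$ and $\Gamma_r=\Gamma_\Omega$, finishing the proof.
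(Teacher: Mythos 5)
Your proposal follows the paper's own proof essentially step for step: elementary transformations over points of $B\setminus r(U)$ to reach $\PP^1\times\PP^1$, Selberg's lemma to force $\Gamma_x=\Gamma_r$, constancy of the foliated $(\PGL(2,\CC),\PP^1)$-structure on the fibers of $f$, a holomorphic trivializing conjugation $H:r(U)\to\PGL(2,\CC)$ upgraded to an element of $\PGL(2,\CC(x))$, and connectedness of $U$ to conclude $U=r(U)\times\Omega$. The "main technical obstacle" you flag — that a holomorphic solution of the conjugation equations is automatically rational — is treated exactly as tersely in the paper ("Since $\Gamma_{\Omega}$ is a subgroup of $\PGL(2,\CC(x))$, the conjugation $H$ is actually algebraic"), so your write-up matches the paper's argument in both structure and level of detail.
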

Finally we distinguish three situations according to whether the fibers of $f$ are $\PP^1$,  elliptic curves or hyperbolic Riemann surfaces. If the fibers of $f$ are $\PP^1$, then $U$ is necessarily $r(U)\times \PP^1$. 
\begin{example}
Consider the cyclic group $\Gamma$ generated by
\begin{align*}
&(x,y)\dashmapsto (bx,cx^ky), \quad b,c\in\CC^*, k\in \ZZ^* \ \vert b \vert\neq 1.
\end{align*}
The quotient $\CC^*\times \PP^1/\Gamma$ is a geometrically ruled surface over an elliptic curve; it is a decomposable ruled surface because of the two disjoint sections $\{y=0\},\{y=\infty\}$. 
\end{example}

\begin{proposition}\label{ellipticbundles}
Suppose that the fibers of $f$ have genus $1$. Then up to geometric conjugation and up to finite index subgroup we are in one of the two following situations
\begin{enumerate}
	\item $Y=\PP^1\times \PP^1$, $U=D_1\times \CC^*$ and $D_1$ is an invariant component of the classical Kleinian group $\Gamma_B$. $\Gamma$ is a subgroup of 
	\[J^+:=\{(x,y)\dashrightarrow (\eta(x),R(x)y)\vert \eta\in\PGL(2,\CC), R\in\CC(x)^*\}=\CC(x)^*\rtimes\PGL(2,\CC).\]
And $\Gamma_r\subset \{\Id\}\times \PGL(2,\CC)$ is a cyclic central subgroup of $\Gamma$.
	\item $Y=\PP^1\times \PP^1$, $U=D_1\times \CC$ and $D_1$ is an invariant component of the classical Kleinian group $\Gamma_B$. $\Gamma$ is a subgroup of $J^*:=\{(x,y)\dashrightarrow (\eta(x),y+R(x))\vert \eta\in\PGL(2,\CC), R\in\CC(x)\}=\CC(x)\rtimes\PGL(2,\CC)$. $\Gamma_r\subset \{\Id\}\times \PGL(2,\CC)$ is a central subgroup of $\Gamma$ isomorphic to $\ZZ^2$.
\end{enumerate}
\end{proposition}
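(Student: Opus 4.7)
The plan is to start from the normalization provided by Proposition~\ref{fibrationproposition}: after a geometric conjugation by elementary transformations and passage to a finite index subgroup we have $Y=\PP^1\times\PP^1$, $U=r(U)\times\Omega$, and $\Gamma_r=\Gamma_\Omega\subset\{\Id\}\times\PGL(2,\CC)$, with $r(U)/\Gamma_B=C$ compact. Since the fibers of $f$ are elliptic curves and $\Omega/\Gamma_r$ is such a fiber, and since $\Gamma_r$ is torsion free (because $\Gamma$ acts freely on $U$), there are exactly two possibilities: $\Omega\cong\CC^*$ with $\Gamma_r=\langle y\mapsto\lambda y\rangle$ infinite cyclic and $|\lambda|\neq 1$ (Case 1), or $\Omega\cong\CC$ with $\Gamma_r\cong\ZZ^2$ acting by translations $y\mapsto y+a$ for $a\in\Lambda=\ZZ a_1+\ZZ a_2$ (Case 2).

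The second step is to use normality of $\Gamma_r$ in $\Gamma\subset\Jonq$ to constrain the fiberwise shape of a general $\gamma\in\Gamma$. Any such $\gamma$ is of the form $\gamma(x,y)=(\eta(x),\varphi_x(y))$ with $\eta\in\PGL(2,\CC)$ and $\varphi_x\in\PGL(2,\CC(x))$. The fiberwise fixed point set of $\Gamma_r$ is $\{y=0\}\cup\{y=\infty\}$ in Case 1 and $\{y=\infty\}$ in Case 2, and conjugation by $\gamma$ sends it to itself. In Case 1, passing to an index-two subgroup ensures that each of the two sections is preserved separately, so $\varphi_x(y)=R(x)y$ with $R\in\CC(x)^*$, i.e.\ $\Gamma\subset J^+$. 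In Case 2, $\gamma$ preserves $\{y=\infty\}$, hence $\varphi_x(y)=A(x)y+B(x)$ is affine, with $A\in\CC(x)^*$ and $B\in\CC(x)$.

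To refine Case 2 I would compute the conjugate $\gamma\circ(y\mapsto y+a)\circ\gamma^{-1}(x',y')=(x',\,y'+A(\eta^{-1}(x'))\,a)$ and require that it lie in $\Gamma_r$ for every $x'\in r(U)$ and every $a\in\Lambda$. Since $\Lambda$ is discrete and $A$ is rational, $A$ must equal a constant $c_\gamma$ with $c_\gamma\Lambda\subset\Lambda$; applying the same reasoning to $\gamma^{-1}$ shows $c_\gamma\in\End(\Lambda)^\times$, which is a finite group. Thus $\gamma\mapsto c_\gamma$ is a homomorphism with finite image, and replacing $\Gamma$ by its finite index kernel gives $\Gamma\subset J^*$. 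Centrality of $\Gamma_r$ in either case is now a direct computation inside $J^+$ respectively $J^*$: scalar multiplications commute with $(\eta(x),R(x)y)$ and translations commute with $(\eta(x),y+R(x))$.

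Finally, to identify $r(U)$ with an invariant component of a classical Kleinian group, I would use the central subgroup $\Gamma_r$ to form the intermediate quotient $U/\Gamma_r=r(U)\times E$, where $E=\Omega/\Gamma_r$ is an elliptic curve. The residual group $\Gamma_B=\Gamma/\Gamma_r$ acts freely, properly discontinuously and cocompactly on $r(U)\times E$ with quotient $X$, and projecting to the first factor together with compactness of $E$ shows that $\Gamma_B$ acts properly discontinuously on $r(U)$; in particular $\Gamma_B$ is a discrete subgroup of $\PGL(2,\CC)$, and cocompactness of $r(U)/\Gamma_B=C$ forces $D_1:=r(U)$ to be a full invariant component of the discontinuity set of $\Gamma_B$. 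The main subtlety I anticipate is the CM case in Case 2, where $\End(\Lambda)^\times$ can be larger than $\{\pm1\}$; but finiteness of this unit group is exactly what makes the finite-index reduction work, and centrality together with the Kleinian description of $\Gamma_B$ are preserved under any such reduction.
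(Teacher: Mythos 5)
Your proposal is correct; it proves the statement by a genuinely different, more hands-on route than the paper. The paper's proof runs in the opposite logical order: it first obtains centrality of $\Gamma_r$ abstractly --- the conjugation action of $\Gamma/\Gamma_r$ on $\Gamma_r$ is realized by automorphisms of the elliptic fiber fixing the origin, a finite group, so a finite index subgroup centralizes $\Gamma_r$ --- and then gets $\Gamma\subset J^+$ (resp.\ $J^*$) in one stroke from the Blanc--D\'eserti description of centralizers of elliptic elements (Theorem \ref{bdcentralizer}). You use only normality of $\Gamma_r$: conjugation must permute the fiberwise fixed locus of $\Gamma_r$ ($\{0,\infty\}$, resp.\ $\{\infty\}$), so the fiberwise part of any $\gamma$ is $y\mapsto R(x)y^{\pm 1}$, resp.\ affine $y\mapsto A(x)y+B(x)$; discreteness of $\Gamma_r$ then forces the swap sign and the multiplier $c_\gamma$ into finite groups ($\ZZ/2$, resp.\ $\End(\Lambda)^\times$, which coincides with $\Aut(E,0)$, so the finite covers you pass to agree with the paper's), and centrality becomes a one-line computation afterwards. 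Your version is self-contained where the paper leans on Theorem \ref{bdcentralizer}, at the price of explicit conjugation computations; it also makes explicit, via proper discontinuity and Ahlfors finiteness applied to the intermediate quotient $r(U)\times E$, why $D_1=r(U)$ is a full invariant component of $\Gamma_B$, a point the paper absorbs into the setup preceding Proposition \ref{fibrationproposition}. I see no gaps.
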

\begin{proof}
By the exact sequence $1\rightarrow \Gamma_r\rightarrow \Gamma\rightarrow \Gamma_B\rightarrow 1$ the group $\Gamma$ acts by outer automorphisms on the abelian group $\Gamma_r$. Any such outer automorphism is induced by an automorphism of an elliptic curve fixing the origin. Therefore up to replacing $\Gamma$ with a finite index subgroup, we can assume that $\Gamma_r$ is central in $\Gamma$. This is equivalent to say that we replace $X$ with a finite unramified cover which is a principal elliptic bundle (cf. \cite{BHPV} V.5). 

It remains to show that $\Gamma$ is contained in $J^+$ or $J^*$. This is because the centralizer of $\Gamma_r$ is respectively $J^+$ or $J^*$ by Thereom \ref{bdcentralizer}.
\end{proof}

Let us see two examples where the group $\Gamma$ is not conjugate to a subgroup of\\ $\PGL(2,\CC)\times \PGL(2,\CC)$. The first is given by the standard construction of primary Kodaira surfaces (see Paragraph \ref{primarykodaira}). In the second example $\Gamma$ is not conjugate to a group of automorphisms of a projective surface:
\begin{example}\label{kodairasurfaceex}
Consider the group $\Gamma$ generated by the following two elements
\begin{align*}
&(x,y)\mapsto (x,ay)\\
&(x,y)\dashmapsto (bx,cx^ky), \quad a,b,c\in\CC^*, k\in \ZZ^* \ \vert a\vert,\vert b \vert\neq 1.
\end{align*}
By lifting the deck transformations to $\CC^2$ we see that the quotient $\CC^*\times \CC^*/\Gamma$ is a primary Kodaira surface. Note $\Gamma$ is included in the toric subgroup. The birational transformation $(x,y)\mapsto (bx,cx^ky)$ has linear degree growth and is not birationally conjugate to an automorphism of a projective surface.
\end{example}

\begin{proposition}\label{highergenusfibration}
Suppose that the fibers of $f$ have genus $\geq 2$. Then up to geometric conjugation and up to finite index subgroup we have: $Y=\PP^1\times \PP^1$, $\Gamma=\Gamma_B\times \Gamma_r$ and $U=D_1\times D_2$ where $D_1,D_2$ are respectively invariant components of the classical Kleinian groups $\Gamma_B,\Gamma_r$. 
\end{proposition}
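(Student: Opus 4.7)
The plan is to build on Proposition~\ref{fibrationproposition}, which gives $Y = \PP^1 \times \PP^1$ with $r$ the first projection, $U = D_1 \times \Omega$ where $D_1 = r(U)$, and $\Gamma_r \subset \{\Id\} \times \PGL(2,\CC)$. Since fibers of $f$ have genus $\geq 2$, the quotient $\Omega/\Gamma_r$ is a hyperbolic compact Riemann surface and $\Gamma_r$ is a cocompact surface group in $\PGL(2,\CC)$ with invariant component $\Omega$; in particular it is non-elementary and its centralizer in $\PGL(2,\CC)$ is trivial.

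The key first step I would carry out is to show $\Gamma \subset \PGL(2,\CC) \times \PGL(2,\CC)$. Write an arbitrary $\gamma \in \Gamma \subset \Jonq$ as $\gamma(x,y) = (\eta(x), M(x) \cdot y)$ with $\eta \in \PGL(2,\CC)$ and $M(x) \in \PGL(2,\CC(x))$. Normality of $\Gamma_r$ in $\Gamma$ forces the conjugate $\gamma(\Id, g^*)\gamma^{-1} = (\Id, M(x) g^* M(\eta^{-1}(x))^{-1})$ to lie in $\Gamma_r$ for every $g^* \in \Gamma_r$, so the middle expression must be constant in $x$. Multiplying such a relation for $g_0^*$ with the inverse of the relation for $g_1^*$ cancels the factor $M(\eta^{-1}(x))$ and leaves $M(x)\, g_0^*(g_1^*)^{-1} M(x)^{-1}$ constant in $x$; as $g_0^*, g_1^*$ range over $\Gamma_r$ this means $M(x) h M(x)^{-1}$ is constant in $x$ for every $h \in \Gamma_r$. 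Fixing $x_0 \in D_1$ then gives that $M(x_0)^{-1} M(x) \in \PGL(2,\CC)$ centralizes $\Gamma_r$ at every $x$, hence equals the identity. Thus $M$ is constant and $\Gamma \subset \PGL(2,\CC) \times \PGL(2,\CC)$.

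The next step is to upgrade the inclusion $\Gamma \subset \Gamma_B \times \PGL(2,\CC)$ to a direct product decomposition. The fibration $f \colon X \to C = D_1/\Gamma_B$ is a locally trivial holomorphic fiber bundle with fiber $F = \Omega/\Gamma_r$ of genus $\geq 2$, so its structure group is contained in $\Aut(F)$, which is finite by the Hurwitz automorphism bound. The monodromy $\Gamma_B = \pi_1(C) \to \Aut(F) \subset \Out(\Gamma_r)$ therefore has finite image. Let $\tilde\Gamma_B \subset \Gamma_B$ be its kernel, a subgroup of finite index, and let $\tilde\Gamma = p_1^{-1}(\tilde\Gamma_B)$. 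For $\eta \in \tilde\Gamma_B$ and any lift $(\eta, M_\eta) \in \tilde\Gamma$, conjugation by $(\eta, M_\eta)$ acts on $\Gamma_r$ as an inner automorphism, so $M_\eta h M_\eta^{-1} = g(\eta) h g(\eta)^{-1}$ for some $g(\eta) \in \Gamma_r$ and every $h \in \Gamma_r$. The same triviality-of-centralizer argument forces $M_\eta = g(\eta) \in \Gamma_r$, whence $(\eta, \Id) = (\eta, M_\eta)(\Id, g(\eta))^{-1} \in \tilde\Gamma$. Hence $\tilde\Gamma$ contains both $\tilde\Gamma_B \times \{\Id\}$ and $\{\Id\} \times \Gamma_r$, and a short argument shows $\tilde\Gamma = \tilde\Gamma_B \times \Gamma_r$ acting coordinatewise on $U = D_1 \times \Omega$. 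Setting $D_2 = \Omega$, cocompactness on each factor yields that $D_1, D_2$ are invariant components of the respective classical Kleinian groups.

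The principal obstacle is the first step, namely rigidifying the a priori rational matrix-valued function $M(x)$ into a single element of $\PGL(2,\CC)$. This rests entirely on the non-elementariness of $\Gamma_r$, which is precisely the content of the genus $\geq 2$ assumption; for elliptic fibers (Proposition~\ref{ellipticbundles}) the fiber group $\Gamma_r$ is abelian and has a large centralizer in $\PGL(2,\CC(x))$, and no such clean product decomposition can be expected.
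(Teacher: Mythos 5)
Your proof follows essentially the same two-step route as the paper's: normality of $\Gamma_r$ combined with the rigidity of a non-elementary Kleinian group (you use triviality of the centralizer, the paper uses discreteness of the normalizer via Maskit) forces the fibre component of each Jonquières element to be constant, and then finiteness coming from the genus $\geq 2$ hypothesis (you via $\Aut(F)$ and $\Out(\Gamma_r)$, the paper via the finite cover splitting an isotrivial family of hyperbolic curves) yields the direct product after passing to a finite index subgroup. One small slip: the conjugate $\gamma(\Id,g^*)\gamma^{-1}$ equals $(\Id,\, M(\eta^{-1}(x))\, g^*\, M(\eta^{-1}(x))^{-1})$, not $(\Id,\, M(x)\, g^*\, M(\eta^{-1}(x))^{-1})$, so your cancellation trick is unnecessary --- the constancy of $u\mapsto M(u)g^*M(u)^{-1}$ follows directly from membership in $\Gamma_r$, and your trivial-centralizer argument then applies verbatim.
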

\begin{proof}
For $x\in r(U)$, we denote by $\PP^1_x$ the fiber of $r$ over $x$ and by $G_x$ the restriction of $\Gamma_r$ on $\PP^1_x$. Since $\Gamma_r\subset \{\Id\}\times \PGL(2,\CC)$ by Proposition \ref{fibrationproposition}, the classical Kleinian groups $G_x$ are the same if we identify the different $\PP^1_x$.

Let $\gamma$ be an element of $\Gamma$. We write it as $\gamma:(x,y)\dashmapsto (\gamma_B(x),\rho(x)(y))$ where $\gamma_B$ is the image of $\gamma$ in $\Gamma_B$ and $\rho$ is a rational map from $B$ to $\PGL(2,\CC)$. Since $\Gamma_r$ is a normal subgroup, we have $\gamma\Gamma_r\gamma^{-1}=\Gamma_r$. Then we have 
\[
\rho(x)G_x\rho(x)^{-1}=G_{\gamma_B(x)}=G_x
\]
This means that $\rho(x)$ is in the normalizer of $G_x$ in $\PGL(2,\CC)$. As the fibers of $f$ are hyperbolic, $G_x$ is a non-elementary Kleinian group and its normalizer is a discrete subgroup of $\PGL(2,\CC)$ (see \cite{Mas88} Chapter V E.10). Being a holomorphic map into this normalizer, the map $\rho$ is constant and consequently $\gamma\in \PGL(2,\CC)\times \PGL(2,\CC)$

Therefore $\Gamma$ is a subgroup of $\PGL(2,\CC)\times \PGL(2,\CC)$. Let $\Gamma_2$ be its projection onto the second factor. Since the fibers of the locally trivial fibration $f:X\rightarrow C$ have genus $\geq 2$, there exists a finite unramifed cover $\bar{C}$ of $C$ such that the pullback fibration over $\bar{C}$ is simply a product (see \cite{BHPV} V.6). In particular a finite unramified cover of $X$ is a product of two curves. This implies that $\Gamma_2/\Gamma_r$ is a finite group. Hence up to replacing $\Gamma$ with a finite index subgroup, we have $\Gamma_2=\Gamma_r$ and $\Gamma$ is isomorphic to $\Gamma_B\times \Gamma_r$. 
\end{proof}

\subsection{Complex tori}

\begin{proposition}\label{toriprop}
Assume that $X$ is a complex torus. Then up to geometric conjugation the birational Kleinian group $(Y,\Gamma,U,X)$ satisfies that 
\begin{itemize}
	\item $Y=\PP^1\times \PP^1$. $U$ is one of the three Zariski open sets: $\CC^2$, $\CC\times\CC^*$ or $\CC^*\times \CC^*$.
	\item $\Gamma$ can be identified with a lattice in $U$ with its natural structure of algebraic group, i.e.\ the elements of $\Gamma$ have respectively the form $(x,y)\mapsto(x+a,y+b)$, $(x,y)\mapsto(x+a,by)$ or $(x,y)\mapsto(ax,by)$.
\end{itemize}
\end{proposition}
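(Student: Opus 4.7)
The plan is to specialize the preceding case analyses in this section (Propositions \ref{fullbasecase}, \ref{suspensionprop}, \ref{ellipticbundles}) to the torus setting, then handle the torus-specific constraints.

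First I would rule out $r(U) = B$. By Proposition \ref{fullbasecase}, if $r(U) = B$ then $\Fol$ is turbulent, an obvious foliation on a Hopf surface, or an infinite suspension of $\PP^1$; the corresponding quotients are Hopf surfaces or ruled surfaces, none of which are complex tori. Thus $r(U) \subsetneq \PP^1$. Since $\Gamma_B$ is an infinite discrete subgroup of $\PGL(2,\CC)$ acting on $r(U)$ with cocompact quotient (forced to be an elliptic curve, as the base of the ensuing fibration structure on $X$), one concludes $r(U) \in \{\CC, \CC^*\}$, with $\Gamma_B$ generated respectively by rank-2 translations or by a loxodromic element.

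By Theorem \ref{Brunellathm} the regular foliation $\Fol$ on the torus $X$ is either an irrational linear foliation or a fibration by elliptic curves. In the irrational case $\Fol$ is an infinite suspension of an elliptic curve over an elliptic curve, so Proposition \ref{suspensionprop} applies: $Y = \PP^1 \times \PP^1$, $\Gamma \subset \PGL(2,\CC) \times \PGL(2,\CC)$, $U = \Omega_1 \times \Omega_2$ with $\Omega_2 \in \{\CC, \CC^*\}$ and $\Omega_1 \cong \bar{M}$ an infinite cover of the elliptic base $M$, hence $\Omega_1 \in \{\CC, \CC^*\}$. In the elliptic fibration case, Proposition \ref{ellipticbundles} applies: $Y = \PP^1 \times \PP^1$, $U = D_1 \times \Omega_2$ with $D_1, \Omega_2 \in \{\CC, \CC^*\}$, and $\Gamma \subset J^+$ or $J^*$ with possibly non-constant cocycle $R(x)$.

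The main obstacle lies in the elliptic fibration case: showing that the cocycle $R(x)$ reduces to a constant, so that $\Gamma$ actually lands in $\PGL(2,\CC) \times \PGL(2,\CC)$. I would invoke the fact that a principal elliptic bundle over an elliptic curve is \Kah{} if and only if it is topologically trivial, if and only if its defining cocycle is cohomologically trivial (cf.\ \cite{BHPV} V.5). Since tori are \Kah{} whereas Kodaira surfaces (such as Example \ref{kodairasurfaceex}) are not, the bundle $X \to D_1/\Gamma_B$ is trivial and $R$ is cohomologically trivial. The only trivializing cochains $h$ with no zeros or poles on $D_1$ are constants (if $D_1 = \CC$) or monomials $cx^n$ (if $D_1 = \CC^*$); the base change $(x,y) \mapsto (x, h(x)y)$ or $(x,y) \mapsto (x, y+h(x))$ is then an elementary transformation supported on fibers of $r$ over $B \setminus r(U)$, hence a geometric conjugation, and reduces $R$ to a constant.

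Finally, with $U = \Omega_1 \times \Omega_2 \in \{\CC^2,\ \CC \times \CC^*,\ \CC^* \times \CC^*\}$ and $\Gamma \subset \PGL(2,\CC) \times \PGL(2,\CC)$ acting factor-wise, freeness of the action forces each factor's restriction of $\Gamma$ to consist of fixed-point-free automorphisms: translations on each $\CC$ factor (since affine maps with slope $\neq 1$ have a fixed point in $\CC$) and multiplications on each $\CC^*$ factor (since $0$ and $\infty$ are the only fixed points of $x \mapsto ax$ and lie outside $\CC^*$). Cocompactness then identifies $\Gamma$ with a lattice in the commutative algebraic group structure of $U$, yielding the three listed forms for the elements of $\Gamma$.
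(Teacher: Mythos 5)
Your overall skeleton (rule out $r(U)=B$, split into irrational linear foliation vs.\ elliptic fibration, then force the cocycle to be constant) is reasonable, but the irrational branch has a genuine gap. You assert that an irrational linear foliation on the torus $X$ is an infinite suspension of an elliptic curve over an elliptic curve and then invoke Proposition \ref{suspensionprop}. A suspension presupposes a fibration $f\colon X\to C$ with compact fibers everywhere transverse to $\Fol$; for a $2$-torus this forces $X$ to admit an elliptic fibration, which a generic torus (e.g.\ the quotient of $\CC^*\times\CC^*$ by a generic rank-two lattice) does not. The paper's remark after Theorem \ref{Brunellathm} is explicitly conditional on the torus admitting an elliptic fibration, so Proposition \ref{suspensionprop} does not cover the bulk of the irrational case, and with it you lose $Y=\PP^1\times\PP^1$, the product structure of $U$, and the algebraicity of the bounding sections. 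The paper instead notes that $\Gamma$ is free abelian and that $\Gamma_B$ has rank $\geq 2$ whenever $\Fol$ is irrational (its action on $r(U)$ is then non-discrete), and applies Theorem \ref{zhaocentralizer} directly, realizing the conjugation by elementary transformations over points outside $r(U)$ so that it is geometric. Relatedly, your justification of $r(U)\in\{\CC,\CC^*\}$ via ``$\Gamma_B$ acts with cocompact quotient an elliptic curve'' fails in the irrational case for the same reason; the paper gets it from the $\pi_1(X)$-equivariant surjection $\CC\to r(U)$ induced by the linear first integral of the foliation.

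A second, smaller problem is the ``trivializing cochain'' step in the fibration case. Holomorphic cochains without zeros on $\CC$ or $\CC^*$ need not be rational (e.g.\ $e^{g(x)}$), so the claimed classification is false; and a monomial cochain $cx^n$ conjugates $(x,y)\mapsto(bx,cx^ky)$ to $(x,y)\mapsto(bx,b^ncx^ky)$, leaving the exponent $k$ unchanged, so it cannot reduce $R$ to a constant anyway. What works, and what the paper does, is: in the multiplicative case $R=cx^k$ with $k\neq 0$ would make $X$ a primary Kodaira surface (Example \ref{kodairasurfaceex}), contradicting K\"ahlerness, hence $k=0$; in the additive case one solves the coboundary equation explicitly inside $\CC(x)$ and conjugates by $(x,y)\mapsto(x,y+P(x)/x^k)$. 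Your underlying idea (K\"ahlerness forces topological triviality of the bundle) is the right one, but the passage to a constant cocycle needs the existence of a \emph{rational} solution of the cohomological equation, not a classification of cochains.
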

\begin{proof}
The fundamental group of a complex torus is isomorphic to $\ZZ^4$. After replacing $\Gamma$ with a subgroup of finite index we can and will assume that $\Gamma$ and $\Gamma_B$ are free abelian groups. Remark that the only regular foliations on complex tori are turbulent foliations and linear foliations (which can be considered as suspensions in some cases). We know by Proposition \ref{turbulentclassification} that $\Fol$ is linear. We also know by Lemma \ref{r(B)oftorus} that $r(U)$ is $\CC$ or $\CC^*$.

A linear foliation on the torus $X$ is induced by a linear map from $\CC^2$ to $\CC$ equivariant under $\pi_1(X)$, the action on $\CC$ being by translations. The image of $\pi_1(X)$ in $\Aut(\CC)$ is a free abelian group of rank $2,3$ or $4$, depending on whether the leaves of $\Fol$ are elliptic curves, $\CC^*$ or $\CC$. The open set $U$ being a quotient of $\bC^2$ with $r$ induced by the above linear map, is a $\CC$ or $\CC^*$ bundle over $\CC$ or $\CC^*$. It is biholomorphic to $\CC^2$, $\CC\times\CC^*$ or $\CC^*\times \CC^*$. In other words the complement $r^{-1}(r(U))\backslash U$ is one or two holomorphic sections of $r$ over $r(U)$. To conclude we need to prove that these sections are algebraic. 

We assume first that $\Gamma_B$ is free of rank $\geq 2$. In this case the descriptions of $\Gamma$ and $U$ given in the statement follow from Theorem \ref{zhaocentralizer}. We only need to observe that the conjugation that we need in Theorem \ref{zhaocentralizer} is a geometric conjugation of birational Kleinian groups. This is because the conjugation is a sequence of elementary transformations. If we let the elementary transformations to be done outside $U$, then the conjugation is geometric. Remark that the hypothesis that the rank of $\Gamma_B$ is $\geq 2$ is always satisfied when $\Fol$ is irrational. This is because if $\Fol$ is irrational then the action of $\Gamma_B$ on $r(U)$ is not discrete.

Consider now the case where $\Gamma_B$ is cyclic. In this case $r(U)$ is necessarily $\CC^*$, $\Fol$ is an elliptic fibration and $\Gamma_r$, the subgroup of $\Gamma$ preserving fiberwise the rational fibration, is a free abelian group of rank $1$ or $2$. If $\Gamma_r$ has rank $1$ then $r^{-1}(x)\cap U=\CC^*$ for any $x\in r(U)$ and $\Gamma_r$ acts by multiplications in the fibers; if $\Gamma_r$ has rank $2$ then $r^{-1}(x)\cap U=\CC$ for any $x\in r(U)$ and $\Gamma_r$ acts by translations in the fibers.

First consider the case where $\Gamma_r$ has rank $1$. By Theorem \ref{zhaocentralizer} up to conjugation $\Gamma_r$ is generated by an element $\gamma_1:(x,y)\mapsto (x,ay)$ and $\Gamma$ is generated by $\gamma_1$ and $\gamma_2:(x,y)\mapsto (bx,R(x)y)$ with $R\in \CC(x)^*$; being a sequence of elementary transformations the conjugation can be done outside $U$ so that it is a geometric conjugation of birational Kleinian groups. Therefore $U$ is the standard Zariski open set $\CC^*\times \CC^*$. As $\gamma_2$ acts regularly over $r(U)=\CC^*$, the rational function $R$ has neither zeros nor poles over $\CC^*$. Thus $R(x)=x^k$ for some $k\in\ZZ$. If $k\neq 0$ then the quotient $\CC^*\times \CC^*$ would be a primary Kodaira surface (cf. Example \ref{kodairasurfaceex}). Hence the conclusion.

Finally consider case where $\Gamma_r$ has rank $2$. By Theorem \ref{zhaocentralizer} up to conjugation $\Gamma_r$ is generated by two elements $\gamma_j:(x,y)\mapsto (x,y+a_j), j=1,2$ and $\Gamma$ is generated by $\gamma_1, \gamma_2$ and $\gamma_3:(x,y)\mapsto (bx,y+R(x))$ with $R\in \CC(x)$. Again the conjugation can be chosen to be geometric conjugation. Therefore $U$ is the standard Zariski open set $\CC^*\times \CC$. As $\gamma_3$ acts in a regular way over $r(U)=\CC^*$, the rational function $R$ has no poles over $\CC^*$. Thus $R(x)=\frac{Q(x)}{x^k}$ for some $k\in\ZZ$ and $Q\in \CC[x]$. The iterates of $\gamma_3$ can be written as 
\[\gamma_3^n:(x,y)\mapsto (b^nx,y+S(x)) \quad \text{where}\quad S(x)=\frac{1}{x^k}\sum_{j=0}^{n-1}\frac{Q(bx)}{b^k}.\]
Thus $\gamma_3$ has bounded degree growth and is an elliptic element. By conjugating $\Gamma$ with elements of the form $(x,y)\mapsto (x,y+\frac{P(x)}{x^k})$ with $P\in \CC[x]$, we can, without changing $\gamma_1,\gamma_2$, put $\gamma_3$ in the form $(x,y)\mapsto (bx,y)$. Hence the conclusion.
\end{proof}

\subsection{Bidisk quotients}
Finally we consider the case where $\Fol$ is a transversely hyperbolic foliation with dense leaves. By Proposition \ref{fullbasecase} $r(U)$ is a proper subset of $\PP^1$. Since every leaf of $\Fol$ is dense in $X$, any $\Gamma_B$-orbit in $r(U)$ is infinite. This implies by Corollary \ref{regularonacylinder} that $r^{-1}(r(U))$ is biholomorphic to $r(U)\times \PP^1$. Then by Proposition \ref{hasafoliatedprojectivestructure} $\Fol$ is equipped with a foliated $(\PGL(2,\CC),\PP^1)$-structure. By Corollary \ref{DGcorollary} $X$ is a bidisk quotient. 

Consider the composition $\pi_1(X)\rightarrow \Gamma\rightarrow \Gamma_B\subset \PGL(2,\CC)$. As in Lemma \ref{r(B)ofbidiskquotient}, Margulis supperrigidity (\cite{Mar91}) implies that $\pi_1(X)\rightarrow \Gamma_B$ is, up to conjugation in $\PGL(2,\CC)$ and up to a Galois conjugate, the projection of the lattice into one of the factors $\PSL(2,\RR)$. Moreover $r(U)$ is a round disk and $U$ is biholomorphic to the bidisk. 

By Proposition \ref{bidiskprojstructure} the foliated $(\PGL(2,\CC),\PP^1)$-structure on $\Fol$ is constant along the foliation. Every leaf of $\Fol$ can be identified with the unit disk as $(\PGL(2,\CC),\PP^1)$-manifold. Consequently $r^{-1}(x)\cap U$ is a round disk for any $x\in r(U)$, and there exists a holomorphic map $h:\bD\rightarrow \PGL(2,\CC)$ and a $\pi_1(X)$-equivariant biholomorphism $H:\bD\times \PP^1\rightarrow \bD\times \PP^1$ defined by $(x,y)\mapsto (x,h(x)(y))$ which sends $U$ to $\bD\times\bD$. It remains to show that $h$ is in fact an algebraic map.

\begin{proposition}\label{bidiskrigidity}
If $\Fol$ is a transversely hyperbolic foliation with dense leaves, then up to geometric conjugation we are in the standard example of bidisk quotient.
\end{proposition}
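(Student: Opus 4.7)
The setup already built up in the paragraphs above the statement does almost all the work: there is a $\pi_1(X)$-equivariant biholomorphism $H:\bD\times\PP^1\to\bD\times\PP^1$, $(x,y)\mapsto(x,h(x)(y))$, with $h:\bD\to\PGL(2,\CC)$ holomorphic, sending $U$ to $\bD\times\bD$, while the target $\pi_1(X)$-action is that of an irreducible cocompact lattice $\Gamma'\subset\PSL(2,\RR)\times\PSL(2,\RR)$. My plan is to upgrade this biholomorphic conjugation to a \emph{geometric} (i.e.\ birational) one.

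\textit{Reduction of $Y$.} First I would replace $Y$ by $\PP^1\times\PP^1$ using elementary transformations performed on fibres over $\PP^1\setminus\bD$; since they touch neither $U$ nor the dynamics there, they realise a geometric conjugation. Afterwards each $\gamma\in\Gamma$ lies in the Jonquières group for the first projection and reads $(x,y)\dashmapsto(\gamma_B(x),\gamma_2(x)(y))$ with $\gamma_2\in\PGL(2,\CC(x))$, and equivariance of $H$ becomes the cocycle identity
\[
h(\gamma_B(x))\,\gamma_2(x)\,h(x)^{-1}=g_\gamma\in\PSL(2,\RR),\qquad \gamma\in\Gamma.
\]

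\textit{Rationality of $h$ (the crux).} My aim is to show $h\in\PGL(2,\CC(x))$. Consider the graph $\mathrm{Gr}(h)\subset\bD\times\PGL(2,\CC)$. The formula $\gamma\cdot(x,A)=(\gamma_B(x),g_\gamma A\gamma_2(x)^{-1})$ defines a rational $\Gamma$-action on $\PP^1\times\PGL(2,\CC)$, and the identity above makes $\mathrm{Gr}(h)$ invariant under it. Since $\Gamma_B$ is dense in $\PSL(2,\RR)$ it is Zariski-dense in $\PSL(2,\CC)$, so every orbit $\Gamma_B\cdot x_0\subset\PP^1$ is Zariski-dense. Consequently the $\Gamma$-orbit of a generic $(x_0,h(x_0))$ is Zariski-dense in an irreducible component $Z$ of the Zariski closure of $\mathrm{Gr}(h)$ in $\PP^1\times\PGL(2,\CC)$. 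Because $\mathrm{Gr}(h)$ is a holomorphic graph over $\bD$, the projection $Z\to\PP^1$ is generically injective on a Euclidean-open set; dimension counting then forces $\dim Z=1$ and the projection to be birational. Hence $Z$ is the graph of a rational map $\PP^1\dashrightarrow\PGL(2,\CC)$ extending $h$, and $h\in\PGL(2,\CC(x))$ as required.

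\textit{Conclusion.} With $h$ rational, $H$ extends to a birational self-map of $\PP^1\times\PP^1$ preserving the first projection; it is regular on $\bD\times\PP^1$, hence on $U$, and sends $U$ biholomorphically onto $\bD\times\bD$ with $H\Gamma H^{-1}=\Gamma'\subset\PSL(2,\RR)\times\PSL(2,\RR)$. This is precisely a geometric conjugation to the standard bidisk quotient $(\PP^1\times\PP^1,\Gamma',\bD\times\bD,X)$.

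The main obstacle is the second step: transferring algebraicity from the cocycle $\gamma_2$ to the trivializing coboundary $h$. The Zariski density of $\Gamma_B$ is the essential rigidity input --- it is what prevents the Zariski closure of $\mathrm{Gr}(h)$ from spreading to higher dimension --- and it plays here the role that Zimmer's cocycle superrigidity would play in a purely measure-theoretic framework.
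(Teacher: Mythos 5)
The reduction to $Y=\PP^1\times\PP^1$ by elementary transformations over points outside $\bD$ and the cocycle identity for $h$ are fine, and they match the normalizations the paper uses. The gap is in your ``crux'' step. The Zariski closure $Z$ of the graph of a non-algebraic holomorphic map is in general \emph{higher-dimensional} (the Zariski closure of the graph of $x\mapsto e^{x}$ in $\CC^2$ is all of $\CC^2$), and the projection $Z\to\PP^1$ need not be generically injective merely because $\mathrm{Gr}(h)$ is a graph over $\bD$: the fibres of $Z$ over points of $\bD$ may strictly contain the single point $h(x)$. So ``dimension counting forces $\dim Z=1$'' is circular --- $\dim Z=1$ is \emph{equivalent} to the algebraicity of $h$, which is what you are trying to prove. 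Invariance of $Z$ under the rational $\Gamma$-action gives no contradiction in higher dimension either (the whole of $\PP^1\times\PGL(2,\CC)$ is invariant), and Zariski density of $\Gamma_B$ only tells you that \emph{if} $Z$ were a curve dominating $\PP^1$ it would determine $h$ by analytic continuation; it does not exclude $\dim Z\geq 2$. Some genuine rigidity input linking $h$ to algebraic data is missing.

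The paper supplies that input from the geometry of $U$ itself rather than from the cocycle. Using that an irreducible cocompact lattice contains $\ZZ^2$-subgroups whose elements are semisimple, it diagonalizes such a subgroup as $\gamma_i:(x,y)\mapsto(a_ix,b_iy)$ (Theorem \ref{zhaocentralizer}), takes a sequence $\delta_n$ in it with $\delta_{nB}\to\mathrm{id}$, and observes that $\delta_n\vert_U$ must degenerate onto the invariant algebraic disks $\{y=0\}$ and $\{y=\infty\}$; hence these lie in $\boundh(U)$ and $H$ may be normalized to the form $(x,y)\mapsto(x,A(x)y)$ with $A:\bD\to\CC^*$ holomorphic. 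A second, non-commuting conjugate copy of $\ZZ^2$ produces further \emph{algebraic} disks in $\boundh(U)$; since each fibre $U\cap r^{-1}(x)$ is the round disk $\{|A(x)y|<1\}$, an algebraic disk $\{y=B(x)\}$ in the boundary forces $|A(x)B(x)|\equiv 1$, hence $A=c/B$ is algebraic. If you want to keep your cocycle-theoretic framing, you would need to replace the Zariski-closure step by an argument of this kind (or by an actual superrigidity-for-cocycles statement), because as written the algebraicity of $h$ does not follow.
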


\begin{proof}
We would like to show that $H$ is the restriction of a birational transformation preserving the rational fibration, i.e.\ an element of the \Jonqui{} group. Then it realizes the desired geometric conjugation of birational Kleinian groups.

Our group $\Gamma$ is isomorphic to a cocompact irreducible lattice in $\PSL(2,\RR)\times \PSL(2,\RR)$. We will use two properties of irreducible lattices:
\begin{enumerate}
	\item $\Gamma$ has subgroups isomorphic to $\ZZ^2$ because $\PSL(2,\RR)\times \PSL(2,\RR)$ has real rank two (see \cite{PraRag72}).
	\item Elements $\Gamma_B$, are either elliptic or loxodromic elements of $\PSL(2,\RR)$ because every element of a cocompact lattice is semisimple (see \cite{BH62}, \cite{MT62}).
\end{enumerate}
Let $\gamma_1,\gamma_2\in\Gamma$ such that $<\gamma_1,\gamma_2>$ a subgroup isomorphic to $\ZZ^2$. Theorem \ref{zhaocentralizer} together with the second property above allow us to write $\gamma_1,\gamma_2$, up to conjugation in $\Jonq$, as $\gamma_i:(x,y)\mapsto (a_ix,b_iy), i=1,2$ or $\gamma_i:(x,y)\mapsto (a_ix,b_i+y), i=1,2$. Here the latter case is impossible because the holomorphic conjugation $H$ is supposed to make $\gamma_i, i=1,2$ also loxodromic in the $y$ coordinate. Hence we can write them as $\gamma_i:(x,y)\mapsto (a_ix,b_iy), i=1,2$. 

Denote by $\partial U$ the boundary of $U$ in $\PP^1\times \PP^1$ and by $\boundh(U)$ the boundary of $U$ in $\bD\times \PP^1$. The two horizontal disks $S_1=\{y=0,x\in\bD\},S_2=\{y=\infty,x\in\bD\}$ are invariant under $\gamma_i, i=1,2$. We want to show that they are contained in $\boundh(U)$. Since $\Gamma_B$ is isomorphic to $\Gamma$, the subgroup of $\CC^*$ generated by $a_1,a_2$ is isomorphic to $\ZZ^2$, thus dense. Therefore there is a non-trivial sequence of birational transformations $(\delta_n)$ in $<\gamma_1,\gamma_2>$ such that $\delta_{nB}$ tends to the identity in $\PGL(2,\CC)$. Up to extracting a subsequence, $\delta_n\vert_{U}$ converges to a holomorphic map from $U$ to $U\cap \partial U$. By discontinuity of the action of $\Gamma$ on $U$, the limit map has values in $\partial U$; as $\delta_{nB}$ tends to the identity, the image of the limit map is in $\boundh(U)$. Since each $\delta_n$ has the form $(x,y)\mapsto (c_nx,d_ny)$, the limit disk is either $S_1$ or $S_2$. By considering the sequence $(\delta_n^{-1})$, we infer that $S_1$ and $S_2$ are both contained in $\boundh(U)$.

Consider the conjugates $H\circ\delta_n \circ H^{-1}$. The sequence $H\circ\delta_n \circ H^{-1}\vert _{\bD\times\bD}$ converges to a horizontal disk as well. Therefore we can assume that $H(S_i)=S_i,i=1,2$. In other words the holomorphic conjugation map $H$ has the form $(x,y)\mapsto (x,A(x)y)$ where $A$ is a holomorphic function $\bD\rightarrow \CC^*$. To conclude we need to show that $A$ is algebraic. It suffices to exhibit other disks in $\boundh(U)$ defined by algebraic functions. We can conjugate the $\gamma_1,\gamma_2$ in $\Gamma$ to obtain $\gamma_3,\gamma_4\in\Gamma$ such that $<\gamma_3,\gamma_4>$ is isomorphic to $\ZZ^2$ but $\gamma_3,\gamma_4$ do not commute with $\gamma_1,\gamma_2$. We apply the above discussion to $\gamma_3,\gamma_4$ and obtain other algebraic disks in $\boundh(U)$. The proof is finished. 
\end{proof}

\section{Classification and proof}\label{synthesis}

\subsection{Classification}\label{longlistsection}

\begin{maintheorem}\label{verylongthm}
Let $(Y,U,\Gamma,X)$ be a birational Kleinian group in dimension $2$. Suppose that if $\Gamma$ is virtually cyclic or if $X$ is a class VII surface then $\Gamma$ contains no loxodromic elements. Then up to geometric conjugation and up to taking a finite index subgroup, we are in one of the cases in the following table:

 \begin{longtable}[c]{|P{.02\textwidth}|P{.24\textwidth} | P{.24\textwidth} | P{.20\textwidth} | P{.12\textwidth} |}

 \hline
 \multicolumn{5}{| c |}{Birational Kleinian groups in dimension two}\\
 \hline
  &$Y$ & $U$ & $\Gamma$  & $X$ \\
 \hline
 \endfirsthead

 \hline
 \hline
  &$Y$ & $U$ & $\Gamma$  & $X$ \\
 \hline
 \endhead

 \hline
 \endfoot

 \hline
 \hline\hline
 \endlastfoot

 1& $B\times\PP^1$ where $B$ is compact Riemann surface & $B\times D_1$ where $D_1$ is an invariant component of a classical Kleinian group $\Gamma_1$ & $\{\Id\}\times \Gamma_1\subset \Aut(B)\times \Aut(\PP^1)$ & $B\times (D_1/\Gamma_1)$  \\
\hline

2& $\PP(\EE)$ where $\EE$ is an extension of $\OO_B$ by $\OO_B$ and $B$ is a compact Riemann surface & the complement of a section of the ruling & isomorphic to $\ZZ^2$ & a principal elliptic bundle  \\
\hline

3& a blow-up of a decomposable ruled surface & a Zariski open subset whose intersection with each fiber of the ruling is $\CC^*$ & isomorphic to $\ZZ$ & an elliptic fibration with only type $mI_0$ singular fibers  \\
\hline

4& $B\times\PP^1$ where $B$ is an elliptic curve & $B\times D_1$ where $D_1$ is an invariant component of a classical Kleinian group $\Gamma_1$ & $\exists \rho:\Gamma_1\rightarrow \Aut(B), \forall \gamma\in\Gamma, \exists \gamma_1\in\Gamma_1, \gamma=(\rho(\gamma_1),\gamma_1)\in \Aut(B)\times \Aut(\PP^1)$ & $B$-bundle over $D_1/\Gamma_1$  \\
\hline

5& an indecomposable geometrically ruled surface over an elliptic curve with a section of zero self-intersection & the complement of the section of zero self-intersection & isomorphic to $\ZZ^2$ & complex torus  \\
\hline

6& a decomposable geometrically ruled surface over an elliptic curve & the complement of two disjoint sections & isomorphic to $\ZZ$ & complex torus  \\
\hline

7& $\PP^2$ & $\CC^2$ & isomorphic to $\ZZ^4$ & complex torus  \\
\hline

8& $\PP^2$ & $\CC\times \CC^*$ & isomorphic to $\ZZ^3$ & complex torus  \\
\hline

9& $\PP^2$ & $\CC^*\times \CC^*$ & isomorphic to $\ZZ^2$ & complex torus  \\
\hline

10& $\PP^2$ & $\CC^2$ & a group of affine transformations, an extension of $\ZZ^2$ by $\ZZ^2$ & primary Kodaira surface  \\
\hline

11& $\PP^2$ & $\CC^2\backslash\{0\}$ & isomorphic to $\ZZ$ & Hopf surface  \\
\hline

12& $\PP^2$ & $\HH\times \CC$ & a solvable group of affine transformations  & Inoue surface  \\
\hline

13& $\PP^2$ & $\bB^2$ & a cocompact lattice in $\PU(1,2)$  & ball quotient  \\
\hline

14& $\PP^1\times \PP^1$ & $\bD^1\times \bD^1$ & an irreducible cocompact lattice in $\PSL(2,\RR)\times \PSL(2,\RR)$  & bidisk quotient  \\
\hline

15& a Hirzebruch surface blown up at at most two fibers & a Zariski open subset whose intersection with each fiber of the ruling is $\CC^*$ & a cyclic group generated by $(x,y)\mapsto (ax,by)$  & Hopf surface  \\
\hline

16& a Hirzebruch surface blown up at at most one fiber & a Zariski open subset whose intersection with each fiber of the ruling is $\CC^*$ & a cyclic group generated by $(x,y)\mapsto (x+a,by)$  & Hopf surface  \\
\hline

17& $\PP^1\times \PP^1$ & $D_1\times D_2$ where $D_i$ is an invariant component of a classical Kleinian group $\Gamma_i$ & $\Gamma_1\times \Gamma_2\subset \PGL(2,\CC)\times \PGL(2,\CC)$  & $(D_1/\Gamma_1)\times (D_2/\Gamma_2)$  \\
\hline

18& $\PP^1\times \PP^1$ & $D_1\times \CC^*$ where $D_1$ is an invariant component of a classical Kleinian group $\Gamma_1$ & isomorphic to a central extension of $\Gamma_1$ by $\ZZ$, any element has the form $(x,y)\dashrightarrow (\gamma_1(x),R(x)y)$ where $\gamma_1\in\Gamma_1$ and $R\in \CC(x)^*$  & principal elliptic bundle over $D_1/\Gamma_1$ \\
\hline

19& $\PP^1\times \PP^1$ & $D_1\times \CC$ where $D_1$ is an invariant component of a classical Kleinian group $\Gamma_1$ & isomorphic to a central extension of $\Gamma_1$ by $\ZZ^2$, any element has the form $(x,y)\dashrightarrow (\gamma_1(x),y+R(x))$ where $\gamma_1\in\Gamma_1$ and $R\in \CC(x)$  & principal elliptic bundle over $D_1/\Gamma_1$ \\
\hline

20& $\PP^1\times \PP^1$ & $D_1\times \PP^1$ where $D_1$ is an invariant component of a classical Kleinian group $\Gamma_1$ & isomorphic to $\Gamma_1$, preserves the projection of $\PP^1\times \PP^1$ onto the first factor & geometrically ruled surface over $D_1/\Gamma_1$ \\
\hline

\end{longtable}

\end{maintheorem}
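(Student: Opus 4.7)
The proof is an assembly of everything the paper has proved so far; the structure of the plan mirrors Theorem \ref{strongTitsalternative}. First, I apply Theorem \ref{nonnegthm} to force $Y$ to have Kodaira dimension $-\infty$. If $Y$ is birationally ruled but not rational, then every birational transformation preserves the ruling with base an irrational curve, and I invoke Theorem \ref{thmruled1} (if the base has genus $\geq 2$, so $\Gamma_B$ is finite) or Theorem \ref{thmruledelliptic} (if the base is elliptic) to obtain the listed cases. So from now on I assume $Y$ is rational. Under the standing hypothesis (either $X$ is not of class VII, or $\Gamma$ has no loxodromic), Theorem \ref{kleinianareelementary} (for the non-class-VII case) together with the class-VII exclusion in the hypothesis allows me to conclude that $\Gamma$ is an elementary subgroup of $\Bir(Y)$ in every case I need to treat.

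Next I run through the eight alternatives of Theorem \ref{strongTitsalternative}. Case (5), virtually cyclic generated by a loxodromic, is excluded by hypothesis. Case (7), non-elementary, is excluded by the previous paragraph. Case (8), torsion non-finitely-generated, cannot be a birational Kleinian group because the action on $U$ would have elements of finite order and fail to be free. Case (6), toric solvable not virtually abelian, contains a loxodromic element and is ruled out by Proposition \ref{notorickleinian} combined with the hypothesis that $\Gamma$ is not virtually cyclic loxodromic. Case (4), Halphen twists preserving a genus one fibration: Lemma \ref{ellipticfibrationlem} forbids such a $\Gamma$ from acting discretely on any open set, eliminating this case. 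What remains are cases (1), (2), and (3): either $\Gamma$ is conjugate into $\Aut^0(Y')$ for some projective surface $Y'$, or $\Gamma$ preserves a rational fibration on some $Y'$.

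For the automorphism case, Lemma \ref{birconjtoaut} upgrades a birational conjugacy into a \emph{geometric} conjugacy of birational Kleinian groups, landing me in $\Aut^0(Y')$ acting on a new quadruple $(Y'',\Gamma'',U'',X'')$. Since $\Aut^0(Y')$ for a rational $Y'$ is (up to conjugation and blow-up outside $U$) a subgroup of $\PGL(3,\CC)$ acting on $\PP^2$ or of $\PGL(2,\CC)\times \PGL(2,\CC)$ acting on $\PP^1\times\PP^1$, I invoke the classification of complex projective Kleinian groups (Theorem \ref{classificationofcplxprojkleinian}, plus ball quotients and bidisks which are already classical), producing entries 7--16 and entry 14 of the table. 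For the rational fibration case, I apply Theorem \ref{thmruled1} when the action $\Gamma_B$ on the base is finite, which gives entries 1--3. When $\Gamma_B$ is infinite, I further distinguish elliptic base (Theorem \ref{thmruledelliptic}, giving entries 4--6) from rational base (Section \ref{rationalfibrationsectionthree}). For the rational base, the induced regular foliation $\Fol$ on $X$ (Propositions \ref{withoutsingularpoints}, \ref{hasafoliatedprojectivestructure}) has a foliated $(\PGL(2,\CC),\PP^1)$-structure, so by Brunella's classification (Theorem \ref{Brunellathm}) combined with Deroin--Guillot's obstruction (Corollary \ref{DGcorollary}), $\Fol$ is one of: a linear foliation on a torus, a suspension, a turbulent foliation, an obvious foliation on a Hopf or Inoue surface, a genuine fibration, or the natural foliation on a bidisk quotient. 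The corresponding Propositions \ref{turbulentclassification}, \ref{Hopfclassification}, \ref{suspensionrationalcase}, \ref{suspensionprop}, \ref{fibrationproposition}, \ref{ellipticbundles}, \ref{highergenusfibration}, \ref{toriprop}, \ref{bidiskrigidity} and Theorem \ref{inoue} pin down the birational Kleinian group in each subcase, yielding entries 15--20 and refinements of earlier entries.

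The main obstacle is essentially bookkeeping: making sure that the geometric conjugacies produced in each subsection are compatible with one another and that every reduction (in particular the blow-ups used in Lemma \ref{birconjtoaut} and the elementary transformations used in Propositions \ref{fibrationproposition}, \ref{ellipticbundles}, \ref{toriprop}) can be arranged to occur \emph{outside} $U$, so that they define genuine geometric conjugacies of the quadruple $(Y,\Gamma,U,X)$. Once that is verified in each case, the table is the union of the output lists of the above propositions, and every overlapping case (e.g.\ bidisk appearing both as a complex projective Kleinian group and as a rational fibration case) is consistent because of the uniqueness statements in Propositions \ref{bidiskprojstructure} and \ref{bidiskrigidity}.
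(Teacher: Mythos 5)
Your proposal follows the paper's proof essentially step for step: Theorem \ref{nonnegthm} to force Kodaira dimension $-\infty$, the split into non-rational ruled surfaces versus rational $Y$, the strong Tits alternative (Theorem \ref{strongTitsalternative}) as the organizing principle, and the Brunella/Deroin--Guillot foliation analysis for the invariant rational fibration case, with the same supporting propositions. Two local points, however, are justified incorrectly.

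First, your exclusion of case (8) of Theorem \ref{strongTitsalternative} rests on the claim that a torsion group ``would have elements of finite order and fail to be free.'' That implication is false: finite-order holomorphic diffeomorphisms can act freely (Enriques and bielliptic surfaces are free quotients by finite groups), and here the fixed points of a finite-order birational map need not lie in $U$ in any case. The correct and much shorter reason, which is the one the paper uses, is that $\Gamma$ is a quotient of $\pi_1(X)$ for a compact manifold $X$, hence finitely generated, whereas the groups in case (8) are not finitely generated.

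Second, in the elliptic (automorphism) case, the assertion that $\Aut^0(Y')$ of a rational surface is, up to conjugation and blow-ups outside $U$, a subgroup of $\PGL(3,\CC)$ on $\PP^2$ or of $\PGL(2,\CC)\times\PGL(2,\CC)$ on $\PP^1\times\PP^1$ is not accurate: for a Hirzebruch surface $\cH_n$ with $n\geq 1$, $\Aut^0(\cH_n)$ contains the unipotent group $H_{n+1}$ of Theorem \ref{Maruyamathm} and is not conjugate into $\PGL(2,\CC)\times\PGL(2,\CC)$. The paper instead observes that if $Y\neq\PP^2$ then $\Aut^0(Y)$ maps into the $\Aut^0$ of a Hirzebruch surface, which preserves a rational fibration, so this subcase folds back into the invariant-rational-fibration analysis; only $Y=\PP^2$ genuinely invokes Theorem \ref{classificationofcplxprojkleinian}. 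Since you also run the full fibration analysis, the coverage of the table is unaffected, but entries 14--16 arise from the fibration/foliation branch (Propositions \ref{turbulentclassification}, \ref{Hopfclassification}, \ref{bidiskrigidity}), not from the complex projective Kleinian classification.
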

The surface $Y$ is rational except the first six cases. The group $\Gamma$ is a group of automorphisms except in the last three cases. In cases 18), 19), 20), $\Gamma$ may or may not be conjugate to a group of automorphisms of some projective surface. 

We explain how Theorem \ref{verylongthm} implies Theorem \ref{themainthm}. Under the hypothesis that $U$ is simply connected and $X$ is \Kah{}, the group $\Gamma$ is isomorphic to the \Kah{} group $\pi_1(X)$. By Hodge Theorem the abelianization of a \Kah{} group has even rank, thus is not cyclic. Hence the hypothesis of Theorem \ref{themainthm} implies the hypothesis of Theorem \ref{verylongthm}. It suffices thus to remark that in the list given by Theorem \ref{verylongthm} the only cases where $U$ is simply connected and $X$ is \Kah{} are the seven cases listed in Theorem \ref{themainthm}.

When $X$ is an Inoue surface, we proved in our previous paper \cite{Zhaobirinoue} that there is no cyclic birational Kleinian group having quotient surface $X$; in other words the two hypothesis in Theorem \ref{verylongthm} can be dropped. The proof uses particular geometric features of Inoue surfaces as well as ergodic properties of loxodromic birational transformations. Known Class VII surfaces have rather small fundamental groups, typically cyclic.

\subsection{Proof}

Let $(Y,\Gamma,U,X)$ be a birational Kleinian group in dimension two. By Theorem \ref{nonnegthm} $Y$ has Kodaira dimension $-\infty$. Thus it is either rational or birational to a ruled surface over a non-rational curve. 

Suppose that $Y$ is not rational. Then there is a rational fibration $r:Y\rightarrow B$ onto a curve $B$ of genus $\geq 1$. Every birational transformation of $Y$ preserves the fibration $r$. If $\Gamma_B$ is a finite group then all possibilities are classified by Theorem \ref{thmruled1}. This gives the first three cases in Theorem \ref{verylongthm}. If $\Gamma_B$ is infinite then $B$ is an elliptic curve and all possibilities are classified by Theorem \ref{thmruledelliptic}. This gives cases 1), 4), 5) and 6) in Theorem \ref{verylongthm}.

From now on we assume that $Y$ is a rational surface. We apply Theorem \ref{strongTitsalternative} to $\Gamma$. We will first deal with the cases where a rational fibration is preserved by $\Gamma$ up to conjugation, then deal with other cases of Theorem \ref{strongTitsalternative}.

\subsubsection{Rational surfaces I: invariant rational fibrations}\label{rationalsurfaceinvariantrationalfibration}

Assume that up to conjugation the group $\Gamma$ preserves a rational fibration, i.e.\ $\Gamma$ preserves a pencil of rational curves. This hypothesis always holds in Cases 2), 3) of Theorem \ref{strongTitsalternative} and can hold for some groups in case 1). Since the action of $\Gamma$ on $U$ is free, the set of base points of the invariant pencil does not intersect $U$. We can blow up the base points to get an invariant fibration. Thus we have:

\begin{lemma}\label{withoutsingularpoints2}
If $\Gamma\subset \Bir(Y)$ is conjugate in $\Bir(Y)$ to a group preserving a fibration, then the conjugation can be chosen to be geometric. 
\end{lemma}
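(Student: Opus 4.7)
The plan is to take any $\Bir(Y)$-conjugation $\phi$ witnessing the hypothesis and modify it so that it becomes a geometric conjugation, essentially by only resolving the base points of the associated invariant pencil, all of which I will show lie outside $U$. First I would observe that if $\phi: Y \dashrightarrow Y'$ is a birational map such that $\phi\Gamma\phi^{-1}$ preserves a fibration $f: Y' \to B$, then pulling back $f$ by $\phi$ yields a rational map $Y \dashrightarrow B$, equivalently a pencil $\mathcal{P}$ of rational curves on $Y$. Since $\phi\Gamma\phi^{-1}$ permutes the fibers of $f$, the pencil $\mathcal{P}$ is $\Gamma$-invariant.

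Next I would locate the base locus $\mathrm{Bs}(\mathcal{P}) \subset Y$, which is a finite subset of $Y$. Being the intersection of the members of a $\Gamma$-invariant pencil, it is itself $\Gamma$-invariant. Because $\Gamma$ is infinite and acts freely on $U$, no nonempty finite $\Gamma$-invariant set can meet $U$; hence $\mathrm{Bs}(\mathcal{P}) \cap U = \emptyset$. This is the only place where the Kleinian hypothesis is used, and it is what allows the resolution of $\mathcal{P}$ to be performed entirely outside $U$.

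I would then resolve the indeterminacies of the rational map $Y \dashrightarrow B$ by a finite sequence of blow-ups at base points and their infinitely near points. At the first step, we blow up a point of $\mathrm{Bs}(\mathcal{P})$, which lies outside $U$; the resulting exceptional divisor is therefore disjoint from the preimage of $U$, and the preimage of $U$ is canonically identified with $U$. Any subsequent infinitely near base points are supported on these exceptional divisors or on strict transforms of the original base points, hence again lie outside the preimage of $U$. Iterating, I obtain a birational morphism $\pi: \tilde{Y} \to Y$ that is an isomorphism over $U$ and such that $\pi^*\mathcal{P}$ becomes an honest morphism $\tilde{f}: \tilde{Y} \to B$. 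Setting $\tilde{U} = \pi^{-1}(U)$ and $\tilde{\Gamma} = \pi^{-1}\Gamma\pi$, the birational map $\pi^{-1}: Y \dashrightarrow \tilde{Y}$ is regular on $U$ and restricts to a biholomorphism $U \to \tilde{U}$, so it implements a geometric conjugation in the sense of the paper; and $\tilde{\Gamma}$ preserves the fibration $\tilde{f}$ because $\Gamma$ preserved $\mathcal{P}$.

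The only step requiring any care is the inductive verification that every blow-up center in the resolution process remains disjoint from the (iterated) preimage of $U$, but this is essentially tautological: blowing up a point outside $U$ does not alter $U$ and introduces an exceptional divisor disjoint from it, so the class of ``infinitely near points outside $U$'' is stable under the procedure. No delicate dynamical input is needed beyond the freeness and infinitude of the action of $\Gamma$ on $U$.
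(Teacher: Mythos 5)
Your proof is correct and follows essentially the same route as the paper: the paper likewise observes that the conjugation produces a $\Gamma$-invariant pencil on $Y$ whose (finite, $\Gamma$-invariant) base locus must avoid $U$ because the action there is free, and then blows up the base points outside $U$ to realize the fibration geometrically. You have merely spelled out the inductive bookkeeping for the infinitely near base points, which the paper leaves implicit.
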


Therefore we can and will assume that $\Gamma$ preserves a rational fibration $r:Y\rightarrow B$ where $B=\PP^1$. As before we denote by $\Gamma_B$ the image of $\Gamma$ in $\Aut(B)$. By Proposition \ref{withoutsingularpoints2} $r$ induces on $X$ a regular holomorphic foliation $\Fol$. If $\Gamma_B$ is a finite group then all possibilities are classified by Theorem \ref{thmruled1}. This gives the first three cases in Theorem \ref{verylongthm}. In this case $\Fol$ is a fibration. 

Assume now that $\Gamma_B$ is infinite. We distinguish two cases according to whether $r(U)=B$ or $r(U)$ is a proper subset of $B$.

\proofstep{First case}
Suppose that $r(U)=B$. Then by Proposition \ref{fullbasecase} the foliation $\Fol$ is a turbulent foliation, an obvious foliation on a Hopf surface or a suspension of $\PP^1$. 

Assume that $\Fol$ is a turbulent foliation. Then by Proposition \ref{turbulentclassification}, up to taking a subgroup of finite index of $\Gamma$, we are in Examples \ref{hopfruledex}, \ref{ruledsuspensionex} and \ref{ruledsuspensionextwo}. In the situation of these three examples $X$ is a Hopf surface or a geometrically ruled surface over an elliptic curve. If $X$ is a Hopf surface then this is Case 15) of Theorem \ref{verylongthm}. If $X$ is a geometrically ruled surface over an elliptic curve, then the situations described in Examples \ref{hopfruledex}, \ref{ruledsuspensionex} and \ref{ruledsuspensionextwo} are subcases of Case 20) in Theorem \ref{verylongthm}.

Assume that $\Fol$ is an obvious foliation on a non-elliptic Hopf surface. Then Proposition \ref{Hopfclassification} classifies all possibilities. There are three cases in Proposition \ref{Hopfclassification}. The last case does not satisfy $r(U)=B$. The first two cases satisfy $r(U)=B$ and they correspond to Cases 15) and 16) of Theorem \ref{verylongthm}.

Assume that $\Fol$ is a suspension of $\PP^1$. Then Proposition \ref{suspensionrationalcase} says that we are in Case 20) of Theorem \ref{verylongthm}

\proofstep{Second case}
Assume now that $r(U)\neq B$. If $X$ is a Hopf surface then $\Fol$ is a turbulent foliation, an elliptic fibration or an obvious foliation. All possibilities are classified in Propositions \ref{turbulentclassification}, \ref{Hopfclassification} and \ref{ellipticbundles}. Apart from the normal form of a Hopf surface, only the last case of Proposition \ref{Hopfclassification} satisfies $r(U)\neq B$ and it corresponds to Case 11) of Theorem \ref{verylongthm}.

If $\Fol$ is an obvious foliation on an Inoue surface then Theorem \ref{inoue} says that we are in Case 12) of Theorem \ref{verylongthm}.  

If $\Fol$ is an irrational linear foliation on a complex torus, then Proposition \ref{toriprop} says that we are in Cases 7), 8) or 9). If $\Fol$ is a transversely hyperbolic foliation with dense leaves, then we are in Case 14) by Proposition \ref{bidiskrigidity}.

If $\Fol$ is a suspension of an elliptic curve over an elliptic curve then a finite covering of $X$ is a torus where $\Fol$ becomes linear. If $\Fol$ is an infinite suspension of an elliptic curve over a hyperbolic curve then Proposition \ref{suspensionprop} says that we are in some refined subcases of Cases 18) or 19). 

Assume that $\Fol$ is a fibration. If the genus of a fiber is $\geq 2$ then Proposition \ref{highergenusfibration} says that we are in Case 17) of Theorem \ref{verylongthm}. If the fibers are elliptic curves then Proposition \ref{ellipticbundles} says that we are in Cases 18) and 19). If the fibers are $\PP^1$ then Proposition \ref{fibrationproposition} says that we are in Case 20).

\subsubsection{Rational surfaces II: other cases in strong Tits alternative}
Now we consider Cases 1), 4), 6) and 7) of Theorem \ref{strongTitsalternative} for $\Gamma$. Case 5) of Theorem \ref{strongTitsalternative} is excluded by our hypothesis in Theorem \ref{verylongthm}. Note that we do not need to consider Case 8) of Theorem \ref{strongTitsalternative} because being a quotient of the fundamental group of a compact manifold, $\Gamma$ is finitely generated.

\proofstep{Genus one fibration}
Let us consider the fourth case of Theorem \ref{strongTitsalternative}. The elements of $\Gamma$ are Halphen twists. There is a rational surface $Y'$ and a birational map $\phi:Y\rightarrow Y'$ such that $\Gamma'=\phi\Gamma\phi^{-1}$ acts by automorphisms and preserves a genus one fibration $\varphi:Y'\rightarrow C$. By Lemma \ref{birconjtoaut} or Lemma \ref{withoutsingularpoints2} we can choose $Y'$ and $\phi$ so that the conjugation $\phi$ is a geometric conjugation of birational Kleinian groups. For any Halphen twist $f$ preserving $\varphi$, the induced action of $f$ on $C$ has finite order (see \cite{CF03} Proposition 3.6). This is impossible by Lemma \ref{ellipticfibrationlem}.

\proofstep{Elliptic group}
Let us consider the first case of Theorem \ref{strongTitsalternative}. In this case there is a projective surface $Y'$ and a birational map $\phi:Y\rightarrow Y'$ such that $\Gamma'=\phi\Gamma\phi^{-1}$ is a group of automorphisms of $Y'$ and a finite index subgroup of $\Gamma'$ is in $\Aut^0(Y')$. By Lemma \ref{birconjtoaut} we can choose $Y'$ and $\phi$ so that the conjugation $\phi$ is a geometric conjugation of birational Kleinian groups. Thus we can and will assume that $\Gamma \subset \Aut^0(Y)$.

Assume that $Y$ is not $\PP^2$. Then there is a birational morphism $\varphi:Y\rightarrow Y'$ to a Hirzebruch surface $Y'$ and we have $\varphi\circ \Aut^0(Y) \circ \varphi^{-1}\subset \Aut^0(Y')$. Since the $\Aut^0$ of a Hirzebruch surface preserves a rational fibration, $\Gamma$ preserves a rational fibration on $Y$. Then all possibilities are already classified in \ref{rationalsurfaceinvariantrationalfibration}.

In the remaining case $Y=\PP^2$ and $\Gamma\subset \PGL_3(\CC)=\Aut(\PP^2)$. This is exactly the case of complex projective Kleinian groups. The complete classification is given by Theorem \ref{classificationofcplxprojkleinian}. It corresponds to Cases 7)--13) in Theorem \ref{verylongthm}.

\proofstep{Toric subgroup}
Proposition \ref{notorickleinian} says that, under the hypothesis that $\Gamma$ is not cyclic, if $\Gamma$ is conjugate in $\Bir(Y)$ to a subgroup of the toric subgroup then it contains no loxodromic elements. Thus Case 6) of Theorem \ref{strongTitsalternative} is impossible.

\proofstep{Non-elementary subgroup}
Theorem \ref{kleinianareelementary} says that $\Gamma$ is an elementary subgroup of $\Bir(Y)$ under the hypothesis that $X$ is not of class VII. Thus Case 7) of Theorem \ref{strongTitsalternative} is ruled out.

\paragraph{Acknowledgement}
I would like to thank warmly Serge Cantat for introducing me to this subject and for his constant support. I would like to thank Benoît Claudon, Laura DeMarco, Bertrand Deroin, Philippe Eyssidieux, Adolfo Guillot, Bruno Klingler, Frank Loray, Pierre Py, Fr\'ed\'eric Touzet, Junyi Xie for discussions and comments. I would like to thank the anonymous referees for many helpful comments.

\nocite{}

\bibliographystyle{alpha}

\bibliography{biblio}

\end{document}